\providecommand{\tabularnewline}{\\}
\providecommand{\qbinom}[2]{\binom{#1}{#2}_{\!\!q}}
\numberwithin{equation}{section}
\numberwithin{figure}{section}
\theoremstyle{plain}
\newtheorem{thm}{\protect\theoremname}[section]
  \theoremstyle{plain}
  \newtheorem{lem}[thm]{\protect\lemmaname}
  \theoremstyle{plain}
  \newtheorem{prop}[thm]{\protect\propositionname}
  \theoremstyle{plain}
  \newtheorem{cor}[thm]{\protect\corollaryname}
  \theoremstyle{plain}
  \newtheorem{conjecture}[thm]{\protect\conjecturename}
  \theoremstyle{plain}
  \newtheorem{question}[thm]{\protect\questionname}
\DeclareMathOperator{\Pk}{Pk}
\DeclareMathOperator{\Lpk}{Lpk}
\DeclareMathOperator{\Rpk}{Rpk}
\DeclareMathOperator{\Epk}{Epk}
\DeclareMathOperator{\Val}{Val}
\DeclareMathOperator{\pk}{pk}
\DeclareMathOperator{\val}{val}
\DeclareMathOperator{\lpk}{lpk}
\DeclareMathOperator{\rpk}{rpk}
\DeclareMathOperator{\epk}{epk}
\DeclareMathOperator{\br}{br}
\DeclareMathOperator{\udr}{udr}
\DeclareMathOperator{\des}{des}
\DeclareMathOperator{\asc}{asc}
\DeclareMathOperator{\maj}{maj}
\DeclareMathOperator{\comaj}{comaj}
\DeclareMathOperator{\st}{st}
\DeclareMathOperator{\Des}{Des}
\DeclareMathOperator{\lfr}{lfr}
\DeclareMathOperator{\sfr}{sfr}
\DeclareMathOperator{\lir}{lir}
\DeclareMathOperator{\sir}{sir}
\DeclareMathOperator{\lr}{lr}
\DeclareMathOperator{\id}{id}
\DeclareMathOperator{\Comp}{Comp}
\DeclareMathOperator{\Span}{Span}
\DeclareMathOperator{\QSym}{QSym}
\let\originalleft\left
\let\originalright\right
\renewcommand{\left}{\mathopen{}\mathclose\bgroup\originalleft}
\renewcommand{\right}{\aftergroup\egroup\originalright}
\renewcommand{\mid}{:}
\providecommand{\conjecturename}{Conjecture}
\providecommand{\corollaryname}{Corollary}
\providecommand{\lemmaname}{Lemma}
\providecommand{\propositionname}{Proposition}
\providecommand{\questionname}{Question}
\providecommand{\theoremname}{Theorem}
\newcommand\pair[2]{\left\langle#1,#2\right\rangle}
\newcommand{\h}{\mathbf{h}}
\renewcommand{\r}{\mathbf{r}}
\newcommand{\Sym}{\mathbf{Sym}}
\newcommand{\Symxy}{\Sym_{xy}}
\newcommand{\Symtxy}{\Sym_{txy}}
\newcommand{\ncsf}{noncommutative symmetric function}
\newcommand{\ncsfs}{\ncsf s}
\newcommand{\Q}{\mathbb{Q}}
\newcommand{\N}{\mathbb{N}}
\newcommand{\Qtxy}{\Q[[t*]][x,y]}
\newcommand{\Qtx}{\Q[[t*]][x]}
\newcommand{\ceil}[1]{\left\lceil#1\right\rceil}
\newcommand{\floor}[1]{\left\lfloor#1\right\rfloor}
\newcommand{\px}{p}
\newcommand{\qx}{q}
\begin{document}

\title{Shuffle-compatible permutation statistics}

\author{Ira M. Gessel\qquad{}Yan Zhuang\\
Department of Mathematics\\
Brandeis University%
\thanks{MS 050, Waltham, MA 02453
}\\
\texttt{\{gessel, zhuangy\}@brandeis.edu}}
\maketitle
\begin{abstract}
Since the early work of Richard Stanley, it has been observed that several permutation statistics have a remarkable property with respect to shuffles of permutations. We formalize this notion of a shuffle-compatible permutation statistic and introduce the shuffle algebra of a shuffle-compatible permutation statistic, which encodes the distribution of the statistic over shuffles of permutations. This paper develops a theory of shuffle-compatibility for descent statistics---statistics that depend only on the descent set and length---which has close connections to the theory of $P$-partitions, quasisymmetric functions, and noncommutative symmetric functions. We use our framework to prove that many descent statistics are shuffle-compatible and to give explicit descriptions of their shuffle algebras, thus unifying past results of Stanley, Gessel, Stembridge, Aguiar--Bergeron--Nyman, and Petersen.
\end{abstract}
\textbf{\small{}Keywords:}{\small{} permutations, shuffles, permutation statistics, $P$-partitions, quasisymmetric functions, noncommutative
symmetric functions}{\small \par}
{\let\thefootnote\relax\footnotetext{The first author was supported by a grant from the Simons Foundation (\#427060, Ira Gessel).}}
{\let\thefootnote\relax\footnotetext{2010 \textit{Mathematics Subject Classification}. Primary 05A05; Secondary 05A15, 05E05, 16T30.}}

\tableofcontents{}

\section{Introduction}

We say that $\pi=\pi_{1}\pi_{2}\cdots\pi_{n}$ is a \textit{permutation}
of \textit{length} $n$ (or an $n$-\textit{permutation}) if it is
a sequence of $n$ distinct letters---not necessarily from 1 to $n$---in
$\mathbb{P}$, the set of positive integers.
For example, $\pi=47381$
is a permutation of length 5. 
Let $\left|\pi\right|$ denote the length of a permutation $\pi$ and
let $\mathfrak{P}_{n}$ denote the set
of all permutations of length $n$.\footnote{In Section \ref{s-section2}, we will in a few instances consider permutations with a letter 0. We note that, in these cases, every property of permutations that is used still holds when 0 is allowed to be a letter.
}

A \textit{permutation statistic} (or \textit{statistic}) $\st$ is
a function defined on permutations such that $\st(\pi)=\st(\sigma)$
whenever $\pi$ and $\sigma$ are permutations with the same relative
order.\footnote{Define the \textit{standardization} of an $n$-permutation $\pi$ to be the permutation of $[n]$ obtained by  replacing the $i$th smallest letter of $\pi$ with $i$ for  $i$ from 1 to $n$. Then two permutations are said to \textit{have the same relative order} if they have the same standardization.} Three classical examples of permutation statistics are the
descent set $\Des$, the descent number $\des$, and the major index
$\maj$. We say that $i\in[n-1]$ is a \textit{descent} of $\pi\in\mathfrak{P}_{n}$
if $\pi_{i}>\pi_{i+1}$. Then the \textit{descent set} 
\[
\Des(\pi)\coloneqq\{\, i\in[n-1]\mid\pi_{i}>\pi_{i+1}\,\}
\]
of $\pi$ is the set of its descents, the \textit{descent number}
\[
\des(\pi)\coloneqq\left|\Des(\pi)\right|
\]
its number of descents, and the \textit{major index} 
\[
\maj(\pi)\coloneqq\sum_{k\in\Des(\pi)}k
\]
the sum of its descents.

Let $\pi\in\mathfrak{P}_{m}$ and $\sigma\in\mathfrak{P}_{n}$ be
\textit{disjoint} permutations, that is, permutations with no letters
in common. We say that $\tau\in\mathfrak{P}_{m+n}$ is a \textit{shuffle}
of $\pi$ and $\sigma$ if both $\pi$ and $\sigma$ are subsequences
of $\tau$. The set of shuffles of $\pi$ and $\sigma$ is denoted
$S(\pi,\sigma)$. For example, $S(53,16)=\{5316,5136,5163,1653,1536,1563\}$.
It is easy to see that the number of permutations in $S(\pi,\sigma)$
is ${m+n \choose m}$.

Richard Stanley's theory of $P$-partitions \cite{Stanley1972} implies that the descent
set statistic has a remarkable property related to shuffles: for any disjoint
permutations $\pi$ and $\sigma$, the multiset $\{\,\Des(\tau)\mid\tau\in S(\pi,\sigma)\,\}$---which encodes the distribution of the descent set over shuffles of $\pi$ and $\sigma$---depends only on $\Des(\pi)$, $\Des(\sigma)$, and the lengths of
$\pi$ and $\sigma$ \cite[Exercise 3.161]{Stanley2011}. That is,
if $\pi$ and $\pi^{\prime}$ are permutations of the same length
with the same descent set, and similarly with $\sigma$ and $\sigma^{\prime}$,
then the number of permutations in $S(\pi,\sigma)$ with any given
descent set is the same as the number of permutations in $S(\pi^{\prime},\sigma^{\prime})$
with that descent set.

Stanley also proved a similar but more refined result for the joint
statistic $(\des,\maj)$, which is a special case of \cite[Proposition 12.6 (ii)]{Stanley1972}.
Bijective proofs were later found by Goulden \cite{Goulden1985} and
by Stadler \cite{Stadler1999}; they referred to this result as ``Stanley's
shuffling theorem''. Recall that the $q$-binomial coefficient ${n \choose k}_{q}$
is defined by 
\[
\qbinom{n}{k}\coloneqq\frac{[n]_{q}!}{[k]_{q}!\,[n-k]_{q}!}
\]
where $[n]_{q}!\coloneqq(1+q)(1+q+q^{2})\cdots(1+q+\cdots+q^{n-1})$.
\begin{thm}[Stanley's shuffling theorem]

Let $\pi\in\mathfrak{P}_{m}$ and $\sigma\in\mathfrak{P}_{n}$ be
disjoint permutations, and let $S_{k}(\pi,\sigma)$ be the set of
shuffles of $\pi$ and $\sigma$ with exactly $k$ descents. Then
\begin{multline}
\qquad\sum_{\tau\in S_{k}(\pi,\sigma)}q^{\maj(\tau)}=q^{\maj(\pi)+\maj(\sigma)+(k-\des(\pi))(k-\des(\sigma))}\\
\times\qbinom{m-\des(\pi)+\des(\sigma)}{k-\des(\pi)}\qbinom{n-\des(\sigma)+\des(\pi)}{ k-\des(\sigma)}.
\qquad
\label{e-qshuffle}
\end{multline}

\end{thm}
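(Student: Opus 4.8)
The plan is to use Stanley's theory of $P$-partitions in its bounded form and then extract the descent refinement by means of a $q$-binomial identity. For a permutation $\rho=\rho_{1}\cdots\rho_{\ell}$, call a function $f$ from the set of letters of $\rho$ to $\{0,1,2,\dots\}$ \emph{$\rho$-compatible} if $f(\rho_{1})\ge f(\rho_{2})\ge\cdots\ge f(\rho_{\ell})$ with the inequality $f(\rho_{i})>f(\rho_{i+1})$ strict whenever $i\in\Des(\rho)$, and write $|f|=\sum_{i}f(\rho_{i})$. The crucial input is the fundamental lemma of $P$-partitions specialized to shuffles: for disjoint $\pi$ and $\sigma$, a function on their combined alphabet whose restrictions to the letters of $\pi$ and of $\sigma$ are respectively $\pi$-compatible and $\sigma$-compatible is $\tau$-compatible for exactly one shuffle $\tau\in S(\pi,\sigma)$. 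First I would record the single-permutation bounded generating function: the substitution $b_{i}=f(\rho_{i})-|\{\,j\in\Des(\rho)\mid j\ge i\,\}|$ converts $\rho$-compatible maps bounded by $N$ into weakly decreasing sequences $N-\des(\rho)\ge b_{1}\ge\cdots\ge b_{\ell}\ge0$ with $|f|=\maj(\rho)+\sum_{i}b_{i}$, whence
\[
\sum_{\substack{f\ \rho\text{-compatible}\\ 0\le f\le N}}q^{|f|}=q^{\maj(\rho)}\qbinom{N-\des(\rho)+\ell}{\ell}.
\]

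Next I would assemble the master identity. Because the letters of $\pi$ and $\sigma$ are disjoint, the generating function over pairs of independently chosen $\pi$- and $\sigma$-compatible maps bounded by $N$ factors as a product, while the fundamental lemma rewrites that same sum as a sum over shuffles $\tau$. With $p=m+n$, equating the two evaluations gives, for every $N\ge0$,
\[
\sum_{\tau\in S(\pi,\sigma)}q^{\maj(\tau)}\qbinom{N-\des(\tau)+p}{p}=q^{\maj(\pi)+\maj(\sigma)}\qbinom{N-\des(\pi)+m}{m}\qbinom{N-\des(\sigma)+n}{n}.
\]
Grouping the left-hand side according to $k=\des(\tau)$ recasts this as the statement that the unknown polynomials $C_{k}\coloneqq\sum_{\tau\in S_{k}(\pi,\sigma)}q^{\maj(\tau)}$ satisfy $\sum_{k}C_{k}\,\qbinom{N-k+p}{p}=(\text{right-hand side})$ for all $N\ge0$.

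Finally I would solve for the $C_{k}$. Multiplying by $z^{N}$, summing over $N\ge0$, and using $\sum_{N\ge0}\qbinom{N+p}{p}z^{N}=\prod_{i=0}^{p}(1-zq^{i})^{-1}$ (the terms with $N<k$ vanishing), the left side becomes $\left(\prod_{i=0}^{p}(1-zq^{i})^{-1}\right)\sum_{k}C_{k}z^{k}$; this exhibits the $C_{k}$ as the coefficients of a determined power series, so they are unique and it suffices to check that the claimed values
\[
R_{k}\coloneqq q^{\maj(\pi)+\maj(\sigma)+(k-\des(\pi))(k-\des(\sigma))}\qbinom{m-\des(\pi)+\des(\sigma)}{k-\des(\pi)}\qbinom{n-\des(\sigma)+\des(\pi)}{k-\des(\sigma)}
\]
obey the same relation. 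After cancelling $q^{\maj(\pi)+\maj(\sigma)}$ and writing $d_{1}=\des(\pi)$, $d_{2}=\des(\sigma)$, this reduces to the purely $q$-binomial identity
\[
\sum_{k}q^{(k-d_{1})(k-d_{2})}\qbinom{m-d_{1}+d_{2}}{k-d_{1}}\qbinom{n-d_{2}+d_{1}}{k-d_{2}}\qbinom{N-k+p}{p}=\qbinom{N-d_{1}+m}{m}\qbinom{N-d_{2}+n}{n}.
\]

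Proving this last identity is the main obstacle: everything preceding it is bookkeeping, but this is a genuine terminating summation of $q$-Chu--Vandermonde/$q$-Saalsch\"utz type. I would establish it either by reindexing with $j=k-d_{1}$ to expose the balanced ${}_{3}\phi_{2}$ structure and applying a standard $q$-hypergeometric evaluation, or, in keeping with the rest of the argument, by a lattice-path (equivalently bounded $P$-partition) interpretation in which both sides are seen to enumerate the same families of arrays confined to a box. Once this is in hand, uniqueness forces $C_{k}=R_{k}$, which is exactly \eqref{e-qshuffle}.
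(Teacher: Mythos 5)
The paper itself never proves this theorem: it is stated as known, with citations to Stanley's memoir (Proposition 12.6(ii)) and to the later bijective proofs of Goulden and Stadler, so there is no in-paper argument to compare yours against. Judged on its own terms, your scaffolding is correct and is essentially Stanley's original $P$-partition route. The bounded generating function
\[
\sum_{\substack{f\ \rho\text{-compatible}\\ 0\le f\le N}} q^{|f|}=q^{\maj(\rho)}\qbinom{N-\des(\rho)+\ell}{\ell}
\]
is right, the fundamental lemma for the disjoint union of two chains is correctly invoked, the master identity it yields holds for every $N\ge0$, and your uniqueness argument is valid: since $\sum_{N\ge0}\qbinom{N+p}{p}z^{N}=\prod_{i=0}^{p}(1-zq^{i})^{-1}$ is a unit in $\mathbb{Q}(q)[[z]]$, the polynomials $C_{k}$ are uniquely determined by the relation, so it does suffice to verify that the claimed values $R_{k}$ satisfy it.

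The genuine gap is the final step, which you explicitly leave open (``I would establish it either by \dots or \dots''): writing $d_{1}=\des(\pi)$, $d_{2}=\des(\sigma)$, $p=m+n$, the terminating identity
\[
\sum_{k}q^{(k-d_{1})(k-d_{2})}\qbinom{m-d_{1}+d_{2}}{k-d_{1}}\qbinom{n-d_{2}+d_{1}}{k-d_{2}}\qbinom{N-k+p}{p}=\qbinom{N-d_{1}+m}{m}\qbinom{N-d_{2}+n}{n}
\]
is not bookkeeping---it is where the entire content of the theorem (the quadratic exponent and both $q$-binomial factors) resides, and until it is established nothing has been proved. The identity is true, and your diagnosis of its type is accurate: when $d_{1}=d_{2}=0$ it is the $q$-analogue of $\binom{N+m}{m}\binom{N+n}{n}=\sum_{k}\binom{m}{k}\binom{n}{k}\binom{N+m+n-k}{m+n}$, provable by a Durfee-square style dissection of pairs of partitions in boxes, and the general case is a shifted, two-parameter instance of the same family, evaluable by the terminating $q$-Saalsch\"utz summation. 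So the route would succeed, but a complete proof must actually carry out that evaluation (or the lattice-path bijection), not merely name candidate methods. This deferred identity is also exactly what distinguishes your approach from the cited proofs of Goulden and Stadler, which avoid $q$-series manipulations entirely by constructing bijections directly on shuffles.
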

A variant of the theorem gives the formula

\begin{equation}
\sum_{\tau\in S(\pi,\sigma)}q^{\maj(\tau)}=q^{\maj(\pi)+\maj(\sigma)}\qbinom{m+n}{m};\label{e-maj}
\end{equation}
see \cite[p.~43]{Stanley1972}. These formulas show that the statistics
$(\des,\maj)$ and $\maj$ have the same property as $\Des$, and
setting $q=1$ in \eqref{e-qshuffle} shows that $\des$ has this
property as well.

We call this property ``shuffle-compatibility''. More precisely, we
say that a permutation statistic $\st$ is \textit{shuffle-compatible}
if for any disjoint permutations $\pi$ and $\sigma$, the multiset $\{\,\st(\tau)\mid\tau\in S(\pi,\sigma)\,\}$
depends only on $\st(\pi)$, $\st(\sigma)$, $\left|\pi\right|$,
and $\left|\sigma\right|$. Hence $\Des$, $\des$, $\maj$, and
$(\des,\maj)$ are examples of shuffle-compatible permutation statistics.

This paper serves as the first in-depth investigation of shuffle-compatibility,
and we focus on the shuffle-compatibility of descent statistics, which are statistics that
depend only on the descent set and length of a permutation. All of
the statistics mentioned so far are descent statistics. In Section
\ref{s-section2}, we introduce some aspects of the general theory of descents in
permutations and define some other descent statistics that we will
be studying in this paper, including the peak set $\Pk$, the peak
number $\pk$, the left peak set $\Lpk$, the left peak number $\lpk$, and the number of up-down
runs $\udr$. There, we also give a bijective proof of the shuffle-compatibility of the descent set.

In Section \ref{s-section3}, we define the ``shuffle algebra'' of a shuffle-compatible
permutation statistic $\st$, which has a natural basis whose structure constants encode the distribution of $\st$ over shuffles of permutations (or more precisely, equivalence classes of permutations induced by the statistic $\st$). Our first result is a characterization of the major index shuffle algebra using the variant (\ref{e-maj}) of Stanley's shuffling theorem. We then prove several basic results that relate the shuffle algebras of permutation statistics that are related in various ways. Notably, if two statistics are related by a basic symmetry---reversion,
complementation, or reverse complementation---and one of them is known
to be shuffle-compatible, then both statistics are shuffle-compatible
and have isomorphic shuffle algebras.

In Section \ref{s-section4}, we introduce the algebra of quasisymmetric functions
$\QSym$ (originally studied in \cite{Gessel1984}) and observe that
it is isomorphic to the descent set shuffle algebra. We establish
a necessary and sufficient condition for the shuffle-compatibility
of a descent statistic, which  shows that the shuffle algebra
of any shuffle-compatible descent statistic is isomorphic to a quotient
algebra of $\QSym$. Using this condition, we give explicit descriptions for the shuffle algebras of 
$\des$ and $(\des,\maj)$. We then observe that the peak
set shuffle algebra is isomorphic to Stembridge's ``algebra of peaks''
arising from his study of enriched $P$-partitions \cite{Stembridge1997}---thus showing
that the peak set $\Pk$ is shuffle-compatible---and use Stembridge's
peak quasisymmetric functions to characterize the peak number shuffle
algebra, thus showing that the peak number $\pk$ is shuffle-compatible. In the same vein, Petersen's work  \cite{Petersen2006,Petersen2007} on left enriched
$P$-partitions implies that the left peak set $\Lpk$ and left peak
number $\lpk$ are shuffle-compatible.

In Section \ref{s-section5}, we introduce the bialgebra of noncommutative symmetric
functions $\mathbf{Sym}$ (originally studied in \cite{ncsf1}), whose
coalgebra structure is dual to the algebra structure of $\QSym$.
By exploiting this duality, we obtain a dual version of our shuffle-compatibility
condition, which allows us to prove shuffle-compatibility of a descent
statistic by constructing a suitable subcoalgebra of $\mathbf{Sym}$.
We use this approach to describe the shuffle algebras of $(\pk,\des)$,
$(\lpk,\des)$, $\udr$, and $(\udr,\des)$, thus showing that these statistics are all
shuffle-compatible. 

Finally, in Section \ref{s-section6}, we provide proofs for an alternate characterization of the $\pk$ and $(\pk,\des)$ shuffle algebras, list some non-shuffle-compatible permutation statistics, and discuss some open questions and conjectures on the topic of shuffle-compatibility.

The appendix of this paper contains two tables. Table 1 lists all permutation statistics that we know to be shuffle-compatible, and Table 2 lists various equivalences (as defined in Section \ref{s-section3}) among the statistics that are studied in this paper.

We note that some permutation statistics, such as the number of inversions, satisfy a weak form of shuffle-compatibility:  
for disjoint permutations $\pi$ and $\sigma$, if every letter of $\pi$ is less than every letter of $\sigma$, then the multiset 
$\{\,\st(\tau)\mid\tau\in S(\pi,\sigma)\,\}$
depends only on $\st(\pi)$, $\st(\sigma)$, $\left|\pi\right|$,
and $\left|\sigma\right|$.
Permutation statistics with this property
are associated with
quotients of the Malvenuto--Reutenauer algebra (also called the algebra
of free quasisymmetric functions).  
Some of these statistics have been studied by Vong \cite{Vong2013}, but we do not consider them here.

Also, there is another class of algebras that are related to permutations and their descent sets, based on ordinary multiplication of permutations rather than shuffles. If $\st$  is a function defined on the $n$th symmetric group $\mathfrak{S}_n$, we may consider the elements 
\begin{equation*}
K_\alpha \coloneqq \sum_{\substack{\pi\in \mathfrak{S}_n\\ \st(\pi) = \alpha}}\pi
\end{equation*}
in the group algebra of $\mathfrak{S}_n$, where $\alpha$ ranges over the image of $\st$. Louis Solomon \cite{Solomon1976} proved that if $\st$ is the descent set, then the $K_\alpha$ span a subalgebra of the group algebra of $\mathfrak{S}_n$, called the \emph{descent algebra} of $\mathfrak{S}_n$. Several other descent statistics give subalgebras of the descent algebra, including the descent number \cite{Loday1989}; the peak set \cite{Nyman2003, Schocker2005}; the left peak set, peak number, and left peak number \cite{Aguiar2004, Petersen2006, Petersen2007}; and the number of biruns and up-down runs \cite{Doyle2008, Josuat-Verges2016}. 
These descent statistics have the property that given values $\alpha$ and $\beta$ of $\st$, and $\tau\in \mathfrak{S}_n$, the number of pairs $(\pi,\sigma)$ of permutations in $\mathfrak{S}_n$ with $\st(\pi)=\alpha$, $\st(\sigma)=\beta$, and  $\pi\sigma=\tau$ depends only on $\st(\tau)$. In other words, these statistics are ``compatible'' under the ordinary product of permutations, and our work is an analogue of Solomon's descent theory for statistics compatible under the shuffle product.

Although there is a significant overlap between shuffle-compatible permutation statistics and statistics corresponding to subalgebras of the descent algebra, neither class is contained in the other, as the number of biruns is not shuffle-compatible and the pair $(\pk,\des)$ does not give a subalgebra of the descent algebra. 
The descent algebra and its subalgebras may also be studied through noncommutative symmetric functions (using the internal product of $\Sym$ \cite[Section 5]{ncsf1}) or quasisymmetric functions (using the internal coproduct of $\QSym$ \cite{Gessel1984}).

\section{Permutations and descents}
\label{s-section2}

\subsection{Increasing runs and descent compositions}

We begin with a brief exposition on some basic material in permutation enumeration relating to descents.

Every permutation can be uniquely decomposed into a sequence of maximal
increasing consecutive subsequences, which we call \textit{increasing
runs} (or simply \textit{runs}). For example, the increasing runs
of $21479536$ are $2$, $1479$, $5$, and $36$. Equivalently, an
increasing run of $\pi$ is a maximal consecutive subsequence containing
no descents. Let us call an increasing run \textit{short} if it has
length 1, and \textit{long} if it has length at least 2. The \textit{initial
run} of a permutation refers to its first increasing run, whereas
the \textit{final run} refers to its last increasing run. For example,
the initial run of $21479536$ is $2$ and its final run is $36$.
(If a permutation has only one increasing run, then it is considered to be both an
initial run and a final run.)

The number of increasing runs of a nonempty permutation is one more
than its number of descents; in fact, the lengths of the increasing
runs determine the descents, and vice versa. Given a subset $A\subseteq[n-1]$
with elements $a_{1}<a_{2}<\cdots<a_{j}$, let $\Comp(A)$ be the
composition $(a_{1},a_{2}-a_{1},\dots,a_{j}-a_{j-1},n-a_{j})$ of
$n$, and given a composition $L=(L_{1},L_{2},\dots,L_{k})$ of $n$, let
$\Des(L)\coloneqq\{L_{1},L_{1}+L_{2},\dots,L_{1}+\cdots+L_{k-1}\}$
be the corresponding subset of $[n-1]$. Then, $\Comp$ and $\Des$
are inverse bijections. If $\pi$ is an $n$-permutation with descent
set $A\subseteq[n-1]$, then we call $\Comp(A)$ the \textit{descent
composition} of $\pi$, which we also denote by $\Comp(\pi)$. By
convention, let us say that the empty permutation (i.e., permutation
of length 0) has descent composition $\varnothing$. Note that the
descent composition of $\pi$ gives the lengths of the increasing
runs of $\pi$. Conversely, if $\pi$ has descent composition $L$,
then its descent set $\Des(\pi)$ is $\Des(L)$.

A permutation statistic $\st$ is called a \textit{descent statistic}
if it depends only on the descent composition, that is, if $\Comp(\pi)=\Comp(\sigma)$
implies $\st(\pi)=\st(\sigma)$ for any two permutations $\pi$ and
$\sigma$. Equivalently, $\st$ is a descent statistic if it depends
only on the descent set and length of a permutation. Since two permutations
with the same descent composition must have the same value of $\st$
if $\st$ is a descent statistic, we shall use the notation $\st(L)$
to indicate the value of a descent statistic $\st$ on any permutation
with descent composition $L$.

We define several statistics based on increasing runs: the long run
$\lr$, long initial run $\lir$, long final run $\lfr$, short initial
run $\sir$, and short final run $\sfr$ statistics. Let $\lr(\pi)$
be the number of long runs of $\pi$, let $\lir(\pi)$ be 1 if the
initial run of $\pi$ is long and 0 otherwise, and let $\lfr(\pi)$
be 1 if the final run of $\pi$ is long and 0 otherwise. Also, for
nonempty $\pi$, let $\sir(\pi)\coloneqq1-\lir(\pi)$ and $\sfr(\pi)\coloneqq1-\lfr(\pi)$.
By convention, if $\pi$ is empty, then all of these statistics are
equal to zero. We will use these run statistics to give an alternative
way of characterizing some of the descent statistics introduced in
the next section.

\subsection{Descent statistics}

In the introduction to this paper, we saw four examples of descent
statistics: the descent set $\Des$, descent number $\des$, major
index $\maj$, and the joint statistic $(\des,\maj)$. The following
are additional descent statistics that we will consider in our investigation
of shuffle-compatibility:
\begin{itemize}

\item The comajor index $\comaj$. The \textit{comajor index} $\comaj(\pi)$
of $\pi\in\mathfrak{P}_{n}$, a variant of the major index, is defined
to be 
\[
\comaj(\pi)\coloneqq\sum_{k\in\Des(\pi)}(n-k).
\]

\item The peak set $\Pk$ and peak number $\pk$. We say that $i$ (where $2\leq i\leq n-1$)
is a \textit{peak} of $\pi\in\mathfrak{P}_{n}$ if $\pi_{i-1}<\pi_{i}>\pi_{i+1}$.
The \textit{peak set} $\Pk(\pi)$ of $\pi$ is defined to be 
\[
\Pk(\pi)\coloneqq\{\,2\leq i\leq n-1\mid\pi_{i-1}<\pi_{i}>\pi_{i+1}\,\}
\]
and the \textit{peak number} $\pk(\pi)$ of $\pi$ to be 
\[
\pk(\pi)\coloneqq\left|\Pk(\pi)\right|.
\]

\item The valley set $\Val$ and valley number $\val$. We say that $i$ (where $2\leq i\leq n-1$) is a \textit{valley} of $\pi\in\mathfrak{P}_{n}$ if $\pi_{i-1}>\pi_{i}<\pi_{i+1}$.
Then $\Val(\pi)$ and $\val(\pi)$ are defined in the analogous way.
\item The left peak set $\Lpk$ and left peak number $\lpk$. We say that
$i\in[n-1]$ is a \textit{left peak} of $\pi\in\mathfrak{P}_{n}$
if $i$ is a peak of $\pi$ or if $i=1$ and is a descent of $\pi$. Thus, left peaks of $\pi$ are  peaks of $0\pi$ shifted by 1.
The \textit{left peak set} $\Lpk(\pi)$ is the set of left peaks of $\pi$ and the \textit{left peak number} $\lpk(\pi)$ is the number of left peaks of $\pi$.

\item The right peak set $\Rpk$ and right peak number $\rpk$. These are defined in the same way as the corresponding left peak statistics, except that right peaks of $\pi$ are peaks of $\pi 0$.

\item The exterior peak set $\Epk$ and exterior peak number $\epk$. The \textit{exterior peak set} $\Epk(\pi)$ of $\pi$ is defined by
\[
\Epk(\pi)\coloneqq\begin{cases}
\Lpk(\pi)\cup\Rpk(\pi), & \mathrm{if}\text{ }\left|\pi\right|\neq1\\
\{1\}, & \mathrm{if}\text{ }\left|\pi\right|=1
\end{cases}
\]
and the \textit{exterior peak number} $\epk(\pi)$ of $\pi$ is defined by 
\[
\epk(\pi)\coloneqq\left|\Epk(\pi)\right|.
\]

\item The number of biruns $\br$ and the number of up-down runs $\udr$.
A \textit{birun} of a permutation is a maximal monotone consecutive
subsequence, and the number of biruns of $\pi$ is denoted $\br(\pi)$.
An \textit{up-down run} of a permutation $\pi$
is either a birun or $\pi_{1}$ when $\pi_{1}>\pi_{2}$, and the number
of up-down runs of $\pi$ is denoted $\udr(\pi)$. Thus the up-down runs of $\pi$ are essentially the biruns of $0\pi$. For example, the
biruns of $\pi=871542$ are $871$, $15$, and $542$, and the up-down
runs of $\pi$ are these biruns along with 8, so $\br(\pi)=3$ and
$\udr(\pi)=4$.
\item Ordered tuples of descent statistics, such as $(\pk,\des)$, $(\lpk,\des)$,
and so on.
\end{itemize}

Before continuing, we give two lemmas that will help us
understand some of the above statistics. The first lemma characterizes several statistics in terms of the run
statistics introduced at the end of the previous section, and
the second lemma reveals a close connection between the $\udr$ statistic
and the $\lpk$ and $\val$ statistics.
\begin{lem}
\label{l-pkvalruns} Let $\pi\in\mathfrak{P}_n$  with $n\geq1$. Then
\begin{enumerate}
\item [\normalfont{(a)}] $\pk(\pi)=\lr(\pi)-\lfr(\pi)$
\item [\normalfont{(b)}] $\val(\pi)=\lr(\pi)-\lir(\pi)$
\item [\normalfont{(c)}] 
$\lpk(\pi) = \begin{cases}
\lr(\pi)+\sir(\pi)-\lfr(\pi), & \mbox{if }n\geq2,\\
0, & \mbox{otherwise}.
\end{cases}$
\item [\normalfont{(d)}] $\rpk(\pi)=\lr(\pi)$
\item [\normalfont{(e)}] $\epk(\pi)=\val(\pi)+1$
\end{enumerate}

\end{lem}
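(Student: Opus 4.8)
The plan is to read all five statistics off the increasing-run decomposition, reducing everything to a single correspondence between peaks (resp.\ valleys) and long runs. The key observation is that a position $i$ with $2 \le i \le n-1$ is a peak of $\pi$ exactly when $i-1$ is an ascent and $i$ is a descent; in run language this says that $\pi_i$ is the top of an increasing run of length at least $2$ that is followed by another run, i.e.\ the top of a long run other than the final run. Dually, $i$ is a valley exactly when $i-1$ is a descent and $i$ is an ascent, so valleys correspond to the bottoms of long runs other than the initial run. I would establish these two correspondences first, since they carry the entire argument.

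Parts (a) and (b) are then immediate. The long non-final runs are counted by $\lr(\pi) - \lfr(\pi)$---we delete the final run from the tally of long runs precisely when it is long---so $\pk(\pi) = \lr(\pi) - \lfr(\pi)$; symmetrically, the long non-initial runs are counted by $\lr(\pi) - \lir(\pi)$, giving $\val(\pi) = \lr(\pi) - \lir(\pi)$.

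For (c) and (d) I would account for the boundary positions. A left peak is a peak together with position $1$ whenever $1$ is a descent, which for $n \ge 2$ happens exactly when the initial run is short; as this position lies outside the range of peak positions, $\lpk(\pi) = \pk(\pi) + \sir(\pi) = \lr(\pi) + \sir(\pi) - \lfr(\pi)$, and the case $n = 1$ is handled by inspection. For (d), a right peak is by definition a peak of $\pi 0$; the only peak of $\pi 0$ not already a peak of $\pi$ sits at position $n$, and it is present exactly when $\pi_{n-1} < \pi_n$, i.e.\ when the final run is long, so $\rpk(\pi) = \pk(\pi) + \lfr(\pi) = \lr(\pi)$.

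Finally, (e) follows by merging the two boundary analyses. For $n \ge 2$ the set $\Epk(\pi) = \Lpk(\pi) \cup \Rpk(\pi)$ consists of the peaks together with position $1$ (when $1$ is a descent) and position $n$ (when the final run is long); these two boundary positions are distinct from each other and from every peak, so $\epk(\pi) = \pk(\pi) + \sir(\pi) + \lfr(\pi) = \lr(\pi) + \sir(\pi) = \val(\pi) + \lir(\pi) + \sir(\pi) = \val(\pi) + 1$, invoking (a), then (b), then the identity $\lir(\pi) + \sir(\pi) = 1$; the case $n = 1$ (where $\Epk(\pi) = \{1\}$ and $\val(\pi) = 0$) is direct. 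The only delicate point is the boundary bookkeeping---confirming that the position-$1$ and position-$n$ contributions overlap neither each other nor the interior peaks, and treating the degenerate lengths $n = 1, 2$ by hand---so I expect these small-$n$ edge cases to be the main, though minor, obstacle.
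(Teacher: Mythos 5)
Your proof is correct and follows essentially the same route as the paper's: identify peaks with the tops of non-final long runs and valleys with the bottoms of non-initial long runs, then handle the boundary positions to get $\lpk$, $\rpk$, and $\epk$ via the identities $\lpk = \pk + \sir$ (for $n\geq 2$) and $\epk = \pk + \sir + \lfr = \val + 1$. The only cosmetic difference is that the paper states the right-peak case (d) as a direct correspondence between right peaks and long runs rather than deriving it as $\rpk = \pk + \lfr$, but this is the same bookkeeping.
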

\begin{proof}
Part (a) follows from the fact that every non-final long run ends in a peak, and every peak is at the end of a non-final long run. The same is true for valleys and non-initial long runs, and for right peaks and long runs, thus implying (b) and (d).
Next,
\begin{equation*}
\lpk(\pi) =  \begin{cases}
\pk(\pi)+\sir(\pi), & \mbox{if }n\geq2,\\
0, & \mbox{otherwise},
\end{cases}
\end{equation*}
which together with (a) proves (c).
Finally,
\begin{align*}
\epk(\pi) & =\rpk(\pi) + \sir(\pi)\\
 & =\lr(\pi)+1-\lir(\pi)\\
 & =\val(\pi)+1
\end{align*}
proves (e).
\end{proof}
\begin{lem}
\label{l-udr} Let $\pi\in\mathfrak{P}_{n}$ with $n\geq1$. Then
\begin{enumerate}
\item [\normalfont{(a)}]$\udr(\pi)=\lpk(\pi)+\val(\pi)+1$
\item [\normalfont{(b)}]$\lpk(\pi)=\left\lfloor \udr(\pi)/2\right\rfloor $
\item [\normalfont{(c)}]$\val(\pi)=\left\lfloor (\udr(\pi)-1)/2\right\rfloor $
\item [\normalfont{(d)}]If $n\geq2$ and the final run of $\pi$ is short, then $\lpk(\pi)=\val(\pi)+1$.
Otherwise, $\lpk(\pi)=\val(\pi)$.
\end{enumerate}
\end{lem}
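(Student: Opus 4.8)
The plan is to prove (a) first by a direct count of biruns, then deduce (d) from the previous lemma, and finally obtain (b) and (c) as arithmetic consequences. For (a), I would use the description of the up-down runs of $\pi$ as the biruns of $0\pi$. Reading any nonempty word from left to right, a new maximal monotone run begins exactly at each interior local extremum, so the number of biruns of a word equals one plus its number of interior peaks and valleys. Applying this to $0\pi$ gives $\udr(\pi) = 1 + \#\{\text{peaks of }0\pi\} + \#\{\text{valleys of }0\pi\}$. By definition the peaks of $0\pi$, shifted by $1$, are precisely the left peaks of $\pi$, so their number is $\lpk(\pi)$. For the valleys, prepending $0$ makes the first step of $0\pi$ an ascent, so position $2$ of $0\pi$ is never a valley, while every other interior valley of $0\pi$ reads $\pi_{i-2} > \pi_{i-1} < \pi_i$ and thus corresponds bijectively to a valley of $\pi$; hence the valleys of $0\pi$ number $\val(\pi)$, and (a) follows. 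This also settles $n=1$, where $0\pi$ has a single ascending birun and $\lpk(\pi)=\val(\pi)=0$.

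For (d), assume $n\geq 2$ and combine parts (b) and (c) of Lemma \ref{l-pkvalruns} with the identity $\sir(\pi)=1-\lir(\pi)$ valid for nonempty $\pi$. Writing all run statistics at $\pi$, subtraction gives
\[
\lpk(\pi)-\val(\pi) = \bigl(\lr+\sir-\lfr\bigr)-\bigl(\lr-\lir\bigr) = \sir-\lfr+\lir = 1-\lfr(\pi).
\]
Thus $\lpk(\pi)-\val(\pi)$ equals $1$ when the final run is short ($\lfr=0$) and $0$ when it is long ($\lfr=1$), which is exactly the statement of (d). The remaining case $n=1$ falls under the ``otherwise'' clause and holds trivially, since both statistics vanish.

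Finally, (b) and (c) follow by substituting into (a). From (d) (together with the $n=1$ case) we have $\lpk(\pi)-\val(\pi)\in\{0,1\}$, so from $\udr(\pi)=\lpk(\pi)+\val(\pi)+1$ I would check the two cases directly: if $\lpk(\pi)=\val(\pi)$ then $\udr(\pi)=2\val(\pi)+1$ is odd, and if $\lpk(\pi)=\val(\pi)+1$ then $\udr(\pi)=2\val(\pi)+2$ is even. In either case one reads off $\lfloor \udr(\pi)/2\rfloor = \lpk(\pi)$ and $\lfloor(\udr(\pi)-1)/2\rfloor = \val(\pi)$, giving (b) and (c).

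I expect no serious obstacle here; the only real care is bookkeeping at the boundaries. Specifically, one must check that the prepended $0$ creates no spurious valley and that the peak/valley identifications with $0\pi$ are exact, and one must handle the degenerate case $n=1$ separately, where Lemma \ref{l-pkvalruns}(c) is given by a distinct clause and the run-statistic identity used for (d) must be bypassed in favor of a direct verification.
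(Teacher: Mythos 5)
Your proof is correct. One thing worth knowing: the paper gives no proof of Lemma \ref{l-udr} at all---it simply cites Lemma 2.1 of \cite{Zhuang2016a}---so there is no internal argument to compare against, and what you have written is a self-contained proof where the paper has none. Your route uses only tools already present in the paper: for (a), the identification $\udr(\pi)=\br(0\pi)$ together with the count ``number of biruns $=$ number of interior extrema $+\,1$,'' plus the facts that the peaks of $0\pi$ are exactly the left peaks of $\pi$ and that the prepended $0$ can never create a valley in position $2$; for (d), a two-line computation from Lemma \ref{l-pkvalruns}(b),(c) and the identity $\sir(\pi)+\lir(\pi)=1$, giving $\lpk(\pi)-\val(\pi)=1-\lfr(\pi)$; and then (b),(c) by the parity case-split on $\lpk(\pi)-\val(\pi)\in\{0,1\}$. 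All of this checks out, including the boundary cases ($n=1$, and the $n\ge 2$ hypothesis in Lemma \ref{l-pkvalruns}(c)). The only imprecision is your phrase ``any nonempty word'' in the birun count: for words with repeated letters one would need the paper's weak-inequality conventions for that statement to even make sense, but since you apply it only to $0\pi$, whose letters are distinct, nothing is lost. The payoff of your approach is that it keeps the lemma's proof inside the paper rather than deferring to an external reference, at the cost of about a page of bookkeeping.
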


This is Lemma 2.1 of \cite{Zhuang2016a}; a proof can be found there. According to this result, not only do $\lpk$ and $\val$ determine $\udr$,
but $\udr$ determines both $\lpk$ and $\val$. In other words, $\udr$
and $(\lpk,\val)$ are equivalent permutation statistics in the sense
that will be formally defined in Section 3.1.

We note that the definitions and properties of descents, increasing runs, descent
compositions, and descent statistics extend naturally to words on
any totally ordered alphabet such as $[n]$ or $\mathbb{P}$ if we replace the strict inequality $<$ with the
weak inequality $\leq$, which reflects the fact that increasing runs
are allowed to be weakly increasing in this setting. For example,
$i$ is a peak of the word $w=w_{1}w_{2}\cdots w_{n}$ if $w_{i-1}\leq w_{i}>w_{i+1}$.

\subsection{Possible values of some descent statistics}

In our study of shuffle-compatibility, it will be useful to determine
all possible values that a descent statistic can achieve. It is clear
that for $\pi\in\mathfrak{P}_{n}$ and $n\geq1$, we have $0\leq\des(\pi)\leq n-1$
and $\des(\pi)$ can attain any value in this range for some $\pi\in\mathfrak{P}_{n}$.
It is also easy to check that the possible values of $\maj(\pi)$
and $\comaj(\pi)$ for $\pi\in\mathfrak{P}_{n}$ range from 0 to ${n \choose 2}$,
and that all of these values are attainable. Finding such bounds for
other descent statistics requires more work. Here, we determine all
possible values for the $(\des,\maj)$, $(\des,\comaj)$, 
$(\pk,\des)$, $(\lpk,\des)$, and $(\udr,\des)$ statistics. 
\goodbreak
\begin{prop}[Possible values of $(\des,\maj)$]
\leavevmode
\begin{enumerate}
\item [\normalfont{(a)}] For any permutation $\pi\in\mathfrak{P}_{n}$
with $n\geq1$ and $\des(\pi)=j$, we have ${j+1 \choose 2}\leq\maj(\pi)\leq nj-{j+1 \choose 2}$.
\item [\normalfont{(b)}] If $n\geq1$, $0\leq j\leq n-1$, and ${j+1 \choose 2}\leq k\leq nj-{j+1 \choose 2}$,
then there exists $\pi\in\mathfrak{P}_{n}$ with $\des(\pi)=j$ and $\maj(\pi)=k$.
\end{enumerate}
\end{prop}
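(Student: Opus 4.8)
The plan is to reduce everything to elementary facts about sums of subsets, since by definition $\maj(\pi)=\sum_{k\in\Des(\pi)}k$ is just the sum of the elements of the descent set, and when $\des(\pi)=j$ the set $\Des(\pi)$ is a $j$-element subset of $[n-1]$. For part (a), I would simply observe that the sum of any $j$-element subset of $[n-1]$ is minimized by taking the $j$ smallest elements $\{1,\dots,j\}$, giving $\binom{j+1}{2}$, and maximized by taking the $j$ largest elements $\{n-j,\dots,n-1\}$. For the upper bound it is cleanest to invoke the involution $A\mapsto\{n-a\mid a\in A\}$ on $j$-element subsets of $[n-1]$, which sends a subset of sum $s$ to one of sum $nj-s$; this is precisely the relation $\maj+\comaj=nj$ coming from the $\comaj$ statistic defined above, so the maximum equals $nj-\binom{j+1}{2}$.

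For part (b), the first step is to note that realizing a value of $\maj$ together with $\des=j$ amounts to realizing a $j$-element subset of $[n-1]$ of the appropriate sum as a descent set. Here I would use the fact---immediate from the $\Comp$--$\Des$ bijection of Section \ref{s-section2}---that every subset $A\subseteq[n-1]$ is the descent set of some permutation in $\mathfrak{P}_n$ (for instance a permutation of $[n]$ whose increasing runs have the lengths prescribed by $\Comp(A)$). Thus part (b) reduces to the purely combinatorial claim that the set of sums of $j$-element subsets of $[n-1]$ is exactly the full integer interval from $\binom{j+1}{2}$ to $nj-\binom{j+1}{2}$.

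To prove this contiguity, I would argue by an incrementing step: starting from the minimal subset $\{1,\dots,j\}$, I claim that any $j$-element subset $A=\{a_1<\cdots<a_j\}$ other than the maximal one $\{n-j,\dots,n-1\}$ admits an element $a_i$ with $a_i+1\le n-1$ and $a_i+1\notin A$, so that replacing $a_i$ by $a_i+1$ yields a $j$-element subset whose sum is larger by exactly $1$. This is verified by contraposition: if no such element exists, then every element of $A$ below $n-1$ has its successor in $A$, forcing $A$ to be an upward-closed segment $\{w,\dots,n-1\}$, which has $j$ elements only when $w=n-j$, i.e.\ when $A$ is maximal. Iterating this single-step increase from the minimal subset produces a subset of every intermediate sum, which together with the realization fact completes part (b).

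The content here is elementary, so the only place requiring care is the increment step in part (b): one must guarantee that the sum can always be raised by exactly $1$ (not merely increased) until the maximum is reached, and the upward-closed-segment argument is what pins this down. I would present the realization of arbitrary descent sets and the subset-sum contiguity as the two self-contained observations, after which both bounds and their attainability follow immediately.
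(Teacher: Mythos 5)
Your proof is correct and follows essentially the same route as the paper: both establish the bounds via the extremal descent sets $\{1,\dots,j\}$ and $\{n-j,\dots,n-1\}$, use the fact that every subset of $[n-1]$ is realizable as a descent set, and prove attainability by repeatedly replacing a descent $a_i$ with $a_i+1$ to raise the sum by exactly $1$. The only difference is one of rigor in your favor: your upward-closed-segment contraposition makes explicit why an incrementable element always exists short of the maximal set, a point the paper's proof leaves implicit.
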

\begin{proof}
Among all $n$-permutations with $j$ descents, it is clear that the
smallest possible value of $\maj$ is attained when the descent set
is $\{1,2,\dots,j\}$, in which case the major index is equal to ${j+1 \choose 2}$.
Similarly, the largest possible value of $\maj$ is attained when
the descent set is $\{n-j,n-j+1,\dots,n-1\}$, in which case the major
index is equal to $nj-{j+1 \choose 2}$. This proves (a).

Next we prove (b). The case $j=0$ is easy, so we assume that $j\ge1$. 
A permutation in $\mathfrak{P}_{n}$ with descent set $\{1,2,\dots,j\}$
has major index ${j+1 \choose 2}$.
Now let $\pi$ be a permutation in $\mathfrak{P}_n$ with $j$ descents, and suppose that for some $i\in\Des(\pi)$ we have $i\ne n-1$ and $i+1\notin\Des(\pi)$.
Take
$\sigma\in\mathfrak{P}_{n}$ to have descent set $(\Des(\pi)\setminus\{i\})\cup\{i+1\}$.
(This is possible because for any subset $A$ of $[n-1]$, there exists
an $n$-permutation with descent set $A$.) Then $\maj(\sigma)=\maj(\pi)+1$.
We can repeat this process to increase the major index by 1 with every
iteration until we reach a permutation with descent set $\{n-j,n-j+1,\dots,n-1\}$, and thus major index $nj-{j+1 \choose 2}$.
This proves (b).
\end{proof}
\begin{prop}[Possible values of $(\des,\comaj)$]
\label{p-comajvalues}
\leavevmode
\begin{enumerate}
\item [\normalfont{(a)}] For any permutation $\pi\in\mathfrak{P}_{n}$
with $n\geq1$ and $\des(\pi)=j$, we have ${j+1 \choose 2}\leq\comaj(\pi)\leq nj-{j+1 \choose 2}$.
\item [\normalfont{(b)}] If $n\geq1$, $0\leq j\leq n-1$, and ${j+1 \choose 2}\leq k\leq nj-{j+1 \choose 2}$,
there exists $\pi\in\mathfrak{P}_{n}$ with $\des(\pi)=j$ and $\comaj(\pi)=k$.
\end{enumerate}
\end{prop}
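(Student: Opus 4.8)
The plan is to derive the entire proposition from the preceding result on $(\des,\maj)$ together with one elementary identity. First I would record that, for any $\pi\in\mathfrak{P}_{n}$ with descent set $\Des(\pi)\subseteq[n-1]$,
\[
\maj(\pi)+\comaj(\pi)=\sum_{k\in\Des(\pi)}k+\sum_{k\in\Des(\pi)}(n-k)=n\left|\Des(\pi)\right|=n\des(\pi),
\]
so that $\comaj(\pi)=n\des(\pi)-\maj(\pi)$. This single identity reduces everything to the $(\des,\maj)$ case, because on the set of permutations with a fixed descent number $j$ the statistic $\comaj$ is just $nj$ minus $\maj$.

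For part (a), I would fix $\pi$ with $\des(\pi)=j$, so that $\comaj(\pi)=nj-\maj(\pi)$. The previous proposition gives $\binom{j+1}{2}\leq\maj(\pi)\leq nj-\binom{j+1}{2}$; substituting and rearranging turns the lower bound on $\maj$ into the upper bound $\comaj(\pi)\leq nj-\binom{j+1}{2}$, and the upper bound on $\maj$ into the lower bound $\comaj(\pi)\geq\binom{j+1}{2}$. The two thresholds $\binom{j+1}{2}$ and $nj-\binom{j+1}{2}$ are symmetric about $nj/2$, so the resulting range for $\comaj$ is identical to the range for $\maj$.

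For part (b), I would argue by the same reflection. Given $k$ with $\binom{j+1}{2}\leq k\leq nj-\binom{j+1}{2}$, set $k'\coloneqq nj-k$; by the symmetry of this interval about $nj/2$, the value $k'$ again satisfies $\binom{j+1}{2}\leq k'\leq nj-\binom{j+1}{2}$. Part (b) of the previous proposition then supplies a permutation $\pi\in\mathfrak{P}_{n}$ with $\des(\pi)=j$ and $\maj(\pi)=k'$, and the identity yields $\comaj(\pi)=nj-k'=k$, as desired.

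There is essentially no hard step here: the mathematical content is entirely carried by the $(\des,\maj)$ proposition, and the only thing to verify is the bookkeeping of the reflection $x\mapsto nj-x$, which maps the interval $[\binom{j+1}{2},\,nj-\binom{j+1}{2}]$ onto itself. If a bijective formulation were preferred, the same facts follow by observing that the reverse-complement map $\pi\mapsto\pi^{rc}$ (with $\pi^{rc}_{i}\coloneqq n+1-\pi_{n+1-i}$ for permutations of $[n]$) sends $\Des(\pi)$ to $\{\,n-k:k\in\Des(\pi)\,\}$, hence preserves $\des$ while interchanging $\maj$ and $\comaj$; since these are descent statistics, this identifies the joint distribution of $(\des,\comaj)$ over $\mathfrak{P}_{n}$ with that of $(\des,\maj)$ and makes both parts immediate.
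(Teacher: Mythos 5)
Your proposal is correct and follows the same route as the paper, which proves this proposition in one line by citing the preceding $(\des,\maj)$ proposition together with the identity $\comaj(\pi)=n\des(\pi)-\maj(\pi)$. You have simply spelled out the reflection bookkeeping (and an optional reverse-complement bijection) that the paper leaves implicit.
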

\begin{proof}
This follows from the previous proposition and the formula $\comaj(\pi)=n\des(\pi)-\maj(\pi)$.\end{proof}
\begin{prop}[Possible values of $(\pk,\des)$]
\label{p-pkdesvalues}
\leavevmode
\begin{enumerate}
\item [\normalfont{(a)}] For any permutation $\pi\in\mathfrak{P}_{n}$
with $n\geq1$, we have $0\leq\pk(\pi)\leq\left\lfloor (n-1)/2\right\rfloor $.
In addition, $\pk(\pi) \leq \des(\pi) \leq n-\pk(\pi) -1$.

\item [\normalfont{(b)}] If $n\geq1$, $0\leq j\leq\left\lfloor (n-1)/2\right\rfloor $,
and $j\leq k\leq n-j-1$, then there exists $\pi\in\mathfrak{P}_{n}$ with
$\pk(\pi)=j$ and $\des(\pi)=k$.
\end{enumerate}
\end{prop}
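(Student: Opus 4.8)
The plan is to prove the inequalities in (a) by injecting the peaks of a permutation into its descents and into its ascents, and to prove (b) by exhibiting, for each admissible pair $(j,k)$, an explicit descent composition of $n$ that realizes it.

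For part (a), I would first record that $i$ is a peak of $\pi\in\mathfrak{P}_n$ exactly when $2\le i\le n-1$, $\pi_{i-1}<\pi_i$, and $\pi_i>\pi_{i+1}$. The map sending a peak $i$ to the descent $i$ (legitimate since $\pi_i>\pi_{i+1}$) is injective, giving $\pk(\pi)\le\des(\pi)$. Dually, the map sending a peak $i$ to the ascent $i-1$ (legitimate since $\pi_{i-1}<\pi_i$ and $i-1\ge1$) is injective, giving $\pk(\pi)\le\asc(\pi)=(n-1)-\des(\pi)$, i.e.\ $\des(\pi)\le n-1-\pk(\pi)$. These two bounds are precisely the assertion $\pk(\pi)\le\des(\pi)\le n-\pk(\pi)-1$, and adding them yields $2\pk(\pi)\le n-1$, hence $\pk(\pi)\le\floor{(n-1)/2}$; the bound $\pk(\pi)\ge0$ is immediate. (Alternatively, the upper bound on $\des$ can be deduced from $\pk\le\des$ together with the reverse symmetry, under which $\pk$ is preserved while $\des$ is sent to $n-1-\des$.)

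For part (b), since $\pk$ and $\des$ are descent statistics and every composition of $n$ is the descent composition of some $n$-permutation, it suffices to produce, for each admissible $(j,k)$, a composition $L$ of $n$ into $k+1$ parts (so that $\des(L)=k$) with $\pk(L)=j$. Here I would use the identity $\pk(L)=\lr(L)-\lfr(L)$ from Lemma~\ref{l-pkvalruns}(a), where $\lr(L)$ counts the parts of $L$ that are $\ge2$ and $\lfr(L)$ equals $1$ or $0$ according as the last part is $\ge2$ or $=1$. When $j\ge1$, I take
\[
L=\bigl(n-k-j+1,\ \underbrace{2,\dots,2}_{j-1},\ \underbrace{1,\dots,1}_{k-j+1}\bigr),
\]
which has $k+1$ parts summing to $n$; the admissibility $j\le k\le n-j-1$ guarantees that the first part is $\ge2$ and that at least one trailing $1$ exists, whence $\lr(L)=j$, $\lfr(L)=0$, and $\pk(L)=j$. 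When $j=0$, the (unique, if any) long run must be placed at the end: if $k=n-1$ the only composition is $(1,\dots,1)$, with $\pk=0$, and if $k\le n-2$ I take $L=(1,\dots,1,n-k)$ with $k$ leading $1$'s and $n-k\ge2$, so that $\lr(L)=\lfr(L)=1$ and $\pk(L)=0$.

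The routine verifications—counting parts, checking the sum, and confirming the long/short status of each part—are straightforward. The main point requiring care is the $j=0$ case of (b): unlike when $j\ge1$, there is no long run available to absorb the ``extra'' length $n-(k+1)$ without creating a peak, so one must instead place a single long run at the very end (which contributes no peak) and treat the extremal case $k=n-1$ separately. Checking that the feasibility of these constructions lines up exactly with the range $j\le k\le n-j-1$ established in part (a) is what makes the two halves of the proposition fit together.
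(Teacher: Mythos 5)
Your proof is correct and takes essentially the same approach as the paper: part (a) uses the same two injections (each peak is a descent, and each peak $i$ gives an ascent at $i-1$), and part (b) reduces to exhibiting a descent composition of $n$ with $k+1$ parts and exactly $j$ non-final long parts. The only cosmetic difference is that the paper's single composition $(2^{j},1^{k-j},n-k-j)$, which places the ``absorbing'' long part at the end where it never creates a peak, handles all admissible $(j,k)$ uniformly and thereby avoids your separate $j=0$ case.
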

\begin{proof}
Fix $n\geq1$. Recall from Lemma \ref{l-pkvalruns} (a) that $\pk(\pi)$
is equal to the number of non-final long runs of $\pi$. It is clear
that the number of non-final long runs of an $n$-permutation is
between $0$ and $\left\lfloor (n-1)/2\right\rfloor $. Every peak is a descent, so $\pk(\pi)\leq\des(\pi)$. For each peak $i$, note that $i-1\in[n-1]$ is not a descent, so that $\pk(\pi)\leq n-1-\des(\pi)$ and therefore $\des(\pi)\leq n-\pk(\pi)-1$. This proves (a).

To prove (b), it suffices to show that if
$n\geq1$, $0\leq j\leq\left\lfloor (n-1)/2\right\rfloor $,
and $j\leq k\leq n-j-1$ then there
exists a composition  of $n$ with $j$ non-final long parts (i.e.,
parts of size at least 2) and $k+1$ total parts. Such a composition
is  $(2^{j},1^{k-j},n-k-j)$. Hence, (b) is
proved.\end{proof}
\begin{prop}[Possible values of $(\lpk,\des)$]
\label{p-lpkdesvalues}
\leavevmode
\begin{enumerate}
\item [\normalfont{(a)}] For any permutation $\pi\in\mathfrak{P}_{n}$
with $n\geq1$, we have $0\leq\lpk(\pi)\leq\left\lfloor n/2\right\rfloor $.
In addition, if $\lpk(\pi)=0$, then $\des(\pi)=0$; otherwise, $\lpk(\pi)\leq\des(\pi)\leq n-\lpk(\pi)$.
\item [\normalfont{(b)}] If $n\geq1$, $1\leq j\leq\left\lfloor n/2\right\rfloor $,
and $j\leq k\leq n-j$, then there exists $\pi\in\mathfrak{P}_{n}$ with
$\lpk(\pi)=j$ and $\des(\pi)=k$. In addition, for any $n\geq1$,
there exists $\pi\in\mathfrak{P}_{n}$ with $\lpk(\pi)=\des(\pi)=0$.
\end{enumerate}
\end{prop}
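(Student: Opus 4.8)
The plan is to reduce everything to descent compositions and the run statistics, as in the preceding proposition for $(\pk,\des)$. The key tool is Lemma~\ref{l-pkvalruns}, from which $\pk(\pi)$ equals the number of non-final long runs and, for $n \ge 2$, $\lpk(\pi) = \pk(\pi) + \sir(\pi)$---that is, $\lpk(\pi)$ counts the non-final long runs, plus one extra when the initial run is short (equivalently, $\pi_1 > \pi_2$, so position $1$ is a descent and thus a left peak). For part~(a), I would obtain $\lpk(\pi) \le \floor{n/2}$ from the fact that the left peaks of $\pi$ are the peaks of $0\pi$ shifted by one: since $0\pi$ has the relative order of a permutation of length $n+1$, and a permutation of length $m$ has at most $\floor{(m-1)/2}$ peaks (as no two peaks can be adjacent), this gives $\lpk(\pi) \le \floor{n/2}$. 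Since every left peak is a descent (a peak at position $i$ satisfies $\pi_i > \pi_{i+1}$, and the position-$1$ left peak is a descent by definition), we get $\lpk(\pi) \le \des(\pi)$. For $\des(\pi) \le n - \lpk(\pi)$, I would count ascents: each peak is immediately preceded by an ascent, so $\pk(\pi) \le \asc(\pi)$, whence $\lpk(\pi) = \pk(\pi) + \sir(\pi) \le \asc(\pi) + 1 = n - \des(\pi)$. Finally, if $\lpk(\pi) = 0$ then $\pk(\pi) = \sir(\pi) = 0$, so the initial run is long; were there any descent, this run would be a non-final long run and hence a peak, contradicting $\pk(\pi) = 0$, so $\des(\pi) = 0$.

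For part~(b), it suffices---again mirroring the $(\pk,\des)$ proof---to produce for each admissible $(j,k)$ a composition of $n$ with $k+1$ parts (forcing $\des = k$) whose number of non-final long parts, plus the indicator that its first part equals $1$, is $j$ (forcing $\lpk = j$). I would use $L = (1, 2^{j-1}, 1^{k-j}, n-k-j+1)$: its parts sum to $n$, it has $k+1$ parts, its first part is $1$, and its only non-final long parts are the $j-1$ twos, so $\lpk = (j-1) + 1 = j$. The hypotheses $1 \le j$, $j \le k$, and $k \le n-j$ are precisely what guarantees all parts are positive. The final assertion is handled by the increasing permutation, which has descent composition $(n)$ and $\lpk = \des = 0$.

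The point requiring the most care is the asymmetry introduced by the $\sir$ term: the position-$1$ left peak is a descent that has no ascent to its left, so it does not participate in the peak-to-ascent correspondence the way ordinary peaks do, and it is exactly what raises the maximum from $\floor{(n-1)/2}$ (as for $\pk$) to $\floor{n/2}$. I would accordingly double-check the boundary cases---$n \le 2$, $j = 1$ (where the construction has no twos), and a final part equal to $1$---to confirm that both $\lpk = \pk + \sir$ and the chosen composition behave as claimed.
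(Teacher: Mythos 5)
Your proof is correct and takes essentially the same approach as the paper: part (a) rests on the $0\pi$ trick (the paper obtains all three inequalities at once by citing Proposition \ref{p-pkdesvalues} applied to $0\pi$, whereas you inline equivalent direct arguments for the two descent bounds), and part (b) constructs an explicit composition with $k+1$ parts, first part $1$, and exactly $j-1$ non-final long parts. Your composition $(1,2^{j-1},1^{k-j},n-k-j+1)$ is merely a rearrangement of the paper's $(1^{k-j+1},2^{j-1},n-k-j+1)$ and is verified by the same run-statistic computation $\lpk(L)=\pk(L)+\sir(L)=j$.
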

\begin{proof}

If $\lpk(\pi)=0$,
then $\pi$ is an increasing permutation, so
we also have $\des(\pi)=0$. The other inequalities of part (a) follow from applying Proposition \ref{p-pkdesvalues} (a) to the permutation $0\pi$.

Now, fix $n\geq2$. (The case $n=1$ is obvious.) A permutation with descent composition $(n)$ has no left peaks and no descents. Suppose that $1\leq j\leq\left\lfloor n/2\right\rfloor $ and $j\leq k\leq n-j$. To complete the proof of (b), we show that there exists a composition $L$ of $n$ with exactly $k+1$ parts such that $\lpk(L)=\lr(L)+\sir(L)-\lfr(L)=j$. Such a composition is  $(1^{k-j+1},2^{j-1},n-k-j+1)$. This completes the proof of (b).\end{proof}

We say that $i\in[n-1]$ is an \textit{ascent} of an $n$-permutation $\pi$ if $\pi_i<\pi_{i+1}$. Let $\asc(\pi)$ denote the number of ascents of $\pi$. It is clear that $\des(\pi)=n-1-\asc(\pi)$.

\begin{prop}[Possible values of $(\udr,\des)$]
\label{p-udrdesvalues}
\leavevmode
\begin{enumerate}
\item [\normalfont{(a)}] For any permutation $\pi\in\mathfrak{P}_{n}$
with $n\geq1$, we have $1\leq\udr(\pi)\leq n$.
In addition, if $\udr(\pi)=1$, then $\des(\pi)=0$; otherwise, 
$\left\lfloor \udr(\pi)/2\right\rfloor \le \des(\pi) \le n - \left\lceil \udr(\pi)/2\right\rceil$.
\item [\normalfont{(b)}] If $n\geq1$, $2\leq j\leq n $,
and $\floor{j/2}\leq k\leq n-\ceil{j/2}$, then there exists $\pi\in\mathfrak{P}_{n}$ with
$\lpk(\pi)=j$ and $\des(\pi)=k$. In addition, for any $n\geq1$,
there exists $\pi\in\mathfrak{P}_{n}$ with $\udr(\pi)=1$ and $\des(\pi)=0$.
\end{enumerate}
\end{prop}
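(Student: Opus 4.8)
The plan is to treat the two parts separately: part (a) follows directly from Lemmas \ref{l-pkvalruns} and \ref{l-udr} together with the identity $\des(\pi)=n-1-\asc(\pi)$, while part (b) is proved, as in the preceding propositions, by writing down explicit descent compositions. (I read the hypothesis ``$\lpk(\pi)=j$'' in part (b) as the intended ``$\udr(\pi)=j$''.)

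For part (a), the bound $\udr(\pi)\ge1$ is immediate, and $\udr(\pi)\le n$ follows from viewing the up-down runs of $\pi$ as the biruns of $0\pi$: these are in bijection with the maximal blocks of equal-direction steps of $0\pi$, of which there are at most $n$ since $0\pi$ has $n$ steps. If $\udr(\pi)=1$ then $0\pi$ is a single increasing run, so $\pi$ is increasing and $\des(\pi)=0$. For the remaining inequalities I would invoke Lemma \ref{l-udr}: since $\lpk(\pi)=\lfloor\udr(\pi)/2\rfloor$ and every left peak is a descent, we get $\lfloor\udr(\pi)/2\rfloor\le\des(\pi)$; and since $\val(\pi)=\lfloor(\udr(\pi)-1)/2\rfloor=\lceil\udr(\pi)/2\rceil-1$ and every valley is an ascent, we get $\lceil\udr(\pi)/2\rceil-1\le\asc(\pi)=n-1-\des(\pi)$, which rearranges to the stated upper bound $\des(\pi)\le n-\lceil\udr(\pi)/2\rceil$.

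For part (b), the case $\udr=1$ is witnessed by the increasing permutation, of descent composition $(n)$. For $j\ge2$ I would reduce to exhibiting a composition $L$ of $n$ with exactly $k+1$ parts (forcing $\des(L)=k$) and $\udr(L)=j$, relying on the formula
\[
\udr(L)=2\lr(L)-2\lir(L)-\lfr(L)+2,
\]
which combines Lemma \ref{l-pkvalruns}(b),(c) with Lemma \ref{l-udr}(a) and the identity $\sir=1-\lir$ (valid here since $n\ge j\ge2$). The key structural point is that $\udr(L)$ has the same parity as $\lfr(L)$, so the two parities of $j$ call for the final run to be long or short respectively. Writing $m=\lfloor j/2\rfloor$, for odd $j=2m+1$ I would take
\[
L=(1^{k-m+1},\,2^{m-1},\,n-k-m+1),
\]
whose leading $1$'s give $\lir=0$ and whose final part stays $\ge2$ throughout the range (so $\lfr=1$ and $\lr=m$), making $\udr(L)=2m-0-1+2=j$; for even $j=2m$ I would take
\[
L=(n-k-m+1,\,2^{m-1},\,1^{k-m+1}),
\]
whose trailing $1$'s give $\lfr=0$ and for which $\lr=(m-1)+\lir$, so the $\lir$-terms in the formula cancel and $\udr(L)=2(m-1)+2=j$ whether or not the first part is long. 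In each case I would check that the parts sum to $n$, that there are $k+1$ of them, and that the bounds $\lfloor j/2\rfloor\le k\le n-\lceil j/2\rceil$ are exactly what keeps every displayed part positive (and forces the odd-case final part to be $\ge2$).

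The only delicate point is coverage up to the endpoints, and it is precisely the parity bookkeeping that resolves it. In the even family, at the top endpoint $k=n-m$ the first part $n-k-m+1$ collapses to $1$, so $\lr$ drops from $m$ to $m-1$ and $\lir$ drops from $1$ to $0$; the virtue of the formula above is that these two changes cancel, so a single family still yields $\udr=j$ across the whole interval rather than forcing a separate boundary construction. Verifying that the analogous degenerate cases—$m=1$, where there are no interior $2$'s, and $k$ at either endpoint—still produce legitimate compositions with the claimed statistics is the main thing to get right; everything else is routine arithmetic.
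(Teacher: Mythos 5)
Your proof is correct, and its overall architecture matches the paper's: part (a) comes from the interplay of up-down runs with descents and ascents, and part (b) from exhibiting explicit descent compositions with prescribed run statistics. The detailed routes differ slightly and are worth recording. In (a), the paper argues directly that the beginning of the $2i$th up-down run is a descent and that the beginning of the $(2i-1)$th run is an ascent for $i\ge2$; you obtain the same two inequalities by citing Lemma \ref{l-udr}(b),(c) together with the facts that every left peak is a descent and every valley is an ascent --- the same counting, repackaged through an already-proved lemma. In (b), the paper reduces to the identical formula $\udr(L)=2\sir(L)+2\lr(L)-\lfr(L)$ (your $2\lr(L)-2\lir(L)-\lfr(L)+2$ after substituting $\sir=1-\lir$), but splits into \emph{three} families: $(n-k,1^{k})$ for $j=2$; $\bigl(1,\,n-j/2-k+2,\,2^{j/2-2},\,1^{k-j/2+1}\bigr)$ for even $j>2$; and $\bigl(1,\,1^{k-(j-1)/2},\,2^{(j-3)/2},\,n-(j+1)/2-k+2\bigr)$ for odd $j$. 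Your two families do the same job: your odd family is essentially the paper's with the leading $1$'s grouped differently, while your even family, with the long part in front rather than after a leading $1$, absorbs both the paper's $j=2$ case (where the paper's general even family would require the ill-defined block $2^{j/2-2}$ with negative exponent) and the top endpoint $k=n-j/2$, thanks to the cancellation of the $\lir$ terms that you point out --- a mild but genuine streamlining of the case analysis. Finally, you are right that ``$\lpk(\pi)=j$'' in the statement of (b) is a typo for ``$\udr(\pi)=j$''; the paper's own proof constructs compositions with $\udr(L)=j$.
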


\begin{proof}
It is clear that every nonempty permutation has at least one up-down
run, and every up-down run of a permutation ends with a different
letter, so $1\leq\udr(\pi)\leq n$.
The beginning of the $2i$th up-down run of $\pi$ is always a descent of $\pi$, so $\des(\pi)\ge \floor{\udr(\pi)/2}$. The beginning of the $(2i-1)$th up-down run of $\pi$ is an ascent of $\pi$ for $i\ge2$, so the number of ascents of $\pi$ is at least $\floor{(\udr(\pi) -1)/2} = \ceil{\udr(\pi)/2}-1$. Thus 
\[\des(\pi) = n-1-\asc(\pi)\le n-1-\left(\ceil{\udr(\pi)/2}-1\right) 
= n-\ceil{\udr(\pi)/2},\]
completing the proof of (a).

Now, fix $n\geq2$. (The case $n=1$ is obvious.) A permutation with descent composition $(n)$
has only one up-down run and no descents. Suppose that $1\leq j\leq n$
and $\floor{j/2}\leq k\leq n-\ceil{j/2}$. To complete the proof of (b), we show that
there exists a composition $L$ of $n$ with exactly $k+1$ parts
such that 
$\udr(L)=\lpk(L)+\val(L)+1=2\sir(L)+2\lr(L)-\lfr(L)=j$.
For this, we consider three cases:
\begin{itemize}
\item If $j=2$, then we can take $(n-k,1^k)$.
\item If $j>2$ and $j$ is even, then we can take $(1,n-j/2-k+2, 2^{j/2-2}, 1^{k-j/2+1})$.
\item If $j$ is odd, then we can take $(1,1^{k-(j-1)/2},2^{(j-3)/2},n-(j+1)/2-k+2)$.
\end{itemize}
This completes the proof of (b).
\end{proof}

\subsection{A bijective proof of the shuffle-compatibility of the descent set}
\label{s-bijproof}

Here we give a simple proof that the descent set is a shuffle-compatible permutation statistic. The idea of the proof is inspired by the theory of $P$-partitions \cite{Stanley1972}.

Recall that in Section 2.1, we defined the inverse bijections $\Comp$
and $\Des$ between compositions of $n$ and subsets of $[n-1]$ in
the following way: for a set $A=\{a_{1},a_{2},\dots,a_{j}\}\subseteq[n-1]$
with $a_{1}<a_{2}<\cdots<a_{j}$, we let $\Comp(A)\coloneqq(a_{1},a_{2}-a_{1},\dots,a_{j}-a_{j-1},n-a_{j})$,
and for a composition $L=(L_{1},L_{2},\dots,L_{k})$, we let $\Des(L)\coloneqq\{L_{1},L_{1}+L_{2},\dots,L_{1}+\cdots+L_{k-1}\}$.
Observe that these maps extend to inverse bijections between weak compositions
of $n$ and multisubsets of $\{0\}\cup[n]$. (A weak composition allows 0 as a part.) For example, if $n=7$
and $A=\{0,2,2,5\}$, then $\Comp(A)=(0,2,0,3,2)$.

For two weak compositions $J=(J_{1},J_{2},\dots,J_{k})$ and $K=(K_{1},K_{2},\dots, K_{k})$
with the same number of parts, let $J+K$ denote the weak composition $(J_{1}+K_{1},J_{2}+K_{2},\dots,J_{k}+K_{k})$
obtained by summing the entries of $J$ and $K$ componentwise.
Also, we define the \textit{refinement
order} on weak compositions of $n$ analogously to
the refinement order on compositions of $n$; that is, $M$ covers
$L$ if and only if $M$ can be obtained from $L$ by replacing two
consecutive parts $L_{i}$ and $L_{i+1}$ with $L_{i}+L_{i+1}$. We
say that $L$ is a \textit{refinement} of $M$ if $L\leq M$ in the
refinement order.

\begin{lem}
\label{t-des-sc}
Let $\pi\in\mathfrak{P}_{m}$ and $\sigma\in\mathfrak{P}_{n}$ be
disjoint permutations, and let $A\subseteq[m+n-1]$ and $L=\Comp(A)$.
Then the number of shuffles of $\pi$ and $\sigma$ with descent set
contained in $A$ is equal to the number of weak compositions $J$
of $m$ and $K$ of $n$ such that $J$ is a refinement of $\Comp(\pi)$,
$K$ is a refinement of $\Comp(\sigma)$, $J$ and $K$ have the same
number of parts as $L$, and $J+K=L$.\end{lem}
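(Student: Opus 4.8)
The plan is to prove the identity by exhibiting an explicit bijection between the two sets being enumerated. First I would reinterpret the left-hand side in terms of block decompositions. Since $A \subseteq [m+n-1]$, the composition $L = \Comp(A) = (L_1,\dots,L_k)$ has all parts positive and $k = |A|+1$ parts, and $\Des(L) = A$. A shuffle $\tau$ of $\pi$ and $\sigma$ satisfies $\Des(\tau)\subseteq A$ exactly when cutting $\tau$ into consecutive blocks $B_1,\dots,B_k$ of lengths $L_1,\dots,L_k$ yields blocks that are each increasing, because the descents of $\tau$ are then forced to lie only at the cut positions, which are the elements of $A$. So the left-hand count equals the number of shuffles admitting such an increasing block decomposition.

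Next I would define the forward map. Given such a $\tau$, let $J_i$ be the number of letters of $B_i$ coming from $\pi$ and $K_i$ the number coming from $\sigma$, and set $J=(J_1,\dots,J_k)$ and $K=(K_1,\dots,K_k)$. Then $J_i+K_i=L_i$, so $J+K=L$; summing shows the parts of $J$ total $m$ and those of $K$ total $n$; and $J$, $K$, $L$ all have $k$ parts. For the refinement conditions I would argue as follows: since the letters of $\pi$ occur in $\tau$ in the order $\pi_1\pi_2\cdots\pi_m$, the $\pi$-letters falling in $B_i$ form a consecutive factor of $\pi$, and because $B_i$ is increasing this factor is itself increasing. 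Hence no descent of $\pi$ lies strictly inside a block, so every descent of $\pi$ is a cut position of $J$; this is precisely the statement that $J$ refines $\Comp(\pi)$ in the weak-composition refinement order, with empty $\pi$-blocks producing $0$-parts of $J$ that are absorbed under refinement. The same reasoning gives that $K$ refines $\Comp(\sigma)$.

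Then I would construct the inverse and verify it reverses the forward map. Given a valid pair $(J,K)$, the relation $J+K=L$ dictates that block $i$ must receive the $J_i$ consecutive letters $\pi_{J_1+\cdots+J_{i-1}+1},\dots,\pi_{J_1+\cdots+J_i}$ of $\pi$ together with the analogous $K_i$ consecutive letters of $\sigma$; I would fill block $i$ by arranging these $L_i$ distinct letters in increasing order and concatenate the blocks to form $\tau$. Each block is increasing by construction, so $\Des(\tau)\subseteq\Des(L)=A$. The crucial point, and the step I expect to be the main obstacle, is checking that $\tau$ is genuinely a shuffle of $\pi$ and $\sigma$, that is, that $\pi$ and $\sigma$ reappear as subsequences in their original orders. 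This is exactly where the refinement hypotheses are used: because $J$ refines $\Comp(\pi)$, each consecutive factor of $\pi$ assigned to a block lies within a single increasing run of $\pi$ and is therefore already increasing, so sorting that block restores the original order of those $\pi$-letters; combined with the fact that earlier blocks precede later ones, the $\pi$-letters are read off in the order $\pi_1\cdots\pi_m$, and likewise for $\sigma$.

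Finally I would observe that the two maps are mutually inverse: the forward map records exactly the per-block profile of $\pi$- and $\sigma$-letters that the inverse map uses to rebuild $\tau$, and since all letters are distinct the increasing arrangement within each block is unique. The maps therefore give the desired bijection, so the two counts coincide.
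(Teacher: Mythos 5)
Your proof is correct and is essentially the same bijection as the paper's: both identify shuffles with descent set contained in $A$ via increasing block decompositions, encode them by the per-block counts of $\pi$- and $\sigma$-letters (which is where the refinement conditions arise), and invert by merging the corresponding consecutive factors of $\pi$ and $\sigma$ in increasing order within each block. The only difference is direction of presentation (you start from $\tau$, the paper starts from the pair $(J,K)$), and you spell out the verification that the reconstructed word is genuinely a shuffle, which the paper leaves more implicit.
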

\begin{proof}
Suppose that $L$ has $k$ parts, and let $J$ and $K$ satisfy the
above conditions. For every $i\in\Des(J)$, insert a bar immediately
before the $(i+1)$th letter of $\pi$.%
\footnote{Since $\Des(J)$ is a multiset, multiple bars may be inserted in any
given position.%
} Similarly, for every $i\in\Des(K)$, insert a bar immediately before
the $(i+1)$th letter of $\sigma$. This creates $k$ blocks of letters
in each of the permutations $\pi$ and $\sigma$ such that the letters
in each block are increasing. For example, take $\pi=12879$, $\sigma=4635$,
$A=\{1,5,6\}$, $L=(1,4,1,3)$, $J=(1,2,0,2)$, and $K=(0,2,1,1)$.
Then this yields the ``barred permutations'' $1|28||79$ and $|46|3|5$.

For each $1\leq i\leq k$, let $\tau^{(i)}$ denote the permutation
obtained by merging the letters in the $i$th block of $\pi$ and
the $i$th block of $\sigma$ in increasing order. Then let $\tau\in S(\pi,\sigma)$
be the concatenation $\tau^{(1)}\tau^{(2)}\cdots\tau^{(k)}$, which
has descent set contained in $A$. For example, using the $\pi$ and
$\sigma$ specified above, we have $\tau^{(1)}=1$, $\tau^{(2)}=2468$,
$\tau^{(3)}=3$, and $\tau^{(4)}=579$, so $\tau=124683579$. Since
$J+K=L$, the descent set of $\tau$ is contained in $A$.

To show that this procedure gives a bijection between shuffles of
$\pi$ and $\sigma$ with descent set contained in $A$ and pairs
of weak compositions $(J,K)$ satisfying the stated conditions, we
give an inverse procedure. Let $\tau\in S(\pi,\sigma)$ with $\Des(\tau)\subseteq A$,
and let $k=\left|A\right|+1$. For every $i\in A$, insert a bar after
the $i$th letter of $\tau$. Delete every letter in $\sigma$ from
$\tau$ to obtain the permutation $\pi$ decorated with bars, which
creates $k$ blocks of letters in $\pi$ such that the letters in
each block are increasing. Similarly, by deleting every letter in
$\pi$ from $\tau$, we obtain $k$ blocks of letters in $\sigma$
such that the letters in each block are increasing. Using the same
example as above, we begin with $A=\{1,5,6\}$ and $\tau=124683579$.
Inserting bars, we have $1|2468|3|579$, from which we obtain $1|28||79$
and $|46|3|5$. 

Now, for each $1\leq i\leq k$, let $J_{i}$ denote the size of the
$i$th block in $\pi$ and let $K_{i}$ denote the size of the $i$th
block in $\sigma$. Then define the weak compositions $J$ and $K$
by $J=(J_{1},J_{2},\dots,J_{k})$ and $K=(K_{1},K_{2},\dots,K_{k})$.
Continuing the example, we have $J=(1,2,0,2)$ and $K=(0,2,1,1)$.
Since the letters in every block are weakly increasing, $J$ is a
refinement of $\Comp(\pi)$ and $K$ is a refinement of $\Comp(\sigma)$.
Moreover, it is clear that $J$ and $K$ have the same number of parts
as $L=\Comp(A)$ and that $J+K=L$.
\end{proof}

Lemma \ref{t-des-sc} shows that the number of shuffles of $\pi$ and $\sigma$
with descent set contained in a specified set $A$ depends only on
$\Des(\pi)$, $\Des(\sigma)$, $\left|\pi\right|$, and $\left|\sigma\right|$.
By inclusion-exclusion, it follows that the number of shuffles of
$\pi$ and $\sigma$ with descent set equal to $A$ depends only on
$\Des(\pi)$, $\Des(\sigma)$, $\left|\pi\right|$, and $\left|\sigma\right|$.
In other words, the descent set is shuffle-compatible.

We can use the shuffle-compatibility of the descent set to prove the shuffle-compatibility of a family of related statistics that we call ``partial descent sets''. For non-negative integers $i$ and $j$, define the \textit{partial descent set} $\Des_{i,j}$ by
\[
\Des_{i,j}(\pi) \coloneqq \Des(\pi) \cap (\{1,2,\dots, i\}\cup\{n-1,\dots, n-j\}),
\]
where $n=\left|\pi\right|$. In other words,  
$\Des_{i,j}(\pi)$ is the set of descents of $\pi$ that occur in the first $i$ or last $j$ positions. For example, if $i+j\ge \left|\pi\right| -1$ then $\Des_{i,j}(\pi)=\Des(\pi)$, and for $\left|\pi\right|\ge2$,  $\left|\Des_{1,0}(\pi)\right| = \sir(\pi)$ and $\left|\Des_{0,1}(\pi)\right| = \sfr(\pi)$.

\begin{thm}
\label{t-truncDessc}
The partial descent sets $\Des_{i,j}$ for all $i,j\ge0$ are shuffle-compatible.
\end{thm}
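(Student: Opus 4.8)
The plan is to deduce this from the already-established shuffle-compatibility of the full descent set $\Des$ (the consequence of Lemma~\ref{t-des-sc}), by localizing that statement to the two ends of a shuffle. The starting observation is that $\Des_{i,j}(\tau)$ depends only on the relative orders of the first $i+1$ and last $j+1$ letters of $\tau$: writing $\alpha$ for the standardization of $\tau_{1}\cdots\tau_{i+1}$ and $\beta$ for that of $\tau_{|\tau|-j}\cdots\tau_{|\tau|}$, a descent of $\tau$ in a position $\le i$ is recorded by $\Des(\alpha)$, and a descent in a position $\ge |\tau|-j$ by a fixed shift of $\Des(\beta)$. Hence, for $\tau\in S(\pi,\sigma)$, the value $\Des_{i,j}(\tau)$ is a function of the pair $(\Des(\alpha),\Des(\beta))$, so the multiset $\{\,\Des_{i,j}(\tau):\tau\in S(\pi,\sigma)\,\}$ is determined by the joint distribution of $(\Des(\alpha),\Des(\beta))$ over all shuffles $\tau$. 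It therefore suffices to show that this joint distribution depends only on $\Des_{i,j}(\pi)$, $\Des_{i,j}(\sigma)$, $m=|\pi|$, and $n=|\sigma|$.

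First I would dispose of the degenerate case $i+j\ge m+n-1$, in which the window $\{1,\dots,i\}\cup\{m+n-1,\dots,m+n-j\}$ exhausts $[m+n-1]$; here $\Des_{i,j}(\tau)=\Des(\tau)$ for every shuffle $\tau$, and likewise $\Des_{i,j}=\Des$ on $\pi$ and $\sigma$, so the claim is exactly the shuffle-compatibility of $\Des$. In the remaining case the first $i+1$ positions and the last $j+1$ positions of $\tau$ are disjoint windows, and I would condition on the \emph{boundary data} $(a,b,c,d)$, where $a$ and $b=i+1-a$ count how many of the first $i+1$ letters of $\tau$ come from $\pi$ and from $\sigma$, and $c$ and $d=j+1-c$ are the analogous counts for the last $j+1$ letters. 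A shuffle with prescribed boundary data is assembled from three independent choices made in disjoint windows: an interleaving of $\pi_{1}\cdots\pi_{a}$ with $\sigma_{1}\cdots\sigma_{b}$ in the prefix window, an interleaving of $\pi_{m-c+1}\cdots\pi_{m}$ with $\sigma_{n-d+1}\cdots\sigma_{n}$ in the suffix window, and an arbitrary shuffle of the leftover middle letters, the last contributing a multiplicity factor $\binom{(m-a-c)+(n-b-d)}{m-a-c}$. Because these choices are independent, conditioned on $(a,b,c,d)$ the descent set $\Des(\alpha)$ is distributed exactly as $\Des$ over $S(\pi_{1}\cdots\pi_{a},\sigma_{1}\cdots\sigma_{b})$, the descent set $\Des(\beta)$ as $\Des$ over $S(\pi_{m-c+1}\cdots\pi_{m},\sigma_{n-d+1}\cdots\sigma_{n})$, and the two are independent.

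The crux—and the step I expect to carry the real weight—is that the cross-comparisons between letters of $\pi$ and letters of $\sigma$ inside each window do not affect the answer; a priori these cross-orders are governed by the actual values of $\pi$ and $\sigma$, not by their partial descent sets. This is precisely what shuffle-compatibility of $\Des$ delivers: the number of prefix shuffles with a given descent set depends only on $\Des(\pi_{1}\cdots\pi_{a})=\Des(\pi)\cap[a-1]$, $\Des(\sigma_{1}\cdots\sigma_{b})=\Des(\sigma)\cap[b-1]$, $a$, and $b$; and since $a-1\le i$ and $b-1\le i$, these restricted descent sets are read off from $\Des_{i,j}(\pi)$ and $\Des_{i,j}(\sigma)$. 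Symmetrically, the suffix count depends only on $\Des(\pi)\cap\{m-c+1,\dots,m-1\}$ and $\Des(\sigma)\cap\{n-d+1,\dots,n-1\}$, which are again recorded by $\Des_{i,j}(\pi)$ and $\Des_{i,j}(\sigma)$ because $c\le j+1$ and $d\le j+1$.

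Summing the product of the prefix count, the suffix count, and the middle multiplicity over all $(a,b,c,d)$ then expresses the joint distribution of $(\Des(\alpha),\Des(\beta))$ purely in terms of $\Des_{i,j}(\pi)$, $\Des_{i,j}(\sigma)$, $m$, and $n$, completing the argument. The one bookkeeping point I would verify carefully is that the binomial factor automatically vanishes on infeasible boundary data (precisely when $a+c>m$ or $b+d>n$, making $m-a-c<0$ or $n-b-d<0$), so that the disjointness of the prefix and suffix portions of $\pi$ and of $\sigma$ requires no separate overlap analysis.
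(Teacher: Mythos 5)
Your proposal is correct, but it takes a genuinely different route from the paper's proof. You condition on the boundary data $(a,b,c,d)$ recording how many letters of $\pi$ and of $\sigma$ land in the first $i+1$ and last $j+1$ positions, use the fact that a shuffle with prescribed boundary data factors uniquely as (prefix shuffle) $\times$ (middle shuffle) $\times$ (suffix shuffle), and then invoke the shuffle-compatibility of $\Des$ on the two window pieces $S(\pi_1\cdots\pi_a,\sigma_1\cdots\sigma_b)$ and $S(\pi_{m-c+1}\cdots\pi_m,\sigma_{n-d+1}\cdots\sigma_n)$, whose descent sets are indeed recoverable from $\Des_{i,j}(\pi)$ and $\Des_{i,j}(\sigma)$ since $a-1\le i$, $b-1\le i$, $c\le j+1$, $d\le j+1$. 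The paper argues quite differently: it defines the relation $\pi\equiv_{i,j}\pi'$ (equality of the $\Des_{i,j}$-multisets against every disjoint $\sigma$), establishes two sufficient conditions for it (equal full descent sets; equal letters in the two windows), and then shows $\Des_{i,j}(\pi)=\Des_{i,j}(\pi')$ implies $\pi\equiv_{i,j}\pi'$ by a three-case analysis on $i+j$ versus $m$: the case $i+j\le m-3$ is handled by replacing the middle letters of both permutations with a common increasing block, and the boundary case $i+j=m-2$ needs a separate argument swapping $\pi_{i+1}$ and $\pi_{i+2}$. Your decomposition buys an explicit formula for the multiset (a sum over boundary data of products of window multisets weighted by $\binom{m+n-i-j-2}{m-a-c}$), proves the stronger fact that the joint distribution of the standardized prefix and suffix windows is determined, and treats all non-degenerate cases uniformly, with only the trivial case $i+j\ge m+n-1$ split off; the paper's argument avoids all multiset bookkeeping by reducing to local surgeries plus the shuffle-compatibility of $\Des$, at the price of the delicate case $i+j=m-2$. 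The feasibility point you flagged — that the binomial factor vanishes exactly when $a+c>m$ or $b+d>n$ — is indeed what makes the unrestricted sum over boundary data legitimate, so your bookkeeping closes cleanly.
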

\begin{proof}
We write $\Des_{i,j}(S(\pi,\sigma))$ for the multiset $\{\, \Des_{i,j}(\tau) \mid \tau\in S(\pi,\sigma)\,\}$. We define the equivalence relation $\equiv_{i,j}$ on permutations of the same length by 
$\pi \equiv_{i,j} \pi'$ if and only if $\Des_{i,j}(S(\pi,\sigma))=\Des_{i,j}(S(\pi',\sigma))$
for all $\sigma$ disjoint from both $\pi$ and $\pi'$.
(It is immediate from the above definition that $\equiv_{i,j}$ is reflexive and symmetric, and it is not hard to show that $\equiv_{i,j}$ is also transitive.) For $\pi$ and $\pi'$ in $\mathfrak{P}_m$, the following are  sufficient conditions for $\pi \equiv_{i,j} \pi'$:

\begin{enumerate}
\item[(i)] If $\pi$ and $\pi'$ have the same descent set, then $\pi \equiv_{i,j} \pi'$.
\item[(ii)] If $\pi_k=\pi_k'$ for all $k$ with  $1 \leq k \leq i+1$ or $m-j \leq k \leq m$, then $\pi \equiv_{i,j} \pi'$.
\end{enumerate}

Condition (i) is a consequence of the shuffle-compatibility of the descent set. Condition (ii) follows from the fact that $\Des_{i,j}(\tau)$ for $\tau\in S(\pi,\sigma)$ does not depend on  the values of $\pi_k$ with $i+1<k<m-j$.%
\footnote{This is because if $i+1<k<m-j$, then upon shuffling $\pi$ with any permutation $\sigma$ disjoint from $\pi$, the letter $\pi_k$ cannot end up in the first $i+1$ or last $j+1$ positions of any element of $S(\pi,\sigma)$.}

We claim that to prove the theorem it is sufficient to show that $\Des_{i,j}(\pi) = \Des_{i,j}(\pi')$ implies $\pi \equiv_{i,j} \pi'$. Indeed, let $\pi$ and $\pi'$ be two permutations of the same length with $\Des_{i,j}(\pi)=\Des_{i,j}(\pi')$ and similarly with $\sigma$ and $\sigma'$, where $\pi$ is disjoint from $\sigma$ and  $\pi'$ is disjoint from $\sigma'$. By (i), we can assume without loss of generality that $\sigma$ is disjoint from $\pi'$ as well, and thus if we have $\pi \equiv_{i,j} \pi'$ and $\sigma \equiv_{i,j} \sigma'$, then  $\Des_{i,j}(S(\pi,\sigma))=\Des_{i,j}(S(\pi',\sigma))=\Des_{i,j}(S(\pi',\sigma'))$.

Now suppose that $\pi$ and $\pi'$ are in $\mathfrak{P}_m$ with $\Des_{i,j}(\pi) =\Des_{i,j}(\pi')$. We shall show that $\pi \equiv_{i,j} \pi'$, considering three cases separately:

\begin{enumerate}
\item First, suppose that $i+j\ge m-1$. Then $\Des(\pi) = \Des(\pi')$, so $\pi \equiv_{i,j} \pi'$ by (i). 

\item Next, suppose that $i+j\le m-3$. It is enough to find permutations $\bar\pi$ and $\bar\pi'$ such that $\bar\pi \equiv_{i,j} \pi$, $\bar\pi' \equiv_{i,j} \pi'$, and $\Des(\bar\pi)=\Des(\bar\pi')$. 
To do this, we may choose some $a\in\mathbb{P}$ greater than all the letters of $\pi$ and $\pi'$ and construct $\bar\pi$ and $\bar\pi'$ by replacing the letters in positions $i+2, i+3,\dots, m-j-1$ of both $\pi$ and $\pi'$ with the sequence $a \mskip 6mu a+1\,\cdots\, a+(m-i-j-3)$.

\item Finally, suppose that $i+j=m-2$. In this case, $\Des_{i,j}(\pi)$ comprises all descents of $\pi $ except in position $i+1$, so that 
$\Des(\pi)$ and $\Des(\pi')$ are the same except that $i+1$ may be in one but not the other. If $\Des(\pi)=\Des(\pi')$ then $\pi \equiv_{i,j} \pi'$, so let us suppose that $i+1$ is a descent of exactly one of $\pi$ and $\pi'$.
Let $\sigma\in\mathfrak{P}_n$ be disjoint from $\pi$. By (i), we may assume without loss of generality that no letter of $\pi$ or $\sigma$ has value strictly between $\pi_{i+1}$ and $\pi_{i+2}$. Let $\pi^*$ be the result of switching  $\pi_{i+1}$ and $\pi_{i+2}$ in $\pi$.  
It is easy to see that switching $\pi_{i+1}$ and $\pi_{i+2}$ in an element $\tau$ of $S(\pi,\sigma)$ can change $\Des(\tau)$ only by adding or removing a single descent which is at least $i+1$ and at most  $n+i+1=m+n-1-j$ and thus does not change $\Des_{i,j}(\tau)$. Thus, $\pi^* \equiv_{i,j} \pi$. Since $\Des(\pi^*) = \Des(\pi')$, we also have $\pi^* \equiv_{i,j} \pi'$, so $\pi \equiv_{i,j} \pi'$ as desired.\qedhere
\end{enumerate}
\end{proof}

\section{Shuffle algebras}
\label{s-section3}

\subsection{Definition and basic results}

Every permutation statistic $\st$ induces an equivalence relation
on permutations; we say that permutations $\pi$ and $\sigma$ are
$\st$-\textit{equivalent} if $\st(\pi)=\st(\sigma)$ and $\left|\pi\right|=\left|\sigma\right|$.\footnote{The notion of $\st$-equivalence should not be confused with that of ``$\st$-Wilf equivalence'' \cite{Dokos2012}. }
We write the $\st$-equivalence class of $\pi$ as $[\pi]_{\st}$.
For a shuffle-compatible statistic $\st$, we can then associate to
$\st$ a $\mathbb{Q}$-algebra in the following way. First, associate
to $\st$ a $\mathbb{Q}$-vector space by taking as a basis the $\st$-equivalence
classes of permutations. We give this vector space a multiplication
by taking 
\[
[\pi]_{\st}[\sigma]_{\st}=\sum_{\tau\in S(\pi,\sigma)}[\tau]_{\st},
\]
which is well-defined (i.e., the choice of $\pi$ and $\sigma$ in an equivalence class does
not matter) because $\st$ is shuffle-compatible. Conversely, if such
a multiplication is well-defined, then $\st$ is shuffle-compatible.
We denote the resulting algebra by ${\cal A}_{\st}$ and call it the
\textit{shuffle algebra} of $\st$. Observe that ${\cal A}_{\st}$
is graded, and $[\pi]_{\st}$ belongs to the $n$th homogeneous component
of ${\cal A}_{\st}$ if $\pi$ has length $n$.

As an example, we describe the shuffle algebra of the major index
$\maj$.
\begin{thm}[Shuffle-compatibility of the major index]
\label{t-majsc} \leavevmode
\begin{itemize}
\item [\normalfont{(a)}] The major index $\maj$ is shuffle-compatible.
\item [\normalfont{(b)}] The linear map on ${\cal A}_{\maj}$ defined by
\[
[\pi]_{\maj}\mapsto\frac{q^{\maj(\pi)}}{[\left|\pi\right|]_{q}!}x^{\left|\pi\right|}
\]
is a $\mathbb{Q}$-algebra isomorphism from ${\cal A}_{\maj}$ to
the span of 
\[
\left\{ \frac{q^{j}}{[n]_{q}!}x^{n}\right\} _{n\geq0,\:0\leq j\leq{n \choose 2}},
\]
a subalgebra of $\mathbb{Q}[[q]][x]$.
\item [\normalfont{(c)}] The $n$th homogeneous component of ${\cal A}_{\maj}$
has dimension ${n \choose 2}+1$.%
\end{itemize}
\end{thm}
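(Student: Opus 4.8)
The plan is to derive all three parts directly from the variant \eqref{e-maj} of Stanley's shuffling theorem. For part (a), I would observe that the multiset $\{\,\maj(\tau)\mid\tau\in S(\pi,\sigma)\,\}$ is encoded by its generating polynomial $\sum_{\tau\in S(\pi,\sigma)}q^{\maj(\tau)}$, and that \eqref{e-maj} evaluates this polynomial as $q^{\maj(\pi)+\maj(\sigma)}\qbinom{m+n}{m}$, where $m=|\pi|$ and $n=|\sigma|$. Since this expression depends only on $\maj(\pi)$, $\maj(\sigma)$, $m$, and $n$, and since a polynomial with nonnegative integer coefficients both determines and is determined by its multiset of exponents, the multiset $\{\,\maj(\tau)\mid\tau\in S(\pi,\sigma)\,\}$ depends only on $\maj(\pi)$, $\maj(\sigma)$, $m$, and $n$. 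This is exactly shuffle-compatibility, so $\mathcal{A}_{\maj}$ is well-defined.

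For part (b), I would first note that the basis of $\mathcal{A}_{\maj}$ consists of the classes $[\pi]_{\maj}$, indexed by the pairs $(|\pi|,\maj(\pi))=(n,j)$; by the remarks preceding the theorem, the attainable pairs are precisely those with $n\ge0$ and $0\le j\le\binom{n}{2}$. The proposed map $\phi$ sends $[\pi]_{\maj}$ to $\frac{q^{\maj(\pi)}}{[|\pi|]_q!}x^{|\pi|}$, which depends only on $(n,j)$, so $\phi$ is well-defined on the basis and extends linearly. It is a bijection onto the claimed span because the images $\frac{q^j}{[n]_q!}x^n$ are linearly independent (distinct $n$ give distinct monomials $x^n$, and for fixed $n$ distinct $j$ give distinct powers of $q$), hence form a basis of the span. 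The heart of the argument is multiplicativity: applying $\phi$ to $[\pi]_{\maj}[\sigma]_{\maj}=\sum_{\tau\in S(\pi,\sigma)}[\tau]_{\maj}$ and invoking \eqref{e-maj} together with $\qbinom{m+n}{m}=[m+n]_q!/([m]_q!\,[n]_q!)$, I expect the factor $[m+n]_q!$ to cancel, leaving $\frac{q^{\maj(\pi)+\maj(\sigma)}}{[m]_q!\,[n]_q!}x^{m+n}$, which is exactly $\phi([\pi]_{\maj})\,\phi([\sigma]_{\maj})$.

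Two small points then remain. First, I must confirm that the target span really is a subalgebra of $\mathbb{Q}[[q]][x]$; this is automatic once $\phi$ is a multiplicative linear bijection onto the span, since products of images are images of products. Alternatively one can check it directly, using that the top $q$-degree of $\qbinom{m+n}{m}$ is $mn$ and that $\binom{m}{2}+\binom{n}{2}+mn=\binom{m+n}{2}$, so the product of two basis elements never exceeds the allowed exponent range. Second, part (c) is immediate: the $n$th homogeneous component is spanned by the classes $[\pi]_{\maj}$ with $|\pi|=n$, which are indexed by the attainable values $0,1,\dots,\binom{n}{2}$ of $\maj$ on $\mathfrak{P}_n$, giving dimension $\binom{n}{2}+1$.

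Since every step reduces to \eqref{e-maj} and elementary $q$-binomial bookkeeping, I do not anticipate a genuine obstacle. The only points requiring care are correctly identifying the index set of the basis (which relies on every value in $\{0,1,\dots,\binom{n}{2}\}$ being attainable by $\maj$ on $\mathfrak{P}_n$) and tracking the cancellation of $[m+n]_q!$ in the multiplicativity computation.
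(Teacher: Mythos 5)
Your proposal is correct and follows essentially the same route as the paper: shuffle-compatibility of $\maj$ is read off directly from \eqref{e-maj}, multiplicativity of the map $[\pi]_{\maj}\mapsto q^{\maj(\pi)}x^{|\pi|}/[|\pi|]_q!$ is verified by the same cancellation of $[m+n]_q!$ via $\qbinom{m+n}{m}=[m+n]_q!/([m]_q!\,[n]_q!)$, and parts (b) and (c) follow from the linear independence of the elements $q^jx^n/[n]_q!$ together with the fact that $\maj$ attains every value in $\{0,1,\dots,\binom{n}{2}\}$ on $\mathfrak{P}_n$. The only additions beyond the paper's proof are your explicit degree check $\binom{m}{2}+\binom{n}{2}+mn=\binom{m+n}{2}$ and the remark that the generating polynomial encodes the multiset, both of which are correct but implicit in the paper's argument.
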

\begin{proof}
We know from (\ref{e-maj}) that $\maj$ is shuffle-compatible, so
there is no need to prove (a). Let $\phi \colon {\cal A}_{\maj}\rightarrow\mathbb{Q}[[q]][x]$
denote the map given in the statement of (b). Then by (\ref{e-maj}),
for $\pi\in\mathfrak{P}_{m}$ and $\sigma\in\mathfrak{P}_{n}$, we
have 
\begin{align*}
\phi([\pi]_{\maj})\phi([\sigma]_{\maj}) & =\frac{q^{\maj(\pi)}}{[m]_{q}!}x^{m}\frac{q^{\maj(\sigma)}}{[n]_{q}!}x^{n}\\
 & =\frac{q^{\maj(\pi)+\maj(\sigma)}}{[m]_{q}![n]_{q}!}x^{m+n}\\
 & =\frac{q^{\maj(\pi)+\maj(\sigma)}}{[m+n]_{q}!}
\qbinom{m+n}{m}x^{m+n}\\
 & =\sum_{\tau\in S(\pi,\sigma)}\frac{q^{\maj(\tau)}}{[m+n]_{q}!}x^{m+n}\\
 & =\phi([\pi]_{\maj}[\sigma]_{\maj}),
\end{align*}
so $\phi$ is an algebra homomorphism. The possible values for $\maj(\pi)$
for an $n$-permutation $\pi$ range from 0 to ${n \choose 2}$, and
since the elements $q^{j}x^{n}/[n]_{q}!$ are linearly independent,
$\phi$ gives an isomorphism from ${\cal A}_{\maj}$ to the stated
subalgebra, thus proving (b) and (c).
\end{proof}
We say that two permutation statistics $\st_{1}$ and $\st_{2}$ are
\textit{equivalent} if $[\pi]_{\st_{1}}=[\pi]_{\st_{2}}$ for every
permutation $\pi$. In other words, $\st_{2}(\pi)$ depends only on
$\st_{1}(\pi)$ and $\left|\pi\right|$ for every permutation $\pi$,
and vice versa. As shown in Lemma \ref{l-udr}, $\udr$ and $(\lpk,\val)$
are equivalent statistics. It also follows from the formula $\comaj(\pi)=n\des(\pi)-\maj(\pi)$
that $(\des,\maj)$ and $(\des,\comaj)$ are equivalent statistics.
\begin{thm}
\label{t-esc} Suppose that $\st_{1}$ and $\st_{2}$ are equivalent
statistics. If $\st_{1}$ is shuffle-compatible with shuffle algebra
${\cal A}_{\st_{1}}$, then $\st_{2}$ is also shuffle-compatible
with shuffle algebra ${\cal A}_{\st_{2}}$ isomorphic to ${\cal A}_{\st_{1}}$.\end{thm}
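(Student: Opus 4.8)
The plan is to observe that the hypothesis of equivalence makes the two induced equivalence relations coincide set-theoretically, so that both shuffle-compatibility and the shuffle algebra depend only on this common relation. Since $\st_1$ and $\st_2$ are equivalent, by definition $[\pi]_{\st_1}=[\pi]_{\st_2}$ for every permutation $\pi$; that is, two permutations are $\st_1$-equivalent if and only if they are $\st_2$-equivalent. Consequently the set of $\st_1$-equivalence classes equals the set of $\st_2$-equivalence classes, and for every $\tau$ the classes $[\tau]_{\st_1}$ and $[\tau]_{\st_2}$ are literally the same subset of permutations.

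Next I would recast shuffle-compatibility in terms of equivalence classes rather than raw statistic values. Knowing $\st(\pi)$ together with $\left|\pi\right|$ is the same as knowing the class $[\pi]_{\st}$, and since every shuffle $\tau\in S(\pi,\sigma)$ has the common length $\left|\pi\right|+\left|\sigma\right|$, the multiset $\{\,\st(\tau)\mid\tau\in S(\pi,\sigma)\,\}$ determines and is determined by the multiset $\{\,[\tau]_{\st}\mid\tau\in S(\pi,\sigma)\,\}$. Hence $\st$ is shuffle-compatible if and only if this latter multiset of classes depends only on $[\pi]_{\st}$ and $[\sigma]_{\st}$.

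With this reformulation the transfer is immediate. Given disjoint $\pi$ and $\sigma$, the multiset $\{\,[\tau]_{\st_2}\mid\tau\in S(\pi,\sigma)\,\}$ equals $\{\,[\tau]_{\st_1}\mid\tau\in S(\pi,\sigma)\,\}$ by the coincidence of classes, while $[\pi]_{\st_2}=[\pi]_{\st_1}$ and $[\sigma]_{\st_2}=[\sigma]_{\st_1}$. Since $\st_1$ is shuffle-compatible, the first multiset depends only on $[\pi]_{\st_1}$ and $[\sigma]_{\st_1}$, hence only on $[\pi]_{\st_2}$ and $[\sigma]_{\st_2}$; this establishes shuffle-compatibility of $\st_2$. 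Finally, the algebras ${\cal A}_{\st_1}$ and ${\cal A}_{\st_2}$ have the same underlying basis (the common set of equivalence classes), and by the definition of the shuffle product together with the equality of these structure multisets, the map $[\pi]_{\st_1}\mapsto[\pi]_{\st_2}$ is a well-defined bijective algebra homomorphism, giving ${\cal A}_{\st_1}\cong{\cal A}_{\st_2}$.

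I do not expect a genuine obstacle here; the only point requiring care is the reformulation of shuffle-compatibility from statistic values to equivalence classes, which hinges on the elementary observation that all shuffles of $\pi$ and $\sigma$ share the common length $\left|\pi\right|+\left|\sigma\right|$, so that the multiset of statistic values and the multiset of classes carry exactly the same information.
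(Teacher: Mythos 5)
Your proof is correct and follows essentially the same route as the paper: both rest on the observation that equivalent statistics induce identical equivalence classes, so the two shuffle algebras share a basis and the shuffle product formulas coincide verbatim. The paper's proof is just a terser version of yours, leaving implicit the reformulation of shuffle-compatibility in terms of classes (via the common length of all shuffles) that you spell out explicitly.
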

\begin{proof}
Equivalent statistics have the same equivalence classes on permutations,
so ${\cal A}_{\st_{1}}$ and ${\cal A}_{\st_{2}}$ (as vector spaces)
have the same basis elements. If $\st_{1}$ and $\st_{2}$ are equivalent,
then 
\[
[\pi]_{\st_{2}}[\sigma]_{\st_{2}}=[\pi]_{\st_{1}}[\sigma]_{\st_{1}}=\sum_{\tau\in S(\pi,\sigma)}[\tau]_{\st_{1}}=\sum_{\tau\in S(\pi,\sigma)}[\tau]_{\st_{2}},
\]
which proves the result.
\end{proof}

For example, it is easy to see that $\Des_{1,0}$ is equivalent to $\sir$ and that $\Des_{0,1}$ is equivalent to $\sfr$. Thus, Theorem \ref{t-truncDessc} implies that $\sir$ and $\sfr$ are shuffle-compatible as well.

We say that $\st_1$ is a \textit{refinement} of $\st_2$ if for all permutations $\pi$ and $\sigma$ of the same length, $\st_1(\pi) = \st_1(\sigma)$ implies $\st_2(\pi) = \st_2(\sigma)$. For example, the statistics of which the descent set is a refinement  are exactly what we call descent statistics.

\begin{thm}
\label{t-quots}
Suppose that $\st_1$ is shuffle-compatible and is a refinement of $\st_2$. Let $A$ be  a $\mathbb{Q}$-algebra with basis $\{u_{\alpha}\}$ indexed by $\st_2$-equivalence classes $\alpha$, and suppose that there exists a $\mathbb{Q}$-algebra homomorphism $\phi\colon \mathcal{A}_{\st_1}\to A$ such that for every $\st_1$-equivalence class $\beta$, we have $\phi(\beta) = u_\alpha$ where $\alpha$ is the $\st_2$-equivalence class containing $\beta$. Then $\st_2$ is shuffle-compatible and the map $u_\alpha\mapsto\alpha$ extends by linearity to an isomorphism from $A$ to ${\cal A}_{\st_2}$.
\end{thm}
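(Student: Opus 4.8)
The plan is to extract the structure constants of $\st_2$ directly from the hypothesis that $\phi$ is an algebra homomorphism. Fix $\pi\in\mathfrak{P}_m$ and $\sigma\in\mathfrak{P}_n$, and write $\alpha=[\pi]_{\st_2}$ and $\alpha'=[\sigma]_{\st_2}$ for their $\st_2$-equivalence classes, so that by the defining property of $\phi$ we have $\phi([\pi]_{\st_1})=u_\alpha$ and $\phi([\sigma]_{\st_1})=u_{\alpha'}$. Applying $\phi$ to the product $[\pi]_{\st_1}[\sigma]_{\st_1}=\sum_{\tau\in S(\pi,\sigma)}[\tau]_{\st_1}$ in $\mathcal{A}_{\st_1}$ and using that $\phi$ is multiplicative gives the key identity
\[
u_\alpha u_{\alpha'}=\sum_{\tau\in S(\pi,\sigma)}u_{[\tau]_{\st_2}},
\]
where I have again used the defining property of $\phi$ to rewrite each $\phi([\tau]_{\st_1})$ as $u_{[\tau]_{\st_2}}$. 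The entire derivation takes place inside the fixed algebra $A$, which is essential since at this stage we are not yet entitled to treat $\mathcal{A}_{\st_2}$ as an algebra.

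Next I would read off shuffle-compatibility of $\st_2$ from this identity. The left-hand side $u_\alpha u_{\alpha'}$ is computed in the fixed algebra $A$ and so depends only on the classes $\alpha$ and $\alpha'$, that is, only on $\st_2(\pi)$, $\st_2(\sigma)$, and the lengths $m$ and $n$. Expanding the right-hand side in the basis $\{u_\gamma\}$, the coefficient of $u_\gamma$ is the number of shuffles $\tau\in S(\pi,\sigma)$ whose $\st_2$-class is $\gamma$; since the $u_\gamma$ are linearly independent, each such count is determined by $\alpha$ and $\alpha'$. As every $\tau\in S(\pi,\sigma)$ has length $m+n$, this is exactly the statement that the multiset $\{\,\st_2(\tau)\mid\tau\in S(\pi,\sigma)\,\}$ depends only on $\st_2(\pi)$, $\st_2(\sigma)$, $m$, and $n$, so $\st_2$ is shuffle-compatible and its shuffle algebra $\mathcal{A}_{\st_2}$ is defined.

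With $\mathcal{A}_{\st_2}$ now available, I would define $\psi\colon A\to\mathcal{A}_{\st_2}$ to be the linear map sending $u_\alpha$ to the basis element of $\mathcal{A}_{\st_2}$ indexed by the same $\st_2$-equivalence class $\alpha$. Since $\{u_\alpha\}$ is a basis of $A$ and the $\st_2$-classes index a basis of $\mathcal{A}_{\st_2}$, the map $\psi$ is a bijection on bases and hence a linear isomorphism. To verify that $\psi$ is an algebra homomorphism it suffices to check multiplicativity on the basis: applying $\psi$ to the key identity turns the right-hand side into $\sum_{\tau\in S(\pi,\sigma)}[\tau]_{\st_2}$, which is precisely $[\pi]_{\st_2}[\sigma]_{\st_2}=\psi(u_\alpha)\psi(u_{\alpha'})$ by the definition of multiplication in $\mathcal{A}_{\st_2}$. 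Thus $\psi$ is the desired isomorphism. I do not expect a genuine obstacle here; the only real care is the ordering of the argument---the structure constants must be obtained inside $A$ before $\mathcal{A}_{\st_2}$ can be spoken of as an algebra---together with the trivial observation that $\phi$ is surjective because $\st_1$ refines $\st_2$, so every class $\alpha$ contains at least one $\st_1$-class and every $u_\alpha$ lies in the image of $\phi$.
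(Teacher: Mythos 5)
Your proof is correct and takes essentially the same approach as the paper: both derive the key identity $u_{[\pi]_{\st_2}}u_{[\sigma]_{\st_2}}=\sum_{\tau\in S(\pi,\sigma)}u_{[\tau]_{\st_2}}$ inside $A$ from the multiplicativity of $\phi$ and the defining multiplication of $\mathcal{A}_{\st_1}$, then conclude shuffle-compatibility and the isomorphism. The only difference is that you spell out the steps the paper compresses into ``it is sufficient to show''---namely the linear-independence argument giving shuffle-compatibility and the explicit construction of the map $u_\alpha\mapsto\alpha$.
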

\begin{proof}
It is sufficient to show that for any two disjoint permutations $\pi$ and $\sigma$, we have
\begin{equation*}
u_{[\pi]_{\st_2}}u_{[\sigma]_{\st_2}}
  =\sum_{\tau\in S(\pi, \sigma)} u_{[\tau]_{\st_2}}.
\end{equation*}
To see this, we have
\begin{align*}
u_{[\pi]_{\st_2}}u_{[\sigma]_{\st_2}} 
  &= \phi([\pi]_{\st_1})\phi([\sigma]_{\st_1})\\
  &= \phi([\pi]_{\st_1}[\sigma]_{\st_1})\\
  &=\phi\Big(\sum_{\tau\in S(\pi, \sigma)} [\tau]_{\st_1}\Big)\\
  &=\sum_{\tau\in S(\pi, \sigma)} u_{[\tau]_{\st_2}}.\qedhere
\end{align*}
\end{proof}

\subsection{Basic symmetries yield isomorphic shuffle algebras}

Here we consider three involutions on permutations given by symmetries\textemdash reversion,
complementation, and reverse-complementation\textemdash and their
implications for the shuffle-compatibility of permutation statistics.

Given $\pi=\pi_{1}\pi_{2}\cdots\pi_{n}\in\mathfrak{P}_{n}$, we define
the \textit{reversal} $\pi^{r}$ of $\pi$ to be $\pi^{r}\coloneqq\pi_{n}\pi_{n-1}\cdots\pi_{1}$, 
the \textit{complement} $\pi^{c}$ of $\pi$ to be the permutation
obtained by (simultaneously) replacing the $i$th smallest letter
in $\pi$ with the $i$th largest letter in $\pi$ for all $1\leq i\leq n$,
and the \textit{reverse-complement} $\pi^{rc}$ of $\pi$ to be $\pi^{rc}\coloneqq(\pi^{r})^{c}=(\pi^{c})^{r}$.
For example, given $\pi=139264$, we have $\pi^{r}=462931$, $\pi^{c}=941623$,
and $\pi^{rc}=326149$.

More generally, let $f$ be an involution on the set of permutations
which preserves the length of a permutation. Then let $\pi^{f}$ denote
$f(\pi)$. Given a set $X$ of permutations, let 
\[
X^{f}\coloneqq\{\,\pi^{f}\mid\pi\in X\,\},
\]
so $f$ naturally induces an involution on sets of permutations as well.

We say that two permutation statistics $\st_{1}$ and $\st_{2}$ are
\textit{$f$-equivalent} if $\st_{1}\circ f$ is equivalent to $\st_{2}$.
Equivalently, $\st_{1}$ and $\st_{2}$ are $f$-equivalent if $([\pi^{f}]_{\st_{1}})^{f}=[\pi]_{\st_{2}}$ for all $\pi$.
It is easy to verify that $\st_{1}(\pi^{f})=\st_{2}(\pi)$ implies
that $\st_{1}$ and $\st_{2}$ are $f$-equivalent (although this
is not a necessary condition).

For example, $\Lpk$ and $\Rpk$ are $r$-equivalent, $\pk$ and $\val$
are $c$-equivalent, $\Pk$ and $\Val$ are $c$-equivalent, $(\pk,\des)$
and $(\val,\des)$ are $rc$-equivalent, and $\maj$ and $\comaj$
are $rc$-equivalent. It is less obvious that $(\lpk,\val)$ and $(\lpk,\pk)$
are $rc$-equivalent, so we provide a proof below.
\begin{prop}
\label{p-lpkpk} $(\lpk,\val)$ and $(\lpk,\pk)$ are $rc$-equivalent
statistics.
\end{prop}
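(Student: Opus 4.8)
The plan is to show that applying the reverse-complement involution $rc$ relates the pair $(\lpk,\val)$ to the pair $(\lpk,\pk)$, i.e.\ that $\lpk(\pi^{rc}) = \lpk(\pi)$ and $\pk(\pi^{rc}) = \val(\pi)$ (equivalently, that $(\lpk\circ rc, \pk\circ rc)$ is equivalent to $(\lpk,\val)$). By the remark preceding the proposition, it suffices to establish these two identities as honest equalities of statistic values on each $\pi$, since $\st_1(\pi^f)=\st_2(\pi)$ implies $f$-equivalence. The natural tool is Lemma~\ref{l-pkvalruns}, which expresses all the relevant peak/valley statistics in terms of the run statistics $\lr, \lir, \lfr, \sir, \sfr$. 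So the strategy is to reduce everything to understanding how the run statistics transform under $rc$.

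First I would record how $rc$ acts on increasing runs. The reverse-complement of $\pi$ reverses the word and flips the order, so ascending runs of $\pi$ become ascending runs of $\pi^{rc}$ but read from the opposite end; concretely, the multiset of run lengths is preserved, but the \emph{order} of the runs is reversed. This gives $\lr(\pi^{rc}) = \lr(\pi)$ (the total number of long runs is unchanged), while the roles of initial and final runs swap: $\lir(\pi^{rc}) = \lfr(\pi)$, $\lfr(\pi^{rc}) = \lir(\pi)$, and correspondingly $\sir(\pi^{rc}) = \sfr(\pi)$, $\sfr(\pi^{rc}) = \sir(\pi)$. I would verify this carefully on the descent-composition level, since $rc$ sends the descent composition $L=(L_1,\dots,L_k)$ to its reversal $(L_k,\dots,L_1)$, from which all of these run-statistic identities are immediate.

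Given these transformation rules, the two desired identities follow by direct substitution into Lemma~\ref{l-pkvalruns}. For the peak/valley relation, Lemma~\ref{l-pkvalruns}(a) gives $\pk(\pi^{rc}) = \lr(\pi^{rc}) - \lfr(\pi^{rc}) = \lr(\pi) - \lir(\pi) = \val(\pi)$ by part~(b), which is exactly the $c$-equivalence of $\pk$ and $\val$ lifted through the reversal. For the left-peak relation, Lemma~\ref{l-pkvalruns}(c) (in the range $n\ge 2$) gives $\lpk(\pi^{rc}) = \lr(\pi^{rc}) + \sir(\pi^{rc}) - \lfr(\pi^{rc}) = \lr(\pi) + \sfr(\pi) - \lir(\pi)$. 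The one genuine wrinkle is that this does not literally match the expression $\lr(\pi)+\sir(\pi)-\lfr(\pi)$ for $\lpk(\pi)$, so I must check the algebraic identity $\lr + \sfr - \lir = \lr + \sir - \lfr$, i.e.\ $\sfr - \lir = \sir - \lfr$. Using $\sir = 1-\lir$ and $\sfr = 1 - \lfr$, both sides equal $1 - \lir - \lfr$, so the identity holds; this is the main (small) obstacle and the step most prone to an off-by-one slip, so I would handle the $n=1$ boundary case separately, where $\lpk$, $\pk$, $\val$ all vanish and the $rc$ action is trivial.

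Finally, combining $\lpk(\pi^{rc}) = \lpk(\pi)$ and $\pk(\pi^{rc}) = \val(\pi)$ shows that $(\lpk,\pk)\circ rc$ agrees with $(\lpk,\val)$ as a statistic, hence $(\lpk,\val)$ and $(\lpk,\pk)$ are $rc$-equivalent by the sufficient condition noted before the proposition. I expect the whole argument to be short once the run-statistic transformation rules under $rc$ are pinned down; the only care needed is in the left-peak computation and in separating out the trivial $n=1$ case.
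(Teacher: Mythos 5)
Your proof is correct, and it takes a genuinely different route from the paper's. Both arguments come down to verifying the pointwise identity $(\lpk,\pk)(\pi^{rc})=(\lpk,\val)(\pi)$ and then citing the sufficient condition that $\st_1(\pi^f)=\st_2(\pi)$ implies $f$-equivalence, but the verifications differ. The paper splits into four cases according to whether the initial and final runs of $\pi$ are short or long; in each case it combines Lemma \ref{l-udr}(d) (which compares $\lpk$ and $\val$ according to the final-run type) with the relation $\lpk=\pk+\sir$ and the correspondence between peaks of $\pi^{rc}$ and valleys of $\pi$. You instead observe that $rc$ reverses the descent composition, so it preserves $\lr$ while swapping initial-run with final-run statistics ($\lir\leftrightarrow\lfr$, $\sir\leftrightarrow\sfr$), and then substitute these rules into Lemma \ref{l-pkvalruns}(a)--(c); the only nontrivial step is the identity $\sfr-\lir=\sir-\lfr$, which holds since both sides equal $1-\lir-\lfr$. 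What your approach buys is uniformity: the paper's four cases collapse into a single two-line algebraic computation, with only the $n\le 1$ boundary needing separate (trivial) treatment. What it costs is having to justify the run-statistic transformation rules under $rc$, which is cleanest at the level of descent compositions exactly as you propose, whereas the paper's case analysis absorbs that bookkeeping implicitly.
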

\begin{proof}
Fix a permutation $\pi$. We divide into four cases: (a) $\pi$ has
a short initial run and a long final run, (b) $\pi$ has a short initial
run and a short final run, (c) $\pi$ has a long initial run and a
long final run, and (d) $\pi$ has a long initial run and short final
run. In case (a), we know from Lemma \ref{l-udr} that $\lpk(\pi)=\val(\pi)$.
Then $\pk(\pi^{rc})=\val(\pi)$, and $\pi^{rc}$ has a long initial
run, so 
\[
\lpk(\pi^{rc})=\pk(\pi^{rc})=\val(\pi)=\lpk(\pi).
\]
Thus, $(\lpk,\val)(\pi)=(\lpk,\pk)(\pi^{rc})$. The other three cases
can be verified in the same way.
\end{proof}

Let us say that $f$ is \textit{shuffle-compatibility-preserving} if
for every pair of disjoint permutations $\pi$ and $\sigma$, there
exist disjoint permutations $\hat{\pi}$ and $\hat{\sigma}$ with
the same relative order as $\pi$ and $\sigma$, respectively, such
that $S(\hat{\pi}^{f},\hat{\sigma}^{f})=S(\pi,\sigma)^{f}$ and $S(\pi^{f},\sigma^{f})=S(\hat{\pi},\hat{\sigma})^{f}$.

We note that $f$-equivalences are not actually equivalence relations on statistics (although they are symmetric), but we shall show that if the statistics are shuffle-compatible and $f$ is shuffle-compatibility-preserving, then $f$-equivalences induce isomorphisms on the corresponding shuffle algebras.

\begin{thm}
\label{t-fsc} Let $f$ be shuffle-compatibility-preserving, and suppose
that $\st_{1}$ and $\st_{2}$ are $f$-equivalent statistics. If
$\st_{1}$ is shuffle-compatible with shuffle algebra ${\cal A}_{\st_{1}}$,
then $\st_{2}$ is also shuffle-compatible with shuffle algebra ${\cal A}_{\st_{2}}$
isomorphic to ${\cal A}_{\st_{1}}$.
\end{thm}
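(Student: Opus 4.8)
The plan is to construct an explicit $\mathbb{Q}$-algebra isomorphism $\Phi\colon \mathcal{A}_{\st_1} \to \mathcal{A}_{\st_2}$ by transporting equivalence classes through $f$. Since $\st_1$ and $\st_2$ are $f$-equivalent, we have $([\pi^f]_{\st_1})^f = [\pi]_{\st_2}$ for all $\pi$, which tells us that $f$ sends $\st_2$-equivalence classes bijectively to $\st_1$-equivalence classes (as $f$ is a length-preserving involution). Concretely, I would first verify that $[\pi]_{\st_2} = [\pi']_{\st_2}$ if and only if $[\pi^f]_{\st_1} = [\pi'^f]_{\st_1}$; this is exactly the content of $f$-equivalence together with $f$ being an involution, and it guarantees that the map $[\pi]_{\st_2} \mapsto [\pi^f]_{\st_1}$ is a well-defined bijection between the bases of the two vector spaces. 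Extending by linearity gives a vector space isomorphism $\Psi\colon \mathcal{A}_{\st_2} \to \mathcal{A}_{\st_1}$, and $\Phi$ will be its inverse.

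The first real task is to establish that $\st_2$ is shuffle-compatible at all, since without this the algebra $\mathcal{A}_{\st_2}$ is not even defined. For this I would fix disjoint permutations $\pi$ and $\sigma$ and use the hypothesis that $f$ is shuffle-compatibility-preserving: choose $\hat\pi$ and $\hat\sigma$ with the same relative orders as $\pi$ and $\sigma$ satisfying $S(\hat\pi^f, \hat\sigma^f) = S(\pi,\sigma)^f$. The key computation is then to show that the multiset $\{\st_2(\tau) : \tau \in S(\pi,\sigma)\}$ depends only on $\st_2(\pi)$, $\st_2(\sigma)$, $|\pi|$, $|\sigma|$. Applying $f$ to each shuffle and using $\st_2(\tau) = \st_1(\tau^f)$ (up to the length data, which $f$ preserves), the multiset of $\st_2$-values over $S(\pi,\sigma)$ is carried to the multiset of $\st_1$-values over $S(\pi,\sigma)^f = S(\hat\pi^f, \hat\sigma^f)$. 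The latter depends only on $\st_1(\hat\pi^f)$, $\st_1(\hat\sigma^f)$ and the lengths, by shuffle-compatibility of $\st_1$; and these in turn are determined by $\st_2(\pi)$ and $\st_2(\sigma)$ since $\hat\pi, \hat\sigma$ share relative orders with $\pi,\sigma$ and $\st_1(\hat\pi^f) = \st_2(\hat\pi) = \st_2(\pi)$. This chain of identifications is where the precise definition of shuffle-compatibility-preserving earns its keep.

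Having shown $\st_2$ is shuffle-compatible, the final step is to check that $\Psi$ (or equivalently $\Phi$) respects multiplication. Here I would compute $\Psi([\pi]_{\st_2}[\sigma]_{\st_2})$ by expanding the product in $\mathcal{A}_{\st_2}$ as $\sum_{\tau \in S(\pi,\sigma)} [\tau]_{\st_2}$, then apply $\Psi$ term by term to get $\sum_{\tau \in S(\pi,\sigma)} [\tau^f]_{\st_1}$. On the other side, $\Psi([\pi]_{\st_2})\Psi([\sigma]_{\st_2}) = [\pi^f]_{\st_1}[\sigma^f]_{\st_1} = \sum_{\rho \in S(\pi^f, \sigma^f)} [\rho]_{\st_1}$. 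The two agree precisely when the multiset $\{\tau^f : \tau \in S(\pi,\sigma)\} = S(\pi,\sigma)^f$ coincides (after passing to $\st_1$-equivalence classes) with $S(\pi^f,\sigma^f)$; this matching is again supplied by the shuffle-compatibility-preserving hypothesis via the companion identity $S(\pi^f,\sigma^f) = S(\hat\pi, \hat\sigma)^f$, combined with the fact that $\hat\pi, \hat\sigma$ have the same relative orders as $\pi, \sigma$ (hence the same $\st_1$-class distributions after applying $f$).

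The main obstacle I anticipate is bookkeeping around the distinction between literal equality of shuffle sets and equality of their images in the respective shuffle algebras—the definition of shuffle-compatibility-preserving does not assert $S(\pi^f,\sigma^f) = S(\pi,\sigma)^f$ directly, but only provides auxiliary permutations $\hat\pi,\hat\sigma$ realizing both halves of the identity. Threading these auxiliary permutations through both the shuffle-compatibility argument and the multiplicativity argument, while carefully invoking that $f$-equivalent statistics satisfy $\st_1 \circ f \sim \st_2$ only up to the equivalence induced by length, is the delicate part; everything else is formal once the bijection on bases is seen to be an algebra map.
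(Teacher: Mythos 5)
Your proposal is correct and follows essentially the same route as the paper's proof: the same map $[\pi]_{\st_2}\mapsto[\pi^f]_{\st_1}$, the same use of the first identity $S(\hat\pi^f,\hat\sigma^f)=S(\pi,\sigma)^f$ to transfer shuffle-compatibility from $\st_1$ to $\st_2$, and the same use of the companion identity $S(\pi^f,\sigma^f)=S(\hat\pi,\hat\sigma)^f$ (together with replacing $S(\pi,\sigma)$ by $S(\hat\pi,\hat\sigma)$ inside the already-established shuffle algebra ${\cal A}_{\st_2}$) to verify multiplicativity. The subtleties you flag---well-definedness via $f$-equivalence, and that $f$-equivalence identifies statistics only up to length---are exactly the points the paper's proof handles, so the plan fills in to a complete argument.
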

\begin{proof}
Let $\pi$ and $\bar{\pi}$ be permutations in the same $\st_{2}$-equivalence
class and similarly with $\sigma$ and $\bar{\sigma}$, such that
$\pi$ and $\sigma$ are disjoint and $\bar{\pi}$ and $\bar{\sigma}$
are disjoint. Since $\st_{1}$ and $\st_{2}$ are $f$-equivalent,
it follows that 
\[
([\pi^{f}]_{\st_{1}})^{f}=[\pi]_{\st_{2}}=[\bar{\pi}]_{\st_{2}}=([\bar{\pi}^{f}]_{\st_{1}})^{f}.
\]
Hence $[\pi^{f}]_{\st_{1}}=[\bar{\pi}^{f}]_{\st_{1}}$ and similarly
$[\sigma^{f}]_{\st_{1}}=[\bar{\sigma}^{f}]_{\st_{1}}$. 

Since $f$ is shuffle-compatibility-preserving, there exist permutations
$\hat{\pi},$ $\hat{\sigma}$, $\hat{\bar{\pi}}$, and $\hat{\bar{\sigma}}$\textemdash having
the same relative order as $\pi$, $\sigma$, $\bar{\pi}$, and $\bar{\sigma}$,
respectively\textemdash satisfying $S(\hat{\pi}^{f},\hat{\sigma}^{f})=S(\pi,\sigma)^{f}$,
$S(\pi^{f},\sigma^{f})=S(\hat{\pi},\hat{\sigma})^{f}$, $S(\hat{\bar{\pi}}^{f},\hat{\bar{\sigma}}^{f})=S(\bar{\pi},\bar{\sigma})^{f}$,
and $S(\bar{\pi}^{f},\bar{\sigma}^{f})=S(\hat{\bar{\pi}},\hat{\bar{\sigma}})^{f}$.
By the ``same relative order'' property, we have
\[
[\hat{\pi}^{f}]_{\st_{1}}=[\pi^{f}]_{\st_{1}}=[\bar{\pi}^{f}]_{\st_{1}}=[\hat{\bar{\pi}}^{f}]_{\st_{1}}
\]
and 
\[
[\hat{\sigma}^{f}]_{\st_{1}}=[\sigma^{f}]_{\st_{1}}=[\bar{\sigma}^{f}]_{\st_{1}}=[\hat{\bar{\sigma}}^{f}]_{\st_{1}}.
\]

Now, by shuffle-compatibility of $\st_{1}$, we have the equality
of multisets 
\[
\{\,\st_{1}(\tau)\mid\tau\in S(\hat{\pi}^{f},\hat{\sigma}^{f})\,\}=\{\,\st_{1}(\tau)\mid\tau\in S(\hat{\bar{\pi}}^{f},\hat{\bar{\sigma}}^{f})\,\},
\]
which is equivalent to 
\[
\{\,\st_{2}(\tau)\mid\tau^{f}\in S(\hat{\pi}^{f},\hat{\sigma}^{f})\,\}=\{\,\st_{2}(\tau)\mid\tau^{f}\in S(\hat{\bar{\pi}}^{f},\hat{\bar{\sigma}}^{f})\,\}
\]
by $f$-equivalence of $\st_{1}$ and $\st_{2}$, and from $S(\hat{\pi}^{f},\hat{\sigma}^{f})=S(\pi,\sigma)^{f}$
and $S(\hat{\bar{\pi}}^{f},\hat{\bar{\sigma}}^{f})=S(\bar{\pi},\bar{\sigma})^{f}$,
we have
\[
\{\,\st_{2}(\tau)\mid\tau\in S(\pi,\sigma)\,\}=\{\,\st_{2}(\tau)\mid\tau\in S(\bar{\pi},\bar{\sigma})\,\}.
\]
Therefore, $\st_{2}$ is shuffle-compatible.

It remains to prove that ${\cal A}_{\st_{2}}$ is isomorphic to ${\cal A}_{\st_{1}}$.
Observe that 
\[
\sum_{\tau\in S(\hat{\pi},\hat{\sigma})}[\tau]_{\st_{2}}=\sum_{\tau\in S(\pi,\sigma)}[\tau]_{\st_{2}},
\]
since $\st_{2}$ is shuffle-compatible. Define the linear
map $\varphi_{f} \colon {\cal A}_{\st_{2}}\rightarrow{\cal A}_{\st_{1}}$
by $[\pi]_{\st_{2}}\mapsto[\pi^{f}]_{\st_{1}}$. Then
\begin{align*}
\varphi_{f}([\pi]_{\st_{2}}[\sigma]_{\st_{2}}) & =\varphi_{f}\Big(\sum_{\tau\in S(\pi,\sigma)}[\tau]_{\st_{2}}\Big)\\
 & =\sum_{\tau\in S(\pi,\sigma)}\varphi_{f}([\tau]_{\st_{2}})\\
 & =\sum_{\tau\in S(\pi,\sigma)}[\tau^{f}]_{\st_{1}}\\
 & =\sum_{\tau\in S(\hat{\pi},\hat{\sigma})}[\tau^{f}]_{\st_{1}}\\
 & =\sum_{\tau\in S(\hat{\pi},\hat{\sigma})^{f}}[\tau]_{\st_{1}}\\
 & =\sum_{\tau\in S(\pi^{f},\sigma^{f})}[\tau]_{\st_{1}}\\
 & =[\pi^{f}]_{\st_{1}}[\sigma^{f}]_{\st_{1}}\\
 & =\varphi_{f}([\pi]_{\st_{2}})\varphi_{f}([\sigma]_{\st_{2}}),
\end{align*}
so $\varphi_{f}$ is an isomorphism from ${\cal A}_{\st_{2}}$ to ${\cal A}_{\st_{1}}$.
\end{proof}
\begin{lem}
Reversion, complementation, and reverse-complementation are shuffle-com\-pat\-i\-bil\-ity-preserving.
\end{lem}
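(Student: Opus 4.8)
The plan is to treat the three symmetries separately, isolating the one genuine difficulty: complementation is defined relative to the letters of a permutation, so complementing a shuffle $\tau$ of $\pi$ and $\sigma$ uses the \emph{combined} alphabet of $\pi$ and $\sigma$ rather than their individual alphabets. Reversion presents no such difficulty, so I would dispose of it first. Since reversing a word reverses each of its subsequences, the map $\tau\mapsto\tau^{r}$ carries $S(\pi,\sigma)$ bijectively onto $S(\pi^{r},\sigma^{r})$; that is, $S(\pi,\sigma)^{r}=S(\pi^{r},\sigma^{r})$. Taking $\hat\pi=\pi$ and $\hat\sigma=\sigma$ then satisfies both required identities at once, so $r$ is shuffle-compatibility-preserving.

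For complementation, write $N$ for the set of letters appearing in $\pi$ or $\sigma$, and let $c_{N}$ denote complementation \emph{relative to $N$}: the order-reversing relabeling of $N$ that sends its $i$th smallest element to its $i$th largest. I would record the two facts that drive the argument. First, every $\tau\in S(\pi,\sigma)$ uses exactly the letters of $N$, so its internal complement coincides with its $N$-complement, $\tau^{c}=\tau^{c_{N}}$; and since $c_{N}$ is an order-reversing relabeling of $N$, it commutes with passing to subsequences, so $\tau\mapsto\tau^{c_{N}}$ is a bijection from $S(\pi,\sigma)$ onto $S(\pi^{c_{N}},\sigma^{c_{N}})$. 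Together these give $S(\pi,\sigma)^{c}=S(\pi^{c_{N}},\sigma^{c_{N}})$. Second, internal complementation commutes with $c_{N}$: for any permutation $\alpha$ with letters in $N$, one has $(\alpha^{c_{N}})^{c}=(\alpha^{c})^{c_{N}}$. I expect this commutation to be the main point to verify, and I would prove it by tracking a single position: if $\alpha_{k}$ is the $\rho$th smallest letter of $\alpha$, then both $((\alpha^{c_{N}})^{c})_{k}$ and $((\alpha^{c})^{c_{N}})_{k}$ work out to the $\rho$th smallest element of $c_{N}(\{\,\text{letters of }\alpha\,\})$, using only that $c_{N}$ reverses order.

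With these two facts in hand, I would take $\hat\pi=(\pi^{c_{N}})^{c}$ and $\hat\sigma=(\sigma^{c_{N}})^{c}$. Since $(\cdot)^{c}$ and $(\cdot)^{c_{N}}$ are involutions that reverse relative order, the composite $(\cdot)^{c_{N}}$ followed by $(\cdot)^{c}$ restores relative order, so $\hat\pi$ and $\hat\sigma$ have the same relative order as $\pi$ and $\sigma$; and their letter sets are the (disjoint) $c_{N}$-images of the letter sets of $\pi$ and $\sigma$, hence disjoint. The first required identity reduces to $\hat\pi^{c}=\pi^{c_{N}}$ and $\hat\sigma^{c}=\sigma^{c_{N}}$ (immediate since $(\cdot)^{c}$ is an involution), giving $S(\hat\pi^{c},\hat\sigma^{c})=S(\pi^{c_{N}},\sigma^{c_{N}})=S(\pi,\sigma)^{c}$. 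For the second identity, the combined alphabet of $\hat\pi$ and $\hat\sigma$ is again $N$, so the first driving fact gives $S(\hat\pi,\hat\sigma)^{c}=S(\hat\pi^{c_{N}},\hat\sigma^{c_{N}})$; the commutation identity together with $(\cdot)^{c_{N}}$ being an involution yields $\hat\pi^{c_{N}}=((\pi^{c_{N}})^{c_{N}})^{c}=\pi^{c}$ and likewise $\hat\sigma^{c_{N}}=\sigma^{c}$, so $S(\hat\pi,\hat\sigma)^{c}=S(\pi^{c},\sigma^{c})$, as needed.

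Finally, for reverse-complementation I would reuse the same $\hat\pi$ and $\hat\sigma$. Reversal commutes with shuffling (as in the reversion case) and also commutes with both $c_{N}$ and internal complementation, since it acts on positions while the complementations act on values. Thus $S(\pi,\sigma)^{rc}=S(\pi^{rc_{N}},\sigma^{rc_{N}})$, where $\pi^{rc_{N}}\coloneqq(\pi^{c_{N}})^{r}$, and in the analogous construction $\hat\pi=(\pi^{rc_{N}})^{rc}$ the two reversals cancel, leaving exactly $\hat\pi=(\pi^{c_{N}})^{c}$ as before. The verification of the two identities then runs verbatim with $c$ replaced by $rc$ and $c_{N}$ by $rc_{N}$, again invoking the commutation of internal and $N$-relative complementation. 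The only real obstacle throughout is keeping the internal complement $(\cdot)^{c}$ and the ambient complement $(\cdot)^{c_{N}}$ straight; once their commutation is established, all of the identities for $c$ and $rc$ follow by formal manipulation.
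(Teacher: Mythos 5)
Your proof is correct and follows essentially the same route as the paper: reversion handled trivially with $\hat\pi=\pi$, $\hat\sigma=\sigma$; complementation handled by introducing the order-reversing relabeling of the combined alphabet (your $c_N$ is the paper's $\rho$, and your $\hat\pi=(\pi^{c_N})^{c}$ coincides with the paper's $\hat\pi=\rho(\pi^{c})$ via the commutation identity you prove); and reverse-complementation obtained by composing the two. The only difference is bookkeeping: you verify the commutation $(\alpha^{c_N})^{c}=(\alpha^{c})^{c_N}$ explicitly, which the paper dismisses as easy to see.
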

\begin{proof}
It is clear that $S(\pi^{r},\sigma^{r})=S(\pi,\sigma)^{r}$, so by
taking $\hat{\pi}=\pi$ and $\hat{\sigma}=\sigma$, the equalities
$S(\hat{\pi}^{r},\hat{\sigma}^{r})=S(\pi,\sigma)^{r}$ and $S(\pi^{r},\sigma^{r})=S(\hat{\pi},\hat{\sigma})^{r}$
come for free. Thus reversion is shuffle-compatibility-preserving.

Unlike with reversion, it is not true in general that $S(\pi^{c},\sigma^{c})=S(\pi,\sigma)^{c}$.
For disjoint permutations $\pi=\pi_{1}\pi_{2}\cdots\pi_{m}$ and $\sigma=\sigma_{1}\sigma_{2}\cdots\sigma_{n}$,
let $P=\{\pi_{1},\dots,\pi_{m},\sigma_{1},\dots,\sigma_{n}\}$ be
the set of letters appearing in $\pi$ and $\sigma$, and let $\rho:P\rightarrow P$
be the map sending the $i$th smallest letter of $P$ to the $i$th
largest letter of $P$ for every $i$. By an abuse of notation, let
$\rho(\pi)$ denote the permutation $\rho(\pi_{1})\rho(\pi_{2})\cdots\rho(\pi_{m})$
obtained by applying $\rho$ to each letter in $\pi$. Then, let $\hat{\pi}=\rho(\pi^{c})$
and $\hat{\sigma}=\rho(\sigma^{c})$. For example, let $\pi=413$
and $\sigma=25$. Then $P=[5]$, $\pi^{c}=143$, and $\sigma^{c}=52$,
and so $\hat{\pi}=523$ and $\hat{\sigma}=14$. Clearly, $\pi$ has
the same relative order as $\hat{\pi}$, and similarly with $\sigma$
and $\hat{\sigma}$. It is also easy to see that $\rho(\pi)=\widehat{\pi^{c}}=\hat{\pi}^{c}$
and $\rho(\sigma)=\widehat{\sigma^{c}}=\hat{\sigma}^{c}$.

To see that $S(\hat{\pi}^{c},\hat{\sigma}^{c})=S(\pi,\sigma)^{c}$,
first let $\tau\in S(\pi,\sigma)$. Then $\tau$ contains both $\pi$
and $\sigma$ as subsequences, and to show that $\tau^{c}\in S(\hat{\pi}^{c},\hat{\sigma}^{c})$,
it suffices to show that $\tau^{c}$ contains both $\hat{\pi}^{c}=\rho(\pi)$
and $\hat{\sigma}^{c}=\rho(\sigma)$ as subsequences. However, this
follows from the fact that, when taking the complement of $\tau$,
the subsequence $\pi$ appearing in $\tau$ is transformed into $\rho(\pi)$,
and similarly $\sigma$ turns into $\rho(\sigma)$. The other inclusion
follows by the same reasoning, and the equality $S(\pi^{c},\sigma^{c})=S(\hat{\pi},\hat{\sigma})^{c}$
follows directly from $S(\hat{\pi}^{c},\hat{\sigma}^{c})=S(\pi,\sigma)^{c}$
and replacing $\pi$ and $\sigma$ with $\pi^{c}$ and $\sigma^{c}$,
respectively. Hence complementation is shuffle-compatibility-preserving.

Finally, the equalities $S(\pi^{r},\sigma^{r})=S(\pi,\sigma)^{r}$,
$S(\hat{\pi}^{c},\hat{\sigma}^{c})=S(\pi,\sigma)^{c}$, and $S(\pi^{c},\sigma^{c})=S(\hat{\pi},\hat{\sigma})^{c}$
imply $S(\hat{\pi}^{rc},\hat{\sigma}^{rc})=S(\pi,\sigma)^{rc}$ and
$S(\pi^{rc},\sigma^{rc})=S(\hat{\pi},\hat{\sigma})^{rc}$. Thus reverse-complementation
is shuffle-compatibility-preserving.
\end{proof}
\begin{cor} \label{c-rcsc}
Suppose that $\st_{1}$ and $\st_{2}$ are $r$-equivalent, $c$-equivalent,
or $rc$-equivalent statistics. If $\st_{1}$ is shuffle-compatible
with shuffle algebra ${\cal A}_{\st_{1}}$, then $\st_{2}$ is also
shuffle-compatible with shuffle algebra ${\cal A}_{\st_{2}}$ isomorphic
to ${\cal A}_{\st_{1}}$.
\end{cor}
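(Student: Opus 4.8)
The plan is to observe that this corollary is an immediate consequence of the two results that directly precede it, namely Theorem \ref{t-fsc} and the Lemma stating that reversion, complementation, and reverse-complementation are shuffle-compatibility-preserving. Theorem \ref{t-fsc} is stated for an arbitrary shuffle-compatibility-preserving involution $f$: whenever $\st_1$ and $\st_2$ are $f$-equivalent and $\st_1$ is shuffle-compatible, it concludes that $\st_2$ is shuffle-compatible with ${\cal A}_{\st_2}\cong{\cal A}_{\st_1}$. All the substantive work — constructing the auxiliary permutations $\hat\pi,\hat\sigma$ and verifying that $\varphi_f$ is an algebra isomorphism — has already been carried out there, so no new argument is needed at this stage.

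Concretely, I would dispatch the three cases in parallel. Suppose $\st_1$ and $\st_2$ are $r$-equivalent (respectively $c$-equivalent, respectively $rc$-equivalent). Take $f=r$ (respectively $f=c$, respectively $f=rc$). The preceding Lemma guarantees that this choice of $f$ is shuffle-compatibility-preserving, so the hypotheses of Theorem \ref{t-fsc} are satisfied. Applying that theorem yields precisely the desired conclusion: $\st_2$ is shuffle-compatible and ${\cal A}_{\st_2}$ is isomorphic to ${\cal A}_{\st_1}$.

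Since the corollary is a direct specialization of Theorem \ref{t-fsc} to the three named involutions, there is no genuine obstacle remaining; the only thing to be careful about is matching each notion of equivalence ($r$-, $c$-, or $rc$-equivalence) with the corresponding involution ($r$, $c$, or $rc$) and citing the Lemma to confirm shuffle-compatibility-preservation in each case. I would therefore keep the write-up to a single short paragraph invoking both prior results.

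\medskip
\noindent\textbf{Proof.} By the preceding Lemma, each of reversion, complementation, and reverse-complementation is shuffle-compatibility-preserving. Hence, if $\st_1$ and $\st_2$ are $r$-equivalent, $c$-equivalent, or $rc$-equivalent, we may apply Theorem \ref{t-fsc} with $f$ equal to $r$, $c$, or $rc$, respectively. In each case the hypotheses of Theorem \ref{t-fsc} are met, so we conclude that $\st_2$ is shuffle-compatible and that its shuffle algebra ${\cal A}_{\st_2}$ is isomorphic to ${\cal A}_{\st_1}$. \qed
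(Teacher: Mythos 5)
Your proposal is correct and is exactly how the paper derives this result: the corollary is stated immediately after Theorem \ref{t-fsc} and the lemma on shuffle-compatibility-preservation, with the intended (and only needed) argument being to specialize $f$ to $r$, $c$, or $rc$ in Theorem \ref{t-fsc} and invoke the lemma to check its hypothesis. No gap; your write-up matches the paper's approach.
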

For example, since $\maj$ and $\comaj$ are $rc$-equivalent, it
follows from Theorem \ref{t-majsc} and Corollary \ref{c-rcsc} that $\comaj$
is shuffle-compatible and that its shuffle algebra ${\cal A}_{\comaj}$
is isomorphic to ${\cal A}_{\maj}$.

\subsection{A note on Hadamard products}

The operation
of \textit{Hadamard product} $*$ on formal power series in $t$ is given by 
\[
\biggl(\sum_{n=0}^{\infty}a_{n}t^{n}\biggr)*\biggl(\sum_{n=0}^{\infty}b_{n}t^{n}\biggr)\coloneqq\sum_{n=0}^{\infty}a_{n}b_{n}t^{n}.
\]

Many shuffle algebras that we study in this paper can be characterized as subalgebras of various algebras in which the multiplication is the Hadamard product in a variable $t$. In the notation for these algebras, we write $t*$ to indicate that multiplication is the Hadamard product in $t$. For example, $\mathbb{Q}[[t*,q]][x]$ is the algebra of polynomials in $x$ whose coefficients are formal power series in $t$ and $q$, where  multiplication is  ordinary multiplication in the variables $x$ and $q$ but is the Hadamard product in $t$.

We note that the Hadamard product is only used in descriptions of shuffle algebras and in the proof of Lemma \ref{l-monoidlike2}, where $t^m * t^n$ denotes the Hadamard product of $t^m$ and $t^n$. (Here, $t^m$ is the ordinary product of $m$ copies of $t$ and similarly with $t^n$.) All other expressions should be interpreted as using ordinary multiplication. For instance, any expression with an exponent such as $t^k$ or $(1+yt)^k$ is ordinary multiplication, and $(1-tf)^{-1}$ (as in Corollary \ref{c-monoidlike2}) denotes
$\sum_{k=0}^\infty t^k f^k$.

\section{Quasisymmetric functions and shuffle-compatibility}
\label{s-section4}

\subsection{The descent set shuffle algebra \texorpdfstring{$\QSym$}{QSym}}

A formal power series $f\in\mathbb{Q}[[x_{1},x_{2},\dots]]$ of bounded degree in countably
many commuting variables $x_{1},x_{2},\dots$  is
called a \textit{quasisymmetric function} if for any
positive integers $a_{1},a_{2},\dots,a_{k}$, if 
$i_{1}<i_{2}<\cdots<i_{k}$ and $j_{1}<j_{2}<\cdots<j_{k}$, then
\[
[x_{i_{1}}^{a_{1}}x_{i_{2}}^{a_{2}}\cdots x_{i_{k}}^{a_{k}}]\,f=[x_{j_{1}}^{a_{1}}x_{j_{2}}^{a_{2}}\cdots x_{j_{k}}^{a_{k}}]\,f.
\]

It is clear that every symmetric function is quasisymmetric, but not
every quasisymmetric function is symmetric. For example, $\sum_{i<j<k}x_{i}^{2}x_{j}x_{k}$
is quasisymmetric, but it is not symmetric because $x_{1}^{2}x_{2}x_{3}$
appears as a term yet $x_{1}x_{2}^{2}x_{3}$ does not.

Let $L \vDash n$ indicate that $L$ is a composition of $n$, and let $\QSym_{n}$ be the set of quasisymmetric functions homogeneous
of degree $n$, which is clearly a vector space. For a composition $L=(L_1,L_2,\dots, L_k)$, the \emph{monomial quasisymmetric function} $M_L$ is defined by
\begin{equation*}
M_L \coloneqq \sum_{i_1<i_2<\dots<i_k}x_{i_1}^{L_1}x_{i_2}^{L_2}\dots x_{i_k}^{L_k}
\end{equation*}
It is clear that $\{M_L\}_{L\vDash n}$ is a basis for $\QSym_n$, so  for $n\ge1$, 
$\QSym_n$ has dimension $2^{n-1}$, the number of compositions of $n$. 

Another important basis for $\QSym_{n}$ (and the
most important basis for our purposes) is the basis of \textit{fundamental
quasisymmetric functions} $\{F_{L}\}_{L\vDash n}$ given by 
\[
F_{L}\coloneqq\sum_{\substack{i_{1}\leq i_{2}\leq\cdots\leq i_{n}\\
i_{j}<i_{j+1}\,\mathrm{if}\, j\in\Des(L)
}
}x_{i_{1}}x_{i_{2}}\cdots x_{i_{n}}.
\]
It is easy to see that 
\begin{equation}
\label{e-FtoM}
F_L = \sum_{\substack{\Des(K)\supseteq \Des(L)\\|K|=|L|}} M_K,
\end{equation}
so by inclusion-exclusion, $M_K$ can be expressed as a linear combination of the $F_L$. It follows that $\{F_{L}\}_{L\vDash n}$ spans $\QSym_n$, so this set must be a basis for $\QSym_n$ since it has the correct number of elements.

The product of two quasisymmetric functions is quasisymmetric, with
the product formula for the fundamental basis given by the following
theorem, which may be proved using $P$-partitions; see \cite[Exercise 7.93]{Stanley2001}.
This theorem may also be derived from Lemma \ref{t-des-sc}.

\begin{thm}
\label{t-fqsym} Let $c_{J,K}^{L}$ be the number of permutations
with descent composition $L$ among the shuffles of a permutation $\pi$ 
with descent composition $J$ and a permutation $\sigma$ \emph{(}disjoint from $\pi$\emph{)} with descent composition $K$. Then 
\begin{equation}
F_{J}F_{K}=\sum_{L}c_{J,K}^{L}F_{L}.\label{e-fundshuffle}
\end{equation}
\end{thm}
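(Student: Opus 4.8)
The plan is to prove the product formula \eqref{e-fundshuffle} by interpreting both sides as generating functions for pairs of monotone integer sequences, and then identifying the coefficients combinatorially. First I would expand each fundamental quasisymmetric function $F_J$ and $F_K$ according to its definition as a sum over weakly increasing sequences of positive-integer indices subject to strict-increase conditions at the descents of $J$ and $K$, respectively. Since $F_J$ and $F_K$ are power series in the same commuting variables $x_1, x_2, \dots$, their product $F_J F_K$ is a sum over pairs of such sequences, and the coefficient of a monomial $x_{i_1} x_{i_2} \cdots x_{i_n}$ (with $i_1 \le i_2 \le \cdots \le i_n$) in $F_J F_K$ counts the ways to interleave a $J$-compatible sequence of length $m=|J|$ with a $K$-compatible sequence of length $n-m=|K|$ so that the merged multiset of indices is exactly $\{i_1, \dots, i_n\}$ as a weakly increasing sequence.

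The key step is to recognize that this interleaving data is exactly what Lemma~\ref{t-des-sc} counts. Fix disjoint permutations $\pi$ of descent composition $J$ and $\sigma$ of descent composition $K$, and fix a weakly increasing index sequence $i_1 \le \cdots \le i_n$. I would argue that specifying which positions of the merged sequence come from $\pi$ versus $\sigma$ is the same as specifying a shuffle $\tau \in S(\pi,\sigma)$, and that the strict-increase constraints (inherited from the descents of $J$ and $K$, plus the forced strict increases wherever consecutive merged letters straddle the two sequences in a descending way) are precisely the condition $\Des(\tau) \subseteq \{\,j : i_j < i_{j+1}\,\}$. Concretely, the monomials $x_{i_1} \cdots x_{i_n}$ appearing in $F_\tau$ are exactly those whose strict-ascent set $\{\,j : i_j < i_{j+1}\,\}$ contains $\Des(\tau)$, so summing the fundamental expansion over all shuffles gives $\sum_{\tau \in S(\pi,\sigma)} F_{\Comp(\tau)} = F_J F_K$; collecting shuffles by their descent composition $L$ then yields the coefficient $c_{J,K}^L$ as defined.

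Alternatively, and more in the spirit of the remark that the theorem ``may also be derived from Lemma~\ref{t-des-sc},'' I would instead compute the coefficient of $M_A$ (equivalently the coefficient $[F_L]$ after \eqref{e-FtoM}) directly. Using \eqref{e-FtoM} to pass between the $M$ and $F$ bases, I would show that the coefficient of $F_L$ in $F_J F_K$ equals the number of shuffles $\tau \in S(\pi,\sigma)$ with $\Comp(\tau)=L$, which is exactly $c_{J,K}^L$: Lemma~\ref{t-des-sc} gives, for each $A$ with $L = \Comp(A)$, the number of shuffles with descent set contained in $A$ as a sum over pairs of refining weak compositions $(J', K')$ with $J'+K'=L$, and inclusion–exclusion (as already invoked after Lemma~\ref{t-des-sc}) converts ``contained in $A$'' to ``equal to $A$.''

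The main obstacle I anticipate is bookkeeping the strict-versus-weak inequalities correctly when merging the two index sequences — specifically verifying that a descent of the shuffle $\tau$ forces a strict increase in the index sequence exactly at the corresponding position, with no spurious constraints at boundaries between the $\pi$-block and $\sigma$-block. Handling this cleanly is essentially the content of the $P$-partition argument; once the correspondence between index-sequence constraints and $\Des(\tau)$ is pinned down, the identification of $c_{J,K}^L$ with the structure constants is immediate, and the well-definedness (independence of the choice of $\pi, \sigma$) follows from the shuffle-compatibility of the descent set already established via Lemma~\ref{t-des-sc}.
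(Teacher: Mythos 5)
Your proposal is correct and takes essentially the same approach as the paper, which gives no detailed proof of its own but points to exactly the two routes you sketch: the $P$-partition argument (citing Stanley, Exercise 7.93) and the derivation from Lemma \ref{t-des-sc} via the monomial expansion and inclusion--exclusion. The bookkeeping obstacle you identify --- that among the interleavings of equal indices exactly one is consistent with $\Des(\tau)\subseteq\{\,j : i_j<i_{j+1}\,\}$ --- is indeed the crux of the $P$-partition argument, so your plan correctly isolates the only nontrivial step.
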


If $f\in\QSym_{m}$ and $g\in\QSym_{n}$, then $fg\in\QSym_{m+n}$.
Thus $\QSym\coloneqq\bigoplus_{n=0}^{\infty}\QSym_{n}$ is a graded
$\mathbb{Q}$-algebra called the \textit{algebra of quasisymmetric
functions} with coefficients in $\mathbb{Q}$, a subalgebra of $\mathbb{Q}[[x_{1},x_{2},\dots]]$. Motivated by Richard Stanley's theory of $P$-partitions, the first
author introduced quasisymmetric functions in \cite{Gessel1984} and
developed the basic algebraic properties of $\QSym$. Further properties of $\QSym$ and connections with many topics of study in combinatorics and algebra were developed in the subsequent decades.
Basic references include \cite[Section 7.19]{Stanley2001}, \cite[Section 5]{Grinberg2014},
and \cite{Luoto2013}.

Observe that Theorem \ref{t-fqsym} implies that $\QSym$ is isomorphic
to the shuffle algebra for the descent set with the fundamental basis
corresponding to the basis of $\Des$-equivalence classes.

\goodbreak
\begin{cor}[Shuffle-compatibility of the descent set]
\leavevmode
\begin{itemize}
\item [\normalfont{(a)}] The descent set $\Des$ is shuffle-compatible. 
\item [\normalfont{(b)}] The linear map on ${\cal A}_{\Des}$ defined by 
\[
[\pi]_{\Des}\mapsto F_{\Comp(\pi)}
\]
is a $\mathbb{Q}$-algebra isomorphism from ${\cal A}_{\Des}$ to $\QSym$.
\end{itemize}

\end{cor}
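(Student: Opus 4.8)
The plan is to show that the corollary follows almost immediately from Theorem \ref{t-fqsym} once we identify the two algebraic structures involved. Recall that the shuffle algebra $\mathcal{A}_{\Des}$ has as a basis the $\Des$-equivalence classes $[\pi]_{\Des}$, where two permutations are $\Des$-equivalent exactly when they have the same length and the same descent set---equivalently, the same descent composition $\Comp(\pi)$. So the basis of $\mathcal{A}_{\Des}$ is naturally indexed by compositions, just like the fundamental basis $\{F_L\}$ of $\QSym$. First I would define the linear map $\psi\colon\mathcal{A}_{\Des}\to\QSym$ on basis elements by $[\pi]_{\Des}\mapsto F_{\Comp(\pi)}$ and observe that it is a bijection on bases (both the $\Des$-equivalence classes of length-$n$ permutations and the fundamental quasisymmetric functions $F_L$ are indexed by compositions $L\vDash n$), hence a linear isomorphism of graded vector spaces.

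The crux is then to verify that $\psi$ respects multiplication. On the shuffle-algebra side, the product is $[\pi]_{\Des}[\sigma]_{\Des}=\sum_{\tau\in S(\pi,\sigma)}[\tau]_{\Des}$ for disjoint representatives $\pi,\sigma$; grouping the shuffles $\tau$ by their descent composition $L$, this equals $\sum_L c^L_{J,K}[\tau_L]_{\Des}$ where $J=\Comp(\pi)$, $K=\Comp(\sigma)$, and $c^L_{J,K}$ is precisely the structure constant counting shuffles with descent composition $L$, as defined in Theorem \ref{t-fqsym}. Applying $\psi$ gives $\sum_L c^L_{J,K}F_L$, which by \eqref{e-fundshuffle} is exactly $F_J F_K=\psi([\pi]_{\Des})\,\psi([\sigma]_{\Des})$. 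This is the whole content of the multiplicativity check, and it is essentially a restatement of Theorem \ref{t-fqsym}.

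There is one subtlety worth flagging as the main (minor) obstacle: the well-definedness of the product in $\mathcal{A}_{\Des}$, i.e., that the right-hand side $\sum_{\tau\in S(\pi,\sigma)}[\tau]_{\Des}$ does not depend on the chosen representatives $\pi,\sigma$ of their $\Des$-equivalence classes. This is exactly part (a), the shuffle-compatibility of $\Des$. I would derive this directly from Theorem \ref{t-fqsym}: since the structure constants $c^L_{J,K}$ depend only on $J=\Comp(\pi)$ and $K=\Comp(\sigma)$ (and hence only on the descent sets and lengths), the multiset $\{\Des(\tau):\tau\in S(\pi,\sigma)\}$ depends only on $\Des(\pi)$, $\Des(\sigma)$, $|\pi|$, and $|\sigma|$. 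Thus $\Des$ is shuffle-compatible, establishing (a) and simultaneously guaranteeing that $\mathcal{A}_{\Des}$ and the map $\psi$ are genuinely well-defined. With well-definedness and multiplicativity in hand, $\psi$ is a graded $\mathbb{Q}$-algebra isomorphism, proving (b).

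Alternatively, one could bypass Theorem \ref{t-fqsym} and appeal directly to Lemma \ref{t-des-sc}, which already shows (via inclusion-exclusion, as noted in the paragraph following its proof) that $\Des$ is shuffle-compatible; but routing through Theorem \ref{t-fqsym} is cleaner since that theorem packages the shuffle structure constants into the product formula for $\QSym$, making the isomorphism transparent. I expect no serious difficulty here---the entire result is, by design, a reinterpretation of the fundamental-basis product rule---so the write-up should be short.
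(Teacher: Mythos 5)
Your proposal is correct and follows essentially the same route as the paper, which states this corollary as an immediate consequence of Theorem \ref{t-fqsym}: the product rule $F_J F_K=\sum_L c^L_{J,K}F_L$, together with the uniqueness of expansions in the basis $\{F_L\}$, simultaneously gives the well-definedness of the structure constants (part (a)) and the multiplicativity of the map $[\pi]_{\Des}\mapsto F_{\Comp(\pi)}$ (part (b)). Your closing remark about alternatively invoking Lemma \ref{t-des-sc} also mirrors the paper, which notes that Theorem \ref{t-fqsym} itself can be derived from that lemma.
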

Now, let $\st$ be a descent statistic. Then not only does $\st$
induce a equivalence relation on permutations, but it also induces
a equivalence relation on compositions because permutations with the
same descent composition are necessarily $\st$-equivalent.

We establish
a necessary and sufficient condition for the shuffle-compatibility
of a descent statistic, which will also imply that the shuffle algebra
of any shuffle-compatible descent statistic is isomorphic to a quotient
of $\QSym$.
\begin{thm}
\label{t-scqsym} 
A descent statistic $\st$ is shuffle-compatible
if and only if there exists a $\mathbb{Q}$-algebra homomorphism $\phi_{\st}\colon\QSym\rightarrow A$,
where $A$ is a $\mathbb{Q}$-algebra with basis $\{u_{\alpha}\}$
indexed by $\st$-equivalence classes $\alpha$ of compositions, such
that $\phi_{\st}(F_{L})=u_{\alpha}$ whenever $L\in\alpha$. In this
case, the linear map on ${\cal A}_{\st}$ defined by 
\[
[\pi]_{\st}\mapsto u_{\alpha},
\]
where $\Comp(\pi)\in\alpha$, is a $\mathbb{Q}$-algebra isomorphism
from ${\cal A}_{\st}$ to $A$.
\end{thm}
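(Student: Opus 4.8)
The plan is to prove this as a direct application of Theorem~\ref{t-quots}, taking $\st_1 = \Des$ and $\st_2 = \st$. The descent set is shuffle-compatible (by the corollary just proved, or by Lemma~\ref{t-des-sc}), and since $\st$ is a descent statistic, $\Des$ is a refinement of $\st$: two permutations of the same length with the same descent set necessarily have the same descent composition, hence the same value of $\st$. Moreover, by the corollary, the descent set shuffle algebra $\mathcal{A}_{\Des}$ is isomorphic to $\QSym$, with the $\Des$-equivalence class $[\pi]_{\Des}$ corresponding to the fundamental quasisymmetric function $F_{\Comp(\pi)}$. Under this identification, $\st_1$-equivalence classes $\beta$ correspond to compositions $L$, and the $\st_2$-equivalence class containing $\beta$ is exactly the $\st$-equivalence class $\alpha$ of compositions with $L \in \alpha$.

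With these identifications in place, the two directions split naturally. For the ``if'' direction, I would assume the algebra homomorphism $\phi_{\st}\colon \QSym \to A$ exists with $\phi_{\st}(F_L) = u_\alpha$ whenever $L \in \alpha$. Transporting $\phi_{\st}$ across the isomorphism $\mathcal{A}_{\Des} \cong \QSym$ yields a homomorphism $\mathcal{A}_{\Des} \to A$ sending each $\Des$-equivalence class $\beta$ (i.e.\ $F_L$) to $u_\alpha$, where $\alpha$ is the $\st$-equivalence class containing $\beta$. This is precisely the hypothesis of Theorem~\ref{t-quots}, which then immediately gives that $\st$ is shuffle-compatible and that $u_\alpha \mapsto \alpha$ extends to an isomorphism $A \to \mathcal{A}_{\st}$; its inverse is the claimed map $[\pi]_{\st} \mapsto u_\alpha$ with $\Comp(\pi) \in \alpha$.

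For the ``only if'' direction, I would suppose $\st$ is shuffle-compatible and construct $\phi_{\st}$ explicitly. Take $A = \mathcal{A}_{\st}$ with basis $\{u_\alpha\} = \{[\pi]_{\st}\}$ indexed by $\st$-equivalence classes, and define $\phi_{\st}\colon \QSym \to \mathcal{A}_{\st}$ by $\phi_{\st}(F_L) = [\pi]_{\st}$ where $\pi$ is any permutation with $\Comp(\pi) = L$; this is well-defined on the fundamental basis and extends linearly. The content is checking that $\phi_{\st}$ is multiplicative: using Theorem~\ref{t-fqsym}, $F_J F_K = \sum_L c^L_{J,K} F_L$, where $c^L_{J,K}$ counts shuffles with descent composition $L$, so applying $\phi_{\st}$ gives $\sum_L c^L_{J,K}[\tau_L]_{\st}$, which one recognizes as $\sum_{\tau \in S(\pi,\sigma)}[\tau]_{\st} = [\pi]_{\st}[\sigma]_{\st}$ exactly because $\st$ is a descent statistic (so the value of $\st$ on a shuffle depends only on its descent composition) and is shuffle-compatible (so this product is well-defined). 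This matches $\phi_{\st}(F_J)\phi_{\st}(F_K)$, confirming the homomorphism property and the defining condition $\phi_{\st}(F_L) = u_\alpha$ for $L \in \alpha$.

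The only real obstacle is bookkeeping: one must be careful that ``$\st$-equivalence class of compositions'' and ``$\st$-equivalence class of permutations of a fixed length'' are interchangeable here, which holds precisely because $\st$ is a descent statistic, so that a composition $L$ unambiguously determines the value $\st(L)$. Everything else is a routine transport of structure along the isomorphism $\mathcal{A}_{\Des} \cong \QSym$ combined with the already-established Theorem~\ref{t-quots}, so I expect the proof to be short.
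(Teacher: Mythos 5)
Your proposal is correct and follows essentially the same route as the paper: the ``only if'' direction is proved by taking $A=\mathcal{A}_{\st}$ and verifying multiplicativity of $\phi_{\st}(F_L)=[\pi]_{\st}$ via Theorem \ref{t-fqsym}, and the ``if'' direction is obtained by invoking Theorem \ref{t-quots} with $\st_1=\Des$ and $\st_2=\st$, exactly as the paper does. Your explicit attention to the identification between $\st$-equivalence classes of compositions and of permutations is a point the paper leaves implicit, but the substance of the argument is identical.
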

\begin{proof}
Suppose that $\st$ is a shuffle-compatible descent statistic. Let $A=\mathcal{A}_{\st}$ be the shuffle algebra of $\st$, and let $u_{\alpha} = [\pi]_{\st}$ for any $\pi$ satisfying $\Comp(\pi)\in\alpha$, so that
\[
u_{\beta}u_{\gamma}=\sum_{\alpha}c_{\beta,\gamma}^{\alpha}u_{\alpha}
\]
where $c_{\beta,\gamma}^{\alpha}$ is the number of permutations with
descent composition in $\alpha$ that are obtained as a shuffle of
a permutation $\pi$ with descent composition in $\beta$ and a permutation $\sigma$ (disjoint from $\pi$) with descent composition in $\gamma$. Observe that $c_{\beta,\gamma}^{\alpha}=\sum_{L\in\alpha}c_{J,K}^{L}$
for any choice of $J\in\beta$ and $K\in\gamma$, where as before
$c_{J,K}^{L}$ is the number of permutations with descent composition
$L$ that are obtained as a shuffle of a permutation $\pi$ with descent
composition $J$ and a permutation $\sigma$ (disjoint from $\pi$) with descent composition $K$.

Define the linear map $\phi_{\st}\colon\QSym\rightarrow A$ by $\phi_{\st}(F_{L})=u_{\alpha}$
for $L\in\alpha$. Then any $J\in\beta$ and $K\in\gamma$ satisfy
\begin{align*}
\phi_{\st}(F_{J}F_{K}) & =\phi_{\st}\Big(\sum_{L}c_{J,K}^{L}F_{L}\Big)\\
 & =\sum_{L}c_{J,K}^{L}\phi_{\st}(F_{L})\\
 & =\sum_{\alpha}\sum_{L\in\alpha}c_{J,K}^{L}u_{\alpha}\\
 & =\sum_{\alpha}c_{\beta,\gamma}^{\alpha}u_{\alpha}\\
 & =u_{\beta}u_{\gamma}\\
 & =\phi_{\st}(F_{J})\phi_{\st}(F_{K}),
\end{align*}
so $\phi_{\st}$ is a $\mathbb{Q}$-algebra homomorphism, thus completing
one direction of the proof.
The converse follows directly from Theorem \ref{t-quots}.\end{proof}

It is immediate from Theorem \ref{t-scqsym} that when $\st$ is shuffle-compatible,
its shuffle algebra is isomorphic to $\QSym/\ker(\phi_{\st})$.
\begin{cor}
The shuffle algebra of every shuffle-compatible descent statistic
is isomorphic to a quotient algebra of $\QSym$.
\end{cor}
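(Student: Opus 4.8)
The plan is to derive this directly from Theorem~\ref{t-scqsym} together with the first isomorphism theorem for $\mathbb{Q}$-algebras. Let $\st$ be a shuffle-compatible descent statistic. Theorem~\ref{t-scqsym} furnishes a $\mathbb{Q}$-algebra $A$ with basis $\{u_\alpha\}$ indexed by the $\st$-equivalence classes $\alpha$ of compositions, a $\mathbb{Q}$-algebra homomorphism $\phi_{\st}\colon\QSym\to A$ satisfying $\phi_{\st}(F_L)=u_\alpha$ whenever $L\in\alpha$, and an algebra isomorphism $\mathcal{A}_{\st}\cong A$. So it suffices to exhibit $A$ itself as a quotient of $\QSym$.

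First I would observe that $\phi_{\st}$ is surjective. Indeed, for every $\st$-equivalence class $\alpha$ of compositions, pick any $L\in\alpha$; then $\phi_{\st}(F_L)=u_\alpha$, so every basis element $u_\alpha$ of $A$ lies in the image of $\phi_{\st}$, whence $\phi_{\st}$ is onto. Applying the first isomorphism theorem to the surjective algebra homomorphism $\phi_{\st}$ then yields $A\cong\QSym/\ker(\phi_{\st})$. Composing with the isomorphism supplied by Theorem~\ref{t-scqsym} gives
\[
\mathcal{A}_{\st}\cong A\cong\QSym/\ker(\phi_{\st}),
\]
which displays $\mathcal{A}_{\st}$ as a quotient algebra of $\QSym$.

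There is no genuine obstacle here: the substantive content lies entirely in Theorem~\ref{t-scqsym}, and once that theorem is in hand the corollary reduces to the trivial surjectivity of $\phi_{\st}$ and a standard application of the first isomorphism theorem. The only point requiring any care is that the $u_\alpha$ are declared to form a \emph{basis} of $A$, which is precisely what guarantees that $\phi_{\st}$ is surjective (rather than merely a homomorphism landing in some subalgebra of $A$), and hence that the quotient by $\ker(\phi_{\st})$ recovers all of $A$.
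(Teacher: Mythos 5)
Your proof is correct and follows essentially the same route as the paper, which simply notes that the corollary is immediate from Theorem~\ref{t-scqsym} via the identification $\mathcal{A}_{\st}\cong\QSym/\ker(\phi_{\st})$. You merely spell out the two standard details the paper leaves implicit (surjectivity of $\phi_{\st}$ and the first isomorphism theorem), which is fine.
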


\subsection{Shuffle-compatibility of \texorpdfstring{$\des$}{des} and \texorpdfstring{$(\des,\maj)$}{(des, maj)}}
\label{s-scdesmaj}

We now use Theorem \ref{t-scqsym} to characterize  the shuffle algebras
of the two other shuffle-compatible statistics mentioned in the introduction:
the descent number $\des$ and the pair $(\des,\maj)$. For the latter,
we will actually characterize the shuffle algebra of $(\des,\comaj)$,
but this is sufficient by Theorem \ref{t-esc} since $(\des,\maj)$
and $(\des,\comaj)$ are equivalent statistics. We note that these characterizations can be derived from Propositions 8.3 and 12.6 of Stanley \cite{Stanley1972} in a related way, though we emphasize the connection with quasisymmetric functions.
We will first prove the result for $(\des,\comaj)$ and then derive from it the result for $\des$ using Theorem \ref{t-quots}.

We denote the set of non-negative integers by $\mathbb{N}$.

\begin{thm}[Shuffle-compatibility of $(\des,\comaj)$]
\label{t-descomajsc} 
\leavevmode
\begin{itemize}
\item [\normalfont{(a)}] The ordered pair $(\des,\comaj)$ is shuffle-compatible.
\item [\normalfont{(b)}] The linear map on ${\cal A}_{(\des,\comaj)}$ defined by
\[
[\pi]_{(\des,\comaj)} \mapsto q^{\comaj(\pi)}\qbinom{p-\des(\pi)+\left|\pi\right|-1}{\left|\pi\right|}x^{\left|\pi\right|}
\]
is a $\mathbb{Q}$-algebra isomorphism from ${\cal A}_{(\des,\comaj)}$
to the span of 
\[
\{1\}\bigcup\left\{ q^{k}\qbinom{p-j+n-1}{n}x^{n}\right\} _{n\geq1,\:0\leq j\leq n-1,\:{j+1 \choose 2}\leq k\leq nj-{j+1 \choose 2}},
\]
a subalgebra of $\mathbb{Q}[q,x]^{\mathbb{N}}$, the algebra of functions 
$\mathbb{N}\rightarrow\mathbb{Q}[q,x]$ in the non-negative
integer variable $p$.
\item [\normalfont{(c)}] The linear map on ${\cal A}_{(\des,\comaj)}$ defined by
\[
[\pi]_{(\des,\comaj)}\mapsto\begin{cases}
\displaystyle{\frac{q^{\comaj(\pi)}t^{\des(\pi)+1}}{(1-t)(1-qt)\cdots(1-q^{\left|\pi\right|}t)}x^{\left|\pi\right|}}, & \text{if }\left|\pi\right|\geq1,\\
1/(1-t), & \text{if }\left|\pi\right|=0,
\end{cases}
\]
is a $\mathbb{Q}$-algebra isomorphism from ${\cal A}_{(\des,\comaj)}$
to the span of 
\[
\left\{ \frac{1}{1-t}\right\} \bigcup\left\{ \frac{q^{k}t^{j+1}}{(1-t)(1-qt)\cdots(1-q^{n}t)}x^{n}\right\} _{n\geq1,\:0\leq j\leq n-1\:,{j+1 \choose 2}\leq k\leq nj-{j+1 \choose 2}},
\]
a subalgebra of $\mathbb{Q}[[t*,q]][x]$.
\item [\normalfont{(d)}] For $n\geq1$, the $n$th homogeneous component
of ${\cal A}_{(\des,\comaj)}$ has dimension ${n \choose 3}+n$.
\end{itemize}
\end{thm}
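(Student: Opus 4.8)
The plan is to apply Theorem~\ref{t-scqsym}. Since $(\des,\comaj)$ is a descent statistic, it suffices to produce a $\mathbb{Q}$-algebra homomorphism out of $\QSym$ whose value on $F_L$ depends only on the $(\des,\comaj)$-equivalence class of $L$, and whose images are linearly independent (so that they form a basis of the target). The key observation is that the map in part~(b) is, up to tracking the degree by a power of $x$, a \emph{principal specialization} of quasisymmetric functions. Concretely, for a composition $L\vDash n$ I would first establish the specialization formula
\[
F_L(1,q,q^2,\dots,q^{p-1}) = q^{\comaj(L)}\qbinom{p-\des(L)+n-1}{n}
\]
for each integer $p\geq 0$. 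This follows from the definition of $F_L$ by counting weakly increasing sequences $1\le i_1\le\cdots\le i_n\le p$ that strictly increase at the descents of $L$, weighted by $q^{(i_1-1)+\cdots+(i_n-1)}$: subtracting $\#\{s\in\Des(L):s<k\}$ from $i_k$ turns the strict inequalities into weak ones, contributes exactly $\sum_{s\in\Des(L)}(n-s)=\comaj(L)$ to the exponent, and leaves a $q$-weighted count of weakly increasing sequences in a box equal to the Gaussian binomial coefficient.

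For parts~(a) and~(b), I would define $\phi_b\colon \QSym\to \mathbb{Q}[q,x]^{\mathbb{N}}$ by $\phi_b(F_L)=F_L(1,q,\dots,q^{p-1})\,x^{|L|}$, read as a function of $p$. Because evaluating the variables at fixed values is a ring homomorphism $\QSym\to\mathbb{Q}[q]$ for each $p$, and because products in $\QSym$ are homogeneous (so the $x^{|L|}$ factors multiply correctly), $\phi_b$ is a $\mathbb{Q}$-algebra homomorphism into the algebra of $\mathbb{Q}[q,x]$-valued functions of $p$ under pointwise multiplication. The specialization formula shows $\phi_b(F_L)$ depends only on $\des(L)$, $\comaj(L)$, and $|L|$, hence only on the equivalence class of $L$. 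Combined with Proposition~\ref{p-comajvalues}, which pins down exactly which pairs $(j,k)$ occur, this identifies the span in the statement, so Theorem~\ref{t-scqsym} will prove (a) and (b) once linear independence of the images is verified.

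For part~(c), I would pass to generating functions in $p$. The map sending $f\colon\mathbb{N}\to\mathbb{Q}[q,x]$ to $\sum_{p\ge0}f(p)\,t^p$ is, by definition of the Hadamard product, an algebra isomorphism from $\mathbb{Q}[q,x]^{\mathbb{N}}$ (pointwise product) onto its image in $\mathbb{Q}[[t*,q]][x]$ (Hadamard product in $t$). Applying this to $\phi_b$ and invoking the classical identity $\sum_{m\ge0}\qbinom{n+m}{n}t^m=\big((1-t)(1-qt)\cdots(1-q^nt)\big)^{-1}$, I would compute
\[
\sum_{p\ge0} q^{\comaj(L)}\qbinom{p-\des(L)+n-1}{n}t^p=\frac{q^{\comaj(L)}\,t^{\des(L)+1}}{(1-t)(1-qt)\cdots(1-q^nt)}
\]
for $n\ge1$ (and $\sum_p t^p=1/(1-t)$ for $n=0$), which is exactly the map of part~(c). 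In this picture linear independence is transparent: at fixed $n$ the denominator is constant, so the images are nonzero scalar multiples of the pairwise distinct monomials $q^k t^{j+1}$. This simultaneously supplies the independence needed to complete (b).

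Finally, part~(d) is a counting argument. By Proposition~\ref{p-comajvalues}, the $(\des,\comaj)$-equivalence classes of compositions of $n\ge1$ are indexed by pairs $(j,k)$ with $0\le j\le n-1$ and $\binom{j+1}{2}\le k\le nj-\binom{j+1}{2}$, giving $nj-2\binom{j+1}{2}+1=j(n-j-1)+1$ classes for each $j$; summing over $j$ and using $\sum_{j=0}^{n-1}j(n-j-1)=\binom{n}{3}$ yields dimension $\binom{n}{3}+n$. I expect the main obstacle to be the bookkeeping in the principal specialization formula—in particular obtaining $\comaj$ rather than $\maj$ and the precise $q$-binomial—together with verifying linear independence of the images, which is what certifies that the target is genuinely the shuffle algebra rather than merely a homomorphic image. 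The generating-function reformulation in~(c) is precisely what makes that independence immediate.
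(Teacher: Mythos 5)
Your proposal is correct and follows essentially the same route as the paper: the paper's proof also specializes $F_L$ at $x_i = q^{i-1}x$ (equivalent to your $F_L(1,q,\dots,q^{p-1})\,x^{|L|}$) to get $q^{\comaj(L)}\qbinom{p-\des(L)+n-1}{n}x^n$, invokes Theorem \ref{t-scqsym} with Proposition \ref{p-comajvalues}, passes to generating functions $\sum_p f(p)t^p$ with the same $q$-binomial identity to obtain part (c) and the needed linear independence, and counts equivalence classes identically for part (d).
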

\begin{proof}
We first prove parts (a) and (b). 
For $p$ a positive integer and $f$ a quasisymmetric function, let
\begin{equation*}
\phi^{(p)}_{(\comaj,\des)}(f) = f(x,qx,\dots, q^{p-1}x)
\end{equation*}
and let $\phi^{(0)}_{(\comaj,\des)}(f)$ be the constant term in $f$. 
It is clear that
$\phi^{(p)}_{(\comaj,\des)}$ is a homomorphism from $\QSym$ to $\mathbb{Q}[q,x]$,
so the map that takes $f$ to the function $p\mapsto f(x,qx,\dots, q^{p-1}x)$ is a homomorphism from $\QSym$ to $\mathbb{Q}[q,x]^\mathbb{N}$.

If $L$ is a composition of $n\ge1$, then
\begin{align*}
F_{L}(x,qx,\dots,q^{p-1}x) & =\sum_{\substack{0\leq i_{1}\leq\cdots\leq i_{n}\leq p-1\\
i_{j}<i_{j+1}\,\mathrm{if}\,j\in\Des(L)
}
}q^{i_{1}+\cdots+i_{n}-n}x^n\\
 & =q^{e(L)}\sum_{0\leq r_{1}\leq\cdots\leq r_{n}\leq p-1-\des(L)}q^{r_{1}+\cdots+r_{n}}x^n,
\end{align*}
where 
\[
r_{j}=i_{j}-\left|\left\{\, k : k\in \Des(L)\text{ and } k<j\,\right\} \right|
\]
and 
\[
e(L)=\sum_{j=1}^{n}\left|\left\{\, k: k\in\Des(L)\text{ and } k<j\,\right\} \right|=\comaj(L).
\]
Since
\[
\sum_{0\leq r_{1}\leq\cdots\leq r_{n}\leq p-1-\des(L)}q^{r_{1}+\cdots+r_{n}}=\qbinom{p-\des(L)+n-1}{n}
\]
\cite[Proposition 1.7.3]{Stanley2011}, it follows that
\[
\phi^{(p)}_{(\comaj,\des)}(F_L) =q^{\comaj(L)}\qbinom{p-\des(L)+n-1}{n}x^n,
\]
and for $n=0$ we have $\phi^{(p)}_{(\comaj,\des)}(F_\varnothing)=1$.

Furthermore, it follows from the formula between equations (1.86)
and (1.87) in \cite{Stanley2011} (a form of the $q$-binomial theorem) that 
\begin{align}
\sum_{p=0}^{\infty}\qbinom{p-\des(L)+n-1}{n}t^{p} 
 & =\sum_{p=0}^{\infty}\qbinom{p+n}{n}t^{p+\des(L)+1}\nonumber \\
 & =\frac{t^{\des(L)+1}}{(1-t)(1-qt)\cdots(1-q^{n}t)}.\label{e-qbinom}
\end{align}
Equation \eqref{e-qbinom} implies that the functions $q^{k}\binom{p-j+n-1}{n}_{q}x^{n}$ are linearly
independent as their generating functions are clearly linearly independent. 
Then parts (a) and (b) follow from Theorem \ref{t-scqsym}
and Proposition \ref{p-comajvalues}.

To prove (c), we define the map $\psi\colon\mathbb{Q}[q,x]^{\mathbb{N}}\rightarrow\mathbb{Q}[q,x][[t*]]$ by the formula
\[
\psi(f)=\sum_{p=0}^{\infty}f(p)t^{p}.
\]
Then $\psi$ is clearly an isomorphism and  
by (\ref{e-qbinom}), the images of the basis elements in (b) are those given in (c), which are in $\mathbb{Q}[[t*,q]][x]$.
For $n\geq1$, the number of $(\des,\comaj)$-equivalence classes
for $n$-permutations is 
\[
\sum_{j=0}^{n-1}\left(\left(nj-{j+1 \choose 2}\right)-{j+1 \choose 2}+1\right)=\sum_{j=0}^{n-1}\left(nj-2{j+1 \choose 2}+1\right),
\]
which can be shown to be equal to ${n \choose 3}+n$ by a routine
 argument. This proves (d).
\end{proof}

\begin{thm}[Shuffle-compatibility of the descent number]
\label{t-dessc} \leavevmode
\begin{itemize}
\item [\normalfont{(a)}] The descent number $\des$ is shuffle-compatible.
\item [\normalfont{(b)}] The linear map on ${\cal A}_{\des}$ defined by

\[
[\pi]_{\des} \mapsto {p-\des(\pi)+\left|\pi\right|-1 \choose \left|\pi\right|}x^{\left|\pi\right|}
\]
is a $\mathbb{Q}$-algebra isomorphism from ${\cal A}_{\des}$ to
the span of 
\[
\{1\}\bigcup\left\{ {p-j+n-1 \choose n}x^{n}\right\} _{n\geq1,\:0\leq j\leq n-1},
\]
a subalgebra of $\mathbb{Q}[p,x]$.
\item [\normalfont{(c)}] ${\cal A}_{\des}$ is isomorphic to the span of
\[
\{1\}\cup\{p^{j}x^{n}\}_{n\geq1,\:1\leq j\leq n},
\]
a subalgebra of $\mathbb{Q}[p,x]$.
\item [\normalfont{(d)}] The linear map on ${\cal A}_{\des}$ defined by
\[
[\pi]_{\des}\mapsto\begin{cases}
\displaystyle{\frac{t^{\des(\pi)+1}}{(1-t)^{\left|\pi\right|+1}}x^{\left|\pi\right|}}, & \text{if }\left|\pi\right|\geq1,\\
1/(1-t), & \text{if }\left|\pi\right|=0,
\end{cases}
\]
is a $\mathbb{Q}$-algebra isomorphism from ${\cal A}_{\des}$ to
the span of 
\[
\left\{ \frac{1}{1-t}\right\} \bigcup\left\{ \frac{t^{j+1}}{(1-t)^{n+1}}x^{n}\right\} _{n\geq1,\:0\leq j\leq n-1},
\]
a subalgebra of $\mathbb{Q}[[t*]][x]$.
\item [\normalfont{(e)}] For $n\geq1$, the $n$th homogeneous component
of ${\cal A}_{\des}$ has dimension $n$.
\end{itemize}
\end{thm}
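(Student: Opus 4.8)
The plan is to derive the entire theorem from the already-established Theorem \ref{t-descomajsc} by specializing $q=1$, exploiting the fact that $(\des,\comaj)$ is a refinement of $\des$ (indeed $\des$ is literally its first coordinate). Concretely, I would apply Theorem \ref{t-quots} with $\st_1=(\des,\comaj)$ and $\st_2=\des$. The underlying map is evaluation at $q=1$: the isomorphism of Theorem \ref{t-descomajsc}(b) identifies $\mathcal{A}_{(\des,\comaj)}$ with a subalgebra $B$ of $\mathbb{Q}[q,x]^{\mathbb{N}}$, and the substitution $q\mapsto 1$ is an algebra homomorphism whose restriction to $B$ sends $q^{\comaj(\pi)}\qbinom{p-\des(\pi)+|\pi|-1}{|\pi|}x^{|\pi|}$ to $\binom{p-\des(\pi)+|\pi|-1}{|\pi|}x^{|\pi|}$, killing the $q$-exponent entirely.

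For parts (a) and (b), I take $A$ to be the image of this composite, namely the span of $\{1\}\cup\{\binom{p-j+n-1}{n}x^n\}$ inside $\mathbb{Q}[p,x]$, which is automatically a subalgebra as the image of an algebra homomorphism. Writing $u_\alpha$ for the basis element attached to the $\des$-equivalence class $\alpha$, the resulting homomorphism $\phi\colon\mathcal{A}_{(\des,\comaj)}\to A$ sends each $(\des,\comaj)$-class $\beta$ to $u_\alpha$, where $\alpha$ is the $\des$-class containing $\beta$, exactly as Theorem \ref{t-quots} requires. Before invoking it, I must verify that the proposed images $\binom{p-j+n-1}{n}x^n$ with $0\le j\le n-1$ are linearly independent, so that they genuinely form a basis of $A$; this is immediate from setting $q=1$ in the generating-function identity \eqref{e-qbinom}, which gives $\sum_{p=0}^\infty\binom{p-j+n-1}{n}t^p=t^{j+1}/(1-t)^{n+1}$, and these series are visibly independent. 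Theorem \ref{t-quots} then yields simultaneously the shuffle-compatibility of $\des$ and the isomorphism $\mathcal{A}_{\des}\cong A$, proving (a) and (b).

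For part (c) I would perform a change of basis within $A$. The key observation is that $\binom{p-j+n-1}{n}=\tfrac{1}{n!}(p-j)(p-j+1)\cdots(p-j+n-1)$ has roots at the $n$ consecutive integers $p=j-n+1,\dots,j$, and for $0\le j\le n-1$ this interval always contains $0$; hence each of these polynomials vanishes at $p=0$ and therefore lies in $\Span\{p,p^2,\dots,p^n\}$. Being $n$ linearly independent elements of that $n$-dimensional space, they span it exactly, so for each $n\ge1$ the degree-$n$ component of $A$ coincides with $\Span\{p^jx^n:1\le j\le n\}$. This identifies $A$, hence $\mathcal{A}_{\des}$, with the span in (c). This little basis-change step—checking the common vanishing at $p=0$—is the one genuinely new computation, and is where I expect to have to be most careful.

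Parts (d) and (e) are then routine. For (d) I would apply the isomorphism $\psi(f)=\sum_{p=0}^\infty f(p)t^p$ exactly as in the proof of Theorem \ref{t-descomajsc}(c), now with $q=1$: by \eqref{e-qbinom} the basis element $\binom{p-j+n-1}{n}x^n$ maps to $t^{j+1}/(1-t)^{n+1}\,x^n$ and $1$ maps to $1/(1-t)$, landing in $\mathbb{Q}[[t*]][x]$ and giving the stated isomorphism. For (e), for $n\ge1$ the statistic $\des$ attains exactly the $n$ values $0,1,\dots,n-1$ on $\mathfrak{P}_n$, each a distinct equivalence class contributing one linearly independent basis vector, so the $n$th homogeneous component of $\mathcal{A}_{\des}$ has dimension $n$.
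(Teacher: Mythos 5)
Your proposal is correct and follows essentially the same route as the paper: the paper's proof is precisely ``apply Theorem \ref{t-quots} to Theorem \ref{t-descomajsc} with the homomorphism $q\mapsto 1$, identifying polynomial functions with polynomials, to get (a), (b), and (d); then (c) and (e) follow easily from (b).'' Your write-up simply fills in details the paper leaves implicit, most notably the clean verification for (c) that each $\binom{p-j+n-1}{n}$ vanishes at $p=0$ and hence that these $n$ independent polynomials span $\{p,\dots,p^n\}$.
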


\begin{proof}
Applying Theorem \ref{t-quots} to Theorem \ref{t-descomajsc} with the homomorphism that takes $q$ to~1, together with the observation that polynomial functions in characteristic zero may be identified with polynomials, yields (a), (b), and (d). Parts (c) and (e) follow easily from (b).
\end{proof}

\subsection{Shuffle-compatibility of the peak set and peak number}

In \cite{Stembridge1997}, Stembridge defined a subalgebra $\Pi$
of $\QSym$ called the ``algebra of peaks'' using enriched $P$-partitions,
a variant of Stanley's $P$-partitions. Here, we observe that Stembridge's
algebra $\Pi$ is isomorphic to the shuffle algebra ${\cal A}_{\Pk}$
of the peak set $\Pk$, thus showing that $\Pk$ is shuffle-compatible,
and we use further results of Stembridge on enriched $P$-partitions
to show that the peak number $\pk$ is shuffle-compatible and to characterize
its shuffle algebra.

An enriched $P$-partition is a map defined for a poset $P$, but
for our purposes, we only need to consider the case where $P$ is
a chain. Then the notion of an enriched $P$-partition is equivalent
to that of an ``enriched $\pi$-partition'' for a permutation $\pi$,
which we define below.\footnote{We note that, in the notation of \cite{Stembridge1997}, we are setting $A=\mathbb{P}$, $\gamma = \pi$, and $P=([n],<)$.}

Let $\mathbb{P}^{\prime}$ denote the set of nonzero integers with
the following total ordering: 
\[
-1\prec+1\prec-2\prec+2\prec-3\prec+3\prec\cdots.
\]
For $\pi\in\mathfrak{P}_{n},$ an \textit{enriched $\pi$-partition
}is a map $f\colon[n]\rightarrow\mathbb{P}^{\prime}$ such that for all
$i<j$ in $[n]$, the following hold:
\begin{enumerate}
\item $f(i)\preceq f(j)$;
\item $f(i)=f(j)>0$ implies $\pi(i)<\pi(j)$;
\item $f(i)=f(j)<0$ implies $\pi(i)>\pi(j)$.
\end{enumerate}
Let ${\cal E}(\pi)$ denote the set of enriched $\pi$-partitions,
and let 
\[
\Gamma(\pi)\coloneqq\sum_{f\in{\cal E}(\pi)}x_{\left|f(1)\right|}x_{\left|f(2)\right|}\cdots x_{\left|f(n)\right|}
\]
be the generating function for enriched $\pi$-partitions in which both
$k$ and $-k$ receive the same weight $x_{k}$. For example, let
$\pi=3125674$. Then the map $f$ given by $f(1)=-1$, $f(2)=-1$,
$f(3)=-3$, $f(4)=3$, $f(5)=3$, $f(6)=-7$, $f(7)=9$ is an enriched
$\pi$-partition, which contributes $x_{1}^{2}x_{3}^{3}x_{7}x_{9}$
to $\Gamma(\pi)$.

It is clear that $\Gamma(\pi)$ is a quasisymmetric function homogeneous
of degree $n$ which depends only on the descent set of $\pi$, but
a stronger statement is true: $\Gamma(\pi)$ depends only on the peak
set of $\pi$ \cite[Proposition 2.2]{Stembridge1997}. Hence, it makes
sense to define the quasisymmetric function 
\[
K_{n,\Lambda}\coloneqq\Gamma(\pi)
\]
where $\pi$ is any $n$-permutation with $\Pk(\pi)=\Lambda$. These
peak quasisymmetric functions $K_{n,\Lambda}$ are linearly independent
over $\mathbb{Q}$ \cite[Theorem 3.1(a)]{Stembridge1997}.

Let $F_{n}$ be the $n$th Fibonacci number defined by $F_{1}=F_{2}=1$
and $F_{n}=F_{n-1}+F_{n-2}$ for $n\geq3$. It is easy to see that,
for $n\geq1$, there are exactly $F_{n}$ peak sets among all $n$-permutations,
so the $\mathbb{Q}$-vector space $\Pi_{n}$ spanned by the $K_{n,\Lambda}$
has dimension $F_{n}$ with basis elements corresponding to peak sets
of $n$-permutations. The peak quasisymmetric functions $K_{n,\Lambda}$
multiply by the rule 
\begin{equation}
K_{m,\Pk(\pi)}K_{n,\Pk(\sigma)}=\sum_{\tau\in S(\pi,\sigma)}K_{m+n,\Pk(\tau)}\label{e-pkmult}
\end{equation}
\cite[Equation (3.1)]{Stembridge1997}, so $\Pi\coloneqq\bigoplus_{n=0}^{\infty}\Pi_{n}$
is a $\mathbb{Q}$-algebra, the \textit{algebra of peaks}. Then the
shuffle-compatibility of $\Pk$ and our characterization of the shuffle
algebra ${\cal A}_{\Pk}$ is immediate from (\ref{e-pkmult}).
\begin{thm}[Shuffle-compatibility of the peak set]
\leavevmode
\begin{itemize}
\item [\normalfont{(a)}] The peak set $\Pk$ is shuffle-compatible. 
\item [\normalfont{(b)}] The linear map on ${\cal A}_{\Pk}$ defined by 
\[
[\pi]_{\Pk}\mapsto K_{\left|\pi\right|,\Pk(\pi)}
\]
is a $\mathbb{Q}$-algebra isomorphism from ${\cal A}_{\Pk}$ to $\Pi$.
\end{itemize}

\end{thm}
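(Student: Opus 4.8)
The plan is to derive both parts directly from Stembridge's multiplication rule \eqref{e-pkmult} together with Theorem \ref{t-scqsym}, exactly as the author signals. First I would observe that the peak set $\Pk$ is a descent statistic: whether $i$ is a peak of $\pi$ depends only on whether $i-1$ and $i$ are descents (specifically, $i\in\Pk(\pi)$ iff $i-1\notin\Des(\pi)$ and $i\in\Des(\pi)$), so $\Pk(\pi)$ is determined by $\Comp(\pi)$ and $|\pi|$. This lets us apply the machinery of Section \ref{s-section4} for descent statistics. The $\Pk$-equivalence classes of compositions are then indexed by the possible peak sets $\Lambda$ of $n$-permutations, and I would take $A = \Pi$ with distinguished basis $\{u_\alpha\} = \{K_{n,\Lambda}\}$, where $\alpha$ is the class consisting of all compositions $L$ of $n$ with the given peak set $\Lambda$.

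Next I would define the candidate homomorphism $\phi_{\Pk}\colon \QSym \to \Pi$ by $\phi_{\Pk}(F_{\Comp(\pi)}) = \Gamma(\pi) = K_{|\pi|,\Pk(\pi)}$, i.e.\ $\phi_{\Pk}(F_L) = K_{n,\Pk(L)}$ for $L\vDash n$. That this is well-defined as a map sending $F_L$ to a single basis element of $\Pi$ depending only on the $\Pk$-class of $L$ is precisely the content of \cite[Proposition 2.2]{Stembridge1997}, quoted in the excerpt as the statement that $\Gamma(\pi)$ depends only on $\Pk(\pi)$. The remaining hypothesis of Theorem \ref{t-scqsym} is that $\phi_{\Pk}$ is a $\mathbb{Q}$-algebra homomorphism, and this is exactly \eqref{e-pkmult}: translating the product rule through the isomorphism $[\pi]_{\Des}\mapsto F_{\Comp(\pi)}$ of the preceding corollary, the identity $K_{m,\Pk(\pi)}K_{n,\Pk(\sigma)} = \sum_{\tau\in S(\pi,\sigma)} K_{m+n,\Pk(\tau)}$ says that $\phi_{\Pk}(F_{\Comp(\pi)})\,\phi_{\Pk}(F_{\Comp(\sigma)}) = \phi_{\Pk}(F_{\Comp(\pi)}F_{\Comp(\sigma)})$, since $F_{\Comp(\pi)}F_{\Comp(\sigma)} = \sum_{\tau\in S(\pi,\sigma)} F_{\Comp(\tau)}$ by Theorem \ref{t-fqsym}.

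With $\phi_{\Pk}$ verified to be an algebra homomorphism carrying each $F_L$ to the basis element $u_\alpha = K_{n,\Pk(L)}$ of its class, Theorem \ref{t-scqsym} applies verbatim and yields both conclusions at once: $\Pk$ is shuffle-compatible (part (a)), and the linear map $[\pi]_{\Pk}\mapsto u_\alpha = K_{|\pi|,\Pk(\pi)}$ is a $\mathbb{Q}$-algebra isomorphism from $\mathcal{A}_{\Pk}$ onto $A = \Pi$ (part (b)). I would note as a sanity check that the indexing matches: the $\Pk$-equivalence classes of $n$-permutations biject with peak sets of $n$-permutations, of which there are $F_n$, which agrees with $\dim \Pi_n = F_n$ and with the linear independence of the $K_{n,\Lambda}$ cited from \cite[Theorem 3.1(a)]{Stembridge1997}, so the map is a genuine bijection of bases and not merely a surjection.

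There is no real obstacle here, since all the analytic work was done by Stembridge: the honest content is the identification of $\Pi$ as a shuffle algebra, and the only thing to be careful about is bookkeeping. The single point requiring attention is the well-definedness of $\phi_{\Pk}$ on $F_L$, which rests entirely on the cited fact that $\Gamma(\pi)$ depends only on $\Pk(\pi)$; once that is granted, everything else is a formal consequence of Theorem \ref{t-scqsym} and \eqref{e-pkmult}. Accordingly I expect the proof to be essentially a one-line invocation of \eqref{e-pkmult} and Theorem \ref{t-scqsym}, which is indeed how the author has framed it (``immediate from \eqref{e-pkmult}'').
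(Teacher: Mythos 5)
Your proposal is correct and takes essentially the same route as the paper: the paper derives both parts as ``immediate from (\ref{e-pkmult}),'' relying on exactly the ingredients you cite (well-definedness of $K_{n,\Lambda}$ from Stembridge's Proposition 2.2, their linear independence, and the multiplication rule), and your explicit invocation of Theorem \ref{t-scqsym} with $\phi_{\Pk}(F_L)=K_{n,\Pk(L)}$ is just the formal spelling-out of that one-line argument. Indeed, the paper itself notes immediately afterward that (\ref{e-pkmult}) makes $F_L\mapsto K_{n,\Pk(L)}$ a $\mathbb{Q}$-algebra homomorphism, which is precisely your $\phi_{\Pk}$.
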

By Corollary \ref{c-rcsc}, the valley set $\Val$ is also shuffle-compatible
and ${\cal A}_{\Val}$ is isomorphic to $\Pi$. 
Note that (\ref{e-pkmult}) implies that the map $F_{L}\mapsto K_{n,\Pk(L)}$
is a $\mathbb{Q}$-algebra homomorphism from $\QSym$ to itself, 
a fact that we shall use in the proof of the next theorem, which is the
analogous result for the peak number (and by Lemma \ref{l-pkvalruns}, Corollary \ref{c-rcsc}, and Theorem \ref{t-esc},
the valley number and exterior peak number as well).

\begin{thm}[Shuffle-compatibility of the peak number]
\label{t-pksc} \leavevmode
\begin{itemize}
\item [\normalfont{(a)}] The peak number $\pk$ is shuffle-compatible.
\item [\normalfont{(b)}] The linear map on ${\cal A}_{\pk}$ defined by
\[
[\pi]_{\pk}\mapsto\begin{cases}
\displaystyle{\frac{2^{2\pk(\pi)+1}t^{\pk(\pi)+1}(1+t)^{\left|\pi\right|-2\pk(\pi)-1}}{(1-t)^{\left|\pi\right|+1}}x^{\left|\pi\right|}}, & \text{if }\left|\pi\right|\geq1,\\
1/(1-t), & \text{if }\left|\pi\right|=0,
\end{cases}
\]
is a $\mathbb{Q}$-algebra isomorphism from ${\cal A}_{\pk}$ to the
span of 
\[
\left\{ \frac{1}{1-t}\right\} \bigcup\left\{ \frac{2^{2j+1}t^{j+1}(1+t)^{n-2j-1}}{(1-t)^{n+1}}x^{n}\right\} _{n\geq1,\:0\leq j\leq\left\lfloor \frac{n-1}{2}\right\rfloor },
\]
a subalgebra of $\mathbb{Q}[[t*]][x]$.
\item [\normalfont{(c)}] The $\pk$ shuffle algebra ${\cal A}_{\pk}$ is isomorphic to the span of 
\[
\{1\}\cup\{p^{j}x^{n}\}_{n\geq1,\:1\leq j\leq n,\: j\equiv n\,(\mathrm{mod}\,2)},
\]
a subalgebra of $\mathbb{Q}[p,x]$.
\item [\normalfont{(d)}] For $n\geq1$, the $n$th homogeneous component
of ${\cal A}_{\pk}$ has dimension $\left\lfloor (n+1)/2\right\rfloor $.
\end{itemize}

\end{thm}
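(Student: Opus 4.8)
The plan is to invoke Theorem~\ref{t-scqsym}: since $\pk$ is a descent statistic, it suffices to exhibit a $\mathbb{Q}$-algebra homomorphism out of $\QSym$ whose value on $F_L$ depends only on $|L|$ and $\pk(L)$ and realizes the claimed images. I would assemble it from two homomorphisms already in hand. By \eqref{e-pkmult}, the map $\vartheta\colon F_L\mapsto K_{|L|,\Pk(L)}$ is a homomorphism $\QSym\to\Pi$. And Theorem~\ref{t-dessc}(d) supplies the descent specialization $\Phi\colon\QSym\to\mathbb{Q}[[t*]][x]$ with $\Phi(F_D)=t^{\des(D)+1}(1-t)^{-|D|-1}x^{|D|}$; concretely $\Phi(f)=\sum_{p\ge0}f(1^p)\,t^{p}x^{\deg f}$, where $f(1^p)$ denotes the evaluation $x_1=\dots=x_p=1$ with all other variables set to $0$. (This $\Phi$ is a homomorphism because each evaluation $f\mapsto f(1^p)$ is one and the target multiplies by the Hadamard product in $t$.) The composite $\phi_{\pk}\coloneqq\Phi\circ\vartheta$ then satisfies $\phi_{\pk}(F_L)=\sum_{p\ge0}K_{|L|,\Pk(L)}(1^p)\,t^{p}x^{|L|}$, the generating function of the enriched order polynomial $p\mapsto K_{|L|,\Pk(L)}(1^p)$.

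The heart of the matter is to show that this generating function depends only on $n=|L|$ and $j=\pk(L)$ and equals $2^{2j+1}t^{j+1}(1+t)^{n-2j-1}(1-t)^{-n-1}x^{n}$. For this I would expand $K_{n,\Lambda}$ in the fundamental basis by Stembridge's rule $K_{n,\Lambda}=2^{|\Lambda|+1}\sum_{D}F_{n,D}$, the sum ranging over descent sets $D\subseteq[n-1]$ that contain exactly one of $i-1,i$ for each peak $i\in\Lambda$ and are otherwise arbitrary on the remaining $n-1-2|\Lambda|$ positions. Applying $\Phi$ termwise, with $\Phi(F_D)=t^{|D|+1}(1-t)^{-n-1}x^{n}$ by \eqref{e-qbinom} at $q=1$, reduces the claim to the combinatorial identity $\sum_{D}t^{|D|}=(2t)^{j}(1+t)^{\,n-1-2j}$: each peak forces one descent chosen from two positions (a factor $2t$), and each of the $n-1-2j$ free positions independently is or is not a descent (a factor $1+t$). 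Because the peaks of a permutation are pairwise non-adjacent, the pairs $\{i-1,i\}$ are disjoint, so this product decomposition is valid and the outcome visibly depends only on $n$ and $j$. Establishing Stembridge's expansion \cite{Stembridge1997}---or, equivalently, deriving the displayed descent enumerator directly from the sign choices available in an enriched $\pi$-partition---is the step I expect to be the main obstacle; the rest is bookkeeping.

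Granting the crux, parts (a) and (b) are immediate from Theorem~\ref{t-scqsym} once the claimed images are seen to be linearly independent: for fixed $n$ they have distinct lowest $t$-degree $j+1$, and distinct $n$ are separated by the power of $x$. Their span is then automatically a subalgebra, being the image of $\phi_{\pk}$, with basis indexed by the $\pk$-equivalence classes of compositions. For (d), Proposition~\ref{p-pkdesvalues}(a) shows that the attainable values of $\pk$ on $\mathfrak{P}_n$ are $0,1,\dots,\lfloor(n-1)/2\rfloor$, so the $n$th homogeneous component has dimension $\lfloor(n-1)/2\rfloor+1=\lfloor(n+1)/2\rfloor$.

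For (c) I would transport each basis element back along the isomorphism $\psi$ of Theorem~\ref{t-descomajsc}(c) (with $q$ set to $1$), representing it as the polynomial $\Omega_{n,j}(p)=K_{n,\Lambda}(1^p)$ times $x^n$. These $\lfloor(n+1)/2\rfloor$ polynomials are linearly independent and of degree $n$; they vanish at $p=0$ since $K_{n,\Lambda}$ has no constant term for $n\ge1$; and the reciprocity theorem for enriched order polynomials gives $\Omega_{n,j}(-p)=(-1)^{n}\Omega_{n,j}(p)$, forcing $\Omega_{n,j}(p)$ to involve only powers of $p$ congruent to $n$ modulo $2$. The space of such polynomials (degree at most $n$, matching parity, no constant term) has dimension exactly $\lfloor(n+1)/2\rfloor$, so these polynomials span it, identifying $\mathcal{A}_{\pk}$ with the span of $\{1\}\cup\{p^{i}x^{n}:1\le i\le n,\ i\equiv n\,(\mathrm{mod}\,2)\}$ and proving (c).
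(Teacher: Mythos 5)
Your proposal is correct. For parts (a), (b), and (d) it follows the paper's own route almost verbatim: the paper likewise defines $\phi_{\pk}$ as the composite of $F_{L}\mapsto K_{|L|,\Pk(L)}$ (a homomorphism by \eqref{e-pkmult}) with the principal specialization $f\mapsto\sum_{k\geq0}f(1^{k})t^{k}x^{\deg f}$, checks linear independence of the images, and invokes Theorem \ref{t-scqsym} together with Proposition \ref{p-pkdesvalues} (note that attainability of every value $0\leq j\leq\left\lfloor (n-1)/2\right\rfloor$ is part (b), not part (a), of that proposition). The only difference there is that the paper cites the closed form for $\sum_{k}K_{n,\Lambda}(1^{k})t^{k}$ directly \cite[Theorem 4.1]{Stembridge1997}, while you rederive it from Stembridge's fundamental-basis expansion $K_{n,\Lambda}=2^{|\Lambda|+1}\sum_{D}F_{D}$; your computation $\sum_{D}t^{|D|}=(2t)^{j}(1+t)^{n-1-2j}$ is sound because peaks are pairwise non-adjacent, so trading one Stembridge citation for another is harmless. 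Part (c) is where you genuinely diverge. The paper postpones (c) to Section \ref{s-altdes} and deduces it by setting $y=1$ in its alternate description of ${\cal A}_{(\pk,\des)}$ (Theorem \ref{t-pkdessc}(c)), which rests on an analysis of the two-variable polynomials $Q_{n,j,k}(p,z)$: vanishing at $p=0$, the joint reciprocity $Q_{n,j,k}(p,z)=(-1)^{n}Q_{n,j,k}(-p,-z)$, and a dimension count. Your argument---vanishing at $p=0$, Stembridge's self-reciprocity $\Omega_{n,j}(-p)=(-1)^{n}\Omega_{n,j}(p)$ for enriched order polynomials, and the same dimension count---is exactly the alternative that the paper flags in a remark at the end of Section \ref{s-altdes} but never carries out. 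Your route is shorter and bypasses the $(\pk,\des)$ machinery of Sections \ref{s-pkdes} and \ref{s-altdes} entirely, at the cost of importing \cite[Proposition 4.2]{Stembridge1997}; what the paper's longer route buys is the simultaneous, finer description of ${\cal A}_{(\pk,\des)}$, for which a parity argument in $p$ alone would not suffice---the reciprocity there also negates $z$, equivalently sends $y\mapsto 1/y$, and collapses to your one-variable parity statement only at $y=1$.
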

The proof below implies parts (a), (b), and (d). We postpone the proof of part (c) until Section \ref{s-altdes}.
\begin{proof}

For a quasisymmetric function $f$, let $f(1^{k})$ denote $f$ evaluated at $x_i=1$ for $1\leq i\leq k$ and $x_i=0$ for $i>k$.
Define $\phi_{\pk}\colon\QSym\rightarrow\mathbb{Q}[[t*]][x]$ by
the formula
\[
\phi_{\pk}(F_{L})=\sum_{k=0}^{\infty}K_{n,\Pk(L)}(1^{k})t^{k}x^{n}
\]
for $L\vDash n>0$ and $\phi_{\pk}(F_\varnothing)=1/(1-t$). Then $\phi_{\pk}$
is the composition of the map $F_{L}\mapsto K_{n,\Pk(L)}$ with the
map $f\mapsto\sum_{k=0}^{\infty}f(1^{k})t^{k}x^{n}$ (where $f$ is homogeneous of degree $n$); since both of
these maps are $\mathbb{Q}$-algebra homomorphisms, it follows that
$\phi_{\pk}$ is a $\mathbb{Q}$-algebra homomorphism as well.

Stembridge \cite[Theorem 4.1]{Stembridge1997}
showed that 
\[
\sum_{k=0}^{\infty}K_{n,\Pk(L)}(1^{k})t^{k}=\frac{2^{2\pk(L)+1}t^{\pk(L)+1}(1+t)^{n-2\pk(L)-1}}{(1-t)^{n+1}},
\]
so in fact 
\[
\phi_{\pk}(F_{L})=\frac{2^{2\pk(L)+1}t^{\pk(L)+1}(1+t)^{n-2\pk(L)-1}}{(1-t)^{n+1}}x^{n}.
\]
We know from Proposition \ref{p-pkdesvalues} that for an $n$-permutation
$\pi$, the possible values of $\pk(\pi)$ range from 0 to $\left\lfloor (n-1)/2\right\rfloor $.
Since the elements $2^{2j+1}t^{j+1}(1+t)^{n-2j-1}x^{n}/(1-t)^{n+1}$
are linearly independent, the result follows from Theorem \ref{t-scqsym}.
\end{proof}

An alternative proof of Theorem \ref{t-pksc} can be given using Theorems \ref{t-quots} and \ref{t-pkdessc}.

\subsection{Shuffle-compatibility of the left peak set and left peak number}

Motivated by Stembridge's theory of enriched $P$-partitions and the
study of peak algebras \cite{Nyman2003}, Petersen \cite{Petersen2006,Petersen2007}
defined another variant of $P$-partitions called ``left enriched
$P$-partitions'' that tells a parallel story for left peaks. 

As before, we restrict our attention to when $P$ is a chain. Let
$\mathbb{P}^{(\ell)}$ denote the set of integers with the following
total ordering: 
\[
0\prec-1\prec+1\prec-2\prec+2\prec-3\prec+3\prec\cdots.
\]
Then for $\pi\in\mathfrak{P}_{n},$ a \textit{left enriched $\pi$-partition
}is a map $f\colon[n]\rightarrow\mathbb{P}^{(\ell)}$ such that for all
$i<j$ in $[n]$, the following hold:
\begin{enumerate}
\item $f(i)\preceq f(j)$;
\item $f(i)=f(j)\geq0$ implies $\pi(i)<\pi(j)$;
\item $f(i)=f(j)<0$ implies $\pi(i)>\pi(j)$.
\end{enumerate}
Let ${\cal E}^{(\ell)}(\pi)$ denote the set of left enriched $\pi$-partitions,
and let 
\[
\Gamma^{(\ell)}(\pi)\coloneqq\sum_{f\in{\cal E}^{(\ell)}(\pi)}x_{\left|f(1)\right|}x_{\left|f(2)\right|}\cdots x_{\left|f(n)\right|}.
\]
Just as the generating function $\Gamma(\pi)$ for enriched $\pi$-partitions
depends only on the peak set of $\pi$, Petersen proved that $\Gamma^{(\ell)}(\pi)$
depends only on the left peak set \cite[Corollary 6.5]{Petersen2007},
so we can define 
\[
K_{n,\Lambda}^{(\ell)}\coloneqq\Gamma^{(\ell)}(\pi)
\]
for any $\pi\in\mathfrak{P}_{n}$ with $\Lpk(\pi)=\Lambda$. Unlike
the peak functions $K_{n,\Lambda}$, the $K_{n,\Lambda}^{(\ell)}$
are not quasisymmetric functions but rather type B quasisymmetric
functions.\footnote{We omit the definition of a type B quasisymmetric function, as they
play no further role in this paper, but we refer the reader to \cite{Chow2001}.}

Petersen briefly mentions that the span of the left peak functions
$K_{n,\Lambda}^{(\ell)}$ forms a graded subalgebra $\Pi^{(\ell)}$
of the algebra of type B quasisymmetric functions, called the \textit{algebra
of left peaks} \cite[p. 604]{Petersen2007}.\footnote{Petersen actually calls this algebra the ``left algebra of peaks'',
but the ``algebra of left peaks'' seems to us a more natural name.} The $n$th homogeneous component of $\Pi^{(\ell)}$ has dimension
$F_{n+1}$, which is easily seen to be the number of left peak sets
among $n$-permutations. He does not explicitly state a multiplication
rule for the $K_{n,\Lambda}^{(\ell)}$, but it follows from the fundamental
lemma of left enriched $P$-partitions \cite[Lemma 4.2]{Petersen2007}
that the multiplication is given by
\[
K_{m,\Lpk(\pi)}^{(\ell)}K_{n,\Lpk(\sigma)}^{(\ell)}=\sum_{\tau\in S(\pi,\sigma)}K_{m+n,\Lpk(\tau)}^{(\ell)},
\]
which implies the shuffle-compatibility of the left peak set (and by Corollary \ref{c-rcsc},
the right peak set as well).
\begin{thm}[Shuffle-compatibility of the left peak set]
\label{t-Lpksc} \leavevmode
\begin{itemize}
\item [\normalfont{(a)}] The left peak set $\Lpk$ is shuffle-compatible. 
\item [\normalfont{(b)}] The linear map on ${\cal A}_{\Lpk}$ defined by 
\[
[\pi]_{\Lpk}\mapsto K_{\left|\pi\right|,\Lpk(\pi)}^{(\ell)}
\]
is a $\mathbb{Q}$-algebra isomorphism from ${\cal A}_{\Lpk}$ to
$\Pi^{(\ell)}$.
\end{itemize}
\end{thm}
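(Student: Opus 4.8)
The plan is to mirror exactly the treatment of the peak set, since every structural ingredient needed for $\Lpk$ has already been assembled in the preceding discussion. The two facts I would rely on are the multiplication rule
\[
K_{m,\Lpk(\pi)}^{(\ell)}K_{n,\Lpk(\sigma)}^{(\ell)}=\sum_{\tau\in S(\pi,\sigma)}K_{m+n,\Lpk(\tau)}^{(\ell)},
\]
stated just above the theorem, and the linear independence of the left peak functions $K_{n,\Lambda}^{(\ell)}$. The latter comes for free from the dimension count: $\Pi_n^{(\ell)}$ is by definition the span of the $K_{n,\Lambda}^{(\ell)}$, and it has dimension $F_{n+1}$, which equals the number of distinct left peak sets among $n$-permutations; hence the spanning set $\{K_{n,\Lambda}^{(\ell)}\}$ has cardinality equal to the dimension and is therefore a basis.

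For part (a), I would argue that the multiplication rule forces shuffle-compatibility. Because the $K_{n,\Lambda}^{(\ell)}$ are linearly independent, the multiset $\{\,\Lpk(\tau)\mid\tau\in S(\pi,\sigma)\,\}$ is recovered as the multiset of second subscripts appearing on the right-hand side, which is in turn determined by the left-hand side $K_{m,\Lpk(\pi)}^{(\ell)}K_{n,\Lpk(\sigma)}^{(\ell)}$. Since the left-hand side depends only on $\Lpk(\pi)$, $\Lpk(\sigma)$, $m=\left|\pi\right|$, and $n=\left|\sigma\right|$, so does the multiset, which is precisely the assertion that $\Lpk$ is shuffle-compatible.

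For part (b), I would verify that the map $[\pi]_{\Lpk}\mapsto K_{\left|\pi\right|,\Lpk(\pi)}^{(\ell)}$ is a well-defined algebra isomorphism. Well-definedness is clear because $K_{n,\Lambda}^{(\ell)}$ depends only on $n$ and the left peak set $\Lambda=\Lpk(\pi)$. That it is an algebra homomorphism is immediate from the multiplication rule together with the definition of the product in ${\cal A}_{\Lpk}$, exactly as in the proof of the isomorphism ${\cal A}_{\Pk}\cong\Pi$. Finally, the map carries the basis of $\Lpk$-equivalence classes of ${\cal A}_{\Lpk}$ bijectively onto the basis $\{K_{n,\Lambda}^{(\ell)}\}$ of $\Pi^{(\ell)}$, since in each degree $n$ both bases are indexed by the left peak sets of $n$-permutations; hence it is an isomorphism.

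There is no genuine obstacle here: once Petersen's fundamental lemma of left enriched $P$-partitions has been invoked to establish the multiplication rule, and the dimension of $\Pi_n^{(\ell)}$ has been identified with the number of left peak sets, the result is immediate and strictly parallel to the peak-set case. The only point requiring a moment's care is noting that the stated dimension count does double duty, furnishing both the linear independence needed in part (a) and the bijectivity of the basis correspondence in part (b).
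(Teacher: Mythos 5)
Your proposal is correct and follows essentially the same route as the paper: the paper likewise treats the theorem as an immediate consequence of the multiplication rule derived from Petersen's fundamental lemma of left enriched $P$-partitions, together with the fact that the $K_{n,\Lambda}^{(\ell)}$ form a basis of $\Pi_n^{(\ell)}$ (which, as you note, follows from Petersen's dimension count $\dim \Pi_n^{(\ell)} = F_{n+1}$ matching the number of left peak sets of $n$-permutations). Your write-up simply makes explicit the linear-independence and basis-correspondence details that the paper leaves implicit, exactly paralleling its treatment of the peak set case.
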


Although Petersen was the first to explicitly construct the algebra of left peaks, Theorem \ref{t-Lpksc} also follows from the work of Aguiar, Bergeron, and Nyman, who constructed the coalgebra dual to the algebra of left peaks \cite[Proposition 8.3 and Remark 8.7.3]{Aguiar2004}. We will extensively study coalgebras dual to shuffle algebras in Section \ref{s-section5}.

Petersen's work can also be used (in conjunction with Proposition
\ref{p-lpkdesvalues} and Theorem \ref{t-scqsym}) to prove the shuffle-compatibility
of the left peak number. The proof is similar to the proof
of Theorem \ref{t-pksc}, but we use the identity 
\[
\sum_{p=0}^{\infty}K_{n,\Lpk(L)}^{(\ell)}(1^{p})t^{p}=\frac{2^{2\lpk(L)}t^{\lpk(L)}(1+t)^{n-2\lpk(L)}}{(1-t)^{n+1}}
\]
\cite[Theorem 4.6]{Petersen2007}. 
Alternatively, Theorems \ref{t-quots} and \ref{t-lpkdessc} can be used to produce a different proof.

\begin{thm}[Shuffle-compatibility of the left peak number]
\label{t-lpksc} \leavevmode
\begin{itemize}
\item [\normalfont{(a)}] The left peak number $\lpk$ is shuffle-compatible.
\item [\normalfont{(b)}] The linear map on ${\cal A}_{\lpk}$ defined by
\[
[\pi]_{\lpk}\mapsto\begin{cases}
{\displaystyle \frac{2^{2\lpk(\pi)}t^{\lpk(\pi)}(1+t)^{\left|\pi\right|-2\lpk(\pi)}}{(1-t)^{\left|\pi\right|+1}}x^{\left|\pi\right|}}, & \text{if }\left|\pi\right|\geq1,\\
1/(1-t), & \text{if }\left|\pi\right|=0,
\end{cases}
\]
is a $\mathbb{Q}$-algebra isomorphism from ${\cal A}_{\lpk}$ to
the span of 
\[
\left\{ \frac{1}{1-t}\right\} \bigcup\left\{ \frac{2^{2j}t^{j}(1+t)^{n-2j}}{(1-t)^{n+1}}x^{n}\right\} _{n\geq1,\:0\leq j\leq\left\lfloor n/2\right\rfloor },
\]
a subalgebra of $\mathbb{Q}[[t*]][x]$.
\item [\normalfont{(c)}] The $n$th homogeneous component of ${\cal A}_{\lpk}$
has dimension $\left\lfloor n/2\right\rfloor +1$.
\end{itemize}

\end{thm}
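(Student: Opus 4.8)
The plan is to mirror the proof of Theorem~\ref{t-pksc} for the peak number, replacing Stembridge's peak functions $K_{n,\Lambda}$ throughout with Petersen's left peak functions $K_{n,\Lambda}^{(\ell)}$. Since part~(a) and the isomorphism of part~(b) both drop out at once from Theorem~\ref{t-scqsym}, the core task is to exhibit a $\mathbb{Q}$-algebra homomorphism $\phi_{\lpk}\colon\QSym\to\mathbb{Q}[[t*]][x]$ that sends each $F_L$ with $L\vDash n\geq1$ to the claimed element depending only on $\lpk(L)$ and $n$, and sends $F_\varnothing$ to $1/(1-t)$.

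First I would build $\phi_{\lpk}$ as a composite of two homomorphisms. The assignment $F_L\mapsto K_{n,\Lpk(L)}^{(\ell)}$ is a $\mathbb{Q}$-algebra homomorphism from $\QSym$ to the algebra of left peaks $\Pi^{(\ell)}$, just as in the paragraph preceding Theorem~\ref{t-pksc}: this follows by combining the multiplication rule for the $K_{n,\Lambda}^{(\ell)}$ with the fundamental product formula $F_JF_K=\sum_L c_{J,K}^L F_L$ of Theorem~\ref{t-fqsym}. Composing with the principal specialization $f\mapsto\sum_{p=0}^{\infty}f(1^p)\,t^p x^n$ (for $f$ homogeneous of degree $n$) then lands in $\mathbb{Q}[[t*]][x]$; this second map is multiplicative because substituting $x_i=1$ for $i\leq p$ and $x_i=0$ otherwise is a ring homomorphism, so that $(fg)(1^p)=f(1^p)g(1^p)$, and pairing these coefficients term-by-term in $t$ is exactly the Hadamard product $t*$. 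One point worth flagging is that $\Pi^{(\ell)}$ consists of type~B quasisymmetric functions rather than ordinary ones, so unlike in the peak case the intermediate functions do not live inside $\QSym$; nevertheless $f(1^p)$ is still well-defined on these power series, so the argument carries over without change.

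With $\phi_{\lpk}$ in hand, Petersen's identity \cite[Theorem~4.6]{Petersen2007},
\[
\sum_{p=0}^{\infty}K_{n,\Lpk(L)}^{(\ell)}(1^{p})\,t^{p}=\frac{2^{2\lpk(L)}t^{\lpk(L)}(1+t)^{n-2\lpk(L)}}{(1-t)^{n+1}},
\]
gives the explicit formula
\[
\phi_{\lpk}(F_L)=\frac{2^{2\lpk(L)}t^{\lpk(L)}(1+t)^{n-2\lpk(L)}}{(1-t)^{n+1}}\,x^n,
\]
which manifestly depends only on $\lpk(L)$ and $n=|L|$. Hence $\phi_{\lpk}(F_L)=u_\alpha$ for the $\lpk$-equivalence class $\alpha$ of $L$, and Theorem~\ref{t-scqsym} delivers parts~(a) and~(b), provided the target elements are linearly independent. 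This last point is routine: for fixed $n$ the common denominator $(1-t)^{n+1}$ may be cleared, leaving the polynomials $2^{2j}t^{j}(1+t)^{n-2j}x^n$, whose lowest $t$-degrees are the distinct integers $j$, while different values of $n$ are separated by the factor $x^n$.

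Finally, for the index ranges and the dimension count I would invoke Proposition~\ref{p-lpkdesvalues}(a), which shows that $\lpk$ attains exactly the values $0,1,\dots,\lfloor n/2\rfloor$ on $n$-permutations; this both produces the indexing $0\leq j\leq\lfloor n/2\rfloor$ appearing in part~(b) and yields $\lfloor n/2\rfloor+1$ equivalence classes, proving part~(c). I do not anticipate any serious obstacle: the only step requiring genuine care is the verification that the two component maps are algebra homomorphisms—in particular the observation that the type~B setting does not obstruct the principal specialization—after which the result reduces entirely to Petersen's identity and the same bookkeeping already performed for the peak number.
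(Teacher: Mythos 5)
Your proposal is correct and takes essentially the same route as the paper, which proves Theorem \ref{t-lpksc} by repeating the argument of Theorem \ref{t-pksc} with Petersen's identity \cite[Theorem 4.6]{Petersen2007} in place of Stembridge's, in conjunction with Proposition \ref{p-lpkdesvalues} and Theorem \ref{t-scqsym}. Your explicit verification that the principal specialization remains a ring homomorphism in the type~B setting (so that landing outside $\QSym$ causes no trouble) fills in a detail the paper leaves implicit.
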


By Theorem \ref{c-rcsc}, the right peak number (or, the number of long runs; see Lemma \ref{l-pkvalruns} (d)) is also shuffle-compatible.

\section{Noncommutative symmetric functions and shuffle-compatibility}
\label{s-section5}

\subsection{Algebras, coalgebras, and graded duals}

In this section, we introduce another criterion for shuffle-compatibility
that will be in a sense ``dual'' to the criterion in Theorem \ref{t-scqsym}.
For this, we shall need the notion dual to an algebra, which requires
the following equivalent definition of an algebra.

Let $R$ be a commutative ring. An $R$-\textit{algebra} $A$ is an $R$-module
with an $R$-linear map $\mu\colon A\otimes A\rightarrow A$ such that the
following diagram commutes:
\noindent \begin{center}
$\begin{CD}
A \otimes A \otimes A   @>\id \otimes \mu >>   A \otimes A\\
@V \mu \otimes \id VV                         @VV \mu V\\
A \otimes A             @>> \phantom{bb} \mu \phantom{bb} >             A
\end{CD}$
\par\end{center}

\noindent The map $\mu$ is called a \textit{multiplication}.%
\footnote{The multiplication map $\mu$ satisfies $\mu(a\otimes b)=ab$ under
the original definition of an algebra; from this, it is clear why
$\mu$ is called ``multiplication''.%
}

The notion dual to an algebra is a coalgebra, defined as follows.
An $R$-\textit{coalgebra} $C$ is an $R$-module with an $R$-linear
map $\Delta\colon C\rightarrow C\otimes C$ such that the following diagram
commutes:
\begin{center}
$\begin{CD}
C \otimes C \otimes C   @<\id \otimes \Delta <<   C \otimes C\\
@A \Delta \otimes \id AA                         @AA \Delta A\\
C \otimes C             @<< \phantom{bb} \Delta \phantom{bb} <             C
\end{CD}$
\par\end{center}

\noindent Observe that this diagram is essentially the diagram in
the definition of an algebra, but with arrows reversed. The map $\Delta$
is called a \textit{comultiplication}.%
\footnote{\noindent Typically, the definition of an algebra requires an additional
linear map called a ``unit'' which satisfies a certain commutative
diagram, and the definition of a coalgebra requires the dual concept
of a ``counit'', but these will not be necessary for our work.}

If an $R$-module $A$ is simultaneously an $R$-algebra and an $R$-coalgebra
such that its comultiplication map is an $R$-algebra homomorphism,
then we call $A$ an $R$-\textit{bialgebra}.

Suppose now that $R$ is a field and that $V=\bigoplus_{n\geq0}V_{n}$
is a graded $R$-vector space of finite type, that is, each component
$V_{n}$ is finite-dimensional. Let $V^{o}$ denote the \textit{graded
dual} $V^{o}\coloneqq\bigoplus_{n\geq0}V_{n}^{*}$, which is contained
inside the dual space $V^{*}$ of $V$. We say that a linear map 
$\phi\colon V\rightarrow W$ 
is \textit{graded} if, for every $n\geq0$, $\phi(V_{n})$ is contained
inside the $n$th homogeneous component of $W$. Every graded
linear map $\phi\colon V\rightarrow W$ induces a graded linear map $\phi^{o}\colon W^{o}\rightarrow V^{o}$
given by 
\[
\phi^{o}(f)(v)=f(\phi(v))
\]
for $f\in W^{o}$ and $v\in V$. In particular, if $A$ is a graded
$R$-algebra---meaning that its vector space and multiplication are
graded---and is of finite type, then by reversing the arrows in the
commutative diagram, we see that $A^{o}$ has the structure of
a graded $R$-coalgebra. In fact, if $A$ has basis $\{a_{i}\}$ with
structure constants $\{c_{j,k}^{i}\}$, i.e., 
\[
a_{j}a_{k}=\sum_{i}c_{j,k}^{i}a_{i},
\]
then the $\{c_{j,k}^{i}\}$ are also the structure constants for the comultiplication
of the dual basis $\{f_{i}\}$ in $A^{o}$:
\[
\Delta(f_{i})=\sum_{j,k}c_{j,k}^{i}f_{j}\otimes f_{k}.
\]
Similarly, the graded dual of a graded $R$-coalgebra is a graded
$R$-algebra, with the same correspondence of structure constants.
If $\phi$ is an $R$-algebra homomorphism, then $\phi^{o}$ is an
$R$-coalgebra homomorphism, and vice versa.

\subsection{Noncommutative symmetric functions}

The graded dual of $\QSym$ is the coalgebra of noncommutative symmetric
functions, which also has an algebra structure. We begin by defining
the algebra of noncommutative symmetric functions before introducing
the comultiplication.

Let $\mathbb{Q}\langle\langle X_{1},X_{2},\dots\rangle\rangle$ be
the $\mathbb{Q}$-algebra of formal power series in countably many
noncommuting variables $X_{1},X_{2},\dots$. Consider the elements
\[
\mathbf{h}_{n}\coloneqq\sum_{i_{1}\leq\cdots\leq i_{n}}X_{i_{1}}X_{i_{2}}\cdots X_{i_{n}}
\]
of $\mathbb{Q}\langle\langle X_{1},X_{2},\dots\rangle\rangle$, with 
$\mathbf{h}_{0}=1$, which
are noncommutative versions of the complete symmetric functions $h_{n}$.
Note that $\mathbf{h}_{n}$ is the noncommutative generating function
for weakly increasing words of length $n$ on the alphabet $\mathbb{P}$
of positive integers.
For example, the weakly increasing word $13449$ is encoded by $X_{1}X_{3}X_{4}^{2}X_{9}$,
which appears as a term in $\mathbf{h}_{5}$. Given a composition
$L=(L_{1},\dots,L_{k})$, we let 
\begin{equation}
\label{e-hL-def}
\mathbf{h}_{L}\coloneqq\mathbf{h}_{L_{1}}\cdots\mathbf{h}_{L_{k}}.
\end{equation}
Equivalently, 
\[
\mathbf{h}_{L}=\sum_{i_{1},\dots,i_{n}}X_{i_{1}}X_{i_{2}}\cdots X_{i_{n}}
\]
where the sum is over all $i_{1},\dots,i_{n}$ satisfying 
\[
\underset{L_{1}}{\underbrace{i_{1}\leq\cdots\leq i_{L_{1}}}},\underset{L_{2}}{\underbrace{i_{L_{1}+1}\leq\cdots\leq i_{L_{1}+L_{2}}}},\dots,\underset{L_{k}}{\underbrace{i_{L_{1}+\cdots+L_{k-1}+1}\leq\cdots\leq i_{n}}},
\]
so $\mathbf{h}_{L}$ is the noncommutative generating function for
words in $\mathbb{P}$ whose descent set is contained in $\Des(L)$. 

Let $\mathbf{Sym}_{n}$ denote the vector space spanned by $\{\mathbf{h}_{L}\}_{L\vDash n}$,
and let $\mathbf{Sym}\coloneqq\bigoplus_{n=0}^{\infty}\mathbf{Sym}_{n}$.
Then $\mathbf{Sym}$ is a graded $\mathbb{Q}$-algebra called the
\textit{algebra of noncommutative symmetric functions} with coefficients
in $\mathbb{Q}$, a subalgebra of $\mathbb{Q}\langle\langle X_{1},X_{2},\dots\rangle\rangle$.
The study of $\mathbf{Sym}$ was initiated in \cite{ncsf1}, although
noncommutative symmetric functions have appeared implicitly in earlier 
work, including the first author's Ph.D. thesis \cite{gessel-thesis}.
Also see \cite{Gessel2014,Zhuang2016,Zhuang2016a} for a series of
recent papers by the present authors on the subject of permutation
enumeration in which $\mathbf{Sym}$ plays a role.

In the following sections, we will work with noncommutative symmetric functions with coefficients in either the ring $\Q[x,y]$ of polynomials in $x$ and $y$ with rational coefficients or the ring $\Qtxy$ of polynomials in $x$ and $y$ with coefficients in the ring of formal power series in $t$ in which multiplication is the Hadamard product in $t$ but ordinary multiplication in $x$ and~$y$.
We will also need to use formal sums of noncommutative symmetric functions of unbounded degree with these coefficient rings, for example,  $\sum_{n=0}^\infty \h_n x^n$.  
We will use the notation $\Symxy$ for the algebra of \ncsfs\ of unbounded degree with coefficients in $\Q[x,y]$ and $\Symtxy$ for \ncsfs\ with coefficients in $\Qtxy$. 

For a composition $L=(L_{1},\dots,L_{k})$, we  define 
\[
\mathbf{r}_{L}\coloneqq\sum_{i_{1},\dots,i_{n}}X_{i_{1}}X_{i_{2}}\cdots X_{i_{n}}
\]
where the sum is over all $i_{1},\dots,i_{n}$ satisfying 
\[
\underset{L_{1}}{\underbrace{i_{1}\leq\cdots\leq i_{L_{1}}}}>\underset{L_{2}}{\underbrace{i_{L_{1}+1}\leq\cdots\leq i_{L_{1}+L_{2}}}}>\cdots>\underset{L_{k}}{\underbrace{i_{L_{1}+\cdots+L_{k-1}+1}\leq\cdots\leq i_{n}}}.
\]
Then $\mathbf{r}_{L}$ is the noncommutative generating function
for words on the alphabet $\mathbb{P}$ with descent composition $L$.

Note that 
\begin{equation}
\mathbf{h}_{L}=\sum_{\substack{\Des(K)\subseteq\Des(L)\\
\left|K\right|=\left|L\right|
}
}\mathbf{r}_{K},\label{e-hitor}
\end{equation}
so by inclusion-exclusion, 
\begin{equation}
\mathbf{r}_{L}=\sum_{\substack{\Des(K)\subseteq\Des(L)\\
\left|K\right|=\left|L\right|
}
}(-1)^{l(L)-l(K)}\mathbf{h}_{K}\label{e-ritoh}
\end{equation}
where $l(L)$ denotes the number of parts of the composition $L$.
Hence the $\mathbf{r}_{L}$ are noncommutative symmetric functions,
and are in fact noncommutative versions of the ribbon skew Schur functions
$r_{L}$. 

Since $\mathbf{r}_{L}$ and $\mathbf{r}_{M}$ have no terms in common
for $L\neq M$, it is clear that $\{\mathbf{r}_{L}\}_{L\vDash n}$
is linearly independent. From (\ref{e-hitor}), we see that $\{\mathbf{r}_{L}\}_{L\vDash n}$
spans $\mathbf{Sym}_{n}$, so $\{\mathbf{r}_{L}\}_{L\vDash n}$ is
a basis for $\mathbf{Sym}_{n}$. Because $\{\mathbf{h}_{L}\}_{L\vDash n}$
spans $\mathbf{Sym}_{n}$ and has the same cardinality as $\{\mathbf{r}_{L}\}_{L\vDash n}$,
we conclude that $\{\mathbf{h}_{L}\}_{L\vDash n}$ is also a basis
for $\mathbf{Sym}_{n}$.

Let us also consider the noncommutative generating function 
\[
\mathbf{e}_{n}\coloneqq\sum_{i_{1}>\cdots>i_{n}}X_{i_{1}}X_{i_{2}}\cdots X_{i_{n}}
\]
for decreasing words of length $n$ on the alphabet $\mathbb{P}$.
Then $\mathbf{e}_n$ is a noncommutative version of the elementary symmetric function $e_n$, and $\mathbf{e}_n\in \Sym_n$ since $\mathbf{e}_n=\r_{(1^n)}$.

Let
\[
\mathbf{h}(x)\coloneqq\sum_{n=0}^{\infty}\mathbf{h}_{n}x^{n}
\]
be the generating function for the noncommutative complete symmetric
functions $\mathbf{h}_{n}$, where $x$ commutes with all the variables $X_i$, and let 
\[
\mathbf{e}(x)\coloneqq\sum_{n=0}^{\infty}\mathbf{e}_{n}x^{n}
\]
be the generating function for the $\mathbf{e}_{n}$. Then 
\begin{equation}
\mathbf{e}(x)=\mathbf{h}(-x)^{-1},\label{e-ehr}
\end{equation}
which is a consequence of the infinite product formulas
\begin{equation*}
\h(x) =(1-X_1x)^{-1}(1-X_2x)^{-1}\cdots
\text{\quad and \quad} 
\mathbf{e}(x) = \cdots (1+X_2x)(1+X_1x)
\end{equation*}
(cf. \cite[p.~38]{gessel-thesis} or \cite[Section 7.3]{ncsf1}).

The algebra $\mathbf{Sym}$ can be given a coalgebra structure by
defining the comultiplication $\Delta:\mathbf{Sym}\rightarrow\mathbf{Sym}\otimes\mathbf{Sym}$
by 
\begin{equation}
\label{e-hcomult}
\Delta\mathbf{h}_{n}=\sum_{i=0}^{n}\mathbf{h}_{i}\otimes\mathbf{h}_{n-i}
\end{equation}
and extending by the rule 
\begin{equation}
\Delta(fg)=(\Delta f)(\Delta g).\label{e-deltahomo}
\end{equation}
Since $\Delta$ is an algebra homomorphism, $\mathbf{Sym}$ is a bialgebra.%
\footnote{In fact, both $\mathbf{Sym}$ and $\QSym$ are Hopf algebras (see \cite{Grinberg2014} for a definition) and the duality between $\mathbf{Sym}$ and $\QSym$ given in the next
theorem is in fact a Hopf algebra duality. However, we will not need
the antipode in this paper, nor will we be concerned with the coalgebra structure of $\QSym$.}
The comultiplication $\Delta$ extends naturally to $\Symxy$ and $\Symtxy$ (but note that now tensor products are over the coefficient ring).

Next, we show that the graded dual of the algebra $\QSym$ is the coalgebra $\mathbf{Sym}$; cf.~\cite[Theorem 6.1]{ncsf1} or \cite[Section 5.3]{Grinberg2014}. We may extend the definition of $\h_L$ to weak compositions $L$ by \eqref{e-hL-def}, so that if $L$ is a weak composition then $\h_L = \h_{L'}$ where $L'$ is the composition obtained from $L$ by removing all zero parts. Recall that, as defined in Section \ref{s-bijproof}, weak compositions are added componentwise.

\begin{lem}
\label{l-hcoprod}
Let $L$ be a composition. Then $\Delta \h_L = \sum_{J,K} \h_J \otimes\h_K$,
where the sum is over all pairs of weak compositions $J$ and $K$ with the same number of parts such that $J+K = L$.
\end{lem}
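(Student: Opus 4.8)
The plan is to compute $\Delta\h_L$ directly from the defining properties of the comultiplication, namely the formula \eqref{e-hcomult} on the generators $\h_n$ together with the fact \eqref{e-deltahomo} that $\Delta$ is an algebra homomorphism. Writing $L=(L_1,\dots,L_k)$ so that $\h_L=\h_{L_1}\cdots\h_{L_k}$ by \eqref{e-hL-def}, I would first apply \eqref{e-deltahomo} repeatedly to obtain $\Delta\h_L=\Delta(\h_{L_1})\cdots\Delta(\h_{L_k})$, and then substitute the coproduct of each generator, giving
\[
\Delta\h_L=\prod_{r=1}^{k}\Big(\sum_{j_r=0}^{L_r}\h_{j_r}\otimes\h_{L_r-j_r}\Big).
\]

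The key step is to expand this product in $\Sym\otimes\Sym$, using that multiplication in the tensor-product algebra is componentwise, $(\h_a\otimes\h_b)(\h_c\otimes\h_d)=\h_a\h_c\otimes\h_b\h_d$. Distributing the product over the sums yields a single sum over all tuples $(j_1,\dots,j_k)$ with $0\le j_r\le L_r$, whose general term is $\h_{j_1}\cdots\h_{j_k}\otimes\h_{L_1-j_1}\cdots\h_{L_k-j_k}$. I would then identify each such tuple with the pair of weak compositions $J=(j_1,\dots,j_k)$ and $K=(L_1-j_1,\dots,L_k-j_k)$, which have the same number of parts and satisfy $J+K=L$; conversely, every pair $(J,K)$ of weak compositions with $k$ parts and $J+K=L$ arises from exactly one tuple (take $j_r=J_r$, the inequality $0\le J_r\le L_r$ being equivalent to $K_r\ge 0$), so this identification is a bijection.

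The main---and essentially only---obstacle is the bookkeeping with zero parts. I must verify that the convention extending $\h$ to weak compositions by \eqref{e-hL-def}, under which $\h_J=\h_{J'}$ for $J'$ the composition obtained by deleting the zero parts of $J$, is precisely what makes $\h_{j_1}\cdots\h_{j_k}=\h_J$: each factor $\h_{j_r}$ with $j_r=0$ contributes $\h_0=1$, so dropping these factors from the product corresponds exactly to dropping the zero parts of $J$, and likewise for $K$. Once this consistency is checked, substituting $\h_J$ and $\h_K$ into the expanded sum and reindexing over pairs $(J,K)$ gives $\Delta\h_L=\sum_{J,K}\h_J\otimes\h_K$ with the sum as stated, completing the proof with no further computation required.
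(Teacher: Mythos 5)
Your proposal is correct and follows exactly the paper's argument: the paper's proof is the one-line observation that the claim "follows easily from the fact that $\Delta \h_{(L_1,\dots, L_m)} = \Delta \h_{L_1}\cdots \Delta \h_{L_m}$ together with \eqref{e-hcomult}," which is precisely the computation you carry out in detail. Your careful handling of the zero-parts convention and the bijection between tuples and pairs of weak compositions simply makes explicit what the paper leaves to the reader.
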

\begin{proof}
This follows easily from the fact that $\Delta \h_{(L_1,\dots, L_m)} = 
\Delta \h_{L_1}\cdots \Delta \h_{L_m}$ together with \eqref{e-hcomult}.
\end{proof} 

\begin{thm}
\label{t-duality} The graded dual of the algebra $\QSym$ of quasisymmetric
functions is isomorphic to the coalgebra $\mathbf{Sym}$ of noncommutative
symmetric functions. In particular, the 
monomial basis $\{M_{L}\}$
of\/ $\QSym$ is dual to the complete basis $\{\mathbf{h}_{L}\}$ of\/ $\mathbf{Sym}$
and the fundamental basis $\{F_{L}\}$
of\/ $\QSym$ is dual to the ribbon basis $\{\mathbf{r}_{L}\}$ of\/ $\mathbf{Sym}$.
\end{thm}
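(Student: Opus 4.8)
<br>

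The plan is to establish the duality by directly computing the pairing between the two pairs of bases and verifying that it is unimodular, then checking compatibility of the algebra and coalgebra structures. The fundamental observation is that $\QSym$ and $\Sym$ both have a combinatorial meaning in terms of words: $M_L$ and $F_L$ encode weakly increasing conditions on commuting variables, while $\h_L$ and $\r_L$ encode the analogous conditions on noncommuting variables. First I would define the pairing $\pair{\cdot}{\cdot}\colon \QSym \otimes \Sym \to \Q$ on the distinguished bases by declaring $\pair{M_L}{\h_K} = \delta_{L,K}$ for compositions $L, K \vDash n$, and extending bilinearly within each graded component (the pairing is zero across different degrees). Since $\{M_L\}$ is a basis of $\QSym_n$ and $\{\h_L\}$ is a basis of $\Sym_n$, this is a well-defined nondegenerate pairing, identifying $\Sym_n$ with $\QSym_n^*$ and hence $\Sym$ with the graded dual $\QSym^o$.

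The substance of the theorem is that this pairing intertwines the multiplication of $\QSym$ with the comultiplication of $\Sym$, and that under it the fundamental and ribbon bases are also dual. For the structure-constant compatibility, I would recall from the general discussion in Section~\ref{s-section5} that if $A$ is a graded algebra of finite type with basis $\{a_i\}$ and structure constants $c^i_{j,k}$, then the dual basis $\{f_i\}$ in $A^o$ carries the comultiplication $\Delta(f_i) = \sum_{j,k} c^i_{j,k}\, f_j \otimes f_k$. Thus it suffices to verify that the comultiplication on the complete basis $\{\h_L\}$ computed in Lemma~\ref{l-hcoprod} is governed by the same structure constants as the product of monomial quasisymmetric functions $M_J M_K$. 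This is the crux, and is where I would spend most of the effort: I would expand $M_J M_K$ in the $M_L$ basis (the coefficient of $M_L$ counts the ways to overlay $J$ and $K$ as weak compositions summing to $L$, accounting for the quasi-shuffle of parts), and compare with the sum over pairs $(J', K')$ of weak compositions with $J' + K' = L$ appearing in $\Delta \h_L$. Matching these two descriptions term-by-term shows that $\pair{M_J M_K}{\h_L} = \pair{M_J \otimes M_K}{\Delta \h_L}$, which is exactly the statement that $\Delta$ on $\Sym$ is dual to multiplication on $\QSym$.

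The remaining claim, that $\{F_L\}$ is dual to $\{\r_L\}$, I would deduce from the change-of-basis formulas rather than recompute from scratch. The key is that the transition matrices \eqref{e-FtoM} expressing $F_L$ in terms of $M_K$ and \eqref{e-hitor}--\eqref{e-ritoh} relating $\h_L$ and $\r_L$ are, up to the relevant indexing, transpose-inverse to one another. Concretely, from \eqref{e-FtoM} we have $F_L = \sum_{\Des(K) \supseteq \Des(L)} M_K$, while \eqref{e-hitor} gives $\h_L = \sum_{\Des(K) \subseteq \Des(L)} \r_K$; since the pairing sends $M_K \mapsto \h_K^*$, dualizing the unitriangular relation between $\h$ and $\r$ yields precisely the unitriangular relation between $M$ and $F$. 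I would then verify $\pair{F_L}{\r_K} = \delta_{L,K}$ by a short computation chaining these two triangular systems, using that a unitriangular matrix and the transpose of its inverse pair to the identity.

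The main obstacle will be the combinatorial bookkeeping in the middle paragraph: making the count of weak-composition pairs $(J,K)$ with $J + K = L$ appearing in $\Delta \h_L$ align exactly with the structure constants of the quasi-shuffle product $M_J M_K$, while correctly handling zero parts and the convention that $\h_L$ depends only on the composition obtained after deleting zeros. Everything else is formal manipulation of triangular change-of-basis matrices and an application of the general duality between products and coproducts already recorded in Section~\ref{s-section5}.
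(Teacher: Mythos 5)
Your proposal is correct and follows essentially the same route as the paper: pair $\{M_L\}$ with $\{\h_L\}$, match the quasi-shuffle structure constants of $M_JM_K$ against the coefficients in $\Delta\h_L$ from Lemma \ref{l-hcoprod}, and then transfer duality to $\{F_L\}$ and $\{\r_L\}$ via the triangular change-of-basis relations \eqref{e-FtoM} and \eqref{e-hitor}--\eqref{e-ritoh}. The only cosmetic difference is that the paper carries out the $\pair{F_K}{\r_L}=\delta_{K,L}$ verification as an explicit inclusion-exclusion sum over $\Des(K)\subseteq\Des(J)\subseteq\Des(L)$, whereas you package the same computation as a transpose-inverse statement about unitriangular transition matrices.
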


\begin{proof}
We first consider the product of two monomial quasisymmetric functions.
Define coefficients $b^{L}_{J,K}$ by
\begin{equation}
\label{e-Mproduct}
M_J M_K = \sum_L b^{L}_{J,K}M_L. 
\end{equation}
It is easy to see that $b^L_{J,K}$ is the number of pairs of weak compositions $(J',K')$ with the same number of parts such that $J'$ is obtained from $J$ by inserting zeros, $K'$ is obtained from $K$ by inserting zeros, and $J'+K' = L$. 

Lemma \ref{l-hcoprod} implies that
\begin{equation*}
\Delta \h_L = \sum_{J,K} b_{J,K}^L \h_J\otimes \h_K,
\end{equation*}
where the coefficients $b_{J,K}^L$ are the same as those in 
equation \eqref{e-Mproduct}. Thus 
$\{M_L\}_{L\vDash n}$ and $\{\h_L\}_{L\vDash n}$ are dual bases for  $\QSym_n$ and 
$\mathbf{Sym}_n$.

We may define a pairing between $\QSym$ and $\mathbf{Sym}$ by 
\begin{equation*}
\pair{M_{K}}{\h_{L}}=\delta_{K,L}=
\begin{cases}
1, & \text{if }K=L,\\
0, & \text{otherwise.}
\end{cases}
\end{equation*}
Then 
\begin{align*}
\pair{F_K}{\r_L} &= \biggl\langle\,\sum_{\Des(I)\supseteq\Des(K)}M_I,
   \sum_{\Des(J)\subseteq \Des(L)}(-1)^{l(L) - l(J)}\h_J\biggr\rangle\\
   &=\sum_{\substack{\Des(J)\supseteq\Des(K)\\  
      \Des(J)\subseteq\Des(L)}}(-1)^{l(L) - l(J)}=\delta_{K,L},
\end{align*}
and this implies that $\{F_L\}$ and $\{\r_L\}$ are dual bases.
\end{proof}

\subsection{Monoidlike elements}

We call an element $f$ of a bialgebra \emph{monoidlike} if $\Delta f = f\otimes f$. It is straightforward to show that the product of two monoidlike elements
is monoidlike and that the inverse of a monoidlike element, if it
exists, is monoidlike.%
\footnote{%
A monoidlike element $f$ of a bialgebra is called \emph{grouplike} if $\varepsilon(f)$ is the identity element of the coefficient ring, where $\varepsilon$ is the counit. In our bialgebras, the counit is the coefficient of $\h_0$,  the identity element of $\Q$ or $\Q[x,y]$ is 1, and the identity element of $\Qtxy$ is $(1-t)^{-1}=\sum_{k=0}^\infty t^k$. Nearly all of our monoidlike elements are actually grouplike, but exceptions occur in Corollary \ref{c-monoidlike2}.}

\begin{lem}
\label{l-hemonoidlike}
$\mathbf{h}(x)$, $\mathbf{e}(x)$, and $\mathbf{e}(xy)$ are monoidlike in $\Symxy$.
\end{lem}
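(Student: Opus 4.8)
The plan is to show that each of the three generating functions $\mathbf{h}(x)$, $\mathbf{e}(x)$, and $\mathbf{e}(xy)$ satisfies $\Delta f = f\otimes f$ directly from the defining comultiplication \eqref{e-hcomult} and the fact that $\Delta$ is an algebra homomorphism \eqref{e-deltahomo}. Since $x$ and $y$ are central and the comultiplication is extended $\Q[x,y]$-linearly to $\Symxy$, I can treat $x$ and $y$ as scalars throughout.

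First I would handle $\mathbf{h}(x)$. Using \eqref{e-hcomult}, I compute
\[
\Delta\mathbf{h}(x)=\sum_{n=0}^{\infty}(\Delta\mathbf{h}_{n})x^{n}
=\sum_{n=0}^{\infty}\sum_{i=0}^{n}\mathbf{h}_{i}\otimes\mathbf{h}_{n-i}\,x^{n}.
\]
Writing $x^{n}=x^{i}x^{n-i}$ and reindexing with $j=i$, $k=n-i$, the right-hand side factors as
\[
\Big(\sum_{j=0}^{\infty}\mathbf{h}_{j}x^{j}\Big)\otimes\Big(\sum_{k=0}^{\infty}\mathbf{h}_{k}x^{k}\Big)=\mathbf{h}(x)\otimes\mathbf{h}(x),
\]
so $\mathbf{h}(x)$ is monoidlike. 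This is the essential computation, and it amounts to the observation that the comultiplication \eqref{e-hcomult} packages the $\mathbf{h}_{n}$ into a monoidlike generating series — the same mechanism by which a binomial-type coproduct turns an exponential into a grouplike element.

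For $\mathbf{e}(x)$, the cleanest route is to invoke \eqref{e-ehr}, namely $\mathbf{e}(x)=\mathbf{h}(-x)^{-1}$, together with the stated fact (in the paragraph preceding the lemma) that the product and the inverse of monoidlike elements are monoidlike. Since $\mathbf{h}(-x)$ is monoidlike by the first computation (replacing $x$ by $-x$ is harmless as $x$ is central), its inverse $\mathbf{e}(x)$ is monoidlike as well. Finally, $\mathbf{e}(xy)$ is obtained from $\mathbf{e}(x)$ by substituting the central variable $x\mapsto xy$, and this substitution commutes with $\Delta$ (which acts trivially on the coefficient ring $\Q[x,y]$), so $\mathbf{e}(xy)$ is monoidlike too; alternatively one repeats the $\mathbf{e}(x)$ argument verbatim with $xy$ in place of $x$.

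The only point requiring any care — and the nearest thing to an obstacle — is the interchange of the double sum and the reindexing in the $\mathbf{h}(x)$ computation, which involves formal power series of unbounded degree in $\Symxy$; this is legitimate because the grading ensures that each homogeneous component receives only finitely many contributions, so the manipulations are purely formal and convergence is not an issue. Everything else is a direct application of the two stated closure properties of monoidlike elements.
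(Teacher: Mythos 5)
Your proof is correct and follows essentially the same route as the paper: the same term-by-term computation showing $\Delta\mathbf{h}(x)=\mathbf{h}(x)\otimes\mathbf{h}(x)$, followed by the identity $\mathbf{e}(x)=\mathbf{h}(-x)^{-1}$ and the closure of monoidlike elements under inverses (and substitution $x\mapsto xy$) to handle $\mathbf{e}(x)$ and $\mathbf{e}(xy)$. The extra remarks on the legitimacy of the formal reindexing and on the substitution commuting with $\Delta$ are fine points the paper leaves implicit.
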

\begin{proof}
We have 
\begin{align*}
\Delta\mathbf{h}(x) & =\sum_{n=0}^{\infty}\Delta\mathbf{h}_{n}x^{n}\\
 & =\sum_{n=0}^{\infty}\sum_{i+j=n}(\mathbf{h}_{i}\otimes\mathbf{h}_{j})x^{n}\\
 & =\sum_{n=0}^{\infty}\sum_{i+j=n}\mathbf{h}_{i}x^{i}\otimes\mathbf{h}_{j}x^{j}\\
 & =\sum_{i,j=0}^{\infty}\mathbf{h}_{i}x^{i}\otimes\mathbf{h}_{j}x^{j}\\
 & =\Big(\sum_{i=0}^{\infty}\mathbf{h}_{i}x^{i}\Big)\otimes\Big(\sum_{j=0}^{\infty}\mathbf{h}_{j}x^{j}\Big),
\end{align*}
so $\mathbf{h}(x)$ is monoidlike. Since $\mathbf{e}(x)=\mathbf{h}(-x)^{-1}$,
this implies that $\mathbf{e}(x)$ and
$\mathbf{e}(xy)$ are monoidlike.
\end{proof}

\begin{lem}
\label{l-monoidlike2}
Let $f=\sum_{n=0}^\infty a_n t^n$ be an element of $\Symtxy$ where each $a_n$ is an element of $\Symxy$. Then $f$ is monoidlike in $\Symtxy$ if and only if each $a_n$ is monoidlike in $\Symxy$.
\end{lem}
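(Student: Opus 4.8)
The plan is to expand both $\Delta f$ and $f\otimes f$ as formal power series in $t$ whose coefficients lie in $\Symxy \otimes \Symxy$, and then to read off the coefficient of each $t^n$. The whole argument rests on the fact that, under the Hadamard product, the powers $t^n$ are orthogonal idempotents of $\Qtxy$; this is precisely what makes the coefficientwise comparison legitimate. To begin, since $\Delta$ is $\Qtxy$-linear and each $a_n$ is free of $t$, the scalar $t^n$ passes through $\Delta$, giving $\Delta f = \sum_{n=0}^\infty (\Delta a_n)\,t^n$.

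Next I would compute $f\otimes f$ in $\Symtxy \otimes_{\Qtxy} \Symtxy$. Writing $f \otimes f = \sum_{m,n}(a_m t^m)\otimes(a_n t^n)$ and pulling the scalars $t^m$ and $t^n$ across the tensor sign---which is legitimate because the tensor product is taken over $\Qtxy$---yields $(a_m t^m)\otimes(a_n t^n) = (t^m * t^n)(a_m\otimes a_n)$, where $t^m * t^n$ denotes the product in $\Qtxy$, namely the Hadamard product. Since $t^m * t^n = \delta_{m,n}\,t^n$, every cross term with $m\neq n$ vanishes and the double sum collapses to $f\otimes f = \sum_{n=0}^\infty (a_n\otimes a_n)\,t^n$. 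This collapse is the crux of the lemma: with ordinary multiplication in $t$ one would instead retain all terms $a_m\otimes a_n$ with $m\neq n$, and the statement would be false.

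With these two expansions in hand, the condition $\Delta f = f\otimes f$ becomes $\sum_{n}(\Delta a_n)\,t^n = \sum_{n}(a_n\otimes a_n)\,t^n$. The $(\Leftarrow)$ direction is then immediate: if every $a_n$ is monoidlike, then $\Delta a_n = a_n\otimes a_n$ for all $n$ and the two series agree. For $(\Rightarrow)$, I would extract the coefficient of $t^n$ by multiplying through by the idempotent $t^n$, obtaining $(\Delta a_n)\,t^n = (a_n\otimes a_n)\,t^n$, and then strip off the $t^n$ by applying to the coefficients the coordinate projection $\epsilon_n\colon \Qtxy \to \Q[x,y]$ that sends $\sum_k P_k t^k \mapsto P_n$. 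This $\epsilon_n$ is a $\Q[x,y]$-algebra homomorphism precisely because the Hadamard product is componentwise, and it satisfies $\epsilon_n(t^n)=1$; base-changing the previous identity along $\epsilon_n$ therefore gives $\Delta a_n = a_n\otimes a_n$ in $\Symxy \otimes_{\Q[x,y]} \Symxy$, so that $a_n$ is monoidlike.

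The main obstacle is not any single computation but the bookkeeping forced by the two different tensor products---over $\Qtxy$ on the side of $f\otimes f$, and over $\Q[x,y]$ on the side of each $a_n \otimes a_n$---and the need to verify that extracting the coefficient of $t^n$ is genuinely well-defined rather than merely formal. The orthogonal-idempotent structure of the $t^n$, together with the homomorphisms $\epsilon_n$, is exactly what resolves this; once it is set up, both directions follow from the collapse computed above.
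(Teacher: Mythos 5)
Your proof is correct and follows essentially the same route as the paper: both expand $\Delta f = \sum_n (\Delta a_n)t^n$ and collapse $f\otimes f = \sum_{m,n}(a_m\otimes a_n)(t^m * t^n)$ to the diagonal $\sum_n (a_n\otimes a_n)t^n$ using the orthogonality $t^m * t^n = \delta_{m,n}t^n$ of the Hadamard product, then compare coefficients of $t^n$. The only difference is that you make the coefficient-extraction step explicit via the projection homomorphisms $\epsilon_n$, which the paper leaves implicit.
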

\begin{proof}
We have 
\begin{align*}
f\otimes f&= \sum_{m,n=0}^\infty a_mt^m \otimes a_n t^n\\
  &= \sum_{m,n=0}^\infty (a_m \otimes a_n) (t^m * t^n)\\
  &=\sum_{n=0}^\infty (a_n \otimes a_n) t^n
\shortintertext{and}
\Delta f&=\sum_{n=0}^\infty \Delta a_n t^n.
\end{align*}
Thus $\Delta f = f\otimes f$ if and only if $\Delta a_n = a_n \otimes a_n$ for each $n$.
\end{proof}

The next result follows immediately from Lemma \ref{l-monoidlike2}.
\begin{cor}
\label{c-monoidlike2}
Suppose that $f$ is monoidlike in $\Symxy$. Then $(1-tf)^{-1}$, $(1-t^2f)^{-1}$, and $1+tf$
are monoidlike in $\Symtxy$. 
\end{cor}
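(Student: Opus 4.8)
The plan is to apply Lemma \ref{l-monoidlike2} directly, by writing each of the three expressions as a power series $\sum_{n=0}^{\infty} a_n t^n$ with each $a_n \in \Symxy$ and then checking that every coefficient $a_n$ is monoidlike in $\Symxy$.

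First I would unpack the three series using the Hadamard-product conventions fixed at the end of Section \ref{s-section3}. By that convention $(1-tf)^{-1}$ denotes $\sum_{k=0}^{\infty} t^k f^k$, so the coefficient of $t^n$ is the honest power $f^n$. Likewise $(1-t^2 f)^{-1} = \sum_{k=0}^{\infty} t^{2k} f^k$, whose coefficient of $t^n$ is $f^{n/2}$ when $n$ is even and $0$ when $n$ is odd; and $1 + tf$ has coefficient $1$ at $t^0$, coefficient $f$ at $t^1$, and coefficient $0$ at every higher power of $t$.

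Next I would observe that all of these coefficients are monoidlike in $\Symxy$. The element $f$ is monoidlike by hypothesis, and since the product of two monoidlike elements is monoidlike (as noted at the beginning of this subsection), each power $f^k$ is monoidlike. The constant $1 = \h_0$ is monoidlike because \eqref{e-hcomult} with $n=0$ gives $\Delta \h_0 = \h_0 \otimes \h_0$, and $0$ is trivially monoidlike since $\Delta 0 = 0 = 0 \otimes 0$. Thus in each of the three series every coefficient $a_n$ is monoidlike, and Lemma \ref{l-monoidlike2} immediately yields that $(1-tf)^{-1}$, $(1-t^2 f)^{-1}$, and $1+tf$ are monoidlike in $\Symtxy$.

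The argument is entirely routine, so there is no genuine obstacle; the only points requiring care are interpreting the inverse expressions through the Hadamard-product convention (so that the coefficient of $t^n$ really is a power $f^k$ rather than something involving Hadamard products of lower-degree terms) and confirming that the degenerate coefficients $1$ and $0$ count as monoidlike. Both are easily handled as above.
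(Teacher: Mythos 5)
Your proof is correct and takes essentially the same approach as the paper, which derives the corollary directly from Lemma \ref{l-monoidlike2} without further comment. Your expansion fills in exactly the implicit details: interpreting $(1-tf)^{-1}$ as $\sum_{k=0}^{\infty} t^k f^k$ per the convention of Section \ref{s-section3}, and checking that each coefficient ($f^k$ as a product of monoidlike elements, $1=\h_0$, and $0$) is monoidlike in $\Symxy$.
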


\subsection{Implications of duality to shuffle-compatibility}

Let $\st$ be a descent statistic. For each $\st$-equivalence
class $\alpha$ of compositions, let 
\[
\mathbf{r}_{\alpha}^{\st} \coloneqq \sum_{L\in\alpha}\mathbf{r}_{L}.
\]
We call the noncommutative symmetric functions $\mathbf{r}_{\alpha}^{\st}$ $\st$-\textit{ribbons}.

The following is the dual version of Theorem \ref{t-scqsym}.
\begin{thm}
\label{t-rib} 
A descent statistic $\st$ is shuffle-compatible if and only if for every $\st$-equivalence
class $\alpha$ of compositions, there exist constants $c_{\beta,\gamma}^{\alpha}$
for which 
\[
\Delta\mathbf{r}_{\alpha}^{\st}=\sum_{\beta,\gamma}c_{\beta,\gamma}^{\alpha}\mathbf{r}_{\beta}^{\st}\otimes\mathbf{r}_{\gamma}^{\st};
\]
that is, the $\st$-ribbons $\mathbf{r}_{\alpha}^{\st}$ span a subcoalgebra of $\mathbf{Sym}$.
In this case, the $c_{\beta,\gamma}^{\alpha}$ are the structure constants
for ${\cal A}_{\st}$.
\end{thm}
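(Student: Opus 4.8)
The plan is to exploit the duality between $\QSym$ and $\mathbf{Sym}$ from Theorem~\ref{t-duality}, translating the multiplication-side criterion of Theorem~\ref{t-scqsym} into a comultiplication-side statement via the structure-constant correspondence for graded duals recorded in Section~5.1.

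First I would record the key coproduct formula for ribbons. By Theorem~\ref{t-fqsym} we have $F_J F_K=\sum_L c_{J,K}^L F_L$, where $c_{J,K}^L$ is the number of shuffles with descent composition $L$. Since $\{\mathbf{r}_L\}$ is dual to $\{F_L\}$ (Theorem~\ref{t-duality}), the structure-constant correspondence for graded duals gives $\Delta\mathbf{r}_L=\sum_{J,K}c_{J,K}^L\,\mathbf{r}_J\otimes\mathbf{r}_K$. Summing over $L$ in a fixed $\st$-equivalence class $\alpha$ and interchanging the order of summation yields
\[
\Delta\mathbf{r}_\alpha^{\st}=\sum_{J,K}\Bigl(\sum_{L\in\alpha}c_{J,K}^L\Bigr)\mathbf{r}_J\otimes\mathbf{r}_K.
\]

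Next I would observe that the $\st$-ribbons $\{\mathbf{r}_\beta^{\st}\}$ are linearly independent, since distinct classes $\beta$ are disjoint sets of compositions; hence $\{\mathbf{r}_\beta^{\st}\otimes\mathbf{r}_\gamma^{\st}\}$ is a basis for $W\otimes W$, where $W=\Span\{\mathbf{r}_\beta^{\st}\}$, and each such element expands as $\mathbf{r}_\beta^{\st}\otimes\mathbf{r}_\gamma^{\st}=\sum_{J\in\beta,\,K\in\gamma}\mathbf{r}_J\otimes\mathbf{r}_K$. Because the index blocks $\{(J,K):J\in\beta,\,K\in\gamma\}$ partition all pairs $(J,K)$, an element of $\mathbf{Sym}\otimes\mathbf{Sym}$ expressed in the basis $\{\mathbf{r}_J\otimes\mathbf{r}_K\}$ lies in $W\otimes W$ if and only if its coefficients are constant on each block. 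Applying this to the displayed formula, $W$ is a subcoalgebra (that is, $\Delta\mathbf{r}_\alpha^{\st}\in W\otimes W$ for every $\alpha$) if and only if $\sum_{L\in\alpha}c_{J,K}^L$ depends only on the classes $[J]=\beta$ and $[K]=\gamma$, not on the representatives $J,K$.

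Finally I would identify this last condition with shuffle-compatibility. For a descent statistic, the multiset $\{\,\st(\tau)\mid\tau\in S(\pi,\sigma)\,\}$ records, for each class $\alpha$, the count $\sum_{L\in\alpha}c_{J,K}^L$ with $J=\Comp(\pi)$ and $K=\Comp(\sigma)$, so this multiset depends only on $[\pi]_{\st}$ and $[\sigma]_{\st}$ exactly when every such sum is block-constant. When this holds, writing $c_{\beta,\gamma}^{\alpha}$ for the common value $\sum_{L\in\alpha}c_{J,K}^L$ reproduces the structure constants of $\mathcal{A}_{\st}$ computed in the proof of Theorem~\ref{t-scqsym}, and factoring the displayed coproduct gives $\Delta\mathbf{r}_\alpha^{\st}=\sum_{\beta,\gamma}c_{\beta,\gamma}^{\alpha}\,\mathbf{r}_\beta^{\st}\otimes\mathbf{r}_\gamma^{\st}$, confirming the final assertion. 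I expect the only delicate point to be the backward direction's linear-algebra step: one must check that the subcoalgebra condition $\Delta\mathbf{r}_\alpha^{\st}\in W\otimes W$ genuinely \emph{forces} the block-constancy of the coefficients rather than merely being implied by it, and this rests precisely on the block sums $\mathbf{r}_\beta^{\st}\otimes\mathbf{r}_\gamma^{\st}$ being linearly independent and indexing a partition of the $\{\mathbf{r}_J\otimes\mathbf{r}_K\}$ basis.
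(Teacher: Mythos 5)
Your proof is correct, but it takes a genuinely different route from the paper's. The paper's proof works functorially: assuming the $\st$-ribbons span a subcoalgebra $D$ of $\Sym$, it dualizes the inclusion $i\colon D\to\Sym$ to obtain a $\mathbb{Q}$-algebra homomorphism $i^{o}\colon\QSym\to D^{o}$ satisfying $i^{o}(F_L)=f_\alpha$ (the dual basis element of $D^o$) whenever $L\in\alpha$, which is precisely the hypothesis of Theorem \ref{t-scqsym}; that theorem then delivers shuffle-compatibility together with the identification $\mathcal{A}_{\st}\cong D^{o}$, and the converse (which the paper omits as ``similar'') is obtained by dualizing the quotient map $\phi_{\st}$ produced by Theorem \ref{t-scqsym}. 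You instead work entirely at the level of structure constants: you expand $\Delta\mathbf{r}_\alpha^{\st}$ in the basis $\{\mathbf{r}_J\otimes\mathbf{r}_K\}$ using the graded-dual correspondence, show that membership in $W\otimes W$ is equivalent to the sums $\sum_{L\in\alpha}c_{J,K}^{L}$ being constant on blocks $\beta\times\gamma$, and identify block-constancy with shuffle-compatibility directly from the combinatorial meaning of $c_{J,K}^{L}$. Your version is more elementary and self-contained---Theorem \ref{t-scqsym} is not actually needed, since the identification of $c_{\beta,\gamma}^{\alpha}$ with the structure constants of $\mathcal{A}_{\st}$ follows from the definition of multiplication in the shuffle algebra---and it establishes both directions simultaneously because every step in your chain is an equivalence, whereas the paper details only one implication. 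The ``delicate point'' you flag is indeed the crux of the backward direction, and your resolution is sound: it rests on uniqueness of coefficients in the basis $\{\mathbf{r}_J\otimes\mathbf{r}_K\}$, and all blocks are finite since each $\st$-equivalence class consists of compositions of a single integer $n$, so no convergence issues arise. What the paper's route buys in exchange is the explicit realization of $\mathcal{A}_{\st}$ as the graded dual coalgebra $D^{o}$, a structural fact that the rest of Section \ref{s-section5} leans on (Theorems \ref{l-monoidlikesc} and \ref{l-monoidlikesc1}), whereas your argument produces $\mathcal{A}_{\st}$ only implicitly through its structure constants.
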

\begin{proof}

By Theorem \ref{t-duality}, we have a pairing between 
quasisymmetric functions and noncommutative symmetric functions
for which
\[
\pair{F_{L}}{\mathbf{r}_{J}}=\begin{cases}
1, & \mbox{if }L=J,\\
0, & \mbox{otherwise.}
\end{cases}
\]

Suppose that the $\st$-ribbons $\mathbf{r}_{\alpha}^{\st}$ span a subcoalgebra
of $\mathbf{Sym}$ with structure constants $c_{\beta,\gamma}^{\alpha}$. 
Let $D$ be the subcoalgebra spanned by the $\mathbf{r}_{\alpha}^{\st}$
and let $i\colon D\rightarrow\mathbf{Sym}$ be the canonical inclusion map, a $\mathbb{Q}$-coalgebra
homomorphism. Then $i$ induces a $\mathbb{Q}$-algebra homomorphism
$i^{o}\colon\QSym\rightarrow D^{o}$ given by 
\begin{align*}
i^{o}(F_{L})(\mathbf{r}_{\alpha}^{\st}) & =\pair{F_{L}}{i(\mathbf{r}_{\alpha}^{\st})}\\
 & =\pair{F_{L}}{\mathbf{r}_{\alpha}^{\st}}\\
 & =\begin{cases}
1, & \mbox{if }L\in\alpha,\\
0, & \mbox{otherwise.}
\end{cases}
\end{align*}
Observe that $i^{o}(F_{L})=i^{o}(F_{J})$ whenever $L$ and $J$ belong
to the same $\st$-equivalence class. Hence, we can define $f_{\alpha} \coloneqq i^{o}(F_{L})$
for $L\in\alpha$. Then $\{f_{\alpha}\}$ is the basis of $D^{o}$
dual to $\{\mathbf{r}_{\alpha}^{\st}\}$, so 
\[
f_{\beta}f_{\gamma}=\sum_{\alpha}c_{\beta,\gamma}^{\alpha}f_{\alpha}.
\]
By Theorem \ref{t-scqsym}, $\st$ is shuffle-compatible with shuffle
algebra isomorphic to $D^{o}$. We omit the  proof of the reverse implication, as it
is similar; we begin with a quotient algebra of $\QSym$ and
then show that its basis elements are dual to the $\st$-ribbons $\mathbf{r}_{\alpha}^{\st}$.
\end{proof}
While Theorem \ref{t-scqsym} tells us that we can prove the shuffle-compatibility
of a descent statistic by constructing suitable quotients of $\QSym$,
Theorem \ref{t-rib} tells us that we could, alternatively, construct
suitable subcoalgebras of $\mathbf{Sym}$, and this is what we will do in Sections \ref{s-pkdes} to \ref{s-udrdes}.
Moreover, because it is straightforward to compute coproducts of noncommutative symmetric functions, Theorem \ref{t-rib} is useful for showing that a descent statistic is not shuffle-compatible and for conjecturing that a statistic is shuffle-compatible, which is not the case for Theorem~\ref{t-scqsym}.

Although Theorem \ref{t-rib} does not give us a way to describe the dual algebra
${\cal A}_{\st}$, we can describe ${\cal A}_{\st}$ explicitly using the following theorem. 
For an $\st$-equivalence class $\alpha$ of compositions, we let $|\alpha|$ be the sum of the parts of any composition $L\in \alpha$.

\begin{thm}
\label{l-monoidlikesc} Let $\st$
be a descent statistic and let $u_{\alpha}\in \Qtxy$
be linearly independent elements \textup{(}\kern -1pt over $\mathbb{Q}$\textup{)} indexed by $\st$-equivalence classes
$\alpha$ of compositions. Suppose that $f=\sum_{\alpha}u_{\alpha}\mathbf{r}_{\alpha}^{\st}$
is monoidlike in $\Symtxy$ and that there exist constants
$c_{\beta,\gamma}^{\alpha}$ such that $u_{\beta}u_{\gamma}=\sum_{\alpha}c_{\beta,\gamma}^{\alpha}u_{\alpha}$
for all $\st$-equivalence classes $\beta$ and~$\gamma$,
where $c_{\beta,\gamma}^{\alpha}=0$ unless
$|\alpha| = |\beta|+|\gamma|$.
 Then $\st$ is
shuffle-compatible and the linear map defined by
\[
[\pi]_{\st}\mapsto u_{\alpha},
\]
where $\Comp(\pi)\in\alpha$, is a $\mathbb{Q}$-algebra isomorphism
from ${\cal A}_{\st}$ to the subalgebra of $\Qtxy$
spanned by the $u_{\alpha}$.
\end{thm}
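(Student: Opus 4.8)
The plan is to show that the hypotheses force the $\st$-ribbons $\mathbf{r}_\alpha^{\st}$ to span a subcoalgebra of $\mathbf{Sym}$ with structure constants $c_{\beta,\gamma}^{\alpha}$, so that Theorem \ref{t-rib} delivers both the shuffle-compatibility of $\st$ and the identification of the $c_{\beta,\gamma}^{\alpha}$ as the structure constants of $\mathcal{A}_{\st}$; the claimed isomorphism then falls out almost immediately.

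First I would expand the monoidlike condition $\Delta f = f\otimes f$. Writing $f = \sum_\alpha u_\alpha \mathbf{r}_\alpha^{\st}$ and using that $\Delta$ acts only on the $\mathbf{Sym}$-factor while the $u_\alpha$ are coefficients in $\Qtxy$, the left-hand side is $\sum_\alpha u_\alpha\, \Delta\mathbf{r}_\alpha^{\st}$. For the right-hand side, the key point is that the tensor product is taken over the coefficient ring $\Qtxy$, so coefficients multiply across the tensor sign using the ring multiplication of $\Qtxy$---that is, the Hadamard product in $t$. Hence $f\otimes f = \sum_{\beta,\gamma}(u_\beta u_\gamma)\,(\mathbf{r}_\beta^{\st}\otimes\mathbf{r}_\gamma^{\st})$, and after substituting the product formula $u_\beta u_\gamma = \sum_\alpha c_{\beta,\gamma}^{\alpha}u_\alpha$ this becomes $\sum_\alpha u_\alpha\sum_{\beta,\gamma}c_{\beta,\gamma}^{\alpha}(\mathbf{r}_\beta^{\st}\otimes\mathbf{r}_\gamma^{\st})$.

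Next I would compare coefficients. Expanding both sides in the $\mathbb{Q}$-basis $\{\mathbf{r}_J\otimes\mathbf{r}_K\}$ of $\mathbf{Sym}\otimes\mathbf{Sym}$ (which remains a basis over $\Qtxy$ after extension of scalars), the equation $\Delta f = f\otimes f$ says that for each pair $(J,K)$ the coefficients in $\Qtxy$ agree. Since each coefficient is a $\mathbb{Q}$-linear combination of the $u_\alpha$ and these are linearly independent over $\mathbb{Q}$, I can strip off the $u_\alpha$ one at a time to conclude that $\Delta\mathbf{r}_\alpha^{\st} = \sum_{\beta,\gamma}c_{\beta,\gamma}^{\alpha}\,\mathbf{r}_\beta^{\st}\otimes\mathbf{r}_\gamma^{\st}$ for every $\alpha$; the grading hypothesis $c_{\beta,\gamma}^{\alpha}=0$ unless $|\alpha|=|\beta|+|\gamma|$ guarantees that this decomposition is compatible with the grading of $\Delta$, matching the homogeneous terms correctly. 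This is exactly the hypothesis of Theorem \ref{t-rib}, so $\st$ is shuffle-compatible and the $c_{\beta,\gamma}^{\alpha}$ are the structure constants of $\mathcal{A}_{\st}$.

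Finally, with $\mathcal{A}_{\st}$ having basis $\{[\pi]_{\st}\}$ indexed by $\st$-equivalence classes $\alpha$ and multiplication governed by the $c_{\beta,\gamma}^{\alpha}$, I would check that $[\pi]_{\st}\mapsto u_\alpha$ (for $\Comp(\pi)\in\alpha$) is an algebra homomorphism: its value on a product equals $\sum_\alpha c_{\beta,\gamma}^{\alpha}u_\alpha$, which is $u_\beta u_\gamma$ by hypothesis. Linear independence of the $u_\alpha$ makes this map injective, and its image is by definition the span of the $u_\alpha$, which is closed under multiplication and hence a subalgebra of $\Qtxy$. The main thing to get right is the bookkeeping in the first step---specifically that the coefficient multiplication across the tensor sign is the Hadamard product in $t$ (this is the content of Lemma \ref{l-monoidlike2}) rather than ordinary multiplication---since this is precisely the feature that makes the Hadamard product the correct operation for describing $\mathcal{A}_{\st}$.
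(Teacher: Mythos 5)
Your proposal is correct and follows essentially the same route as the paper: expand $\Delta f = f\otimes f$, substitute the product rule for the $u_\alpha$, use the grading hypothesis to reduce to finite sums so that $\mathbb{Q}$-linear independence of the $u_\alpha$ yields $\Delta\mathbf{r}_{\alpha}^{\st}=\sum_{\beta,\gamma}c_{\beta,\gamma}^{\alpha}\,\mathbf{r}_{\beta}^{\st}\otimes\mathbf{r}_{\gamma}^{\st}$, then invoke Theorem \ref{t-rib} and check that $[\pi]_{\st}\mapsto u_{\alpha}$ is an injective algebra homomorphism onto the span of the $u_{\alpha}$. The only cosmetic difference is that you extract coefficients of the individual basis elements $\mathbf{r}_J\otimes\mathbf{r}_K$, whereas the paper extracts the components lying in $\Sym_i\otimes\Sym_j$ with $i+j=n$; both hinge on the same finiteness and independence argument.
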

\begin{proof}
Since $f$ is monoidlike,
we have that 
\begin{align*}
\sum_{\alpha}u_{\alpha}\Delta\mathbf{r}_{\alpha}^{\st}%\\
 & =\Delta f
 =\Big(\sum_{\beta}u_{\beta}\mathbf{r}_{\beta}^{\st}\Big)\otimes\Big(\sum_{\gamma}u_{\gamma}\mathbf{r}_{\gamma}^{\st}\Big)\\
 & =\sum_{\beta,\gamma}u_{\beta}u_{\gamma}\mathbf{r}_{\beta}^{\st}\otimes\mathbf{r}_{\gamma}^{\st}\\
 & =\sum_{\alpha}u_{\alpha}\sum_{\beta,\gamma}c_{\beta,\gamma}^{\alpha}\mathbf{r}_{\beta}^{\st}\otimes\mathbf{r}_{\gamma}^{\st}.
\end{align*}
Extracting the linear combinations of elements of $\Sym_i\otimes \Sym_j$, where $i+j=n$, we obtain
\[
\sum_{|\alpha| = n}u_{\alpha}\Delta\mathbf{r}_{\alpha}^{\st}=\sum_{|\alpha| = n}u_{\alpha}\sum_{\beta,\gamma}c_{\beta,\gamma}^{\alpha}\mathbf{r}_{\beta}^{\st}\otimes\mathbf{r}_{\gamma}^{\st}.
\]
Since these are finite sums, linear independence of the $u_{\alpha}$ implies 
\[
\Delta\mathbf{r}_{\alpha}^{\st}=\sum_{\beta,\gamma}c_{\beta,\gamma}^{\alpha}\mathbf{r}_{\beta}^{\st}\otimes\mathbf{r}_{\gamma}^{\st}
\]
and it follows from Theorem \ref{t-rib} that $\st$ is shuffle-compatible
and that the $c_{\beta,\gamma}^{\alpha}$ are the structure constants
for ${\cal A}_{\st}$. Since 
\[
u_{\beta}u_{\gamma}=\sum_{\alpha}c_{\beta,\gamma}^{\alpha}u_{\alpha}
\]
for all $\st$-equivalence classes $\beta$ and $\gamma$, the map
$[\pi]_{\st}\mapsto u_{\alpha}$ is an algebra homomorphism from ${\cal A}_{\st}$
to the subalgebra of $\Qtxy$ spanned by the $u_{\alpha}$,
and since the $u_{\alpha}$ are linearly independent, this map is
an isomorphism.
\end{proof}

We note that Theorem \ref{l-monoidlikesc} can be generalized to a statement about monoidlike elements of more general graded bialgebras; we stated it only in the special case that we will use.

Unfortunately, in our applications, it is  difficult to show directly
that the desired $u_{\alpha}$ are closed under multiplication. The following variant of Theorem \ref{l-monoidlikesc} uses a change of basis argument to deal with this problem.

\begin{thm}
\label{l-monoidlikesc1} Let $\st$
be a descent statistic and let $u_{\alpha}\in\Qtxy$ 
be linearly independent elements \textup{(}\kern -1pt over $\mathbb{Q}$\textup{)} indexed by $\st$-equivalence classes
$\alpha$ of compositions.
Suppose that  $f=\sum_{\alpha}u_{\alpha}\mathbf{r}_{\alpha}^{\st}$
is monoidlike in $\Symtxy$, where $u_\alpha$ is $x^{|\alpha|}$ times an element of $\Q[[t*]][y]$. Let $\mathbf{s}_{n,\px,\qx}$ be the coefficient of $x^n y^\px t^\qx $ in $\sum_{\alpha}u_{\alpha}\mathbf{r}_{\alpha}^{\st}$ and suppose that $\mathbf{r}_{\alpha}^{\st}\in \Span_{\mathbb{Q}}\{\mathbf{s}_{n,\px,\qx}\}$ for each $\alpha$. Then $\st$ is shuffle-compatible and the linear map defined by
\[
[\pi]_{\st}\mapsto u_{\alpha},
\]
where $\Comp(\pi)\in\alpha$, is a $\mathbb{Q}$-algebra isomorphism
from ${\cal A}_{\st}$ to the subalgebra of $\Qtxy$
spanned by the $u_{\alpha}$.
\end{thm}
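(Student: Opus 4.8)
The plan is to use the hypotheses to show that the $\st$-ribbons $\mathbf{r}_{\alpha}^{\st}$ span a subcoalgebra of $\mathbf{Sym}$, apply Theorem \ref{t-rib} to deduce shuffle-compatibility, and then recover the multiplication of the $u_{\alpha}$ by comparing coefficients. First I would organize $f$ by monomials in the coefficient variables: writing $u_{\alpha}=x^{|\alpha|}\sum_{\px,\qx}c_{\alpha,\px,\qx}\,y^{\px}t^{\qx}$ with $c_{\alpha,\px,\qx}\in\mathbb{Q}$, the coefficient of $x^{n}y^{\px}t^{\qx}$ in $f$ is $\mathbf{s}_{n,\px,\qx}=\sum_{|\alpha|=n}c_{\alpha,\px,\qx}\,\mathbf{r}_{\alpha}^{\st}$, which lies in $\Sym_{n}$ and is a $\mathbb{Q}$-linear combination of $\st$-ribbons. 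Hence $\Span_{\mathbb{Q}}\{\mathbf{s}_{n,\px,\qx}\}\subseteq\Span_{\mathbb{Q}}\{\mathbf{r}_{\alpha}^{\st}\}$, while the hypothesis $\mathbf{r}_{\alpha}^{\st}\in\Span_{\mathbb{Q}}\{\mathbf{s}_{n,\px,\qx}\}$ gives the reverse inclusion; so the two spans coincide, and I denote this space by $D$.

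The key step is to compute $\Delta\mathbf{s}_{n,\px,\qx}$ from the hypothesis that $f$ is monoidlike. As in the proof of Lemma \ref{l-monoidlike2}, when forming $f\otimes f$ the Hadamard product satisfies $t^{\qx}*t^{\qx'}=\delta_{\qx,\qx'}t^{\qx}$, so only equal powers of $t$ survive; since $x$ and $y$ multiply ordinarily, extracting the coefficient of $x^{N}y^{P}t^{Q}$ from $\Delta f=f\otimes f$ yields
\[
\Delta\mathbf{s}_{N,P,Q}=\sum_{\substack{n+n'=N\\ \px+\px'=P}}\mathbf{s}_{n,\px,Q}\otimes\mathbf{s}_{n',\px',Q}.
\]
Every term on the right lies in $D\otimes D$, so $\Delta(D)\subseteq D\otimes D$ and $D$ is a subcoalgebra. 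Because the $\mathbf{r}_{\alpha}^{\st}$ are linearly independent (distinct $\st$-equivalence classes are disjoint sets of compositions, and the $\mathbf{r}_{L}$ form a basis of $\mathbf{Sym}$), they form a basis of $D$, so there are constants $c^{\alpha}_{\beta,\gamma}$ with $\Delta\mathbf{r}_{\alpha}^{\st}=\sum_{\beta,\gamma}c^{\alpha}_{\beta,\gamma}\,\mathbf{r}_{\beta}^{\st}\otimes\mathbf{r}_{\gamma}^{\st}$; by Theorem \ref{t-rib}, $\st$ is shuffle-compatible and the $c^{\alpha}_{\beta,\gamma}$ are the structure constants of ${\cal A}_{\st}$. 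Since $\Delta$ is graded, $c^{\alpha}_{\beta,\gamma}=0$ unless $|\alpha|=|\beta|+|\gamma|$.

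It remains to transport this to the $u_{\alpha}$. Substituting the coproduct formula into $\Delta f=\sum_{\alpha}u_{\alpha}\Delta\mathbf{r}_{\alpha}^{\st}$ and comparing with $f\otimes f=\sum_{\beta,\gamma}(u_{\beta}u_{\gamma})\,\mathbf{r}_{\beta}^{\st}\otimes\mathbf{r}_{\gamma}^{\st}$ (the coefficient ring is $\Qtxy$, so scalars pass across the tensor as in Lemma \ref{l-monoidlike2}), I would equate coefficients of the linearly independent elements $\mathbf{r}_{\beta}^{\st}\otimes\mathbf{r}_{\gamma}^{\st}$ to obtain $u_{\beta}u_{\gamma}=\sum_{\alpha}c^{\alpha}_{\beta,\gamma}\,u_{\alpha}$. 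Together with the grading condition and the monoidlikeness of $f$, this is precisely the hypothesis of Theorem \ref{l-monoidlikesc}, which then yields the desired algebra isomorphism $[\pi]_{\st}\mapsto u_{\alpha}$; alternatively one can finish directly, since $[\pi]_{\st}\mapsto u_{\alpha}$ now matches structure constants on both sides and the $u_{\alpha}$ are linearly independent. The main obstacle is the change of basis itself: we cannot read off $\Delta\mathbf{r}_{\alpha}^{\st}$ directly, so the delicate point is to pass from monoidlikeness to the coproduct of the auxiliary elements $\mathbf{s}_{n,\px,\qx}$, correctly accounting for the Hadamard product in $t$, and then to use the spanning hypothesis to conclude that the $\st$-ribbons close up into a subcoalgebra.
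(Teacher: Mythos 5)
Your proposal is correct and takes essentially the same approach as the paper's proof: both identify $\Span_{\mathbb{Q}}\{\mathbf{s}_{n,\px,\qx}\}$ with $\Span_{\mathbb{Q}}\{\mathbf{r}_{\alpha}^{\st}\}$, use monoidlikeness of $f$ (with the Hadamard product collapsing unequal powers of $t$) to show this span is a subcoalgebra of $\Sym$, invoke Theorem \ref{t-rib} for shuffle-compatibility, and then equate coefficients of the linearly independent $\mathbf{r}_{\beta}^{\st}\otimes\mathbf{r}_{\gamma}^{\st}$ in $\Delta f=f\otimes f$ to obtain $u_{\beta}u_{\gamma}=\sum_{\alpha}c_{\beta,\gamma}^{\alpha}u_{\alpha}$ and conclude the isomorphism. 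The only cosmetic difference is that the paper packages your direct extraction of the coefficient of $x^{N}y^{P}t^{Q}$ as first slicing $f$ into its coefficients $f_{\qx}$ of $t^{\qx}$ via Lemma \ref{l-monoidlike2} and then equating coefficients of $x^{n}y^{\px}$, which is the same computation.
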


\begin{proof}
Equating coefficients of $x^n$ in  
\begin{equation*}
f = \sum_{\alpha}u_{\alpha}\mathbf{r}_{\alpha}^{\st}=\sum_{n,\px,\qx} x^n y^\px t^\qx  \mathbf{s}_{n,\px,\qx}
\end{equation*}
gives
\begin{equation*}
 \sum_{|\alpha|=n}u_{\alpha}\mathbf{r}_{\alpha}^{\st}=x^n\sum_{\px,\qx}y^\px t^\qx \mathbf{s}_{n,\px,\qx}.
\end{equation*}
Since the sum on the left is finite,
this 
shows that $\mathbf{s}_{n,\px,\qx}\in\Span_{\mathbb{Q}}\{\mathbf{r}_{\alpha}^{\st}\}$, so 
$\Span_{\mathbb{Q}}\{\mathbf{r}_{\alpha}^{\st}\}=\Span_{\mathbb{Q}}\{\mathbf{s}_{n,\px,\qx}\}$.

Let $f_\qx$ be the coefficient of $t^\qx$ in $f$. Then since $f$ is monoidlike, $f_{\qx}$ is monoidlike by Lemma \ref{l-monoidlike2}, so 
\begin{align*}
\sum_{n,\px} x^n y^\px \Delta\mathbf{s}_{n,\px,\qx}
   &= \Delta f_{\qx}
   =f_{\qx}\otimes f_{\qx}\\
   &=\Big(\sum_{n_1,\px_1} x^{n_1} y^{\px_1}  \mathbf{s}_{n_1,\px_1,\qx}\Big)
      \otimes \Big(\sum_{n_2,\px_2} x^{n_2} y^{\px_2}  
        \mathbf{s}_{n_2,\px_2,\qx}\Big)\\
   &=\sum_{n_1,\px_1,n_2,\px_2}x^{n_1+n_2}y^{\px_1+\px_2}   
      \mathbf{s}_{n_1,\px_1,\qx}\otimes\mathbf{s}_{n_2,\px_2,\qx}
\end{align*}
Equating coefficients of $x^ny^\px$ shows that $\Span_{\mathbb{Q}}\{\mathbf{s}_{n,\px,\qx}\}$ is a subcoalgebra of $\Sym$ and thus so is $\Span_{\mathbb{Q}}\{\mathbf{r}_{\alpha}^{\st}\}$. As a result, there exist constants $c_{\beta,\gamma}^{\alpha}$ such that 
\[
\Delta\mathbf{r}_{\alpha}^{\st}=\sum_{\beta,\gamma}c_{\beta,\gamma}^{\alpha}\mathbf{r}_{\beta}^{\st}\otimes\mathbf{r}_{\gamma}^{\st},
\]
so it follows from Theorem \ref{t-rib} that $\st$ is shuffle-compatible
and that the $c_{\beta,\gamma}^{\alpha}$ are the structure constants
for ${\cal A}_{\st}$.

Moreover, since $\sum_{\alpha}u_{\alpha}\mathbf{r}_{\alpha}^{\st}$
is monoidlike, we have  
\begin{align*}
\sum_{\beta,\gamma}\sum_{\alpha}u_{\alpha}c_{\beta,\gamma}^{\alpha}\mathbf{r}_{\beta}^{\st}\otimes\mathbf{r}_{\gamma}^{\st} & =\sum_{\alpha}u_{\alpha}\sum_{\beta,\gamma}c_{\beta,\gamma}^{\alpha}\mathbf{r}_{\beta}^{\st}\otimes\mathbf{r}_{\gamma}^{\st}\\
 & =\sum_{\alpha}u_{\alpha}\Delta\mathbf{r}_{\alpha}^{\st}\\
 & =\Delta\Big(\sum_{\alpha}u_{\alpha}\mathbf{r}_{\alpha}^{\st}\Big)\\
 & =\Big(\sum_{\beta}u_{\beta}\mathbf{r}_{\beta}^{\st}\Big)\otimes\Big(\sum_{\gamma}u_{\gamma}\mathbf{r}_{\gamma}^{\st}\Big)\\
 & =\sum_{\beta,\gamma}u_{\beta}u_{\gamma}\mathbf{r}_{\beta}^{\st}\otimes\mathbf{r}_{\gamma}^{\st}.
\end{align*}
Using the linear independence of the 
 $\mathbf{r}_{\beta}^{\st}\otimes\mathbf{r}_{\gamma}^{\st}$ and the fact that for each $i$ and $j$,  $\mathbf{r}_{\beta}^{\st}\otimes\mathbf{r}_{\gamma}^{\st}\in \Sym_i \otimes \Sym_j$ for only finitely many $\beta$ and $\gamma$, 
we may equate
coefficients of $\mathbf{r}_{\beta}^{\st}\otimes\mathbf{r}_{\gamma}^{\st}$
to obtain $u_{\beta}u_{\gamma}=\sum_{\alpha}c_{\beta,\gamma}^{\alpha}u_{\alpha}$.
Thus the map $[\pi]_{\st}\mapsto u_{\alpha}$ is an algebra homomorphism
from ${\cal A}_{\st}$ to the subalgebra of $\mathbb{Q}[[t*]][x,y]$
spanned by the $u_{\alpha}$, and since the $u_{\alpha}$ are linearly
independent, this map is an isomorphism.
\end{proof}

Before applying Theorem \ref{l-monoidlikesc1} to prove new results, let us see how it works in a simpler case, the shuffle-compatibility of the descent number (Theorem \ref{t-dessc}).

We start with the formula
\begin{equation}
\label{e-desh}
(1-t\h(x))^{-1}=\frac{1}{1-t}
 +\sum_{n=1}^{\infty}\sum_{L\vDash n}
 \frac{t^{\des(L)+1}}{(1-t)^{n+1}}x^{n}\mathbf{r}_{L},
\end{equation}
which is the case $y=0$ of Equation (\ref{e-pkdes}) below, but is easily proved directly 
\cite[p.~83, Equation (3)]{gessel-thesis}.
Let $\mathbf{r}_{n,j}^{\des}$, for $n\ge1$, denote the noncommutative symmetric
function $\mathbf{r}_{\alpha}^{\des}$ where $\alpha$ is the
$\des$-equivalence class of compositions corresponding to $n$-permutations
with $j-1$ descents, and let $\r_{0,j}^{\des}=\delta_{0,j}$.
Let
\begin{equation}
\label{e-udes}
u_{n,j}=u_{\alpha}=\frac{t^{j}}{(1-t)^{n+1}}x^{n}
\end{equation}
for $n\ge0$. Then $\sum_\alpha u_\alpha \r_{\alpha}^{\des}$ is equal to \eqref{e-desh}, which is monoidlike in $\Qtx$ by Lemma \ref{l-hemonoidlike} and Corollary \ref{c-monoidlike2}.

With the notation of Theorem \ref{l-monoidlikesc1}, we have for fixed $n\ge1$,
\[
\sum_{\qx=0}^{\infty}t^{\qx}\mathbf{s}_{n,0,\qx}
=\sum_{j=1}^{n}\frac{t^{j}}{(1-t)^{n+1}}\mathbf{r}_{n,j}^{\des},
\]
Multiplying both sides by $(1-t)^{n+1}$ and equating coefficients of powers of $t$ shows that $\r_{n,j}^{\des}\in \Span_{\mathbb{Q}}\{\mathbf{s}_{n,0,\qx}\}$. 
So by Theorem \ref{l-monoidlikesc1}, we obtain part (d) of Theorem \ref{t-dessc}.

\subsection{Shuffle-compatibility of \texorpdfstring{$(\pk,\des)$}{(pk, des)}}
\label{s-pkdes}

In the remainder of Section \ref{s-section5}, we use Theorem \ref{l-monoidlikesc1}
to establish the shuffle-compatibility and describe the shuffle algebras
of the descent statistics $(\pk,\des)$, $(\lpk,\des)$, $(\udr,\des)$, and $\udr$. 
All computations are done
in the algebra $\Symtxy$ of noncommutative symmetric functions with coefficients
in $\Qtxy$.
We start with the shuffle-compatibility of $(\pk,\des)$.
 
\begin{thm}[Shuffle-compatibility of $(\pk,\des)$]
\label{t-pkdessc} \leavevmode
\begin{itemize}
\item [\normalfont{(a)}] The pair $(\pk,\des)$ is shuffle-compatible.
\item [\normalfont{(b)}] The linear map on ${\cal A}_{(\pk,\des)}$ defined by
\begin{multline*}
[\pi]_{(\pk,\des)}\mapsto\\
\begin{cases}
{\displaystyle \frac{t^{\pk(\pi)+1}(y+t)^{\des(\pi)-\pk(\pi)}(1+yt)^{\left|\pi\right|-\pk(\pi)-\des(\pi)-1}(1+y)^{2\pk(\pi)+1}}{(1-t)^{\left|\pi\right|+1}}x^{\left|\pi\right|}}, & \text{if }\left|\pi\right|\geq1,\\
1/(1-t), & \text{if }\left|\pi\right|=0,
\end{cases}
\end{multline*}
is a $\mathbb{Q}$-algebra isomorphism from ${\cal A}_{(\pk,\des)}$
to the span of 
\[
\left\{ \frac{1}{1-t}\right\} \bigcup\left\{ \frac{t^{j+1}(y+t)^{k-j}(1+yt)^{n-j-k-1}(1+y)^{2j+1}}{(1-t)^{n+1}}x^{n}\right\} _{\substack{n\geq1,\hphantom{...................}\\
0\leq j\leq\left\lfloor (n-1)/2\right\rfloor ,\\
j\leq k\leq n-j-1\hphantom{......}
}
},
\]
a subalgebra of $\Qtxy$. 

\item [\normalfont{(c)}] The $(\pk,\des)$ shuffle algebra ${\cal A}_{(\pk,\des)}$ is isomorphic to the span of 
\[
\{1\}\cup\{p^{n-j}(1+y)^{n}(1-y)^{n-2k}x^{n}\}_{n\geq1,\:0\leq j\leq n-1,\:0\leq k\leq\left\lfloor j/2\right\rfloor },
\]
a subalgebra of $\mathbb{Q}[p,x,y]$.

\item [\normalfont{(d)}] For $n\geq1$, the $n$th homogeneous component
of ${\cal A}_{(\pk,\des)}$ has dimension $\left\lfloor (n+1)^{2}/4\right\rfloor $.
\end{itemize}

\end{thm}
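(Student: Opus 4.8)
The plan is to invoke Theorem~\ref{l-monoidlikesc1} with $\st=(\pk,\des)$, taking $u_\alpha\in\Qtxy$ to be the rational expression displayed in part~(b), indexed by the $(\pk,\des)$-equivalence class $\alpha$; thus if $\alpha$ consists of the length-$n$ compositions $L$ with $\pk(L)=j$ and $\des(L)=k$, then $u_\alpha=t^{j+1}(y+t)^{k-j}(1+yt)^{n-j-k-1}(1+y)^{2j+1}(1-t)^{-(n+1)}x^{n}$, and $u_\varnothing=1/(1-t)$. Each $u_\alpha$ is visibly $x^{|\alpha|}$ times an element of $\mathbb{Q}[[t*]][y]$, as the theorem requires. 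Everything then reduces to three checks: that $f:=\sum_\alpha u_\alpha\,\mathbf{r}^{(\pk,\des)}_\alpha$ is monoidlike in $\Symtxy$, that the $u_\alpha$ are linearly independent over $\mathbb{Q}$, and that each $\mathbf{r}^{(\pk,\des)}_\alpha$ lies in $\Span_{\mathbb{Q}}\{\mathbf{s}_{n,\px,\qx}\}$.

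The crux, and the step I expect to be the main obstacle, is the generating-function identity \eqref{e-pkdes}, which asserts that $f$ equals an explicit element of $\Symtxy$ assembled from the monoidlike series $\mathbf{h}(x)$, $\mathbf{e}(x)$, and $\mathbf{e}(xy)$. This is the two-variable refinement of \eqref{e-desh}, to which it specializes at $y=0$ (the weight in (b) collapses to $t^{\des(L)+1}(1-t)^{-(n+1)}$ there). I would prove \eqref{e-pkdes} by expanding the proposed right-hand side in the ribbon basis $\{\mathbf{r}_L\}$ and verifying that the coefficient of $x^{n}\mathbf{r}_L$ is precisely the prescribed function of $\pk(L)$ and $\des(L)$; Lemma~\ref{l-pkvalruns} is the bridge, since it rewrites $\pk$ in terms of the long and short run statistics that control how each $\mathbf{r}_L$ arises from products of $\mathbf{h}$'s and $\mathbf{e}$'s. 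Once \eqref{e-pkdes} is established, monoidlikeness of $f$ is automatic: the building blocks $\mathbf{h}(x)$, $\mathbf{e}(x)$, $\mathbf{e}(xy)$ are monoidlike in $\Symxy$ by Lemma~\ref{l-hemonoidlike}, products and inverses of monoidlike elements are monoidlike, and the passage to $\Symtxy$ through factors of the form $(1-tg)^{-1}$ and $1+tg$ preserves monoidlikeness by Corollary~\ref{c-monoidlike2}.

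For the remaining two hypotheses, linear independence of the $u_\alpha$ follows from a triangularity argument within each fixed degree $n$: after clearing the common denominator $(1-t)^{n+1}$, the numerator attached to the class with $\pk=j$, $\des=k$ has lowest $t$-power $t^{j+1}$, with coefficient $y^{k-j}(1+y)^{2j+1}$, whose $y$-degree $k+j+1$ separates the admissible $k$ for fixed $j$; combined with the value ranges $0\le j\le\lfloor(n-1)/2\rfloor$ and $j\le k\le n-j-1$ from Proposition~\ref{p-pkdesvalues}, this gives independence (the factor $x^{|\alpha|}$ separates distinct degrees). The span condition is checked exactly as in the warm-up for $\des$: equating coefficients of $x^{n}$ in $f$ writes $\sum_{|\alpha|=n}u_\alpha\mathbf{r}^{(\pk,\des)}_\alpha$ as $x^{n}\sum_{\px,\qx}y^{\px}t^{\qx}\mathbf{s}_{n,\px,\qx}$, and multiplying by $(1-t)^{n+1}$ and equating coefficients of the finitely many monomials in $t$ and $y$ recovers each $\mathbf{r}^{(\pk,\des)}_\alpha$ as a $\mathbb{Q}$-combination of the $\mathbf{s}_{n,\px,\qx}$. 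Theorem~\ref{l-monoidlikesc1} then yields parts~(a) and~(b) simultaneously.

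Part~(d) is a direct count: by Proposition~\ref{p-pkdesvalues} the $(\pk,\des)$-equivalence classes of $n$-permutations correspond to pairs $(j,k)$ with $0\le j\le\lfloor(n-1)/2\rfloor$ and $j\le k\le n-j-1$, so their number is $\sum_{j=0}^{\lfloor(n-1)/2\rfloor}(n-2j)=\lfloor(n+1)^{2}/4\rfloor$. For part~(c), I would transport the description in (b) from the Hadamard-product (generating-function-in-$t$) model to the polynomial-in-$p$ model, exactly as in the relationship between parts~(b) and~(c) of Theorem~\ref{t-dessc}, and then apply a triangular change of basis replacing the $u_\alpha$ by the displayed monomials $p^{n-j}(1+y)^{n}(1-y)^{n-2k}x^{n}$; since this set has the same cardinality $\lfloor(n+1)^{2}/4\rfloor$ in each degree, it spans the same subalgebra. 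Alternatively, part~(c) can be obtained from the alternate characterization of ${\cal A}_{(\pk,\des)}$ developed in Section~\ref{s-altdes}.
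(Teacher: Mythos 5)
Your overall strategy is the paper's: feed the identity \eqref{e-pkdes} into Theorem \ref{l-monoidlikesc1}, get monoidlikeness from Lemma \ref{l-hemonoidlike} and Corollary \ref{c-monoidlike2}, and count equivalence classes via Proposition \ref{p-pkdesvalues} for part (d). (That you would prove \eqref{e-pkdes} by ribbon expansion while the paper cites it from Lemma 4.1 of \cite{Zhuang2016a} is a harmless difference.) The genuine gap is in your verification of the hypothesis $\mathbf{r}^{(\pk,\des)}_{\alpha}\in\Span_{\mathbb{Q}}\{\mathbf{s}_{n,\px,\qx}\}$, which you claim follows ``exactly as in the warm-up for $\des$.'' It does not. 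In the $\des$ case, clearing the denominator $(1-t)^{n+1}$ leaves the numerators $t^{j}$, which are \emph{distinct monomials}, so equating coefficients of powers of $t$ isolates each $\mathbf{r}^{\des}_{n,j}$ individually. Here the numerators $t^{j+1}(y+t)^{k-j}(1+yt)^{n-j-k-1}(1+y)^{2j+1}$ have overlapping monomial supports, so equating coefficients of $y^{\px}t^{\qx}$ only produces relations expressing each $\mathbf{s}_{n,\px,\qx}$ (or, after clearing denominators, combinations of them) as linear combinations of \emph{several} ribbons $\mathbf{r}^{(\pk,\des)}_{n,j,k}$ at once; that is the easy direction, and what is actually needed is the inversion of this linear system. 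The inversion is exactly where the paper's proof invests its effort: it exhibits explicit substitutions $u(y,t),v(y,t)\in\Q[[t,y]]$, both divisible by $t$, for which $\sum_{j,k}y^{j}t^{k}\mathbf{r}^{(\pk,\des)}_{n,j,k}=(1+u)\left(\frac{1-v}{1+uv}\right)^{n+1}\sum_{\px,\qx}u^{\px}v^{\qx}\mathbf{s}_{n,\px,\qx}$, and only then equates coefficients. (That this step is not routine is underscored by the $(\udr,\des)$ case, Theorem \ref{t-udsc}, where no such closed-form inversion is available and the paper must switch to a colexicographic triangularity argument.) The gap is repairable without the explicit inversion: every $\mathbf{s}_{n,\px,\qx}$ lies in the finite-dimensional span $V_n$ of the ribbons $\mathbf{r}^{(\pk,\des)}_{n,j,k}$, and $\Span_{\mathbb{Q}}\{\mathbf{s}_{n,\px,\qx}\}=V_n$ if and only if the coefficient matrix $\bigl([y^{\px}t^{\qx}]\,N_{j,k}\bigr)$ of the numerator polynomials $N_{j,k}$ has full column rank, i.e., if and only if the $N_{j,k}$ are linearly independent over $\mathbb{Q}$ --- which is precisely what your triangularity argument establishes. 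But this rank observation (or some substitute for it) is absent from your proposal, and without it the span hypothesis of Theorem \ref{l-monoidlikesc1} is unverified.

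For part (c), your primary argument is also insufficient: a ``triangular change of basis'' justified by equality of cardinalities does not prove equality of spans --- the hard content is the containment of one span in the other, which in Section \ref{s-altdes} requires Lemma \ref{l-Qnct} and the reciprocity $Q_{n,j,k}(p,z)=(-1)^{n}Q_{n,j,k}(-p,-z)$ to show that only monomials $p^{n-a}z^{a-2b}$ with even gap in the $z$-exponent can occur, before the cardinality count closes the argument. Your fallback of deferring part (c) to Section \ref{s-altdes} is exactly what the paper does, so that route is fine.
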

We prove here parts (a), (b), and (d). We postpone the proof of part (c) until Section \ref{s-altdes}.

\begin{proof}
By Lemma 4.1 of \cite{Zhuang2016a}, we have the formula
\begin{multline}
\label{e-pkdes}
(1-t\mathbf{e}(xy)\mathbf{h}(x))^{-1}=\frac{1}{1-t}+\\
\sum_{n=1}^{\infty}\sum_{L\vDash n}\frac{t^{\pk(L)+1}(y+t)^{\des(L)-\pk(L)}(1+yt)^{n-\pk(L)-\des(L)-1}(1+y)^{2\pk(L)+1}}{(1-t)^{n+1}}x^{n}\mathbf{r}_{L}.
\end{multline}

Let $\mathbf{r}_{n,j,k}^{(\pk,\des)}$ denote the noncommutative symmetric
function $\mathbf{r}_{\alpha}^{(\pk,\des)}$ where $\alpha$ is the
$(\pk,\des)$-equivalence class of compositions corresponding to $n$-permutations
with $j-1$ peaks and $k-1$ descents. 
By (\ref{e-pkdes}) and Proposition \ref{p-pkdesvalues},
we have 
\begin{align*}
 & (1-t\mathbf{e}(xy)\mathbf{h}(x))^{-1}\\
 & \qquad\qquad=\frac{1}{1-t}+\sum_{n=1}^{\infty}\sum_{j=0}^{\left\lfloor (n-1)/2\right\rfloor }\sum_{k=j}^{n-j-1}\frac{t^{j+1}(y+t)^{k-j}(1+yt)^{n-j-k-1}(1+y)^{2j+1}}{(1-t)^{n+1}}x^{n}\mathbf{r}_{n,j+1,k+1}^{(\pk,\des)}\\
 & \qquad\qquad=\frac{1}{1-t}+\sum_{n=1}^{\infty}\sum_{j=1}^{\left\lfloor (n+1)/2\right\rfloor }\sum_{k=j}^{n-j+1}\frac{t^{j}(y+t)^{k-j}(1+yt)^{n-j-k+1}(1+y)^{2j-1}}{(1-t)^{n+1}}x^{n}\mathbf{r}_{n,j,k}^{(\pk,\des)},
\end{align*}
and this is monoidlike by Lemma \ref{l-hemonoidlike} and 
Corollary \ref{c-monoidlike2}.

Now define $\mathbf{s}_{n,\px,\qx}$ by 
\begin{align*}
\sum_{n,\px,\qx=0}^{\infty}x^{n}y^{\px}t^{\qx}\mathbf{s}_{n,\px,\qx}
 & =(1-t\mathbf{e}(xy)\mathbf{h}(x))^{-1}.
\end{align*}
For fixed $n\geq1,$ we have 
\[
\sum_{\px,\qx=0}^{\infty}y^{\px}t^{\qx}\mathbf{s}_{n,\px,\qx}=\sum_{j=1}^{\left\lfloor (n+1)/2\right\rfloor }\sum_{k=j}^{n-j+1}\frac{t^{j}(y+t)^{k-j}(1+yt)^{n-j-k+1}(1+y)^{2j-1}}{(1-t)^{n+1}}\mathbf{r}_{n,j,k}^{(\pk,\des)}.
\]
This identity can be inverted
to obtain 
\[
\sum_{j=1}^{\left\lfloor (n+1)/2\right\rfloor }\sum_{k=j}^{n-j+1}y^{j}t^{k}\mathbf{r}_{n,j,k}^{(\pk,\des)}=(1+u)\left(\frac{1-v}{1+uv}\right)^{n+1}\sum_{\px,\qx=0}^{\infty}u^{\px}v^{\qx}\mathbf{s}_{n,\px,\qx},
\]
where 
\[
u=\frac{1+t^{2}-2yt-(1-t)\sqrt{(1+t)^{2}-4yt}}{2(1-y)t}
\]
and 
\[
v=\frac{(1+t)^{2}-2yt-(1+t)\sqrt{(1+t)^{2}-4yt}}{2yt},
\]
in the formal power series ring $\Q[[t,y]]$. 
It is easily checked that $u$ and $v$ are both formal power series divisible by $t$, so $(1-v)/(1+uv)$ is a well-defined formal power series in $t$ and $y$.

Equating coefficients of $y^\px  t^\qx $  shows that each $\mathbf{r}_{n,j,k}^{(\pk,\des)}$ is a linear
combination of the $\mathbf{s}_{n,\px,\qx}$. (Since $u$ and $v$ are divisible by $t$, only finitely many terms on the right will contribute a term in $t^\qx $.)
Parts (a) and (b) then follow from Theorem \ref{l-monoidlikesc1}.

By Proposition \ref{p-pkdesvalues}, we know that for $n\geq1$, the number of $(\pk,\des)$-equivalence classes for
$n$-permutations is 
\[
\sum_{j=0}^{\left\lfloor (n-1)/2\right\rfloor }((n-j-1)-j+1)
   =\sum_{j=0}^{\left\lfloor (n-1)/2\right\rfloor }(n-2j),
\]
which is easily shown to be equal to $\left\lfloor (n+1)^{2}/4\right\rfloor$. This proves (d).
\end{proof} 

Note that $(\pk,\des)$ and $(\val,\des)$ are $rc$-equivalent statistics, and that $(\val,\des)$ and $(\epk,\des)$ are equivalent statistics. Thus, by Corollary \ref{c-rcsc} and Theorem \ref{t-esc}, we know that $(\val,\des)$ and $(\epk,\des)$ are also shuffle-compatible and have shuffle algebras isomorphic to $\cal{A}_{(\pk,\des)}$.

\subsection{Shuffle-compatibility of \texorpdfstring{$(\lpk,\des)$}{(lpk, des)}}

We now prove the shuffle-compatibility of $(\lpk,\des)$ and characterize its shuffle algebra.

\begin{thm}[Shuffle-compatibility of $(\lpk,\des)$]
\label{t-lpkdessc} \leavevmode
\begin{itemize}
\item [\normalfont{(a)}] The pair $(\lpk,\des)$ is shuffle-compatible.
\item [\normalfont{(b)}] The linear map on ${\cal A}_{(\lpk,\des)}$ defined by
\begin{multline*}
[\pi]_{(\lpk,\des)}\mapsto\\
\begin{cases}
\displaystyle{\frac{t^{\lpk(\pi)}(y+t)^{\des(\pi)-\lpk(\pi)}(1+yt)^{\left|\pi\right|-\lpk(\pi)-\des(\pi)}(1+y)^{2\lpk(\pi)}}{(1-t)^{\left|\pi\right|+1}}x^{\left|\pi\right|}}, & \text{if }\left|\pi\right|\geq1,\\
1/(1-t), & \text{if }\left|\pi\right|=0,
\end{cases}
\end{multline*}
is a $\mathbb{Q}$-algebra isomorphism from ${\cal A}_{(\lpk,\des)}$
to the span of 
\[
\left\{ \frac{1}{1-t}\right\} \bigcup\left\{ \frac{(1+yt)^{n}}{(1-t)^{n+1}}x^{n}\right\} _{n\geq1}\bigcup\left\{ \frac{t^{j}(y+t)^{k-j}(1+yt)^{n-j-k}(1+y)^{2j}}{(1-t)^{n+1}}x^{n}\right\} _{\substack{n\geq2,\hphantom{............}\\
1\leq j\leq\left\lfloor n/2\right\rfloor ,\\
j\leq k\leq n-j\phantom{...}
}
},
\]
a subalgebra of $\Qtxy$.
\item [\normalfont{(c)}] The $n$th homogeneous component of ${\cal A}_{(\lpk,\des)}$
has dimension $\left\lfloor n^{2}/4\right\rfloor +1$.
\end{itemize}
\end{thm}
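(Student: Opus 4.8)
The plan is to mirror the proof of Theorem \ref{t-pkdessc} and apply Theorem \ref{l-monoidlikesc1}, using the left-peak counterpart of the noncommutative symmetric function identity \eqref{e-pkdes}. The starting point is an identity analogous to \eqref{e-pkdes} (obtainable by the methods of \cite{Zhuang2016a}) that expresses a monoidlike element of $\Symtxy$ as
\[
\frac{1}{1-t}+\sum_{n\geq1}\sum_{L\vDash n}u_{L}\,x^{n}\mathbf{r}_{L},
\]
where
\[
u_{L}=\frac{t^{\lpk(L)}(y+t)^{\des(L)-\lpk(L)}(1+yt)^{n-\lpk(L)-\des(L)}(1+y)^{2\lpk(L)}}{(1-t)^{n+1}}
\]
is exactly the coefficient appearing in part (b). As in the $(\pk,\des)$ case, the left-hand side is assembled from $\mathbf{h}(x)$ and $\mathbf{e}(xy)$ by taking products and the geometric-series inverse, so it is monoidlike in $\Symtxy$ by Lemma \ref{l-hemonoidlike}, Corollary \ref{c-monoidlike2}, and the fact that products of monoidlike elements are monoidlike.

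Next I would let $\mathbf{r}^{(\lpk,\des)}_{n,j,k}$ denote the $(\lpk,\des)$-ribbon indexed by the class of $n$-permutations with $j$ left peaks and $k$ descents, and reindex the double sum over $L$ using the ranges from Proposition \ref{p-lpkdesvalues} (namely $\lpk=0$ forcing $\des=0$, and otherwise $1\leq j\leq\lfloor n/2\rfloor$ with $j\leq k\leq n-j$). Defining $\mathbf{s}_{n,p,q}$ to be the coefficient of $x^{n}y^{p}t^{q}$ in the monoidlike element and equating coefficients of $x^{n}$ for each fixed $n$ then yields a finite identity expressing $\sum_{p,q}y^{p}t^{q}\mathbf{s}_{n,p,q}$ as a $\mathbb{Q}[[t,y]]$-linear combination of the $\mathbf{r}^{(\lpk,\des)}_{n,j,k}$, with the coefficients obtained from $u_{L}$ above.

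The crux is to invert this relation, showing that each $\mathbf{r}^{(\lpk,\des)}_{n,j,k}$ lies in $\Span_{\mathbb{Q}}\{\mathbf{s}_{n,p,q}\}$; this is the step I expect to be the main obstacle, exactly as the corresponding inversion (carried out via the square-root substitution defining the auxiliary series $u$ and $v$) was the technical heart of the $(\pk,\des)$ argument. Since $u_{L}$ here has the same multiplicative shape—a product of powers of $t$, $y+t$, $1+yt$, and $1+y$ over $(1-t)^{n+1}$—I expect an entirely parallel change of variables, given by some pair of power series in $t$ and $y$ each divisible by $t$, to triangularize the system and exhibit each ribbon in the desired span; the only differences from the $(\pk,\des)$ case are the shift of the $t$-power and the $(1+y)$-power down by one and the $(1+yt)$-power up by one. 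Once this inversion is established, Theorem \ref{l-monoidlikesc1} immediately delivers shuffle-compatibility and the isomorphism of part (b).

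Finally, part (c) is a direct enumeration. By Proposition \ref{p-lpkdesvalues}, the number of $(\lpk,\des)$-equivalence classes of $n$-permutations is
\[
1+\sum_{j=1}^{\lfloor n/2\rfloor}(n-2j+1)=1+\lfloor n/2\rfloor\bigl(n-\lfloor n/2\rfloor\bigr),
\]
which equals $\lfloor n^{2}/4\rfloor+1$ upon checking the even and odd cases separately. Because the images $u_{\alpha}$ are linearly independent over $\mathbb{Q}$ (their generating functions in $t$ are), this count is also the dimension of the $n$th homogeneous component of ${\cal A}_{(\lpk,\des)}$, completing (c).
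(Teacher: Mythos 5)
Your proposal is correct and follows essentially the same route as the paper: the paper takes the monoidlike element $\mathbf{h}(x)(1-t\mathbf{e}(xy)\mathbf{h}(x))^{-1}$, whose ribbon expansion (Lemma 4.6 of \cite{Zhuang2016a}) has exactly the coefficients $u_{L}$ you wrote down, checks monoidlikeness via Lemma \ref{l-hemonoidlike} and Corollary \ref{c-monoidlike2}, and then defers the triangularization/inversion step to the same argument as in Theorem \ref{t-pkdessc}, exactly as you anticipate (the coefficient has the same multiplicative core, so the same $u,v$ substitution applies with only the prefactor changed), concluding via Theorem \ref{l-monoidlikesc1} and Proposition \ref{p-lpkdesvalues}. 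Your count for part (c) also matches the paper's computation.
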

\begin{proof}
By Lemma 4.6 of \cite{Zhuang2016a}, we have the formula
\begin{multline*}
\mathbf{h}(x)(1-t\mathbf{e}(xy)\mathbf{h}(x))^{-1}=\frac{1}{1-t}+\\
\sum_{n=1}^{\infty}\sum_{L\vDash n}\frac{t^{\lpk(L)}(y+t)^{\des(L)-\lpk(L)}(1+yt)^{n-\lpk(L)-\des(L)}(1+y)^{2\lpk(L)}}{(1-t)^{n+1}}x^{n}\mathbf{r}_{L}.
\end{multline*}

Let $\mathbf{r}_{n,j,k}^{(\lpk,\des)}$ denote $\mathbf{r}_{\alpha}^{(\lpk,\des)}$
where $\alpha$ is the $(\lpk,\des)$-equivalence class of compositions
corresponding to $n$-permutations with $j$ left peaks and $k$ descents.
Define $\mathbf{s}_{n,\px,\qx}$ by 
\begin{align*}
\sum_{n,\px,\qx=0}^{\infty}x^{n}y^{\px}t^{\qx}\mathbf{s}_{n,\px,\qx}
 & =\mathbf{h}(x)(1-t\mathbf{e}(xy)\mathbf{h}(x))^{-1}.
\end{align*}
Then the proofs for parts (a) and (b) follow
in the same manner as for Theorem \ref{t-pkdessc}, using
Proposition \ref{p-lpkdesvalues} and
Corollary \ref{c-monoidlike2} along the way.

By Proposition \ref{p-lpkdesvalues}, the number of $(\lpk,\des)$-equivalence classes for $n$-permutations
is 
\[
1+\sum_{j=1}^{\left\lfloor n/2\right\rfloor }((n-j)-j+1)=1+\sum_{j=1}^{\left\lfloor n/2\right\rfloor }(n-2j+1),
\]
which is easily shown to be equal to $\left\lfloor n^{2}/4\right \rfloor +1$. This proves (c).
\end{proof}

Although $(\lpk, \des)$ and $(\rpk, \des)$ are not equivalent, $r$-equivalent, $c$-equivalent, or $rc$-equivalent, this argument does show that $(\rpk, \des)$ is shuffle-compatible and has shuffle algebra isomorphic to that of $(\lpk, \des)$ because $(\lpk, \des)$ is $r$-equivalent to $(\rpk, \asc)$---where $\asc$ is the number of ascents%
---and $(\rpk, \asc)$ is equivalent to $(\rpk, \des)$.

\subsection{Shuffle-compatibility of \texorpdfstring{$\udr$}{udr} and \texorpdfstring{$(\udr,\des)$}{(udr,des)}}
\label{s-udrdes}

Finally, we prove our result for the pair $(\udr,\des)$ and derive from it the analogous result for $\udr$, the number of up-down runs.

\begin{thm}[Shuffle-compatibility of $(\udr,\des)$]
\label{t-udsc} \leavevmode
\begin{itemize}
\item [\normalfont{(a)}] The pair $(\udr,\des)$ is shuffle-compatible.
\item [\normalfont{(b)}] The linear map on ${\cal A}_{(\udr,\des)}$ defined by
\begin{equation*}
[\pi]_{(\udr,\des)}\mapsto\\
\begin{cases}
\displaystyle{\frac{N_\pi}{(1-t)(1-t^2)^{\left|\pi\right|}}x^{\left|\pi\right|}}, & \text{if }\left|\pi\right|\geq1,\\
1/(1-t), & \text{if }\left|\pi\right|=0,
\end{cases}
\end{equation*}
where
\def\ucp{\ceil{\udr(\pi)/2}}
\def\ufp{\floor{\udr(\pi)/2}}
\begin{multline*}
N_\pi = t^{\udr(\pi)}(1+y)^{\udr(\pi)-1}(1+yt^{2})^{|\pi|-\des(\pi)-\ucp}
(y+t^{2})^{\des(\pi)-\ufp}\\
\times(1+yt)^{\ucp-\ufp}(y+t)^{1-\ucp+\ufp},
\end{multline*}

is a $\mathbb{Q}$-algebra isomorphism from ${\cal A}_{(\udr,\des)}$
to the span of 
\begin{multline*}
\left\{ \frac{1}{1-t}\right\} 
\bigcup\,
\left\{ \frac{t(1+yt)(1+yt^2)^{n-1}}{(1-t)(1-t^2)^{n}}x^{n}\right\} _{\!n\geq1}
\\
\bigcup\,
\left\{
\frac{t^{j}(1+y)^{j-1}(1+yt^2)^{n-k-\ceil{j/2}}(y+t^2)^{k-\floor{j/2}}S_j}
{(1-t)(1-t^2)^n}x^n
\right\} 
_{\substack{n\geq1,\hfill\\
2\le j\le n ,\hfill\\
\floor{j/2}\leq k\leq n-\ceil{j/2}
}},
\end{multline*}
where $S_j$ is $1+yt$ if $j$ is odd and is $y+t$ if $j$ is even,
a subalgebra of $\Qtxy$.

\item [\normalfont{(c)}] The $n$th homogeneous component of ${\cal A}_{(\udr,\des)}$
has dimension $\binom{n}{2}+1$.
\end{itemize}
\end{thm}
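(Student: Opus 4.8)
The plan is to follow the method used for Theorems~\ref{t-pkdessc} and~\ref{t-lpkdessc}: exhibit a monoidlike element of $\Symtxy$ whose expansion in the ribbon basis has the prescribed coefficients, and then apply Theorem~\ref{l-monoidlikesc1}. First I would invoke the relevant generating-function identity from \cite{Zhuang2016a}, which expresses a product of the series $\mathbf{h}(x)$ and $\mathbf{e}(xy)$---now involving the factor $(1-t^{2}\mathbf{e}(xy)\mathbf{h}(x))^{-1}$, reflecting the fact that up-down runs are counted in pairs---as a sum $\sum_{L}\bigl(N_{L}/((1-t)(1-t^{2})^{|L|})\bigr)x^{|L|}\mathbf{r}_{L}$ with $N_{L}$ the numerator in the statement. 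By Lemma~\ref{l-hemonoidlike}, $\mathbf{e}(xy)\mathbf{h}(x)$ is monoidlike in $\Symxy$, so by the $(1-t^{2}f)^{-1}$ clause of Corollary~\ref{c-monoidlike2} (together with its $(1+tf)$ clause, which accounts for the odd/even correction) this generating function is monoidlike in $\Symtxy$.

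Next, grouping compositions of $n$ by $(\udr,\des)$-equivalence class and using Proposition~\ref{p-udrdesvalues} to pin down the ranges $2\le j\le n$ and $\floor{j/2}\le k\le n-\ceil{j/2}$ (plus the degenerate class $\udr=1,\des=0$), I would rewrite the element as $\sum_{\alpha}u_{\alpha}\mathbf{r}_{\alpha}^{(\udr,\des)}$, where $u_{\alpha}$ is the claimed image of $[\pi]_{(\udr,\des)}$ and $\mathbf{r}_{n,j,k}^{(\udr,\des)}$ denotes the $(\udr,\des)$-ribbon for $n$-permutations with $j$ up-down runs and $k$ descents. Defining $\mathbf{s}_{n,\px,\qx}$ as the coefficient of $x^{n}y^{\px}t^{\qx}$, the heart of the argument is to show that each $\mathbf{r}_{n,j,k}^{(\udr,\des)}$ lies in $\Span_{\mathbb{Q}}\{\mathbf{s}_{n,\px,\qx}\}$. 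As in Theorem~\ref{t-pkdessc}, this amounts to inverting, for each fixed $n$, the linear system relating the $\mathbf{r}$'s to the $\mathbf{s}$'s; I expect this to require a quadratic change of variables $(y,t)\mapsto(u,v)$ with $u,v$ divisible by $t$, chosen so that the four factors $(1+yt^{2})$, $(y+t^{2})$, $(1+yt)$, and $(y+t)$ become monomials, after which the coefficients of $y^{\px}t^{\qx}$ can be read off and are seen to be finitely supported in each degree. Theorem~\ref{l-monoidlikesc1} then delivers parts~(a) and~(b).

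The main obstacle is precisely this inversion. The numerator $N_{\pi}$ is more delicate than in the $(\pk,\des)$ and $(\lpk,\des)$ cases because the exponents and the factor $S_{j}$ depend on the parity of $\udr(\pi)$ through $\ceil{j/2}$ and $\floor{j/2}$. Consequently the substitution diagonalizing the system must simultaneously accommodate the $t^{2}$-graded part coming from $(1-t^{2})^{n}$ and the odd/even correction $S_{j}$; I anticipate either treating the even and odd values of $j$ separately and reconciling them, or finding a single substitution (the analogue of the $u,v$ of Theorem~\ref{t-pkdessc}, but built from radicals of the form $\sqrt{(1+t^{2})^{2}-4yt^{2}}$ adapted to $t^{2}$) that works uniformly. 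Verifying that this substitution produces power series divisible by $t$---so that only finitely many terms contribute to each $t^{\qx}$---is where the bookkeeping is heaviest.

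Finally, part~(c) is a counting computation. By Proposition~\ref{p-udrdesvalues}, the number of $(\udr,\des)$-equivalence classes among $n$-permutations is
\[
1+\sum_{j=2}^{n}\left(\left(n-\ceil{j/2}\right)-\floor{j/2}+1\right)
 =1+\sum_{j=2}^{n}(n-j+1)=\binom{n}{2}+1,
\]
using $\ceil{j/2}+\floor{j/2}=j$, which gives the stated dimension.
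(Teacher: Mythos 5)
Your overall framework coincides with the paper's: you invoke the identity from \cite{Zhuang2016a} (in the paper it reads $(1-t^{2}\mathbf{h}(x)\mathbf{e}(xy))^{-1}(1+t\mathbf{h}(x))$, the factor $1+t\mathbf{h}(x)$ playing the odd/even role you allude to), establish monoidlikeness via Lemma \ref{l-hemonoidlike} and Corollary \ref{c-monoidlike2}, define the $\mathbf{s}_{n,\px,\qx}$, and aim to conclude with Theorem \ref{l-monoidlikesc1}; your part (c) computation is exactly the paper's. The gap is precisely the step you flag as the ``main obstacle'': you propose to invert the system relating the $\mathbf{r}_{n,j,k}^{(\udr,\des)}$ to the $\mathbf{s}_{n,\px,\qx}$ by a change of variables $(y,t)\mapsto(u,v)$ that monomializes the binomial factors, as in Theorems \ref{t-pkdessc} and \ref{t-lpkdessc}. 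The paper states explicitly that this is not available here: ``Because of the floor and ceiling functions in \eqref{e-ud2}, we are not able to use the generating function inversion method that we used in the proofs of Theorems \ref{t-pkdessc} and \ref{t-lpkdessc}, so we take a different approach.'' The obstruction is structural, not bookkeeping: the numerator now carries four distinct binomial factors $(1+yt^{2})$, $(y+t^{2})$, $(1+yt)$, $(y+t)$ together with $(1+y)^{j-1}$ and the parity-dependent factor $S_{j}$, and a two-variable substitution cannot turn all of them into monomials at once; nor can you split into even-$j$ and odd-$j$ systems, since each $\mathbf{s}_{n,\px,\qx}$ is a mixed combination of ribbons of both parities. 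So the clause ``I anticipate either \dots\ or \dots'' is exactly where a proof is missing.

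What the paper does instead is a unitriangularity argument, which you could substitute for your anticipated inversion with no other change to your outline. Order $\N\times\N$ colexicographically, so $(\px_{1},\qx_{1})\le(\px_{2},\qx_{2})$ if and only if $\qx_{1}<\qx_{2}$, or $\qx_{1}=\qx_{2}$ and $\px_{1}\le\px_{2}$. From \eqref{e-ud2} one reads off that the colex-minimal monomial $y^{\px}t^{\qx}$ occurring in the coefficient of $x^{n}\mathbf{r}_{n,j,k}^{(\udr,\des)}$ is $y^{k_{j}}t^{j}$ with coefficient $1$, where $k_{j}=k-\floor{j/2}+1$ if $j$ is even and $k_{j}=k-\floor{j/2}$ if $j$ is odd. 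Consequently
\begin{equation*}
\mathbf{s}_{n,k_{j},j}=\mathbf{r}_{n,j,k}^{(\udr,\des)}+\sum_{(k'\!,\,j')<(k,j)}c_{j'\!,\,k'}\,\mathbf{r}_{n,j',k'}^{(\udr,\des)},
\end{equation*}
and induction along the colex order expresses each $\mathbf{r}_{n,j,k}^{(\udr,\des)}$ as a $\mathbb{Q}$-linear combination of the $\mathbf{s}_{n,\px,\qx}$, which is all that Theorem \ref{l-monoidlikesc1} requires for parts (a) and (b).
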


\begin{proof}

By Lemma 4.11 of \cite{Zhuang2016a}, together with Lemma \ref{l-udr} (b) and (c), we have
\begin{equation}
\label{e-ud1}
(1-t^{2}\mathbf{h}(x)\mathbf{e}(xy))^{-1}(1+t\mathbf{h}(x))
=\frac{1}{1-t}+\sum_{n=1}^{\infty}
\sum_{L\vDash n}\frac{N_L}{(1-t)(1-t^{2})^{n}}x^{n}\mathbf{r}_{L}
\end{equation}
where
\begin{multline*}
N_L = t^{\udr(L)}(1+y)^{\udr(L)-1}(1+yt^{2})^{n-\des(L)-\ceil{\udr(L)/2}}
(y+t^{2})^{\des(L)-\floor{\udr(L)/2}}\\
\times(1+yt)^{\ceil{\udr(L)/2} -\floor{\udr(L)/2}}(y+t)^{1-\ceil{\udr(L)/2}+\floor{\udr(L)/2}}.\quad
\end{multline*}
Note that $\ceil{\udr(L)/2} -\floor{\udr(L)/2}$ is 1 if $\udr(L)$ is odd and is 0 if $\udr(L)$ is even. The left-hand side of \eqref{e-ud1} is monoidlike by  
Lemma \ref{l-hemonoidlike} and Corollary \ref{c-monoidlike2}.

Let $\mathbf{r}_{n,j,k}^{(\udr,\des)}$ denote $\mathbf{r}_{\alpha}^{(\udr,\des)}$
where $\alpha$ is the $(\udr,\des)$-equivalence class of compositions
corresponding to $n$-permutations with $j$ up-down runs and $k$ descents.
Then by \eqref{e-ud1} and Proposition \ref{p-udrdesvalues}, we have
\begin{multline}
\label{e-ud2}
(1-t^{2}\mathbf{h}(x)\mathbf{e}(xy))^{-1}(1+t\mathbf{h}(x))
=\frac{1}{1-t}+\sum_{n=1}^{\infty}
\biggl(
\frac{t(1+yt)(1+yt^2)^{n-1}}{(1-t)(1-t^2)^{n}}x^{n}\mathbf{r}_{n,1,0}^{(\udr,\des)}\\
+\sum_{\substack{2\le j\le n\\ \floor{j/2}\le k\le n-\ceil{j/2}}}
\frac{t^{j}(1+y)^{j-1}(1+yt^2)^{n-k-\ceil{j/2}}(y+t^2)^{k-\floor{j/2}}S_j}
{(1-t)(1-t^2)^n} x^n
\mathbf{r}_{n,j,k}^{(\udr,\des)}\biggr)
\end{multline}
with $S_j$ as in the statement of the theorem. 
Define $\mathbf{s}_{n,\px,\qx}$ by 
\begin{equation}
\label{e-s-ud}
\sum_{n,\px,\qx=0}^{\infty}x^{n}y^{\px}t^{\qx}\mathbf{s}_{n,\px,\qx}
   =(1-t^{2}\mathbf{h}(x)\mathbf{e}(xy))^{-1}(1+t\mathbf{h}(x)).
\end{equation}
To prove (a) and (b), as in Theorems \ref{t-pkdessc} and \ref{t-lpkdessc}, it is sufficient to show that each $\mathbf{r}_{n,j,k}^{(\udr,\des)}$ is in the span of the 
$\mathbf{s}_{n,\px,\qx}$. Because of the floor and ceiling functions in \eqref{e-ud2}, we are not able 
to use the generating function inversion method that we used in the proofs of Theorems \ref{t-pkdessc} and \ref{t-lpkdessc}, so we take a different approach.

Expanding the right side of \eqref{e-ud2} and comparing with \eqref{e-s-ud} shows that, for fixed $n$, each $\mathbf{s}_{n,\px,\qx}$ is a linear combination (with integer coefficients) of the $\mathbf{r}_{n,j,k}^{(\udr,\des)}$. 
We will show that these relations can be inverted to express each $\mathbf{r}_{n,j,k}^{(\udr,\des)}$ as a linear combination of the $\mathbf{s}_{n,\px,\qx}$.

We totally order $\N\times \N$ colexicographically, so  $(\px_1,\qx_1)\le (\px_2,\qx_2)$ if and only if $\qx_1<\qx_2$ or $\qx_1=\qx_2$ and $\px_1\le \px_2$. 
We shall show that for each $j$ and $k$, there exist $p$ and $q$ such that $\mathbf{r}_{n,j,k}^{(\udr,\des)}$ appears with coefficient 1 in $\mathbf{s}_{n,\px,\qx}$ and if
$\mathbf{r}_{n,j',k'}^{(\udr,\des)}$ appears in $\mathbf{s}_{n,\px,\qx}$ then $(k', j')\le (k,j)$. This will imply, by induction, that  $\mathbf{r}_{n,j,k}^{(\udr,\des)}$  is in $\Span_{\mathbb{Q}}\{\mathbf{s}_{n,\px,\qx}\}$.

With this total order,  the monomial $y^{\px}t^{\qx}$  with minimal $(p,q)$ that appears in the coefficient of $x^n \mathbf{r}_{n,j,k}^{(\udr,\des)}$ on the right side of \eqref{e-ud2} is easily seen to be $y^{k_j}t^j$ (with coefficient 1),
where $k_j$ is $k-\floor{j/2}+1$ if $j$ is even and is $k-\floor{j/2}$ if $j$ is odd. In other words, $\mathbf{s}_{n,\px,\qx}$ does not contain any $\mathbf{r}_{n,j,k}^{(\udr,\des)}$ for which $(p,q)<(k_j,j)$.
Replacing $\px$ and $\qx$ with $k_j$ and $j$, and replacing $k$ and $j$ with $k'$ and $j'$, we have that  
\begin{equation*}
\mathbf{s}_{n,k_j,j}=\mathbf{r}_{n,j,k}^{(\udr,\des)}+\sum_{j'\!,\, k'}c_{j'\!,\, k'} \mathbf{r}_{n,j',k'}^{(\udr,\des)}
\end{equation*}
where  $c_{j'\!,\, k'}=0$ unless $(k'_{j'}, j') < (k_j,j)$.
It is easy to see that $(k'_{j'}, j') < (k_j,j)$ implies $(k', j') < (k,j)$, so 
we have
\begin{equation*}
\mathbf{s}_{n,k_j,j}=\mathbf{r}_{n,j,k}^{(\udr,\des)}+\sum_{(k'\!,\, j')<(k,j)}c_{j'\!,\, k'} \mathbf{r}_{n,j',k'}^{(\udr,\des)}
\end{equation*}
and this completes the proof of (b).

By Proposition \ref{p-udrdesvalues}, the number of $(\udr,\des)$-equivalence classes for $n$-permutations
is 
\[
1+\sum_{j=2}^{n}(n-\floor{j/2} - \ceil{j/2}+1)
 = 1+\sum_{j=2}^n (n-j+1) = 1+\binom{n}{2}.
\]
This proves part (c).
\end{proof}

We know from Lemma \ref{l-udr} that $\udr$ and $(\lpk,\val)$ are equivalent statistics, from Lemma \ref{l-pkvalruns} (d) that $\val$ is equivalent to $\epk$, and from Proposition \ref{p-lpkpk} that $(\lpk,\val)$ is $rc$-equivalent to $(\lpk,\pk)$. It follows that $(\udr,\des)$ is equivalent to $(\lpk,\val,\des)$ and $(\lpk,\epk,\des)$, and is $rc$-equivalent to $(\lpk,\pk,\des)$. Thus, by Theorem \ref{t-esc} and Corollary \ref{c-rcsc}, the statistics $(\lpk,\val,\des)$, $(\lpk,\epk,\des)$, and $(\lpk,\pk,\des)$ are all shuffle-compatible and have shuffle algebras isomorphic to ${\cal A}_{(\udr,\des)}$.

\begin{thm}[Shuffle-compatibility of the number of up-down runs]
\label{t-udrsc} \leavevmode
\begin{itemize}
\item [\normalfont{(a)}] The number of up-down runs $\udr$ is shuffle-compatible.
\item [\normalfont{(b)}] The linear map on ${\cal A}_{\udr}$ defined by
\begin{align*}
[\pi]_{\udr}\mapsto & \begin{cases}
{\displaystyle \frac{2^{\udr(\pi)-1}t^{\udr(\pi)}(1+t^{2})^{\left|\pi\right|-\udr(\pi)}}{(1-t)^{2}(1-t^{2})^{\left|\pi\right|-1}}x^{\left|\pi\right|}}, & \text{if }\left|\pi\right|\geq1,\\
1/(1-t), & \text{if }\left|\pi\right|=0,
\end{cases}
\end{align*}
is a $\mathbb{Q}$-algebra isomorphism from ${\cal A}_{\udr}$ to
the span of 
\[
\left\{ \frac{1}{1-t}\right\} \bigcup\left\{ \frac{2^{j-1}t^{j}(1+t^{2})^{n-j}}{(1-t)^{2}(1-t^{2})^{n-1}}x^{n}\right\} _{n\geq1,\:1\leq j\leq n},
\]
a subalgebra of $\mathbb{Q}[[t*]][x]$.
\item [\normalfont{(c)}] For $n\geq1$, the $n$th homogeneous component
of ${\cal A}_{\udr}$ has dimension $n$.
\end{itemize}

\end{thm}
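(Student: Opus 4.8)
The plan is to obtain Theorem \ref{t-udrsc} as a direct consequence of the already-established shuffle-compatibility of $(\udr,\des)$ (Theorem \ref{t-udsc}) together with Theorem \ref{t-quots}, exactly as the shuffle-compatibility of $\des$ was deduced from that of $(\des,\comaj)$. Since $(\udr,\des)$ is trivially a refinement of $\udr$, it suffices to produce a $\Q$-algebra homomorphism from ${\cal A}_{(\udr,\des)}$ into a target algebra whose image depends only on $\udr$ and length. The natural candidate is the composite $\phi$ of the isomorphism of Theorem \ref{t-udsc}(b) with the evaluation homomorphism $y\mapsto 1$, which is a $\Q$-algebra homomorphism $\Qtxy\to\Qtx$ (it respects ordinary multiplication in $x$ and the Hadamard product in $t$ because $y$ is an ordinary polynomial variable).

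The key step is to check that setting $y=1$ collapses the $(\udr,\des)$ image onto the claimed $\udr$ image. Writing $u=\udr(\pi)$, at $y=1$ every binomial factor in $N_\pi$ becomes a power of $1+t$ or $1+t^2$: the two factors $(1+yt^2)^{|\pi|-\des(\pi)-\ceil{u/2}}$ and $(y+t^2)^{\des(\pi)-\floor{u/2}}$ merge into $(1+t^2)^{|\pi|-u}$ using $\ceil{u/2}+\floor{u/2}=u$, while $(1+yt)^{\ceil{u/2}-\floor{u/2}}(y+t)^{1-\ceil{u/2}+\floor{u/2}}$ merges into $1+t$; thus $N_\pi|_{y=1}=2^{u-1}t^u(1+t^2)^{|\pi|-u}(1+t)$. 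Dividing by $(1-t)(1-t^2)^{|\pi|}$ and using $1-t^2=(1-t)(1+t)$ to cancel the extra factor $1+t$ yields precisely the expression in part (b), which indeed depends only on $\udr(\pi)$ and $|\pi|$. The empty-permutation value $1/(1-t)$ is unaffected by $y=1$.

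It then remains to verify that the target elements $2^{j-1}t^j(1+t^2)^{n-j}x^n/((1-t)^2(1-t^2)^{n-1})$ are linearly independent, so that they form a basis of the image algebra $A$ indexed by $\udr$-equivalence classes (as required to invoke Theorem \ref{t-quots}). Elements with different $n$ are independent because they carry different powers of $x$; for fixed $n$, clearing the common denominator reduces the claim to the independence of $\{t^j(1+t^2)^{n-j}\}_{1\le j\le n}$, which holds because these series have distinct minimal degrees $t^j$. With this in hand, Theorem \ref{t-quots} applied to $\st_1=(\udr,\des)$, $\st_2=\udr$, and $\phi$ yields parts (a) and (b).

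Part (c) then follows at once, since the basis in (b) contains exactly one element for each $j\in\{1,\dots,n\}$; equivalently, by Proposition \ref{p-udrdesvalues} the statistic $\udr$ attains precisely the values $1,\dots,n$ on $n$-permutations, so there are $n$ equivalence classes and the $n$th homogeneous component has dimension $n$. I expect the only genuine obstacle to be the bookkeeping in the $y=1$ specialization of the second paragraph, which is routine once the floor/ceiling identity $\ceil{u/2}+\floor{u/2}=u$ and the factorization $1-t^2=(1-t)(1+t)$ are used to combine the four binomial factors; everything else is formal.
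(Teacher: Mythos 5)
Your proposal is correct and is essentially the paper's own proof: the paper likewise applies Theorem \ref{t-quots} to Theorem \ref{t-udsc} via the evaluation homomorphism $y\mapsto 1$, with part (c) coming from Proposition \ref{p-udrdesvalues}. The only difference is that you spell out the specialization of $N_\pi$ and the linear-independence check that the paper dismisses as ``easy to check,'' and both computations are carried out correctly.
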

\begin{proof}
Let $\phi$ be the homomorphism from $\Qtxy$ to $\Qtx$ obtained by setting $y$ to 1. 
It is easy to check that $\phi$ takes the image of $[\pi]_{(\udr,\des)}$ as described in Theorem \ref{t-udsc} (b) to the image of $[\pi]_{\udr}$ as given in (b). Then (a) and (b) follow from Theorem \ref{t-quots}. Part (c) follows from Proposition \ref{p-udrdesvalues}.
\end{proof}

Since $\udr$ and $(\lpk,\val)$ are equivalent statistics, 
 $(\lpk,\val)$ is shuffle-compatible and ${\cal A}_{(\lpk,\val)}$ is isomorphic to ${\cal A}_{\udr}$.
Furthermore, since $(\lpk,\val)$ is $rc$-equivalent to $(\lpk,\pk)$, we have also proven the shuffle-compatibility of $(\lpk,\pk)$ and characterized the shuffle algebra ${\cal A}_{(\lpk,\pk)}$. Similar reasoning implies that $(\lpk,\epk)$, $(\rpk,\val)$, $(\rpk,\pk)$, $(\rpk,\epk)$, $(\lr,\val)$, $(\lr,\pk)$, and $(\lr,\epk)$ are shuffle-compatible and that their shuffle algebras are all isomorphic to ${\cal A}_{\udr}$.

\section{Miscellany}
\label{s-section6}

\subsection{An alternate description of the \texorpdfstring{$\pk$}{pk} and \texorpdfstring{$(\pk,\des)$}{(pk, des)} shuffle algebras}
\label{s-altdes}

In Section 5.5, we showed that the $(\pk,\des)$ shuffle algebra
${\cal A}_{(\pk,\des)}$ is isomorphic to the span of 
\[
\left\{ \frac{1}{1-t}\right\} \bigcup\left\{ \frac{t^{j+1}(y+t)^{k-j}(1+yt)^{n-j-k-1}(1+y)^{2j+1}}{(1-t)^{n+1}}x^{n}\right\} _{\substack{n\geq1,\hphantom{...................}\\
0\leq j\leq\left\lfloor (n-1)/2\right\rfloor ,\\
j\leq k\leq n-j-1\hphantom{......}
}
}
\]
where the multiplication is the Hadamard product in $t$. Let 
\[
P_{n,j,k}(y,t)\coloneqq t^{j+1}(y+t)^{k-j}(1+yt)^{n-j-k-1}(1+y)^{2j+1}
\]
for $n\geq1$, $0\leq j\leq\left\lfloor (n-1)/2\right\rfloor $, and
$j\leq k\leq n-j-1$. Then by \cite[Corollary 4.3.1]{Stanley2011},
we can write 
\[
\frac{P_{n,j,k}(y,t)}{(1-t)^{n+1}}=\sum_{p=1}^{\infty}R_{n,j,k}(p,y)t^{p}
\]
where $R_{n,j,k}(p,y)$ is a polynomial in $p$ of degree at most
$n$, with coefficients that are polynomials in $y$. In this section,
we give a simple description of the span of the polynomials $R_{n,j,k}(p,y)$,
which yields an alternate characterization of the $(\pk,\des)$ shuffle
algebra that was stated in part (c) of Theorem \ref{t-pkdessc}.
Similarly, a simple description of the span of the polynomials $R_{n,j,k}(p,1)$ yields an alternate
characterization of the $\pk$ shuffle algebra, which is part (c) of Theorem \ref{t-pksc}.

It is simpler to work with the following transformations of the polynomials
$R_{n,j,k}(p,y)$ and $P_{n,j,k}(y,t)$; let
\[
Q_{n,j,k}(p,z)\coloneqq(1-z)^{n}R_{n,j,k}\left(p,\frac{1+z}{1-z}\right)
\]
and let
\begin{align*}
A_{n,j,k}(t,z) & \coloneqq(1-z)^{n}P_{n,j,k}\left(\frac{1+z}{1-z},t\right)\\
 & =(1-z)^{n}t^{j+1}\left(\frac{1+z}{1-z}+t\right)^{k-j}\left(1+\frac{1+z}{1-z}t\right)^{n-j-k-1}\left(1+\frac{1+z}{1-z}\right)^{2j+1}\\
 & =2^{2j+1}t^{j+1}(1+t+z(1-t))^{k-j}(1+t-z(1-t))^{n-j-k-1},
\end{align*}
so that 
\begin{equation}
\label{e-A(t,z)}
\frac{A_{n,j,k}(t,z)}{(1-t)^{n+1}}=\sum_{p=1}^{\infty}Q_{n,j,k}(p,z)t^{p}.
\end{equation}
Also, define $\bar{A}_{n,j,k}(t,z)$ by 
\begin{equation}
\label{e-barA(t,z)}
\frac{\bar{A}_{n,j,k}(t,z)}{(1-t)^{n+1}}=\sum_{p=0}^{\infty}Q_{n,j,k}(-p,z)t^{p}.
\end{equation}

\begin{lem}
\label{l-Qnct} Each $Q_{n,j,k}(p,z)$, as a polynomial in $p$, has
no constant term.\end{lem}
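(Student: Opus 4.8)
The plan is to show that $Q_{n,j,k}(0,z)=0$, which is exactly the assertion that $Q_{n,j,k}(p,z)$ has no constant term as a polynomial in $p$. The engine of the argument is that $Q_{n,j,k}(p,z)$ is a polynomial in $p$ of degree at most $n$: this is inherited from the analogous property of $R_{n,j,k}(p,y)$, since the substitution $y=(1+z)/(1-z)$ and the factor $(1-z)^n$ affect only the dependence on $z$, not the degree in $p$. I would then play this degree bound against the explicit rational form of the generating function.

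First I would rewrite \eqref{e-A(t,z)} so that the summation includes the missing $p=0$ term, obtaining
\[
\sum_{p=0}^{\infty}Q_{n,j,k}(p,z)\,t^{p}
  = Q_{n,j,k}(0,z) + \frac{A_{n,j,k}(t,z)}{(1-t)^{n+1}}.
\]
On the other hand, because $Q_{n,j,k}(p,z)$ is a polynomial in $p$ of degree at most $n$, the converse direction of \cite[Corollary 4.3.1]{Stanley2011} (applied with coefficients in $\mathbb{Q}[z]$, which is legitimate since the correspondence is a coefficientwise statement valid over any commutative $\mathbb{Q}$-algebra) shows that this same series equals $B(t,z)/(1-t)^{n+1}$ for some polynomial $B(t,z)$ with $\deg_{t}B\le n$. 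Equating the two expressions yields
\[
B(t,z) = Q_{n,j,k}(0,z)(1-t)^{n+1} + A_{n,j,k}(t,z).
\]

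The decisive step is a degree comparison in $t$. Writing $1+t+z(1-t)=(1+z)+t(1-z)$ and $1+t-z(1-t)=(1-z)+t(1+z)$, each factor in the product form of $A_{n,j,k}$ is linear in $t$, so
\[
\deg_{t}A_{n,j,k}(t,z) = (j+1)+(k-j)+(n-j-k-1) = n-j \le n,
\]
the inequality holding because $j\ge 0$. Thus the coefficient of $t^{n+1}$ vanishes in both $A_{n,j,k}(t,z)$ and $B(t,z)$, while the coefficient of $t^{n+1}$ in $(1-t)^{n+1}$ is $(-1)^{n+1}\neq 0$. Extracting the coefficient of $t^{n+1}$ from the displayed identity therefore forces $Q_{n,j,k}(0,z)=0$, which is what we want.

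I do not expect a genuine obstacle: once the $p=0$ term is reinstated and the two rational representations of the generating function are matched, the conclusion is pure degree bookkeeping. The only points that warrant a sentence of care are the observation that passing from $R_{n,j,k}$ to $Q_{n,j,k}$ preserves the degree-$n$ bound in $p$, and the invocation of the polynomial--generating-function correspondence over the coefficient ring $\mathbb{Q}[z]$ rather than over $\mathbb{Q}$; both are routine.
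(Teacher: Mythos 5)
Your proof is correct, but it takes a genuinely different route from the paper's. You use both directions of the polynomial--rational-function correspondence \cite[Corollary 4.3.1]{Stanley2011}: since $Q_{n,j,k}(p,z)$ is a polynomial in $p$ of degree at most $n$, the full series $\sum_{p\ge 0}Q_{n,j,k}(p,z)t^{p}$ must equal $B(t,z)/(1-t)^{n+1}$ with $\deg_{t}B\le n$; comparing this with $Q_{n,j,k}(0,z)+A_{n,j,k}(t,z)/(1-t)^{n+1}$ at the coefficient of $t^{n+1}$, and using $\deg_{t}A_{n,j,k}=n-j\le n$, forces $Q_{n,j,k}(0,z)=0$. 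The paper instead applies the reciprocity theorem \cite[Proposition 4.2.3]{Stanley2011} to the series \eqref{e-A(t,z)} and \eqref{e-barA(t,z)}, obtaining the closed form $\bar{A}_{n,j,k}(t,z)=(-1)^{n}t^{n+1}A_{n,j,k}(1/t,z)$, which is visibly a polynomial divisible by $t^{j+1}$; setting $t=0$ then gives $Q_{n,j,k}(0,z)=\bar{A}_{n,j,k}(0,z)=0$. Both arguments ultimately exploit the same underlying fact---that $\deg_{t}A_{n,j,k}=n-j$ is strictly less than $n+1$---but yours is the more elementary and self-contained: it needs no reciprocity and no computation of $\bar{A}_{n,j,k}$. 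What the paper's route buys is that the reciprocity identity is not a one-off expense: $\bar{A}_{n,j,k}(t,z)=(-1)^{n}t^{n+1}A_{n,j,k}(1/t,z)$ is reused in the proof of the theorem that follows the lemma, to establish the symmetry $Q_{n,j,k}(p,z)=(-1)^{n}Q_{n,j,k}(-p,-z)$. Your two points of care (the degree bound in $p$ transferring from $R_{n,j,k}$ to $Q_{n,j,k}$, and applying the correspondence coefficientwise over $\mathbb{Q}[z]$) are handled correctly and are indeed the only delicate spots.
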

\begin{proof}
By \cite[Proposition 4.2.3]{Stanley2011},  from \eqref{e-A(t,z)} and \eqref{e-barA(t,z)} follows the equality
of rational functions 
\[
\frac{\bar{A}_{n,j,k}(t,z)}{(1-t)^{n+1}}=-\frac{A_{n,j,k}(1/t,z)}{(1-(1/t))^{n+1}},
\]
which implies 
\begin{align*}
\bar{A}_{n,j,k}(t,z) & =(-1)^{n}t^{n+1}A_{n,j,k}(1/t,z)\\
 & =(-1)^{n}2^{2j+1}t^{n+1}\left(\frac{1}{t}\right)^{j+1}\left(1+\frac{1}{t}+z\left(1-\frac{1}{t}\right)\right)^{k-j}\left(1+\frac{1}{t}-z\left(1-\frac{1}{t}\right)\right)^{n-j-k-1}\\
 & =(-1)^{n}2^{2j+1}t^{j+1}(1+t-z(1-t))^{k-j}(1+t+z(1-t))^{n-j-k+1}.
\end{align*}
Evaluating at $t=0$ yields $\bar{A}_{n,j,k}(0,z)=0$, so by \eqref{e-barA(t,z)}, $Q_{n,j,k}(0,z)=0$.
\end{proof}
\begin{lem}
Let $n\geq1$. Then the polynomials $Q_{n,j,k}(p,z)$ for $0\leq j\leq\left\lfloor (n-1)/2\right\rfloor $
and $j\leq k\leq n-j-1$ are linearly independent.\end{lem}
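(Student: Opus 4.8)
The plan is to transfer the question from the polynomials $Q_{n,j,k}(p,z)$ to the generating-function numerators $A_{n,j,k}(t,z)$, where the algebra is transparent. First I would observe that the map sending a polynomial $Q(p,z)\in\mathbb{Q}[p,z]$ to its generating function $\sum_{p\ge1}Q(p,z)\,t^{p}$ is $\mathbb{Q}$-linear and injective: if this power series vanishes, then $Q(p,z)=0$ for all positive integers $p$, and a polynomial in $p$ vanishing at infinitely many $p$ is identically zero. Combining this with \eqref{e-A(t,z)}, a $\mathbb{Q}$-linear relation $\sum_{j,k}c_{j,k}Q_{n,j,k}(p,z)=0$ holds if and only if $\sum_{j,k}c_{j,k}\,A_{n,j,k}(t,z)/(1-t)^{n+1}=0$, and, clearing the common denominator $(1-t)^{n+1}$, if and only if $\sum_{j,k}c_{j,k}\,A_{n,j,k}(t,z)=0$ in $\mathbb{Q}[t,z]$. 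So it suffices to prove that the polynomials $A_{n,j,k}(t,z)$ are linearly independent over $\mathbb{Q}$.

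The key structural fact I would exploit is that the index $j$ is detected by the order of vanishing in $t$. Writing the two linear factors as $1+t+z(1-t)=(1+z)+t(1-z)$ and $1+t-z(1-t)=(1-z)+t(1+z)$, each has a nonzero value at $t=0$, so the explicit formula for $A_{n,j,k}(t,z)$ shows that $A_{n,j,k}$ has $t$-adic valuation exactly $j+1$, with
\begin{equation*}
[t^{j+1}]\,A_{n,j,k}(t,z)=2^{2j+1}(1+z)^{k-j}(1-z)^{n-j-k-1}.
\end{equation*}

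Now I would suppose $\sum_{j,k}c_{j,k}\,A_{n,j,k}(t,z)=0$ is a nontrivial relation and let $j_0$ be the smallest index $j$ occurring with some $c_{j,k}\neq0$. Every term with $j>j_0$ has valuation at least $j_0+2$, so extracting the coefficient of $t^{j_0+1}$ kills those terms and leaves only the $j=j_0$ terms:
\begin{equation*}
2^{2j_0+1}\sum_{k=j_0}^{n-j_0-1}c_{j_0,k}\,(1+z)^{k-j_0}(1-z)^{n-j_0-k-1}=0.
\end{equation*}
Setting $m=n-2j_0-1$ and $i=k-j_0$, this is a relation among the polynomials $(1+z)^{i}(1-z)^{m-i}$ for $0\le i\le m$. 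These are linearly independent: dividing by $(1-z)^{m}$ and putting $w=(1+z)/(1-z)$ turns the relation into $\sum_{i}c_{j_0,j_0+i}\,w^{i}=0$, which forces all coefficients to vanish since $w$ takes infinitely many values as $z$ varies. Hence $c_{j_0,k}=0$ for every $k$, contradicting the minimality of $j_0$, and the independence follows.

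I expect the only genuinely delicate point to be the reduction in the first paragraph—justifying that linear independence of the $Q_{n,j,k}$ is equivalent to that of the $A_{n,j,k}$—which rests on injectivity of the generating-function map and on $(1-t)^{n+1}$ being a common denominator; everything afterward is a clean valuation-and-leading-coefficient argument, with the lone computational input being the two auxiliary linear-independence facts (powers of $w$, and a polynomial determined by infinitely many values).
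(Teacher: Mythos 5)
Your proof is correct and follows essentially the same route as the paper: both reduce linear independence of the $Q_{n,j,k}(p,z)$ to linear independence of the generating-function numerators (you use $A_{n,j,k}(t,z)$, the paper uses the equivalent family $P_{n,j,k}(y,t)$ related by the invertible substitution $y=(1+z)/(1-z)$), via injectivity of the map $Q\mapsto\sum_{p\ge1}Q(p,z)t^{p}$ on polynomials. The only difference is one of completeness: the paper declares the independence of the numerators ``easy to see,'' whereas you actually prove it with the $t$-adic valuation and leading-coefficient argument, which is a valid way to fill in that step.
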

\begin{proof}
It is easy to see that the polynomials $P_{n,j,k}(y,t)$ are linearly
independent, and that a linear dependence relation for the polynomials
$Q_{n,j,k}(p,z)$ would imply a linear dependence relation for the polynomials
$P_{n,j,k}(y,t)$.
\end{proof}
Essentially the same argument can be used to show that the polynomials
$R_{n,j,k}(p,y)$ are also linearly independent.
\begin{thm}
Let $n\geq1$. Then 
\[
\Span_{\mathbb{Q}}\{Q_{n,j,k}(p,z)\}_{\substack{0\leq j\leq\left\lfloor (n-1)/2\right\rfloor ,\\
j\leq k\leq n-j-1\hphantom{......}
}
}=\Span_{\mathbb{Q}}\{p^{n-a}z^{a-2b}\}_{\substack{0\leq a\leq n-1,\\
0\leq b\leq\left\lfloor a/2\right\rfloor 
}
}
\]
\end{thm}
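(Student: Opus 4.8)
The plan is to prove the equality of the two spans by establishing a single inclusion together with a dimension count, leveraging the two lemmas just proved. First I would observe that both spanning sets are linearly independent: the $Q_{n,j,k}(p,z)$ by the preceding lemma, and the monomials $p^{n-a}z^{a-2b}$ because distinct admissible pairs $(a,b)$ give distinct monomials (the exponent of $p$ recovers $a$, and then the exponent of $z$ recovers $b$). A routine count shows both index sets have cardinality $\floor{(n+1)^2/4}$: on the left this is $\sum_{j=0}^{\floor{(n-1)/2}}(n-2j)$, exactly as in the proof of Proposition \ref{p-pkdesvalues}, and on the right it is $\sum_{a=0}^{n-1}(\floor{a/2}+1)$. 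Hence both spans have the same finite dimension, so it suffices to prove one inclusion, and I would prove $\Span_{\mathbb{Q}}\{Q_{n,j,k}\}\subseteq\Span_{\mathbb{Q}}\{p^{n-a}z^{a-2b}\}$.

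Next I would reparametrize the target monomials. Setting $s=n-a$ and $m=a-2b$ gives a bijection between $\{(a,b):0\le a\le n-1,\ 0\le b\le\floor{a/2}\}$ and the set of pairs $(s,m)$ satisfying $1\le s\le n$, $m\ge0$, $s+m\le n$, and $s+m\equiv n\pmod 2$. Thus the target span consists precisely of those polynomials all of whose monomials $p^sz^m$ obey these four conditions, and the inclusion reduces to checking that every monomial occurring in each $Q_{n,j,k}(p,z)$ satisfies them. The bound $s\le n$ is the stated degree bound of $Q_{n,j,k}$ in $p$, and $s\ge1$ is exactly the no-constant-term lemma.

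The two remaining conditions are the heart of the argument. For $s+m\le n$, I would show that for each $m$ the coefficient of $z^m$ in $Q_{n,j,k}(p,z)$ is a polynomial in $p$ of degree at most $n-m$. Writing $A_{n,j,k}(t,z)=2^{2j+1}t^{j+1}\bigl((1+t)+z(1-t)\bigr)^{k-j}\bigl((1+t)-z(1-t)\bigr)^{n-j-k-1}$ and extracting the coefficient of $z^m$, every factor of $z$ carries a factor $1-t$, so $[z^m]A_{n,j,k}(t,z)=2^{2j+1}C_m\,t^{j+1}(1-t)^m(1+t)^{n-2j-1-m}$ for some constant $C_m$; dividing by $(1-t)^{n+1}$ and applying \cite[Corollary 4.3.1]{Stanley2011} then gives the degree bound $n-m$, since after cancelling $(1-t)^m$ the denominator is $(1-t)^{n+1-m}$ and the numerator has degree $n-j-m\le n-m$. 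For the parity $s+m\equiv n\pmod 2$, I would reuse the reciprocity already computed in the proof of the no-constant-term lemma, namely $\bar{A}_{n,j,k}(t,z)=(-1)^n A_{n,j,k}(t,-z)$; comparing coefficients of $t^p$ in the corresponding generating functions yields $Q_{n,j,k}(-p,z)=(-1)^n Q_{n,j,k}(p,-z)$ for all $p\ge1$, hence the polynomial identity $Q_{n,j,k}(-p,-z)=(-1)^n Q_{n,j,k}(p,z)$, which forces $s+m\equiv n\pmod 2$ for every monomial. With all four conditions verified, the inclusion holds, and equality follows from the dimension count.

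The main obstacle is the degree bound $s+m\le n$: it depends on a genuine cancellation that lowers the $p$-degree of the $z^m$-coefficient below the generic value $n$, and the key observation that makes it work is the divisibility of $[z^m]A_{n,j,k}(t,z)$ by $(1-t)^m$. The parity condition, by contrast, falls out cleanly from the reciprocity computation already on hand, and the overall structure—one inclusion plus equal dimensions—avoids having to invert the change of basis explicitly.
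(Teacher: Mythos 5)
Your proposal is correct and follows essentially the same route as the paper's proof: both establish the inclusion $\Span_{\mathbb{Q}}\{Q_{n,j,k}\}\subseteq\Span_{\mathbb{Q}}\{p^{n-a}z^{a-2b}\}$ by exploiting that every power of $z$ in $A_{n,j,k}(t,z)$ comes paired with a factor of $(1-t)$ (giving the bound $s+m\le n$, with $s\ge 1$ from the no-constant-term lemma) and the reciprocity $Q_{n,j,k}(p,z)=(-1)^{n}Q_{n,j,k}(-p,-z)$ (giving the parity condition), and then conclude equality from the equinumerosity of the two index sets together with linear independence. Your reorganization---extracting the coefficient of $z^{m}$ wholesale rather than expanding term by term, and counting both sets as $\left\lfloor (n+1)^{2}/4\right\rfloor$ rather than matching the index sets bijectively---is only cosmetic.
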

\begin{proof}
First, we show that each $Q_{n,j,k}(p,z)$ can be written as a linear
combination of the polynomials $p^{n-a}z^{a-2b}$. Note that 
\begin{align*}
\sum_{p=1}^{\infty}Q_{n,j,k}(p,z)t^{p} & =\frac{2^{2j+1}t^{j+1}(1+t+z(1-t))^{k-j}(1+t-z(1-t))^{n-j-k-1}}{(1-t)^{n+1}}
\end{align*}
is a linear combination of terms of the form 
\[
\frac{z^{l}t^{q}(1-t)^{l}}{(1-t)^{n+1}} = \frac{t^{q}z^{l}}{(1-t)^{n-l+1}} = \sum_{p=0}^{\infty}z^{l}{n-l+p-q \choose n-l}t^{p}
\]
where $0\leq l\leq n-2j-1$ and $j+1\leq q\leq n-j-l$. Moreover,
${n-l+p-q \choose n-l}$ is a polynomial in $p$ of degree $n-l$, so
it is a linear combination of $1,p,p^{2},\dots,p^{n-l}$. This shows
that each $Q_{n,j,k}(p,z)$ is a linear combination of terms of the
form $p^{n-a}z^{l}$ with $n-a\leq n-l$, or equivalently, $l\leq a$,
and $a\leq n-1$
by Lemma \ref{l-Qnct}. We set $c=a-l$, so
that $p^{n-a}z^{l}=p^{n-a}z^{a-c}$. It remains to show that $c$
must be even.

Observe that $(-p)^{n-a}(-z)^{a-c}=(-1)^{n}(-1)^{c}p^{n-a}z^{a-c}$.
Thus, it suffices to show that $Q_{n,j,k}(p,z)=(-1)^{n}Q_{n,j,k}(-p,-z)$.
Recall that 
\[
\bar{A}_{n,j,k}(t,z)=(-1)^{n}t^{n+1}A_{n,j,k}(1/t,z),
\]
so that 
\[
\bar{A}_{n,j,k}(t,-z)=(-1)^{n}t^{n+1}A_{n,j,k}(1/t,-z).
\]
Since 
\begin{align*}
A_{n,j,k}(t,z) & =2^{2j+1}t^{j+1}(t+1-z(t-1))^{k-j}(t+1+z(t-1))^{n-j-k+1}\\
 & =2^{2j+1}t^{n+1}\left(\frac{1}{t}\right)^{j+1}\left(1+\frac{1}{t}-z\left(1-\frac{1}{t}\right)\right)^{k-j}\left(1+\frac{1}{t}+z\left(1-\frac{1}{t}\right)\right)^{n-j-k-1}\\
 & =t^{n+1}A_{n,j,k}(1/t,-z),
\end{align*}
we have 
\begin{align*}
\sum_{p=1}^{\infty}(-1)^{n}Q_{n,j,k}(p,z)t^{p} & =\frac{(-1)^{n}A(t,z)}{(1-t)^{n+1}}\\
 & =\frac{(-1)^{n}t^{n+1}A(1/t,-z)}{(1-t)^{n+1}}\\
 & =\frac{\bar{A}(t,-z)}{(1-t)^{n+1}}\\
 & =\sum_{p=1}^{\infty}Q_{n,j,k}(-p,-z)t^{p}.
\end{align*}
Therefore, $Q_{n,j,k}(p,z)=(-1)^{n}Q_{n,j,k}(-p,-z)$, so each $Q_{n,j,k}(p,z)$
is a linear combination of the polynomials $p^{n-a}z^{a-2b}$.

Since we know that the polynomials $Q_{n,j,k}(m,z)$ are linearly
independent, it suffices to show that the two sets of polynomials
have the same cardinality. The restrictions $0\leq a\leq n-1$ and
$0\leq b\leq\left\lfloor a/2\right\rfloor $ can be reformulated as
$0\leq b\leq\left\lfloor (n-1)/2\right\rfloor $ and $2b\leq a\leq n-1$;
the restriction on $b$ matches the condition on $j$, and the number
of possible values of $a$ for a fixed $b$ is equal to the number
of possible values of $k$ for a fixed $j$. Hence, the two sets are
equinumerous and thus their spans are equal.
\end{proof}
We are now ready to prove our alternate characterization of ${\cal A}_{(\pk,\des)}$ and of ${\cal A}_{\pk}$.

\begin{proof}[Proof of Theorem \ref{t-pkdessc} $\mathrm{(}c\mathrm{)}$]
In this proof, we identify ${\cal A}_{(\pk,\des)}$ with its characterization given in part (b) of Theorem \ref{t-pkdessc}.

Let $\psi\colon{\cal A}_{(\pk,\des)}\rightarrow\mathbb{Q}[p,x,y]$ be the
linear map defined by 
\[
\psi\Big(\sum_{p=1}^{\infty}R_{n,j,k}(p,y)t^{p}x^{n}\Big)=R_{n,j,k}(p,y)x^{n}
\]
and $\psi(1/(1-t))=1$. With the usual multiplication of $\mathbb{Q}[p,x,y]$,
it is easy to see that $\psi$ is an algebra homomorphism, and thus
restricts to an algebra isomorphism from ${\cal A}_{(\pk,\des)}$
to the subalgebra of $\mathbb{Q}[p,x,y]$ spanned by the $R_{n,j,k}(p,y)x^{n}$.

Observe that 
\[
\Span_{\mathbb{Q}}\{R_{n,j,k}(p,y)\}_{\substack{0\leq j\leq\left\lfloor (n-1)/2\right\rfloor ,\\
j\leq k\leq n-j-1\hphantom{......}
}
}=\Span_{\mathbb{Q}}\{p^{n-a}(1+y)^{n}(1-y)^{a-2b}\}_{\substack{0\leq a\leq n-1,\\
0\leq b\leq\left\lfloor a/2\right\rfloor 
}
};
\]
this is immediate from the previous theorem and applying the inverse
transformation: dividing by $(1-z)^{n}$ and setting $z=(y-1)/(1+y)$. Then the result follows.
\end{proof}

\begin{proof}[Proof of Theorem \ref{t-pksc} $\mathrm{(}c\mathrm{)}$]
First note that setting $y=1$ in the basis for ${\cal A}_{(\pk,\des)}$ given by part (b) of Theorem \ref{t-pkdessc} gives the basis for ${\cal A}_{\pk}$ described in part (b) of Theorem \ref{t-pksc}. Thus setting $y=1$ in the basis for ${\cal A}_{(\pk,\des)}$ given by part (c) of Theorem \ref{t-pkdessc} will give a spanning set for ${\cal A}_{\pk}$.

The only polynomials $p^{n-a}(1+y)^{n}(1-y)^{a-2b}x^{n}$ that are nonzero after setting $y=1$
are those for which $a=2b$, yielding the polynomials $2^{n}p^{n-2b}x^{n}$
for $0\leq2b\leq n-1$. The span of these polynomials is equal to
the span of $p^{j}x^{n}$ for $1\leq j\leq n$ with $j$ having the same parity as $n$.
\end{proof}
We note that part (c) of Theorem \ref{t-pksc} can also be proven using Stembridge's self-reciprocity property for enriched order polynomials \cite[Proposition 4.2]{Stembridge1997}.

Unfortunately, we were unable to use the approach in this section
to give an alternate characterization of any of the shuffle algebras ${\cal A}_{\lpk}$, ${\cal A}_{(\lpk,\des)}$, ${\cal A}_{\udr}$, or ${\cal A}_{(\udr,\des)}$.

\subsection{Non-shuffle-compatible permutation statistics}

Although many well-known descent statistics have been shown to be
shuffle-compatible, there are many descent statistics that are
not shuffle-compatible. Here we list some of them.
\begin{thm}
\label{t-pairfalse} The set $\Pk\cup\Val$ and the tuples $(\pk,\val)$, $(\pk,\val,\des)$,  $(\Pk,\des)$, $(\Pk,\val)$, $(\Pk,\val,\des)$, $(\Pk,\Val)$,  $(\Lpk,\des)$, $(\Lpk,\val,\des)$, and $(\Epk,\des)$ are not shuffle-compatible.
\end{thm}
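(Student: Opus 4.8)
The plan is to disprove shuffle-compatibility of each of the ten statistics by exhibiting, in each case, a single discrepancy in its shuffle distribution. Concretely, to show that a descent statistic $\st$ is not shuffle-compatible it suffices to produce disjoint permutations $\pi,\sigma$ and $\pi',\sigma'$ with $|\pi|=|\pi'|$, $|\sigma|=|\sigma'|$, $\st(\pi)=\st(\pi')$, and $\st(\sigma)=\st(\sigma')$, for which the multisets $\{\,\st(\tau):\tau\in S(\pi,\sigma)\,\}$ and $\{\,\st(\tau):\tau\in S(\pi',\sigma')\,\}$ differ. Rather than search by hand, I would locate such witnesses using the dual criterion of Theorem \ref{t-rib}: since $\Delta$ is an algebra homomorphism, the coproduct of any ribbon $\mathbf{r}_L$ is immediate from \eqref{e-ritoh} together with Lemma \ref{l-hcoprod}, so I would compute $\Delta\mathbf{r}_\alpha^{\st}$ for small $\alpha$ and detect the first $\alpha$ with $\Delta\mathbf{r}_\alpha^{\st}\notin\Span_{\mathbb{Q}}\{\mathbf{r}_\beta^{\st}\otimes\mathbf{r}_\gamma^{\st}\}$; by Theorem \ref{t-rib} this already proves failure, and unwinding the offending coefficient through the $\QSym$--$\Sym$ pairing of Theorem \ref{t-duality} recovers an explicit permutation-level counterexample to record.

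The second ingredient is a lifting reduction that lets one witness serve many of the statistics at once. If a descent statistic $\st'$ refines $\st$ and $(\pi,\sigma,\pi',\sigma')$ is a counterexample for $\st$ that moreover satisfies $\st'(\pi)=\st'(\pi')$ and $\st'(\sigma)=\st'(\sigma')$, then the same tuple is a counterexample for $\st'$: were the two $\st'$-distributions equal as multisets, then pushing forward by the function through which $\st'$ determines $\st$ would force the two $\st$-distributions to agree, a contradiction. Consequently a witness whose inputs agree under a fine statistic $\st_{\max}$ while its shuffle distribution already splits under a coarse statistic $\st_{\min}$ simultaneously refutes every $\st'$ with $\st_{\min}\preceq\st'\preceq\st_{\max}$ in the refinement poset. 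For example, a single pair on which $\Pk$, $\Val$, and $\des$ all coincide between $\pi$ and $\pi'$ (and between $\sigma$ and $\sigma'$) but whose descent sets differ only in the initial or final monotone runs, so that the distribution of $(\pk,\val)$ already splits, handles the whole interval $(\pk,\val),(\pk,\val,\des),(\Pk,\val),(\Pk,\val,\des),(\Pk,\Val)$ at once.

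The statistics $\Pk\cup\Val$ and $(\Pk,\des)$ lie outside that interval (neither is refined by $(\pk,\val)$, since neither recovers $\pk$ and $\val$ separately), and the boundary-sensitive families $(\Lpk,\des)$, $(\Lpk,\val,\des)$, and $(\Epk,\des)$ are governed by the virtual letters of $0\pi$ and $\pi0$ rather than the interior, so none of these can borrow the interior-peak witness; each requires its own pair, again found via the coproduct computation above. Here I would also exploit the symmetries and equivalences established earlier: by Corollary \ref{c-rcsc} and Theorem \ref{t-esc} it is enough to refute one representative of each $r$-, $c$-, or $rc$-equivalence class and transport the witness by $\pi\mapsto\pi^{r},\pi^{c},\pi^{rc}$, which trims the number of genuinely distinct constructions that must be exhibited.

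The main obstacle I anticipate is not any single computation but the constraint satisfaction inherent in building each master witness: one must force agreement of the finest statistic $\st_{\max}$ on the inputs while still guaranteeing a genuine split in the coarse output multiset, and these pull in opposite directions, since making $\pi$ and $\pi'$ more alike tends to equalize their shuffles. The usable freedom is concentrated in the lengths of the extremal increasing or decreasing runs, which are precisely what interleaving probes, so the design amounts to tuning those boundary runs. Verifying a candidate is then routine---either by enumerating all $\binom{|\pi|+|\sigma|}{|\pi|}$ shuffles, or, more cleanly, by reading the failure off the single coproduct coefficient of $\Delta\mathbf{r}_\alpha^{\st}$ that escapes $\Span_{\mathbb{Q}}\{\mathbf{r}_\beta^{\st}\otimes\mathbf{r}_\gamma^{\st}\}$---so the real care lies in covering the ten-element refinement poset economically with as few such witnesses as the equivalences permit.
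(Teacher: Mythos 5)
The paper prints no proof of this theorem at all: the ten negative results are stated as facts, backed by explicit counterexamples that the authors do not reproduce (the paper only remarks, after Theorem \ref{t-rib}, that the coproduct criterion ``is useful for showing that a descent statistic is not shuffle-compatible''). So the only question is whether your plan would itself constitute a proof. Your general machinery is correct---the counterexample formulation, the test $\Delta\mathbf{r}_\alpha^{\st}\notin\Span_{\mathbb{Q}}\{\mathbf{r}_\beta^{\st}\otimes\mathbf{r}_\gamma^{\st}\}$ from Theorem \ref{t-rib}, the lifting of a witness along refinements, and the transport by $r$-, $c$-, $rc$-equivalences---but the proposal never exhibits a single explicit witness for any of the ten statistics. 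Since the theorem is precisely an existence claim about such witnesses, a description of how one would search for them, however correct the search criterion, proves nothing; what you have is a strategy, not a proof.

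Moreover, the one concrete construction you do sketch is impossible. Your ``master witness'' for the interval covering $(\pk,\val)$, $(\pk,\val,\des)$, $(\Pk,\val)$, $(\Pk,\val,\des)$, $(\Pk,\Val)$ requires $\Pk$, $\Val$, and $\des$ to agree on $\pi$ and $\pi'$ while their descent sets differ. But peaks and valleys are exactly the sign changes of the ascent--descent sequence, so $(\Pk,\Val)$ determines $\Des$ whenever $\Pk\cup\Val\neq\varnothing$ (the type of the first extremum fixes the initial direction, and no further sign changes are allowed between extrema); the only remaining ambiguity is for monotone permutations, and that is resolved by $\des$. Hence $(\Pk,\Val,\des)$ is equivalent to $\Des$ (compare the entry $\Des\sim(\Lpk,\Val)$ in Table 2), your hypotheses force $\Des(\pi)=\Des(\pi')$, and the shuffle-compatibility of $\Des$ then forces the shuffle distributions of \emph{every} descent statistic to coincide: no split can occur, and your intended top of the interval is in fact shuffle-compatible. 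The covering must be broken up: $(\Pk,\Val)$ can only fail through the monotone ambiguity (increasing vs.\ decreasing inputs, which necessarily disagree on $\des$ and so cannot serve any $\des$-containing tuple), while tuples such as $(\pk,\val,\des)$ and $(\Pk,\val,\des)$ need witnesses agreeing on $(\Pk,\val,\des)$ with lowercase $\val$ but with different $\Val$---such pairs do exist, e.g.\ descent sets $\{2,5,6,7\}$ and $\{2,3,5,6\}$ in $\mathfrak{P}_9$, both having $\Pk=\{2,5\}$, $\val=2$, $\des=4$---and one must then verify by computation that their shuffle distributions actually split, which is exactly the content the proposal defers.
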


Recall that a birun of a permutation is a maximal monotone consecutive subsequence, and that $\br(\pi)$ is the number of biruns
of $\pi$. The number of biruns is not shuffle-compatible, and the only joint statistics involving $\br$ that we have found that seem to be shuffle-compatible are $(\Lpk, \br)$ and $(\Epk, \br)$; however, these are easily shown to be equivalent to $\Epk$, which is shuffle-compatible (see the discussion following Conjecture \ref{cj-sc}).
\begin{thm}
The number of biruns $\br$ and the tuples $(\br,\des)$, $(\br,\maj)$, $(\br,\des,\maj)$,
$(\br,\pk)$, $(\br,\pk,\des)$, $(\br,\lpk)$,
$(\br,\lpk,\des)$, and $(\Pk,\br)$ are not shuffle-compatible.
\end{thm}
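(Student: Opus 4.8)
The plan is to disprove shuffle-compatibility by producing explicit counterexamples, organized by means of two reductions so that only a couple of genuinely new computations are needed. First I would record that $\br$ is a descent statistic satisfying
\[
\br(\pi)=\pk(\pi)+\val(\pi)+1 ,
\]
since the boundaries between consecutive biruns of $\pi$ are exactly its peaks and valleys, so the number of biruns is one more than $\pk(\pi)+\val(\pi)$. I would also note that, because $\Des$ is shuffle-compatible, the multiset of descent sets occurring among $S(\pi,\sigma)$ depends only on $\Comp(\pi)$ and $\Comp(\sigma)$; hence for any descent statistic the whole shuffle distribution is a function of the two descent compositions, and every verification below may be carried out purely in terms of compositions.

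The first reduction disposes of $(\br,\pk)$, $(\br,\pk,\des)$, and $(\Pk,\br)$ using the identity above. Since $\Pk$ determines $\pk$, the $(\Pk,\br)$-equivalence classes coincide with the $(\Pk,\val)$-equivalence classes; likewise $(\br,\pk)$ is equivalent to $(\pk,\val)$ and $(\br,\pk,\des)$ is equivalent to $(\pk,\val,\des)$ in the sense of Section \ref{s-section3}. Each of $(\pk,\val)$, $(\pk,\val,\des)$, and $(\Pk,\val)$ is already known to be non-shuffle-compatible by Theorem \ref{t-pairfalse}, so Theorem \ref{t-esc} (whose hypothesis is symmetric in the two equivalent statistics) immediately yields that $(\br,\pk)$, $(\br,\pk,\des)$, and $(\Pk,\br)$ are not shuffle-compatible.

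It remains to treat $\br$, $(\br,\des)$, $(\br,\maj)$, $(\br,\des,\maj)$, $(\br,\lpk)$, and $(\br,\lpk,\des)$, none of which is equivalent to a statistic appearing in Theorem \ref{t-pairfalse} (for instance, a permutation and its reversal can share the same values of $\br$, $\lpk$, and $\des$ while having different values of $\val$, so $(\br,\lpk,\des)$ is not equivalent to $(\lpk,\val,\des)$). Here I would invoke the following refinement principle: if $\st_2$ is a refinement of $\st_1$, and there are disjoint permutations $\pi,\sigma$ and disjoint $\pi',\sigma'$ with $|\pi|=|\pi'|$, $|\sigma|=|\sigma'|$, $\st_2(\pi)=\st_2(\pi')$, and $\st_2(\sigma)=\st_2(\sigma')$ for which the multisets $\{\,\st_1(\tau)\mid\tau\in S(\pi,\sigma)\,\}$ and $\{\,\st_1(\tau)\mid\tau\in S(\pi',\sigma')\,\}$ differ, then neither $\st_1$ nor $\st_2$ is shuffle-compatible. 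Indeed, $\st_2(\pi)=\st_2(\pi')$ forces $\st_1(\pi)=\st_1(\pi')$, which gives a direct failure of shuffle-compatibility for $\st_1$; and since the map sending each $\st_2$-value to the corresponding $\st_1$-value is well defined on permutations of the common length $|\pi|+|\sigma|$, the differing $\st_1$-multisets force the $\st_2$-multisets to differ as well. I would then exhibit two counterexamples: one pair $(\pi,\sigma)$, $(\pi',\sigma')$ agreeing in $(\br,\des,\maj)$ but with differing $\br$-distributions over their shuffles, which by the principle simultaneously kills $\br$, $(\br,\des)$, $(\br,\maj)$, and $(\br,\des,\maj)$; and a second pair agreeing in $(\br,\lpk,\des)$ but with differing $(\br,\lpk)$-distributions, which kills $(\br,\lpk)$ and $(\br,\lpk,\des)$.

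The main obstacle is locating and certifying these two explicit counterexamples. The systematic tool for both finding and verifying them is Theorem \ref{t-rib}: for the relevant equivalence class $\alpha$ one computes the coproduct $\Delta\mathbf{r}_{\alpha}^{\st}=\sum_{L\in\alpha}\Delta\mathbf{r}_{L}$ from the deconcatenation formula for ribbons and searches for two tensor terms $\mathbf{r}_{J}\otimes\mathbf{r}_{K}$ and $\mathbf{r}_{J'}\otimes\mathbf{r}_{K'}$ with $J,J'$ in one $\st$-class and $K,K'$ in another but occurring with unequal coefficients; such a discrepancy both certifies that the $\st$-ribbons fail to span a subcoalgebra and translates back into the shuffle counterexample described above. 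Finally, I would emphasize that in each counterexample the two permutations must share the value of the fine statistic yet have different descent sets---otherwise shuffle-compatibility of $\Des$ would force identical distributions---so the search is confined to pairs of distinct descent compositions lying in a single equivalence class.
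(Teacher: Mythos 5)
Your reduction of $(\br,\pk)$, $(\br,\pk,\des)$, and $(\Pk,\br)$ is correct and complete: the identity $\br=\pk+\val+1$ for nonempty permutations makes these equivalent, in the sense of Section \ref{s-section3}, to $(\pk,\val)$, $(\pk,\val,\des)$, and $(\Pk,\val)$ respectively, and Theorem \ref{t-esc} in contrapositive form then transfers the non-shuffle-compatibility asserted in Theorem \ref{t-pairfalse} (indeed, Table 2 of the paper records the equivalences $(\pk,\val)\sim(\pk,\br)$ and $(\Pk,\val)\sim(\Pk,\br)$). Your refinement principle is also sound, including its extension to intermediate statistics. The paper itself offers no written proof of this theorem---the non-shuffle-compatibility results are implicitly certified by counterexample computations---so for the remaining six statistics the counterexamples \emph{are} the proof, and this is precisely what your proposal never supplies.

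That is the genuine gap, and it is not a formality, for two reasons. First, your plan presupposes that a single pair agreeing in $(\br,\des,\maj)$ with differing $\br$-distributions exists; the truth of the theorem does not guarantee this (a counterexample to $(\br,\des,\maj)$ need not have differing $\br$-distributions), so the hypothesis of your principle must itself be verified, possibly forcing separate counterexamples for each statistic. Second, the search is genuinely delicate. For $n\le 4$ all compositions have distinct $(\br,\des,\maj)$, and the unique pair of distinct compositions of $5$ sharing $(\br,\des,\maj)$ is $(1,3,1)$ and $(2,1,2)$ (both give $(3,2,5)$); yet shuffling representatives of these classes with a single letter yields \emph{identical} $\br$-distributions, namely $\{3,3,3,4,4,5\}$ in both cases, so the minimal structured candidate fails and any such counterexample requires a longer second permutation or larger $n$. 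By contrast, a counterexample for $\br$ alone is easy but does not propagate upward: $\br(123)=\br(321)=1$, and among the shuffles with $45$ the first pair produces the monotone shuffle $12345$ with one birun while no shuffle of $321$ with $45$ can be monotone, so the distributions differ; but since $\des(123)\neq\des(321)$ this says nothing about $(\br,\des)$, $(\br,\maj)$, or $(\br,\des,\maj)$. You correctly identify Theorem \ref{t-rib} as the right tool for locating and certifying the needed discrepancies, but until explicit compositions and verified distributions (or coproduct coefficients) are produced, six of the nine assertions remain unproved.
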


Although $(\des,\maj)$ is shuffle-compatible, we have not found any other shuffle-compatible joint statistics involving the major index.

\begin{thm}
The tuples $(\pk,\maj)$, $(\pk,\des,\maj)$, $(\lpk,\maj)$, $(\lpk,\des,\maj)$, $(\Pk, \maj)$, $(\Lpk,\maj)$,
$(\udr,\maj)$,  $(\udr,\des,\maj)$, and $(\lir,\maj)$ are not shuffle-compatible.
\end{thm}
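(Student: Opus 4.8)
The plan is to disprove shuffle-compatibility in each case by exhibiting an explicit counterexample: disjoint permutations $\pi,\sigma$ and $\pi',\sigma'$ with $\st(\pi)=\st(\pi')$, $\left|\pi\right|=\left|\pi'\right|$, $\st(\sigma)=\st(\sigma')$, $\left|\sigma\right|=\left|\sigma'\right|$, yet $\{\,\st(\tau)\mid\tau\in S(\pi,\sigma)\,\}\neq\{\,\st(\tau)\mid\tau\in S(\pi',\sigma')\,\}$ as multisets (one may always take $\sigma=\sigma'$). Equivalently, by the dual criterion of Theorem~\ref{t-rib}, it suffices to produce a single $\st$-equivalence class $\alpha$ for which $\Delta\mathbf{r}_{\alpha}^{\st}$ fails to lie in $\Span_{\mathbb{Q}}\{\mathbf{r}_{\beta}^{\st}\otimes\mathbf{r}_{\gamma}^{\st}\}$; this is convenient to check by direct computation of coproducts. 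The guiding observation is that every constituent of these joint statistics is \emph{separately} shuffle-compatible: $\maj$ by Theorem~\ref{t-majsc}; $\pk$, $\lpk$, and $\udr$ by Theorems~\ref{t-pksc}, \ref{t-lpksc}, and~\ref{t-udrsc}; the peak set $\Pk$ and left peak set $\Lpk$ by the results of Section~\ref{s-section4} (Theorem~\ref{t-Lpksc} for $\Lpk$); and $\lir$ because $\sir=1-\lir$ is equivalent to $\Des_{1,0}$, which is shuffle-compatible by Theorem~\ref{t-truncDessc}. Consequently both marginals of each joint statistic are already determined over any shuffle, so the failure must be a genuine failure of the \emph{joint} distribution to be determined: the major index and the accompanying peak-type statistic become correlated in a way not fixed by their separate values.

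This observation both explains the phenomenon and disciplines the search. Since the descent set is shuffle-compatible, any two permutations with equal descent composition produce identical shuffle distributions for every descent statistic; hence in any counterexample $\pi$ and $\pi'$ must have \emph{different} descent compositions while agreeing on the value of $\st$. Thus for each statistic I would search among compositions $J\neq J'$ of a common size $m$ that are $\st$-equivalent, together with a composition $K$ of some $n$, and compute $\{\,\st(\tau)\mid\tau\in S(\pi,\sigma)\,\}$ by enumerating all $\binom{m+n}{m}$ shuffles (or, via Theorem~\ref{t-rib}, by expanding $\Delta\mathbf{r}_{\alpha}^{\st}$ in the ribbon basis). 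Small lengths should suffice, as the $\maj$ and peak-type marginals coincide automatically and only the joint pairing needs to split.

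To keep the number of independent verifications small, I would exploit the refinement relations among the nine statistics. If $\st$ refines $\st'$, then the $\st$-distribution over a shuffle determines the $\st'$-distribution; so if a coarser distribution already differs, every finer distribution differs as well, while agreement on a finer statistic forces agreement on all of its coarsenings. Grouping the list as $\{(\pk,\maj),(\pk,\des,\maj),(\Pk,\maj)\}$, $\{(\lpk,\maj),(\lpk,\des,\maj),(\Lpk,\maj)\}$, $\{(\udr,\maj),(\udr,\des,\maj)\}$, and the singleton $(\lir,\maj)$, each triple has $(\pk,\maj)$, $(\lpk,\maj)$, and $(\udr,\maj)$ respectively as common coarsening. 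For the first group it therefore suffices to find $\pi,\pi'$ agreeing on $(\Pk,\des,\maj)$---hence on all three targets---for which the coarsest statistic $(\pk,\maj)$ already distributes differently over a shuffle; then all three statistics fail at once, and similarly for the other groups.

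The hard part will be the search itself rather than any conceptual difficulty. Because both marginals are pinned by the individual shuffle-compatibility results, there is no marginal shortcut: a counterexample is a pure correlation effect and must be located by computer-assisted enumeration over compositions meeting the agreement constraints, then verified by listing shuffles. The tightest constraint is the simultaneous agreement needed to kill an entire group with one pair---for instance matching $\Pk$, $\des$, and $\maj$ at once while keeping the descent compositions distinct---so in practice I expect either to find such doubly constrained pairs at moderate length or, failing that, to settle each of the nine statistics with its own small counterexample, each checkable by direct computation.
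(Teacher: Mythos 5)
Your framing is sound, and it in fact matches what the paper does: the paper states this theorem with no proof at all (the authors' implicit justification being computations of the kind licensed by Theorem \ref{t-rib}), and the correct way to prove non-shuffle-compatibility is exactly what you describe --- exhibit disjoint $\pi,\sigma$ and $\pi',\sigma'$ that are componentwise $\st$-equivalent but whose shuffles give different multisets of $\st$-values, or equivalently show that some $\Delta\mathbf{r}_{\alpha}^{\st}$ does not lie in $\Span_{\mathbb{Q}}\{\mathbf{r}_{\beta}^{\st}\otimes\mathbf{r}_{\gamma}^{\st}\}$. Your two structural observations are also correct: since $\Des$ is shuffle-compatible and all nine statistics are descent statistics, any counterexample must use $\pi,\pi'$ with distinct descent compositions but equal $\st$-values; and the refinement reductions are valid (a coarsening map sends equal multisets to equal multisets, so a single pair agreeing on $(\Pk,\des,\maj)$ whose shuffles already separate the coarsening $(\pk,\maj)$ would dispose of the first group at one stroke).

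The gap is that the proposal stops exactly where the mathematical content begins: no counterexample is ever produced. The theorem is an existence claim --- that such $\pi,\pi',\sigma$ exist for each of the nine statistics --- and your text only establishes what a counterexample would have to look like and how you would search for one; ``I expect to find'' is not a proof. To complete the argument you must name explicit permutations or descent compositions (the agreement constraints you identify are first satisfiable at length $5$, where for instance $(4,1)$ and $(1,2,2)$ share $(\pk,\maj)=(1,4)$, and $(1,3,1)$ and $(2,1,2)$ share $(1,5)$), choose a $\sigma$, compute the two multisets of $\st$-values over all shuffles, and display that they differ --- once per statistic, or once per refinement group provided the doubly constrained pairs you hope for actually exist, which itself requires verification, with the one-example-per-statistic fallback otherwise. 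Note also that your guiding remark that both marginals are separately shuffle-compatible, while true and a good heuristic, does nothing toward the proof: it only says that counterexamples, if they exist, are ``correlation effects,'' not that they exist. Without the explicit data, nothing in the theorem has been established.
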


In addition to the descent statistics examined in this paper, we mention
that there are two additional families of descent statistics, one based
on the classical notion of double descents and one based on the more
recent notion of alternating descents. We say that $i$ (where $2\leq i\leq n-1$)
is a \textit{double descent} of $\pi\in\mathfrak{P}_{n}$ if $\pi_{i-1}>\pi_{i}>\pi_{i+1}$;
then we can define the double descent set and double descent number---as well as variations of these such as the left double descent set and left double descent number---in the obvious
way. We say that $i\in[n-1]$ is an \textit{alternating descent} if
$i$ is an even ascent or an odd descent; then we can define the alternating descent set,
alternating descent number, and alternating major index in the obvious
way. Alternating descents were introduced by Chebikin \cite{chebikin}
and have been more recently studied by Remmel \cite{remmel} and by the
present authors \cite{Gessel2014}.

Aside from the alternating descent set\textemdash which is equivalent
to the descent set\textemdash none of these statistics mentioned above
are shuffle-compatible. Among joint statistics that involve one or
more of these statistics, we have not found any that seem to be shuffle-compatible
(other than a few that are equivalent to statistics that we know to
be shuffle-compatible).

Lastly, among permutation statistics that are not descent statistics, we have not found any that seem to be shuffle-compatible.

\subsection{Open problems and conjectures}

To conclude this paper, we state a couple permutation statistics 
that we conjecture to be shuffle-compatible based on empirical evidence, and a few more general open problems and conjectures on the topic of shuffle-compatibility.

\begin{conjecture}
\label{cj-sc} The tuples $(\udr,\pk)$ and $(\udr,\pk,\des)$ are shuffle-compatible.
\end{conjecture}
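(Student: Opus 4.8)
The first move is a reformulation that isolates the genuinely new content. By the identity $\lpk(\pi)=\pk(\pi)+\sir(\pi)$ for $|\pi|\ge2$ established in the proof of Lemma~\ref{l-pkvalruns}, together with $\lpk(\pi)=\floor{\udr(\pi)/2}$ from Lemma~\ref{l-udr}, we have $\sir(\pi)=\floor{\udr(\pi)/2}-\pk(\pi)$ (the length-one case being trivial). Hence $(\udr,\pk)$ is equivalent to $(\udr,\sir)$ and $(\udr,\pk,\des)$ is equivalent to $(\udr,\des,\sir)$, so by Theorem~\ref{t-esc} it suffices to prove that $(\udr,\des,\sir)$ and $(\udr,\sir)$ are shuffle-compatible. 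This is encouraging, since $\sir$ is already known to be shuffle-compatible (it is equivalent to $\Des_{1,0}$, which is covered by Theorem~\ref{t-truncDessc}) and $(\udr,\des)$ is shuffle-compatible by Theorem~\ref{t-udsc}; what remains is to show that their common refinement $(\udr,\des,\sir)$ is shuffle-compatible.

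As in the passage from $(\udr,\des)$ to $\udr$ in Theorem~\ref{t-udrsc}, I would treat $(\udr,\des,\sir)$ as primary and then descend to $(\udr,\sir)$ by specializing the descent-tracking variable $y\mapsto1$ and invoking Theorem~\ref{t-quots}: $(\udr,\des,\sir)$ refines $(\udr,\sir)$, and setting $y=1$ should collapse exactly the $\des$-dependence while leaving $\udr$ (tracked by $t$) and $\sir$ intact. So the whole conjecture reduces to the shuffle-compatibility of $(\udr,\des,\sir)$.

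For $(\udr,\des,\sir)$ I would apply Theorem~\ref{l-monoidlikesc1}, starting from the $(\udr,\des)$ identity $(1-t^2\mathbf{h}(x)\mathbf{e}(xy))^{-1}(1+t\mathbf{h}(x))$ of \eqref{e-ud1}. The statistic $\sir$ records whether the initial increasing run has length $1$ or length at least $2$, and this initial run is produced by the leftmost $\mathbf{h}(x)$-block of the leading factor $(1-t^2\mathbf{h}(x)\mathbf{e}(xy))^{-1}$; indeed, the analogous shift $\pk\mapsto\lpk=\pk+\sir$ is precisely what left-multiplication by $\mathbf{h}(x)$ accomplishes in passing from the $(\pk,\des)$-series of \eqref{e-pkdes} to the $(\lpk,\des)$-series of Theorem~\ref{t-lpkdessc}. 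The plan is to introduce a new commuting variable $z$ that refines this leading factor so as to separate short from long initial runs, obtaining a generating function $G$ over the coefficient ring $\Q[[t*]][x,y,z]$ whose ribbon coefficients $u_L$ depend only on $(\udr(L),\des(L),\sir(L))$. Provided $G$ is monoidlike --- which one checks using Lemmas~\ref{l-hemonoidlike} and~\ref{l-monoidlike2} and Corollary~\ref{c-monoidlike2}, all of which extend verbatim to the extra variable $z$ --- one then shows that each $\mathbf{r}^{(\udr,\des,\sir)}_{\alpha}$ lies in $\Span_{\mathbb{Q}}\{\mathbf{s}_{n,\px,\qx}\}$. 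Because of the floor and ceiling functions attached to $\udr$, the generating-function inversion of Theorems~\ref{t-pkdessc} and~\ref{t-lpkdessc} will not apply, so I would instead adapt the unitriangularity argument of Theorem~\ref{t-udsc}, extending its colexicographic order to accommodate the extra $z$-grading; Theorem~\ref{l-monoidlikesc1} then yields shuffle-compatibility together with an explicit shuffle algebra, and the dimension of each homogeneous component follows by refining the count of Proposition~\ref{p-udrdesvalues} by the single bit $\sir$.

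The main obstacle is the construction of the refined monoidlike $G$. The difficulty is structural: $\sir$ distinguishes an initial increasing run of length exactly $1$ from one of length at least $2$, so recording it amounts to resolving the \emph{degree} of the first noncommutative $\mathbf{h}$-block, whereas a degree-truncated version of $\mathbf{h}(x)$ is no longer monoidlike, putting the naive refinement outside the scope of Theorem~\ref{l-monoidlikesc1}. This is in sharp contrast to the passage from $\pk$ to $\lpk$, where left-multiplication by the \emph{entire} (monoidlike) series $\mathbf{h}(x)$ shifts the statistic uniformly and costs nothing. Finding a genuinely monoidlike expression that nevertheless resolves $\sir$ --- equivalently, a combinatorial decomposition of permutations tracking up-down runs, descents, and the short/long initial run simultaneously --- is the crux, and is plausibly the reason the statement has so far resisted proof; once such a $G$ is in hand, the remaining inversion and dimension bookkeeping should be routine along the lines of Theorem~\ref{t-udsc}.
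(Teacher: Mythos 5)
This statement is Conjecture \ref{cj-sc} of the paper: it is posed as an open problem supported only by empirical evidence, and the paper contains no proof of it. So the question is whether your argument closes the conjecture on its own, and it does not. Your opening reduction is correct and genuinely useful: combining $\lpk(\pi)=\pk(\pi)+\sir(\pi)$ for $\left|\pi\right|\ge 2$ (from the proof of Lemma \ref{l-pkvalruns}) with $\lpk(\pi)=\floor{\udr(\pi)/2}$ (Lemma \ref{l-udr}) gives $\sir(\pi)=\floor{\udr(\pi)/2}-\pk(\pi)$, so $(\udr,\pk)$ is equivalent to $(\udr,\sir)$, $(\udr,\pk,\des)$ is equivalent to $(\udr,\des,\sir)$, and Theorem \ref{t-esc} transfers shuffle-compatibility across these equivalences. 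But everything after that is a program, not a proof, and its essential step is missing: to invoke Theorem \ref{l-monoidlikesc1} you must actually exhibit a monoidlike element of $\Symtxy$ (extended by your variable $z$) whose ribbon-basis expansion has coefficients depending only on $\left(\udr(L),\des(L),\sir(L)\right)$, and you never construct such an element. You correctly diagnose why the naive candidate fails --- resolving $\sir$ means splitting the leftmost $\h(x)$ factor of \eqref{e-ud1} by degree (separating $\h_1 x$ from $\h(x)-1-\h_1x$), and a degree-truncated $\h(x)$ is not monoidlike, so no product built from it is --- but diagnosing the obstruction is not the same as overcoming it; indeed you concede that finding such a $G$ ``is the crux, and is plausibly the reason the statement has so far resisted proof.'' Note also that the separate shuffle-compatibility of $\sir$ (via $\Des_{1,0}$ and Theorem \ref{t-truncDessc}) and of $(\udr,\des)$ (Theorem \ref{t-udsc}) buys nothing here: the paper itself stresses, via Theorem \ref{t-pairfalse} and the question following it, that statistics which are individually shuffle-compatible need not be jointly shuffle-compatible.

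To be clear about what survives: the equivalence $(\udr,\pk,\des)\sim(\udr,\des,\sir)$ is a sound reformulation of the conjecture, and the downstream steps you outline (a triangularity argument in place of generating-function inversion, then the specialization $y=1$ together with Theorem \ref{t-quots} to pass from $(\udr,\des,\sir)$ to $(\udr,\sir)$) are plausible imitations of the proofs of Theorems \ref{t-udsc} and \ref{t-udrsc}. But all of these are conditional on the unconstructed monoidlike generating function, so the conjecture remains open under your proposal.
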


In a preliminary version of this paper, we included as part of Conjecture \ref{cj-sc} the conjectured shuffle-compatibility of the exterior peak set $\Epk$ and the tuples $(\Pk,\val,\des)$, $(\Pk,\udr)$, $(\Lpk,\val)$, and $(\Lpk,\val,\des)$. All of these have been addressed by Darij Grinberg. Specifically, Grinberg proved that $\Epk$ is shuffle-compatible using a $P$-partition argument \cite{Grinberg}, noted that $(\Pk,\udr)$ and $(\Lpk,\val)$ are both equivalent to $\Epk$, and found counterexamples showing that $(\Pk,\val,\des)$ and $(\Lpk,\val,\des)$ are not shuffle-compatible \cite{Grinberg2017}.

Prior to this, Grinberg had shown that $\QSym$ is a ``dendriform algebra'' \cite{Grinberg2017a}, an algebra whose multiplication can
be split into a ``left multiplication'' and a ``right multiplication'' satisfying certain nice axioms. 
Together with the shuffle-compatibility of $\Epk$, Grinberg
proved that ${\cal A}_{\Epk}$ is a dendriform quotient of $\QSym$. More generally, he proved that a descent statistic is a dendriform quotient of $\QSym$ if and only if it is
both ``left-shuffle-compatible'' and  ``right-shuffle-compatible'', which are combinatorial conditions that, together, refine the notion of shuffle-compatibility. Other descent statistics that Grinberg has shown to be both left- and right-shuffle-compatible include the descent number $\des$, the pair $(\des,\maj)$, and the left peak set $\Lpk$. On the other hand, the major index $\maj$, the peak set $\Pk$, and the right peak set $\Rpk$ are neither left- nor right-shuffle-compatible.

From Theorem \ref{t-pairfalse}, we know that a pair of two 
shuffle-compatible statistics need not be shuffle-compatible.
Hence, we pose the following question.
\begin{question}
Suppose that $\st_{1}$ and $\st_{2}$ are shuffle-compatible statistics.
Are there simple conditions that imply that the pair $(\st_{1},\st_{2})$ is shuffle-compatible?
\end{question}
Similarly, if a pair is shuffle-compatible, then that does not imply
that the individual statistics in the pair are both shuffle-compatible.
\begin{question}
Suppose that the pair $(\st_{1},\st_{2})$ is shuffle-compatible. Are there simple conditions that imply that $\st_{1}$ and $\st_{2}$ are both shuffle-compatible?
\end{question}
Recall that Goulden \cite{Goulden1985} and Stadler \cite{Stadler1999} gave combinatorial proofs for the shuffle-compatibility of $(\des,\maj)$, and in Section \ref{s-bijproof} we provided combinatorial proofs for the shuffle-compatibility of the descent set $\Des$ and partial descent sets $\Des_{i,j}$.

\begin{question}
Can we find combinatorial proofs for the shuffle-compatibility of other statistics?
\end{question}
Finally, we present the following conjecture.
\begin{conjecture}
Every shuffle-compatible permutation statistic is a descent statistic.
\end{conjecture}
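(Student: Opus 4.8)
The plan is to establish the conjecture in the equivalent form that the descent set is the \emph{finest} shuffle-compatible statistic: if $\st$ is shuffle-compatible and $\pi,\pi'\in\mathfrak{P}_n$ satisfy $\Des(\pi)=\Des(\pi')$, then $\st(\pi)=\st(\pi')$. This says precisely that each $\st$-equivalence class is a union of $\Des$-equivalence classes, i.e.\ that $\st$ is a descent statistic. Throughout I would exploit the full strength of shuffle-compatibility, namely that the multiset $\{\,\st(\tau)\mid\tau\in S(\pi,\sigma)\,\}$ is unchanged under \emph{every} choice of relative order between the alphabets of $\pi$ and $\sigma$, not merely the case where one alphabet lies entirely below the other (which is what the Malvenuto--Reutenauer algebra sees).

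First I would reduce to a single elementary move. Call $\pi^{*}$ a \emph{value-swap} of $\pi$ if it is obtained by interchanging the two letters with consecutive values $v$ and $v+1$, and call the move \emph{non-adjacent} when those letters occupy non-adjacent positions. Such a swap alters only comparisons between $v$ and $v+1$, so it preserves $\Des(\pi)$ exactly when it is non-adjacent. I would then prove a connectivity lemma: any two permutations in $\mathfrak{P}_n$ with the same descent set are joined by a sequence of non-adjacent value-swaps, by induction on $n$ (normalizing the position of the largest letter through swaps with the second-largest, then recursing). Granting this, it suffices to show that shuffle-compatibility forces $\st(\pi)=\st(\pi^{*})$ for a single non-adjacent value-swap.

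To attack that, suppose $v$ sits at position $p$ and $v+1$ at position $q$, with $q\ge p+2$. Let $\beta$ be the $(n-1)$-permutation obtained by deleting the letter $v$ from $\pi$, and let $\beta^{*}$ be obtained by deleting the letter in position $p$ of $\pi^{*}$ (now equal to $v+1$). Since $v$ and $v+1$ are consecutive integers and $\beta,\beta^{*}$ each omit exactly one of them, $\beta$ and $\beta^{*}$ have the same relative order, so $\st(\beta)=\st(\beta^{*})$; the single letters $v$ and $v+1$ are trivially $\st$-equivalent. Shuffle-compatibility therefore yields the multiset identity
\[
\{\,\st(\tau)\mid\tau\in S(v,\beta)\,\}=\{\,\st(\tau)\mid\tau\in S(v+1,\beta^{*})\,\},
\]
and a short check shows that the position-by-position bijection between these two insertion sets sends each $\tau$ to its own value-swap $\tau^{*}$, carrying $\pi$ to $\pi^{*}$. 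A parallel family of identities arises by splitting off any single letter lying strictly between positions $p$ and $q$, which produces shorter configurations of strictly smaller gap; combining several such identities is the intended route to isolating the target terms.

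The main obstacle — and the reason this remains a conjecture — is that these identities are equalities of \emph{multisets}, not of individual terms: the natural bijection pairs $\tau$ with a value-swap $\tau^{*}$ of the same length $n$, so a naive induction on length never closes. The proposed remedy is an induction on the gap $d=q-p$, isolating $\st(\pi)-\st(\pi^{*})$ as a linear combination of several split-off identities (using that one may freely prescribe the value-ranges of the inserted letters) so that every surviving term involves a value-swap of gap strictly less than $d$; the base case $d=2$ collapses to the single pattern pair $132$ versus $231$ occupying three consecutive positions, which would be handled directly. Making this isolation rigorous amounts to controlling the joint behavior of $\st$ and $\Des$ across shuffles even though $(\st,\Des)$ need not itself be shuffle-compatible a priori, and overcoming this is where the real work lies.
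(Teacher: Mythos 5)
First, a point of orientation: the paper does not prove this statement at all --- it is the paper's closing conjecture, left explicitly open, so there is no proof of record to compare your attempt against. Your proposal does not prove it either, and to your credit you say so. Your preliminary reductions are in fact sound: $\st$ is a descent statistic precisely when it is constant on each descent class; swapping the consecutive values $v$ and $v+1$ preserves the descent set exactly when they occupy non-adjacent positions; descent classes are indeed connected under such swaps (provable by your induction on $n$, or from the classical fact that descent classes are intervals in the weak order, whose covering relations are value swaps); and your deletion argument correctly produces $\st$-equivalent permutations $\beta$ and $\beta^{*}$, so that shuffle-compatibility yields the multiset identity $\{\,\st(\tau)\mid\tau\in S(v,\beta)\,\}=\{\,\st(\tau)\mid\tau\in S(v+1,\beta^{*})\,\}$. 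But from that point on the proposal is a statement of intent rather than an argument. The multiset identity matches the $n$ insertions only in bulk; nothing in it pairs the insertion at position $p$ on the left with the insertion at position $p$ on the right, which is what $\st(\pi)=\st(\pi^{*})$ requires. The induction on the gap $d=q-p$ has no engine: no mechanism is ever exhibited for isolating $\st(\pi)-\st(\pi^{*})$ from the split-off identities, and the base case $d=2$ is not ``handled directly'' by anything you wrote --- it is the whole problem in miniature, since every identity available at that scale still sees the two values $\st(\dots v\,w\,(v{+}1)\dots)$ and $\st(\dots(v{+}1)\,w\,v\dots)$ only as an unordered pair.

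Second, and decisively: the gap you deferred cannot be filled, because the conjecture is false. It was disproved after this paper appeared by Kantarc{\i} O\u{g}uz, who constructed a permutation statistic that is shuffle-compatible but is not a descent statistic. Since your reductions up to the multiset identity are correct (in particular the connectivity lemma), that counterexample implies that the precise statement you isolated as ``where the real work lies'' --- that shuffle-compatibility forces $\st(\pi)=\st(\pi^{*})$ across a single non-adjacent value swap --- is not merely unproven but actually fails for some shuffle-compatible $\st$. So your plan correctly identifies the crux, but the crux is a false statement: the obstruction you flagged (multiset identities versus termwise identities) is a genuine logical barrier, not a technical inconvenience, and the productive move would have been to exploit exactly that slack to search for a counterexample rather than to try to close it.
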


\vspace{10bp}

\noindent \textbf{Acknowledgements.} We thank Bruce Sagan and an anonymous referee for providing extensive feedback on a preliminary version of this paper, as well as Marcelo Aguiar, Sami Assaf, Darij Grinberg, and Kyle Petersen for helpful discussions on this project.

\appendix

\section{Tables of permutation statistics}

The following table summarizes every permutation statistic $\st$
that we know to be shuffle-compatible, along with their shuffle algebra
${\cal A}_{\st}$ and the dimension of the $n$th homogeneous component
of ${\cal A}_{\st}$.
\goodbreak
\renewcommand{\arraystretch}{1.4}
\noindent \begin{center}
\begin{longtable}{|>{\centering}p{2.1in}|>{\centering}p{1.85in}|>{\centering}p{1.85in}|}
\caption{Shuffle-compatible permutation statistics}
\tabularnewline
\hline 
Permutation Statistic & Shuffle Algebra & Dimension of $n$th Homogeneous Component\tabularnewline
\hline 
$\Des$ & $\QSym$ & $2^{n-1}$\tabularnewline
\hline 
$\des$ & Theorem \ref{t-dessc} & $n$\tabularnewline
\hline 
$\maj$, $\comaj$ & Theorem \ref{t-majsc} & ${n \choose 2}+1$\tabularnewline
\hline 
$(\des,\maj)$, $(\des,\comaj)$ & Theorem \ref{t-descomajsc} & ${n \choose 3}+n$\tabularnewline
\hline 
$\Pk,\Val$ & Algebra of peaks $\Pi$ & $F_{n}$\tabularnewline
\hline 
$\pk,\val,\epk$ & Theorem \ref{t-pksc} & $\left\lfloor (n+1)/2\right\rfloor $\tabularnewline
\hline
$\Lpk,\Rpk$ & Algebra of left peaks $\Pi^{(\ell)}$ & $F_{n+1}$\tabularnewline
\hline 
$\lpk,\rpk,\lr$ & Theorem \ref{t-lpksc} & $\left\lfloor n/2\right\rfloor +1$\tabularnewline
\hline
$\Des_{1,0},\Des_{0,1},\sir,\lir,\sfr,\lfr$ &  & $2$ \tabularnewline
\hline  
$\Des_{i,j}$ &  & $2^{i+j}$ (if $i+j \leq n-1$) \tabularnewline
\hline  
$(\pk,\des)$, $(\val,\des)$, $(\epk,\des)$ & Theorem \ref{t-pkdessc} & $\left\lfloor (n+1)^{2}/4\right\rfloor $\tabularnewline
\hline 
$(\lpk,\des), (\rpk,\des), (\lr,\des)$ & Theorem \ref{t-lpkdessc} & $\left\lfloor n^{2}/4\right\rfloor +1$\tabularnewline
\hline 
$\udr$, $(\lpk,\val)$, $(\lpk,\pk)$, $(\lpk,\epk)$, $(\rpk,\val)$, $(\rpk,\pk)$, $(\rpk,\epk)$, $(\lr,\val)$, $(\lr,\pk)$, $(\lr,\epk)$ & Theorem \ref{t-udrsc} & $n$\tabularnewline
\hline
$(\udr,\des)$, $(\lpk,\val,\des)$, $(\lpk,\epk,\des)$, $(\lpk,\pk,\des)$ & Theorem \ref{t-udsc} & $\binom{n}{2}+1$\tabularnewline
\hline  
$\Epk$ & \cite{Grinberg} & $F_{n+2}-1$ \tabularnewline
\hline  
\end{longtable}
\par\end{center}

The next table gives a partial list of equivalences, $r$-equivalences, $c$-equivalences, and $rc$-equivalences among permutation statistics that are studied in this paper. Not all of these are explicitly proven in this paper, but the proofs are very straightforward. 
We leave out some redundancies such as $\sir \sim_{c} \lir$---omitted since we include $\sir \sim \lir$---as well as equivalences like $(\Lpk,\val,\des) \sim (\Lpk,\br,\des)$, which is an immediate consequence of  $(\Lpk,\val) \sim (\Lpk,\br)$.

\begin{center}
\begin{longtable}{|>{\centering}p{3.5in}|>{\centering}p{0.2in}|c|}
\caption{Equivalences among permutation statistics}
\tabularnewline
\cline{1-1} \cline{3-3} 
Equivalences &  & $r$-Equivalences\tabularnewline
\cline{1-1} \cline{3-3} 
$\Des\sim\Lpk\cup\Val\sim(\Lpk,\Val)$ &  & $\Lpk\sim_{r}\Rpk$\tabularnewline
\cline{1-1} \cline{3-3} 
$\val\sim\epk$ &  & $\lpk\sim_{r}\rpk$\tabularnewline
\cline{1-1} \cline{3-3} 
$\rpk\sim\epk$ &  & $\sir\sim_{r}\lfr$\tabularnewline
\cline{1-1} \cline{3-3} 
$\rpk\sim\lr$ &  & $\sfr\sim_{r}\lir$\tabularnewline
\cline{1-1} \cline{3-3} 
$\udr\sim(\lpk,\val)$ & \multicolumn{1}{>{\centering}p{0.2in}}{} & \multicolumn{1}{c}{}\tabularnewline
\cline{1-1} \cline{3-3} 
\multirow{2}{3.5in}{\hphantom{i}$\Epk\sim(\Epk,\val)\sim(\Epk,\udr)\sim(\Epk,\br)$\vspace{1bp}
\hphantom{aaaaaaaaaa}$\sim(\Lpk,\val)\sim(\Lpk,\udr)\sim(\Pk,\udr)$} &  & $c$-Equivalences\tabularnewline
\cline{3-3} 
 &  & $\Pk\sim_{c}\Val$\tabularnewline
\cline{1-1} \cline{3-3} 
$\sir\sim\lir\sim\Des_{1,0}$ &  & $\pk\sim_{c}\val$\tabularnewline
\cline{1-1} \cline{3-3} 
$\sfr\sim\lfr\sim\Des_{0,1}$ & \multicolumn{1}{>{\centering}p{0.2in}}{} & \multicolumn{1}{c}{}\tabularnewline
\cline{1-1} \cline{3-3} 
$(\Pk,\val)\sim(\Pk,\br)$ &  & $rc$-Equivalences\tabularnewline
\cline{1-1} \cline{3-3} 
$(\Lpk,\val)\sim(\Lpk,\br)$ &  & $(\pk,\des)\sim_{rc}(\val,\des)$\tabularnewline
\cline{1-1} \cline{3-3} 
$(\pk,\val)\sim(\pk,\br)\sim(\val,\br)$ &  & $(\lpk,\val)\sim_{rc}(\lpk,\pk)$\tabularnewline
\cline{1-1} \cline{3-3} 
\end{longtable}

\par\end{center}

\phantomsection

\bibliographystyle{amsplain}
\addcontentsline{toc}{section}{\refname}\bibliography{bibliography}

\providecommand\noopsort[1]{}
\providecommand{\bysame}{\leavevmode\hbox to3em{\hrulefill}\thinspace}
\providecommand{\MR}{\relax\ifhmode\unskip\space\fi MR }
% \MRhref is called by the amsart/book/proc definition of \MR.
\providecommand{\MRhref}[2]{%
  \href{http://www.ams.org/mathscinet-getitem?mr=#1}{#2}
}
\providecommand{\href}[2]{#2}
\begin{thebibliography}{10}

\bibitem{Aguiar2004}
Marcelo Aguiar, Nantel Bergeron, and Kathryn Nyman, \emph{The peak algebra and
  the descent algebras of types {$B$} and {$D$}}, Trans. Amer. Math. Soc.
  \textbf{356} (2004), no.~7, 2781--2824.

\bibitem{chebikin}
Denis Chebikin, \emph{Variations on descents and inversions in permutations},
  Electron. J. Combin. \textbf{15} (2008), no.~1, Research Paper 132, 34 pp.

\bibitem{Chow2001}
Chak-On Chow, \emph{Noncommutative symmetric functions of type {B}}, Ph.D.
  thesis, Massachusetts Institute of Technology, 2001.

\bibitem{Dokos2012}
Theodore Dokos, Tim Dwyer, Bryan~P. Johnson, Bruce~E. Sagan, and Kimberly
  Selsor, \emph{Permutation patterns and statistics}, Discrete Math.
  \textbf{312} (2012), no.~18, 2760--2775.

\bibitem{Doyle2008}
Peter~G. Doyle and Dan Rockmore, \emph{Riffles, ruffles, and the turning
  algebra}, \url{https://arxiv.org/abs/0804.0157}, 2008.

\bibitem{ncsf1}
Israel~M. Gelfand, Daniel Krob, Alain Lascoux, Bernard Leclerc, Vladimir~S.
  Retakh, and Jean-Yves Thibon, \emph{Noncommutative symmetric functions}, Adv.
  Math. \textbf{112} (1995), no.~2, 218--348.

\bibitem{Gessel1984}
Ira~M. Gessel, \emph{Multipartite {$P$}-partitions and inner products of skew
  {S}chur functions}, Contemp. Math. \textbf{34} (1984), 289--317.

\bibitem{Gessel2014}
Ira~M. Gessel and Yan Zhuang, \emph{Counting permutations by alternating
  descents}, Electron. J. Combin. \textbf{21} (2014), no.~4, Paper P4.23, 21
  pp.

\bibitem{gessel-thesis}
Ira~Martin Gessel, \emph{Generating {F}unctions and {E}numeration of
  {S}equences}, Ph.D. thesis, Massachusetts Institute of Technology, 1977.

\bibitem{Goulden1985}
I.~P. Goulden, \emph{A bijective proof of {S}tanley's shuffling theorem},
  Trans. Amer. Math. Soc. \textbf{1} (1985), 147--160.

\bibitem{Grinberg}
Darij Grinberg, \emph{Shuffle-compatible permutation statistics {II}: the
  exterior peak set}, in preparation.

\bibitem{Grinberg2017a}
\bysame, \emph{Dual creation operators and a dendriform algebra structure on
  the quasisymmetric functions}, Canad. J. Math. \textbf{69} (2017), no.~1,
  21--53.

\bibitem{Grinberg2017}
\bysame, Personal communication, 2017--2018.

\bibitem{Grinberg2014}
Darij Grinberg and Victor Reiner, \emph{Hopf {A}lgebras in {C}ombinatorics},
  Preprint, \url{https://arxiv.org/abs/1409.8356v4}, 2016.

\bibitem{Josuat-Verges2016}
Matthieu Josuat-Verg\`es and C.~Y.~Amy Pang, \emph{Subalgebras of {S}olomon's
  descent algebra based on alternating runs}, J. Comb. Theory Ser. A
  \textbf{158} (2018), 36--65.

\bibitem{Loday1989}
Jean-Louis Loday, \emph{Op\'erations sur l'homologie cyclique des alg\`ebres
  commutatives}, Invent. Math. \textbf{96} (1989), no.~1, 205--230.

\bibitem{Luoto2013}
Kurt Luoto, Stefan Mykytiuk, and Stephanie van Willigenburg, \emph{{An
  Introduction to Quasisymmetric Schur Functions. Hopf Algebras, Quasisymmetric
  Functions, and Young Composition Tableaux}}, Springer, 2013.

\bibitem{Nyman2003}
Kathryn~L. Nyman, \emph{The peak algebra of the symmetric group}, J. Algebraic
  Combin. \textbf{17} (2003), no.~3, 309--322.

\bibitem{Petersen2006}
T.~Kyle Petersen, \emph{Descents, {P}eaks, and ${P}$-partitions}, Ph.D. thesis,
  Brandeis University, 2006.

\bibitem{Petersen2007}
\bysame, \emph{Enriched ${P}$-partitions and peak algebras}, Adv. Math.
  \textbf{209} (2007), no.~2, 561--610.

\bibitem{remmel}
Jeffrey~B. Remmel, \emph{Generating functions for alternating descents and
  alternating major index}, Ann. Comb. \textbf{16} (2012), no.~3, 625--650.

\bibitem{Schocker2005}
Manfred Schocker, \emph{The peak algebra of the symmetric group revisited},
  Adv. Math. \textbf{192} (2005), no.~2, 259--309.

\bibitem{Solomon1976}
Louis Solomon, \emph{A {M}ackey formula in the group ring of a {C}oxeter
  group}, J. Algebra \textbf{41} (1976), no.~2, 255--264.

\bibitem{Stadler1999}
Jonathan~D. Stadler, \emph{Stanley's shuffling theorem revisited}, J. Comb.
  Theory Ser. A \textbf{88} (1999), no.~1, 176--187.

\bibitem{Stanley1972}
Richard~P. Stanley, \emph{Ordered {S}tructures and {P}artitions}, Memoirs of
  the American Mathematical Society, vol. 199, American Mathematical Society,
  Providence, 1972.

\bibitem{Stanley2011}
\bysame, \emph{Enumerative {C}ombinatorics, vol. 1}, 2nd ed., Cambridge
  University Press, {\noopsort{a}}2011.

\bibitem{Stanley2001}
\bysame, \emph{Enumerative {C}ombinatorics, vol. 2}, Cambridge University
  Press, {\noopsort{b}}2001.

\bibitem{Stembridge1997}
John~R. Stembridge, \emph{Enriched ${P}$-partitions}, Trans. Amer. Math. Soc.
  \textbf{349} (1997), no.~2, 763--788.

\bibitem{Vong2013}
Vincent Vong, \emph{Algebraic properties of some permutation statistics}, 25th
  International Conference on Formal Power Series and Algebraic Combinatorics
  (FPSAC 2013), Discrete Math. Theor. Compt. Sci. Proc., 2013, pp.~813--824.

\bibitem{Zhuang2016}
Yan Zhuang, \emph{Counting permutations by runs}, J. Comb. Theory Ser. A
  \textbf{142} (2016), 147--176.

\bibitem{Zhuang2016a}
\bysame, \emph{Eulerian polynomials and descent statistics}, Adv. in Appl.
  Math. \textbf{90} (2017), 86--144.

\end{thebibliography}

\end{document}